\numberwithin{equation}{section}
\newtheorem{thm}{Theorem}[section]
\newtheorem{dfn}[thm]{Definition}
\newtheorem{lem}[thm]{Lemma}
\newtheorem{cor}[thm]{Corollary}
\newtheorem{rmq}[thm]{Remark}
\newtheorem{prp}[thm]{Proposition}
\def\toclevel@paragraph{5}
\def\toclevel@subparagraph{6}
\begin{document}
	\vspace*{1cm}
	\thispagestyle{empty}
	\begin{center}
		{\Large{A SIR Stochastic Epidemic Model in Continuous Space: \vspace*{0.2cm} \\Law of Large Numbers and  Central Limit Theorem  }}\vspace*{0.4cm}\\
		Alphonse Emakoua \hspace*{0.2cm} \hspace*{0.2cm}  \\
		emakouaal@gmail.com\vspace*{0.3cm}\\
		\textit{Aix-Marseille Université, Centre de mathématiques et informatique (C.M.I),}\\\textit{39 Rue Frédéric Joliot Curie, 13013 Marseille, France}\vspace*{0.2cm}\\
		\today \vspace*{0.3cm}
	\end{center}
	
	\begin{center}
		\textbf{\large{Abstract}}
			\vspace*{0.3cm}
	\end{center}
		The impact of spatial structure on the spread of an epidemic is an important issue in the propagation of infectious diseases. Recent studies, both deterministic and stochastic, have made it possible to understand the importance of the movement of individuals in a population on the persistence or extinction of an endemic disease. In this paper we study a compartmental SIR stochastic epidemic model  for a population that moves on $\mathbb{R}^{d}$ following SDEs driven by independent Brownian motions. We define the sequences of empirical measures, which describe the evolution of the positions of the susceptible, infected and removed individuals. Next, we obtain large population approximations of those sequence of  measures, as weak solution of a system of reaction-diffusion equations. Finaly we study the fluctuations of the stochastic model around its large population  limit via the central limit theorem. The limit is a distribution
		valued Ornstein-Uhlenbeck process and can be represented as the solution of system of stochastic
		partial differential equations.\\\\
	
\hspace*{-0.6cm}\textbf{Keywords}:  Stochastic   model $\cdotp$ Déterministic $\cdotp$ Law of large numbers $\cdotp$ Central limit theorem $\cdotp$ Measure valued processes.\\
	
	\section{Introduction}
	\hspace*{0.5cm} Deterministic models of epidemics have been developed significantly in recent decades. The study of stochastic models in contrast is more recent, see for example   \cite{dc}, \cite{hc}, \cite{sb},\cite{lpz},\cite{vc} and \cite{npy}. Anderson and Britton \cite{dc},
	and Britton and Pardoux \cite{hc}  show that deterministic models of epidemics are the law of large
	numbers limits (as the size of the population tends to $\infty$) of homogenous stochastic models, while the central
	limit theorem and moderate and large deviations (see \cite{hc} and \cite{vc}) give tools to accurately
	describe the gap between stochastic and deterministic models.\\ However, in their models, they ignore the fact that a population spreads over a spatial
	region. But spatial heterogeneity, habitat connectivity, and rates of movement play important roles in disease persistence and extinction. Movement of susceptible or infected individuals can enhance or suppress the spread of disease, depending on the heterogeneity and connectivity of the spatial environnement, see for example \cite{cc} and the references therein, for the deterministic case and \cite{ik} and \cite{npy} for the stochastic case.
	\par Having in mind the above conclusions,  Emakoua and  Pardoux  \cite{sb},  have studied the law of large numbers and central limit theorem of two spatial SIR epidemic models in a compact set. In the first considered model, the population is moving and in the second, there is no mouvement (to refer to plant epidemics). For the first   model,  the limiting law of large numbers model is a system of parabolic PDEs, which is a  deterministic
	epidemic model in continuous space, and for the second a system of ODEs. The study of the fluctuations around the limit law of large numbers through the  central limit theorem gives in the limit for the two models an Ornstein-Uhlenbeck processe with values  in a negative index Sobolev space. Moreover in the fisrt model it was assumed that the diffusion coefficient remains the same for all the individuals and that the infectious rate does not depends of the position of the infectious. The impact of environmental heterogeneity was not   taken into account. In this paper we study the law of large numbers and central limit theorem of a SIR stochastic epidemic models, for a population with constant size $N$, distributed on $\mathbb{R}^{d}.$ The initial condition will be the same as in \cite{sb}.\par Let us describe our model. 
	We consider  a population of  constant size $N$, living in $\mathbb{R}^{d}$.  We assume that at time t=0,  the population consists of two classes: the susceptible  S(0) and the infectious  I(0), such that $S(0)+I(0)=N$ described as follows.\\
	Given $ A$  an arbitrary Borel subset  of  $\mathbb{R}^{d}$  and $ 0 <p \le 1 $, each individual $i $ is placed in $\mathbb{R}^{d}$ independently of the others at the position $X_{0}^{i}$. If  $X_{0}^{i}\in A^{c}$  then the individual $i$   is susceptible and  if $X_{0}^{i}\in A $, the individual $i$ is infected with  probability $p$ and susceptible with  probability $1-p$. This situation is modelled by empirical measures
	\vspace*{0.2cm} \\$
	\hspace*{5cm}\displaystyle  \mu_{0}^{S,N}=\frac{1}{N}\sum_{i=1}^{N}\{1_{A}(X_{0}^{i})(1-\xi_{i})+1_{A^{c}}(X_{0}^{i})\}\delta_{X_{0}^{i}}
	\\\hspace*{5cm}
	\displaystyle  \mu_{0}^{I,N}=\frac{1}{N}\sum_{i=1}^{N}1_{A}(X_{0}^{i})\xi_{i}\delta_{X_{0}^{i}} 
	$\vspace*{0.07cm}\\
	where  $\{\xi_{i},1\leq i\leq N \}$ is a mutually independent family of Ber(p) random variables, globally independent of $\{X_{0}^{i},1\leq i\leq N \}$, which in turn is a mutually independent family of random variables.\vspace*{0.08cm}\\\hspace*{0.5cm}During the epidemic the population is divided  into three compartments: the susceptible S, the infectious I and the  recovered R (the recovered individuals are those who are dead  or who have recovered and have permanent immunity). We weaken  assumptions  made in  the first model in \cite{sb}, by assuming that an individual $i$ moves on  $\mathbb{R}^{d}$  according to a diffusion process driven by  the following stochastic differential equation.
	
	\begin{align}
		X^{i}_{t}=X_0^{i}+\displaystyle\int_{0}^{t}m(E^{i}_{r},X^{i}_{r})dr+\int_{0}^{t} \theta(E^{i}_{r},X^{i}_{t})dB_{r}^{i}. \label{uith}
	\end{align} 
	Where
	\begin{itemize} 
		\item $\{ B^{i},1\leq i\leq N \}$ is  a family of  independent standard   Brownian motions on $\mathbb{R}^{d}$ which is globaly independent of    $\{ X^{i}_{0},1\leq i\leq N \}.$ \vspace*{-0.12cm}
		\item  $E_{t}^{i}$ is the state at time $t$ of the individual $i$, $E_{t}^{i}\in \hspace*{-0.2cm}\mbox{\begin{cursive}	C	\end{cursive}}=\{S,I,R\}.$ \vspace*{-0.1cm}
		\item The environment
		heterogeneity is modelled by the  function $m$: $\hspace*{-0.2cm}\mbox{\begin{cursive}	C	\end{cursive}}\hspace*{-0.2cm}\times\mathbb{R}^{d}\longrightarrow \mathbb{R}^{d}$. \vspace*{-0.12cm}
		\item The diffusion matrix is the  function $\theta$: $\hspace*{-0.2cm}\mbox{\begin{cursive}	C	\end{cursive}}\hspace*{-0.2cm}\times\mathbb{R}^{d}\longrightarrow \mathcal{M}_{d}(\mathbb{R})$.\vspace*{-0.17cm}
		\item We assume  that the functions $m$ and $\theta$ are bounded and    Lipschitz continuous with respect to the space variable.
	\end{itemize} 
	Infections are non local and  a susceptible $i$ becomes infected at time t at the following  rate. 
	\begin{align}
\displaystyle\frac{1}{N^{\gamma}}  \sum_{j=1}^{N}\frac{K(X_{t}^{i},X_{t}^{j})}{\Big[\sum\limits_{\ell=1}^{N}K(X_{t}^{\ell},X_{t}^{j})\Big]^{1-\gamma}}
	1_{\{E_{t}^{j}=I\}}, \quad\textrm{with} \quad \gamma\in[0,1] \label{ftxg}
		\end{align}
	 where to have less parameter, we have let $K(X_{t}^{i},X_{t}^{j})=\beta(X_{t}^{j})F(X_{t}^{i},X_{t}^{j}),$ with $\beta$ a function on $\mathbb{R}^{d}$ and  $F$ a symetric function on $ \mathbb{R}^{d}\times \mathbb{R}^{d} $ . The formulation of such a rate of infections can be explained as follows. Since we take into account the spatial structure, an
	infectious individual $j$ has a contact with the individual   $i$ at the rate  $\displaystyle \frac{1}{N^{\gamma}}\frac{K(X_{t}^{i},X_{t}^{j})}{\Big[\sum\limits_{\ell=1}^{N}K(X_{t}^{\ell},X_{t}^{j})\Big]^{1-\gamma}},$ thus summing over the infectious individuals at time $t$ gives the above rate.\\The case $\gamma=0,$ has already been studied in \cite{sb}, in this paper we focus  on the case $\gamma=1,$ so (\ref{ftxg}) will becomes $ \displaystyle\frac{1}{N}  \sum_{j=1}^{N}K(X_{t}^{i},X_{t}^{j})1_{\{E_{t}^{j}=I\}}.$
	The case $\gamma\in (0,1)$ will be studied in a  future work.\\\\
	The evolution of the numbers of susceptible, infectious and removered individuals is described by the following equations.
	\begin{align*}
		\displaystyle  S(t)&=S(0)-P_{inf}\left(  \int_{0}^{t}  \frac{1}{N}  \sum_{i\neq j}K(X_{r}^{i},X_{r}^{j})1_{\{E_{r}^{i}=S\}} 1_{\{E_{r}^{j}=I\}} dr\right)
		\\
		\displaystyle  I(t)&=I(0)+P_{inf}\left(  \int_{0}^{t} \frac{1}{N}  \sum_{i\neq j}K(X_{r}^{i},X_{r}^{j})1_{\{E_{r}^{i}=S\}} 1_{\{E_{r}^{j}=I\}} dr\right)-P_{cu}\left(\alpha \int_{0}^{t}\sum_{j=1}^{N}1_{\{E_{r}^{j}=I\}} dr\right)
		\\
		\displaystyle R(t)&=R(0) + P_{cu}\left(\alpha \int_{0}^{t}\sum_{j=1}^{N}1_{\{E_{r}^{j}=I\}} dr\right),
	\end{align*}
	
	where $P_{inf}$ and $P_{cu}$ are two independent standard Poisson processes.\vspace*{0.2cm}\\
	We end  the description of the  model, by defining the renormalized point processes, which allows us to control the evolutions of the positions of susceptible, infectious and recovered individuals and the proportions of susceptible, infectious and recovered individuals in any subset of $\mathbb{R}^{d}$.
	$ \forall t>0, $ \vspace*{0.2cm}\\\hspace*{5cm}
	$ \displaystyle  \mu_{t}^{S,N}=\frac{1}{N}\sum_{i=1}^{N}1_{\{E_{t}^{i}=S\}}\delta_{X_{t}^{i}},
	\\\hspace*{5cm}
	\displaystyle  \mu_{t}^{I,N}=\frac{1}{N}\sum_{i=1}^{N}1_{\{E_{t}^{i}=I\}}\delta_{X_{t}^{i}},
	\\\hspace*{5cm}
	\displaystyle  \mu_{t}^{R,N}=\frac{1}{N}\sum_{i=1}^{N}1_{\{E_{t}^{i}=R\}}\delta_{X_{t}^{i}}.$
	\vspace*{0.25cm}\\ \hspace*{0.6cm}Since the law of large numbers and the central limit theorem of the initial sequence \\$(\mu_{0}^{I,N}, \mu_{0}^{S,N})_{N\geq 1}$ has already been studied in \cite{sb}, under the assumption (H0) that the law of $X^{1}_{0},$ is absolutly continuous with respect to the Lebesgue measure, in this paper, we will first write the equation of evolution of $(\mu_{t}^{S,N},\mu_{t}^{I,N},\mu_{t}^{R,N}),$ when the size of the population  $N$ is fixed. We shall next study the law of large numbers and the central limit theorem of those sequences. The law of large number
	result will be a convergence result in the space of measure valued processes. The convergence
	proof will start with tightness in the appropriate space, identification of the limit of any vaguely
	converging subsequence with the unique deterministic solution of a system of PDEs, from which
	the weak convergence, then  in probability of the whole sequence will follow.\\The central limit theorem is
	technically more involved. The first difficulty comes from the fact that our domain is not compact.  The approximating sequence lives in the space of signed measures
	valued processes and 
	one of the main problems to overcome is to find a suitable space in which this sequence, as well as its limit, take their values.\\
	We prove that the approximating sequence converges in the Skorokhod space $[\mathbb{D}(\mathbb{R}_{+},H^{-s,\sigma}(\mathbb{R}^{d}))]^{3},\\$($\sigma>d/2,$ $1+\lceil\frac{d}{2}\rceil<s<2+\lceil\frac{d}{2}\rceil$), to a continuous process characterized as the unique solution of a linear
	Gaussian processes valued stochastic partial differential equation (abbreviated below SPDE). The weighted Sobolev spaces $H^{-s,\sigma}(\mathbb{R}^{d})$ we consider here were introduced by Métivier \cite{mt}, for the integer values of $s.$ Mélérad \cite{meroe} uses that space for the  study of the central limit theorem of a sequence of  empirical (random) measures of interacting particle systems. Cl\'emençon and all \cite{cta} also use that space to study a central limit theorem for  a specific stochastic epidemic model accounting for the effect of contact-tracing on the spread of an infectious disease. The work of Löfstrom  \cite{lof} and \cite{loff} allows us to extend that space to the non interger values of $s,$ by using real interpolation techniques.\vspace*{0.17cm}\par The paper is  organized  as follows.
	In section 2 we recall some results that will be useful in the sequel. In section 3, we  first establish the evolution equations of the measure-valued processes   $\mu^{S,N},$ $\mu^{I,N} $ and $\mu^{I,N} $  then we  show  that   the sequence  $\{(\mu_{t}^{S,N},\mu_{t}^{I,N},\mu_{t}^{R,N}) ,t\geq0 \} $ converges in probability as $N\rightarrow \infty $  towards $(\mu^{S},\mu^{I},\mu^{R}),$ the unique solution of a system of parabolic PDEs. In section 5 we study the convergence of the sequence of fluctuations processes $(U^{N}=\sqrt{N}(\mu^{S,N}-\mu), V^{N}=\sqrt{N}(\mu^{I,N}-\mu^{I}),W^{N}=\sqrt{N}(\mu^{R,N}-\mu^{R}))$ as $N\rightarrow \infty$. 
	\section{Preliminaries}
	\textbf{Notation:}  For any metric space $\mathbb{E},$
	\begin{itemize}
	 \item $\mathcal{M}_{F}(\mathbb{E})$ denotes the space of finite measures on $\mathbb{E}$. 
	\item For any  integer $k\ge0$, $C^{k}(\mathbb{E})$ (resp. $C^k_{c}(\mathbb{E})$) denotes the space of continuous and $k$ times continuously  differentiable real valued functions defined on $\mathbb{E},$ (resp. with compact support). For $k=0,$ we write $C(\mathbb{E})$ (resp .$C_{c}(\mathbb{E})$) instead of $C^{0}(\mathbb{E}).$ (resp. $C^{0}_{c}(\mathbb{E})$).  
		\item For any  integer $k\ge0$, $C^{k}_{b}(\mathbb{E})$  denotes the space  of real 	valued functions of class 
		$C^{k}$ on $\mathbb{E}$ with bounded derivatives up to order $k$ (order 0 included ). 
	\item $C_{0}(\mathbb{E})$ denotes the 
	space of continuous functions on $\mathbb{E}$ vanishing at infinity.
	\item For $\mu \in \mathcal{M}_{F}(\mathbb{E}) $ and $ \varphi \in C(\mathbb{E})$,  we denote the integral  $\int_{\mathbb{E}} \varphi(x)\mu(dx)$ by $(\mu,\varphi)$.  
	\item In  the following, the letter $C$ will denote a (constant) positive  real number which can change from line to line. 
	\item We equip  $\mathcal{M}_{F}(\mathbb{E})$   with the topology of weak convergence.
	\item Let $\mathbb{E}$ be a complete separable metric space, $\mathbb{C}(\mathbb{R}_{+},\mathbb{E})$ (resp. $\mathbb{D}(\mathbb{R}_{+},\mathbb{E}$) is a space of continuous (resp.  càdlàg) functions   from $\mathbb{R}_{+}$  to $\mathbb{E},$  equipped with the locally uniform (resp.  Skorokhod) topology. We refer the reader to section 12 of \cite{fc} for a presentation of the Skorokhod topology and its associated metric. 
		\end{itemize}
		\subsection{Weighted Spaces of functions \label{sec3}}
	For every nonnegative integer $m$ and $\sigma\in\mathbb{R}_{+}$, we consider  the space of all real valued functions $\varphi$ defined on $\mathbb{R}^{d},$ with partial derivative up to oder $m$ such that:
	\begin{center}
		$\lVert \varphi \lVert_{m,\sigma}=\Big( \sum\limits_{\lvert \gamma\lvert\leq m}\displaystyle\int_{\mathbb{R}^{d}}\frac{\lvert D^{\gamma}\varphi(x)\lvert^{2}}{1+\lvert x\lvert^{2\sigma}}dx\Big)<+\infty,$
	\end{center}
	where $\lvert . \lvert $ denotes the euclidian norm on $\mathbb{R}^{d}$, and  for $\gamma=(\gamma_{1},\gamma_{2}.....\gamma_{d})\in\mathbb{N}^{d}$, $\lvert \gamma \lvert =\sum\limits_{i=1}^{d}\gamma_{i}$ and $D^{\gamma}\varphi=(\partial^{\lvert \gamma\lvert}\varphi)/(\partial x_{1}^{\gamma_{1}}\partial x_{2}^{\gamma_{2}}.....\partial x_{d}^{\gamma_{d}}).\\\\$
	Let $W^{m,\sigma}_{0}(\mathbb{R}^{d})$ be the closure of the set of functions of class $C^{\infty}$ with compact support for this norm. $W^{m,\sigma}_{0}(\mathbb{R}^{d})$ is a Hilbert space for the norm $\lVert . \lVert_{m,\sigma}$.  For a nonnegative real number $s$ we extend the above space as follows.\\Let $\mathcal{J}^{s}$ be the potential operator defined on $\mathbb{R}^{d}$ by $(\mathcal{J}^{s}\varphi)=\mathcal{F}^{-1}[(1+\lvert.\lvert^{2})^{s/2}\widehat{\varphi}]$ (where  the Fourier transform $\widehat{\varphi}$  of $\varphi$ is well defined, $\mathcal{F}^{-1}$ denotes the inverse of the Fourier transform). \\$H^{s,\sigma}(\mathbb{R}^{d})$ denotes the space of functions  $\varphi$ which satisfy the following.
	\begin{center}
		$\lVert \varphi \lVert_{s,\sigma}=\lVert (\mathcal{J}^{s}\varphi)\lVert_{0,\sigma}<\infty.$ 
	\end{center}
	It is shown in \cite{lof} that $H^{m,\sigma}(\mathbb{R}^{d})=W^{m,\sigma}_{0}(\mathbb{R}^{d})$, for any nonnegative integer $m.$\\
	We denote by $H^{-s,\sigma}(\mathbb{R}^{d})$ the  dual space of $H^{s,\sigma}(\mathbb{R}^{d})$.\\\\Let $C^{m,\sigma}(\mathbb{R}^{d})$ be the space of functions $\varphi$ with continuous  partial derivatives up to oder $m$ and such that $\lim\limits_{\lvert x\lvert\longrightarrow\infty}\lvert D^{\gamma}\varphi(x)\lvert^{2}/1+\lvert x\lvert^{2\sigma}=0$ for all $\lvert\gamma\lvert\leq m.$ \\
	This space is normed with 
	\begin{center}
		$\lVert \varphi \lVert_{C^{m,\sigma}}=\displaystyle \sum\limits_{\lvert \gamma\lvert\leq m}\underset{x\in \mathbb{R}^{d}}{\sup}\frac{\lvert D^{\gamma}\varphi(x)\lvert}{1+\lvert x\lvert^{\sigma}}.$
	\end{center}
	Let $C^{-m,\sigma}(\mathbb{R}^{d})$ denotes the dual of $C^{m,\sigma}(\mathbb{R}^{d}).$\\ We have the following continuous  embeddings (see  \cite{ac} and \cite{mt}).\\
	\begin{align}
		C^{m+j,\sigma}&\hookrightarrow C^{m,\sigma+r}\qquad m\geq 0, \quad j\geq 0, \quad \sigma >0,\quad r\geq0.\\
		H^{s,\sigma}(\mathbb{R}^{d})&\hookrightarrow C^{\ell,\sigma}(\mathbb{R}^{d}), \qquad s> d/2+\ell, \quad \ell\geq 0, \quad\sigma> 0.\label{em1}\\
		C^{m,\sigma}(\mathbb{R}^{d})&\hookrightarrow 	W^{m,\sigma+\eta}_{0}(\mathbb{R}^{d}), \qquad \eta>d/2, \quad m\geq 0,\quad \sigma > 0 \label{em2}.
	\end{align}
	\begin{lem}{  (A special case of theorem 2.1 in \cite{kotel}) } \\
		Let $(H, \lVert . \lVert_{H})$ be a separable Hilbert space, $M$ be an $H-$valued locally square integrable càdlàg martingale and \emph{\textbf{T}}(t) a contraction semigroup operator of $\mathcal{L}(H).$ Then there exists a finite constant $C$ depending only on the Hilbert norm $\lVert . \lVert_{H}$ such that for all $T>0.$
		
		\begin{center}
			$\mathbb{E}\left( \underset{0\leq t\leq T}{\sup} \Big \lVert \displaystyle \int_{0}^{t} \emph{\textbf{T}}(t-r)dM_{r} \Big\lVert^{2}_{H}\right)\leq Ce^{4\sigma T}\mathbb{E}\Big(\lVert M_{T}\lVert^{2}_{H}\Big),$ 
		\end{center}
		
		where $\sigma$ is a real number such that $\lVert \emph{T}(t)\lVert_{\mathcal{L}} \leq e^{\sigma t}. \label{bbn}$ 
	\end{lem}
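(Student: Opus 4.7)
The plan is to regularize the semigroup by its Yosida approximation so that the stochastic convolution becomes the genuine mild (and in fact strong) solution of a Hilbert-space SDE, then to apply the Itô formula to $\|\cdot\|_H^2$, and finally to close the estimate with Doob/BDG and Gronwall before removing the regularization.

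First, by a standard localization argument I reduce to the case where $M$ is a genuine square-integrable martingale with $\mathbb{E}\|M_T\|_H^2<\infty$. Let $A$ be the infinitesimal generator of $\textbf{T}(t)$, let $R_\lambda=(\lambda I-A)^{-1}$ be its resolvent (defined for $\lambda$ large enough, using that $\|\textbf{T}(t)\|_{\mathcal{L}}\le e^{\sigma t}$), and let $A_\lambda=\lambda AR_\lambda$ be the Yosida approximation, which is bounded and generates a uniformly continuous semigroup $\textbf{T}_\lambda(t)=e^{tA_\lambda}$ with $\|\textbf{T}_\lambda(t)\|_{\mathcal{L}}\le e^{\sigma_\lambda t}$, $\sigma_\lambda\to\sigma$. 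Set $X_t^\lambda=\int_0^t\textbf{T}_\lambda(t-r)\,dM_r$. Because $A_\lambda$ is bounded, a direct computation shows that $X_t^\lambda$ is the strong solution of
\begin{equation*}
dX_t^\lambda = A_\lambda X_t^\lambda\,dt + dM_t,\qquad X_0^\lambda=0.
\end{equation*}

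I would then apply the Itô formula for $\|\cdot\|_H^2$ to a càdlàg Hilbert-valued semimartingale to get
\begin{equation*}
\|X_t^\lambda\|_H^2 = 2\int_0^t\langle X_{r-}^\lambda, A_\lambda X_{r-}^\lambda\rangle\,dr + 2\int_0^t\langle X_{r-}^\lambda, dM_r\rangle + [M]_t,
\end{equation*}
where $[M]_t$ is the scalar quadratic variation of $M$ in $H$. The dissipativity inequality $\langle x,A_\lambda x\rangle\le\sigma_\lambda\|x\|_H^2$, which is equivalent to the bound on $\|\textbf{T}_\lambda(t)\|_{\mathcal{L}}$, then gives
\begin{equation*}
\|X_t^\lambda\|_H^2 \le 2\sigma_\lambda\int_0^t\|X_r^\lambda\|_H^2\,dr + 2\int_0^t\langle X_{r-}^\lambda,dM_r\rangle + [M]_t.
\end{equation*}
Taking suprema and applying the Burkholder--Davis--Gundy inequality to the martingale term, followed by the elementary inequality $2ab\le \tfrac12 a^2+2b^2$ to absorb $\mathbb{E}\sup_{t\le T}\|X_t^\lambda\|_H^2$ into the left-hand side, yields
\begin{equation*}
\mathbb{E}\Bigl(\sup_{0\le t\le T}\|X_t^\lambda\|_H^2\Bigr) \le 4\sigma_\lambda\int_0^T\mathbb{E}\|X_r^\lambda\|_H^2\,dr + C\,\mathbb{E}[M]_T.
\end{equation*}
Gronwall's lemma (applied to the pointwise expectation $\mathbb{E}\|X_t^\lambda\|_H^2$, which satisfies the same inequality without the sup and with the factor $2\sigma_\lambda$) produces the exponential factor, and one concludes
\begin{equation*}
\mathbb{E}\Bigl(\sup_{0\le t\le T}\|X_t^\lambda\|_H^2\Bigr) \le C e^{4\sigma_\lambda T}\,\mathbb{E}\|M_T\|_H^2,
\end{equation*}
using that $\mathbb{E}[M]_T=\mathbb{E}\|M_T\|_H^2$ for the Hilbert martingale.

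Finally, I pass to the limit $\lambda\to\infty$: by the Trotter--Kato theorem $\textbf{T}_\lambda(t)x\to\textbf{T}(t)x$ for every $x\in H$, uniformly in $t$ on compact sets, and a dominated convergence argument (controlled by the uniform bound $\|\textbf{T}_\lambda(t-r)\|_{\mathcal{L}}\le e^{\sigma_\lambda(t-r)}$ applied inside the stochastic integral) shows that $X_t^\lambda\to X_t:=\int_0^t\textbf{T}(t-r)\,dM_r$ in $L^2(\Omega;H)$, uniformly in $t\in[0,T]$. The estimate above is stable under this limit since $\sigma_\lambda\to\sigma$, which gives the claim with constant $C$ depending only on the Hilbert norm (through BDG). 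The main obstacle in this plan is step three, namely carrying out the Itô formula for the càdlàg Hilbert-valued semimartingale $X^\lambda$ (jumps of $M$ create both a drift contribution and a quadratic-variation contribution that have to be tracked carefully), together with justifying the $L^2$-continuity in $\lambda$ of the stochastic convolution when $M$ has jumps; these are exactly the points at which one has to invoke the Kotelenez framework rather than a soft convergence argument.
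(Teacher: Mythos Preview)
The paper does not prove this lemma at all; it is stated without proof as a special case of Theorem~2.1 in Kotelenez \cite{kotel}, and your sketch (Yosida regularization, It\^o's formula for $\|\cdot\|_H^2$ on the approximating strong solution, dissipativity, BDG plus absorption, Gronwall, then $\lambda\to\infty$) is precisely the Kotelenez argument. Your It\^o expansion is correct even with jumps because $\|\cdot\|_H^2$ is quadratic, so the jump correction collapses exactly to $[M]_t$; the only genuinely delicate point, as you note, is the $L^2$-stability of the stochastic convolution in $\lambda$, which is handled in \cite{kotel} by the same uniform semigroup bound you invoke.
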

 \begin{dfn} (White noise)\label{white}\\ White noise on $\mathbb{R}^{d}$ is a random distribution $\mathcal{W}$ defined on a probability space ($\Omega$, $\mathcal{F}$, $\mathbb{P}$) which is  such that the mapping $\varphi\mapsto (\mathcal{W},\varphi)$ is linear and continuous from $L^{2}(\mathbb{R}^{d})$ into $L^{2}(\Omega) $ and $\{(\mathcal{W},\varphi),\varphi\in L^{2}(\mathbb{R}^{d})\}$ is  a centered Gaussian generalized process satisfying:
	\vspace*{-0.12cm}
	\begin{center}
		$\mathbb{E}((\mathcal{W}, \varphi)(\mathcal{W}, \phi))=(\varphi, \phi)_{L^{2}}$, for any $\varphi, \phi \in L^{2}(\mathbb{R}^{d}).$
	\end{center}
	Where  $(.,.)_{L^{2}}$ denotes a scalar product on $ L^{2}(\mathbb{R}^{d}).$\\ Space-time white noise  is a white noise on $\mathbb{R}_{+}\times \mathbb{R}^{d}.$ 
	
\end{dfn}
	\section{Law of Large Numbers }
	The aim of this section is to study the convergence  of $
	(\mu^{S,N},\mu^{I,N},\mu^{R,N}),$ as $ N\rightarrow \infty $ under  Assumption (H1) below.  \vspace*{0.2cm}\\
	To this end   we are going to:
	\begin{itemize}
		
	\item  Write the system of evolution equations of $( \mu^{S,N},\mu^{I,N},\mu^{R,N}).$
	\item Study the tightness of $  (\mu^{S,N},\mu^{I,N},\mu^{R,N})_{ N \geq 1}$ in Skorokhod's space $[\mathbb{D}(\mathbb{R_{+}},(\mathcal{M}_{F}(\mathbb{R}^{d}),v))]^{3},$ where $(\mathcal{M}_{F}(\mathbb{R}^{d}),v)$ is the space of finite measure on $\mathbb{R}^{d},$ equipped with the vague topology. 
	\item Find the system of evolution equations satisfies by the limit in law  $(\mu^{S},\mu^{I },\mu^{R})$ of a  convergent  subsequence of  $ (\mu^{S,N} \mu^{I,N},\mu^{R,N})_{N\geq 1}$
	\item Show that the system of  PDEs verified by   $(\mu^{S}, \mu^{I},\mu^{R})$  admits a unique   solution in  \\  $\Lambda=\{(\mu^{1},\mu^{2},\mu^{3})/0\leq (\mu^{i},\mathbb{1}) \leq 1, i\in \{1,2,3\}\}.$ .
\end{itemize}
The following is assumed to hold throughout this section. \\\\
	\textbf{{\color{blue}{Assumption (H1)}}}
	\begin{itemize}
		\item The law of $X^{1}_{0}$ is absolutly continuous with respect to the Lebesgue measure and its  density  is bounded.
	\item 	 $K\in C_{c}(\mathbb{R}^{d}\times \mathbb{R}^{d}).$ 
		\item  For any $A\in \{S,I,R\},$ and $x\in \mathbb{R}^{d},$ the matrix $(\theta\theta^{t})(A,x)$ is invertible.
	\end{itemize}.  
	\subsection{ System of evolution equations  of  $ (\mu^{S,N},\mu^{I,N}, \mu^{R,N}) $  \label{hm}}
	In this subsection we shall establish the following result.
	
	\begin{prp}\label{bnv} 
		For any $\varphi \in C^{2}_{c}(\mathbb{R}^{d})$, $\{(\mu^{S,N}_{t},\varphi),(\mu^{I,N}_{t},\varphi),(\mu^{R,N}_{t},\varphi)\}$ satisfies,
		\begin{align}
			(\mu_{t}^{S,N},\varphi)&=(\mu_{0}^{S,N},\varphi)+\int_{0}^{t}(\mu_{r}^{S,N},\mathcal{Q}_{S}\varphi) dr-\int_{0}^{t}\left(\mu_{r}^{S,N}, \varphi (\mu_{r}^{I,N}, K)\right) dr + M_{t}^{N,\varphi},
			\label{e1}\\
			(\mu_{t}^{I,N}, \varphi)&= \displaystyle (\mu_{0}^{I,N},\varphi) + \int_{0}^{t}(\mu_{r}^{I,N},\mathcal{Q}_{I}\varphi)dr + \int_{0}^{t}\left(\mu_{r}^{S,N}, \varphi (\mu_{r}^{I,N}, K)\right) dr-\alpha \int_{0}^{t}(\mu_{r}^{I,N},\varphi)dr+L^{N,\varphi}_{t},\label{e2} \\
			(\mu_{t}^{R,N}, \varphi)&=\displaystyle   
			\int_{0}^{t}(\mu_{r}^{R,N},\mathcal{Q}_{R}\varphi)dr+\alpha \int_{0}^{t}(\mu_{r}^{I,N},\varphi)dr  +Y_{t}^{N,\varphi} \label{e3}.   
		\end{align}
		Where \begin{center}
			
			$ \displaystyle\left(\mu_{r}^{S,N}, \varphi (\mu_{r}^{I,N}, K)\right) =\int_{\mathbb{R}^{d}}\varphi(x)\int_{\mathbb{R}^{d}}K(x,y)\mu_{r}^{I,N}(dy)\mu_{r}^{S,N}(dx); $
		\end{center}
	\begin{center}  
		$\mathcal{Q}_{A}\varphi(x)=\displaystyle m(A,x).\bigtriangledown \varphi(x) + \frac{1}{2}\displaystyle \displaystyle\sum\limits_{1\leq \ell,u\leq d}(\theta \hspace*{0.1cm} \theta^{t})_{\ell,u}(S,x)\frac{\partial^{2}\varphi}{\partial  x_{\ell}x_{u}}(x),$
		\end{center}
		and with $\{M^i\}_{1\le i\le N}$, $\{Q^i\}_{1\le i\le N}$ two collections of  standard (i.e. with mean the Lebesgue measure) Poisson random measures (abbreviated below PRM) on $\mathbb{R}^2_+$, which are such that $B^1,M^1,Q^1,\ldots,B^N,M^N,Q^N$ are mutually independent, and denoting by $\overline{M}^i$ and $\overline{Q}^i$ the compensated PRMs associated to $M^i$ and $Q^i$, we have
		\begin{align*}
			M_{t}^{N,\varphi}&= - \displaystyle\frac{1}{N} \sum_{i=1}^{N} \int_{0}^{t} \int_{0}^{\infty}1_{\{E_{r^{-}}^{i}=S\}}\varphi(X_{r}^{i})1_{\{u\leq \frac{1}{N} \sum_{j=1}^{N}K(X_{r}^{i},X_{r}^{j}) 1_{\{E_{r}^{j}=I\}}\}} \overline{M}^{i}(dr,du) \\&+\frac{1}{N} \sum_{i=1}^{N} \int_{0}^{t}1_{\{E_{r}^{i}=S\}}\bigtriangledown\varphi(X_{r}^{i})\theta(S,X_{r}^{i})dB_{r}^{i}.\\
			L_{t}^{N,\varphi} &= \displaystyle\frac{1}{N} \sum_{i=1}^{N} \int_{0}^{t} \int_{0}^{\infty}1_{\{E_{r^{-}}^{i}=S\}}\varphi(X_{r}^{i})1_{\{u\leq \frac{1}{N} \sum_{j=1}^{N}K(X_{r}^{i},X_{r}^{j}) 1_{\{E_{r}^{j}=I\}}\}} \overline{M}^{i}(dr,du))  \\ &+\frac{1}{N} \sum_{i=1}^{N} \int_{0}^{t}1_{\{E_{r}^{i}=I\}}\bigtriangledown\varphi(X_{r}^{i})\theta(I,X_{r}^{i})dB_{r}^{i} -\displaystyle \frac{1}{N} \sum_{i=1}^{N} \int_{0}^{t} \int_{0}^{\alpha}1_{\{E_{r}^{i}=I\}}\varphi(X_{r}^{i})\overline{Q}^{i}(dr,du). \\
			Y_{t}^{N,\varphi} &= \displaystyle\frac{1}{N} \sum_{i=1}^{N} \int_{0}^{t}1_{\{E_{r}^{i}=R\}}\bigtriangledown\varphi(X_{r}^{i})\theta(R,X_{r}^{i})dB_{r}^{i}  +\displaystyle \frac{1}{N} \sum_{i=1}^{N} \int_{0}^{t} \int_{0}^{\alpha}1_{\{E_{r^{-}}^{i}=I\}}\varphi(X_{r}^{i})\overline{Q}^{i}(dr,du).
		\end{align*}
	\end{prp}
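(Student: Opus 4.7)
The strategy is to derive each identity by writing an SDE for $t\mapsto\varphi(X_t^i)1_{\{E_t^i=A\}}$, for a fixed individual $i$ and state $A\in\{S,I,R\}$, then averaging over $i$. First I would make the jump mechanism explicit via a thinning construction: for each $i$, introduce independent PRMs $M^i$ and $Q^i$ on $\mathbb{R}_+^2$ with Lebesgue intensity, mutually independent across $i$ and independent of the $B^j$; declare that the transition $S\to I$ of individual $i$ occurs at those atoms of $M^i$ that fall in $\{u\le\lambda_r^i\}$, with $\lambda_r^i:=\tfrac{1}{N}\sum_{j}K(X_r^i,X_r^j)1_{\{E_r^j=I\}}$, and that $I\to R$ occurs at atoms of $Q^i$ in $\{u\le\alpha\}$. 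This reproduces exactly the prescribed infection rate and the constant recovery rate $\alpha$, and gives a pathwise construction of $(X^i,E^i)_{1\le i\le N}$.

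Next, for $\varphi\in C^2_c(\mathbb{R}^d)$ and each $i$, It\^o's formula applied to (\ref{uith}) gives
\[
\varphi(X_t^i)=\varphi(X_0^i)+\int_0^t\mathcal{Q}_{E_r^i}\varphi(X_r^i)\,dr+\int_0^t\nabla\varphi(X_r^i)\theta(E_r^i,X_r^i)\,dB_r^i.
\]
Since $t\mapsto 1_{\{E_t^i=A\}}$ is pure jump and $t\mapsto\varphi(X_t^i)$ is continuous, they have no common jumps, so integration by parts yields
\[
\varphi(X_t^i)1_{\{E_t^i=A\}}=\varphi(X_0^i)1_{\{E_0^i=A\}}+\int_0^t 1_{\{E_r^i=A\}}\,d\varphi(X_r^i)+\int_0^t\varphi(X_r^i)\,d\bigl(1_{\{E_r^i=A\}}\bigr).
\]
The last differential is read off the PRM construction: for $A=S$ only jumps $-1$ at atoms of $M^i\cap\{u\le\lambda_r^i\}$; for $A=I$, jumps $+1$ at those same atoms and $-1$ at atoms of $Q^i\cap\{u\le\alpha\}$; for $A=R$, only jumps $+1$ at the latter atoms. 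Replacing each raw PRM by its compensator plus $\overline{M}^i$ or $\overline{Q}^i$ separates, for each $i$ and each $A$, a predictable drift (equal, up to sign, to $\varphi(X_r^i)$ times an indicator times the corresponding rate) from a local martingale.

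Finally, summing over $i=1,\dots,N$ and dividing by $N$: the continuous drift collapses to $\int_0^t(\mu_r^{A,N},\mathcal{Q}_A\varphi)\,dr$; the infection drift, via Fubini,
\[
\frac{1}{N^2}\sum_{i,j}1_{\{E_r^i=S\}}1_{\{E_r^j=I\}}\varphi(X_r^i)K(X_r^i,X_r^j)=\bigl(\mu_r^{S,N},\varphi(\mu_r^{I,N},K)\bigr),
\]
appears with opposite signs in (\ref{e1}) and (\ref{e2}); the recovery drift collapses to $\alpha(\mu_r^{I,N},\varphi)$ and appears with opposite signs in (\ref{e2}) and (\ref{e3}). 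The stochastic integrals against $\{B^i,\overline{M}^i,\overline{Q}^i\}$ regroup exactly into the processes $M^{N,\varphi}$, $L^{N,\varphi}$, $Y^{N,\varphi}$.

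The main obstacle is purely bookkeeping: one must verify that the atom of $M^i$ which removes the mass $\delta_{X_r^i}$ from $\mu^{S,N}$ is the very same atom that deposits it on $\mu^{I,N}$, so that the compensated martingale of (\ref{e1}) and the first martingale piece of $L^{N,\varphi}$ in (\ref{e2}) carry identical integrands with opposite signs (and similarly for $Q^i$ between (\ref{e2}) and (\ref{e3})). Boundedness of $K$ and compactness of $\operatorname{supp}\varphi$ make all the stochastic integrals well-defined, so no further localisation of $\varphi$ is required.
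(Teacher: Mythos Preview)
Your proposal is correct and follows essentially the same route as the paper: apply It\^o's formula to $\varphi(X_t^i)$, represent $1_{\{E_t^i=A\}}$ via the thinned PRMs, take the product (your integration-by-parts remark is exactly what the paper does implicitly since the continuous and pure-jump factors share no jumps), compensate, then average over $i$. The only cosmetic difference is that the paper writes out the three cases $A=S,I,R$ sequentially rather than treating them in parallel as you do.
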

	
	\begin{proof}
		Let us first recall that for any $ t\geq 0$, 
		\[ X^{i}_{t}=X^{i}_{0}+\displaystyle\int_{0}^{t}m(E^{i}_{r},X^{i}_{r})dr+\int_{0}^{t} \theta(E^{i}_{r},X^{i}_{t})dB_{r}^{i}.\]
		Let $\varphi\in C_{c}^{2}(\mathbb{R}^{d}),$  according to It\^o's formula we have,
		\begin{align}
			\varphi(X_{t}^{i})&=\varphi(X^{i})+\displaystyle\int_{0}^{t}\bigtriangledown\varphi(X_{r}^{i}).m(E_{r}^{i},X_{r}^{i})dr+\frac{1}{2}\int_{0}^{t}\sum\limits_{1\leq \ell,u\leq d}(\theta \hspace*{0.1cm} \theta^{t})_{\ell,u}(E_{r}^{i},X_{r}^{i})\frac{\partial^{2}\varphi}{\partial  x_{\ell}x_{u}}(X_{r}^{i})dr  \nonumber\\&+\displaystyle\int_{0}^{t}\bigtriangledown\varphi(X_{r}^{i})\theta(E_{r}^{i},X_{r}^{i})dB_{r}^{i}.\label{l3}
		\end{align}
		On the other hand
		\begin{align}
			1_{\{E_{t}^{i}=S\}}&=1_{\{E_{0}^{i}=S\}}-\displaystyle\int_{0}^{t}\int_{0}^{\infty}1_{\{u\leq\frac{1}{N}\sum_{j=1}^{N} K(X_{r}^{i},X_{r}^{j})1_{\{E_{r}^{j}=I\}}\} } 1_{\{E_{r^{-}}^{i}=S\}}M^{i}(du,dr).\label{l2}
		\end{align}
		Hence using  (\ref{l3}) and (\ref{l2}), we have\\ $1_{\{E_{t}^{i}=S\}}\varphi({X_{t}^{i}})=\displaystyle 1_{\{E_{0}^{i}=S\}}\varphi({X^{i}})+\displaystyle\int_{0}^{t}1_{\{E_{r}^{i}=S\}}\bigtriangledown\varphi(X_{r}^{i}).m(S,X_{r}^{i})dr\\\hspace*{2cm}+\frac{1}{2}\int_{0}^{t}1_{\{E_{r}^{i}=S\}}\sum\limits_{1\leq \ell,u\leq d}(\theta \hspace*{0.1cm} \theta^{t})_{\ell,u}(S,X_{r}^{i})\frac{\partial^{2}\varphi}{\partial  x_{\ell}x_{u}}(X_{r}^{i})dr\\\hspace*{2cm}+\int_{0}^{t}1_{\{E_{r}^{i}=S\}}\bigtriangledown\varphi({X_{r}^{i}})\theta(S,X_{r}^{i})dB_{r}^{i}\\\hspace*{2cm}-\int_{0}^{t}\int_{0}^{\infty}1_{\{u\leq\frac{1}{N}\sum_{j=1}^{N}  K(X_{r}^{i},X_{r}^{j})1_{\{E_{r}^{j}=I\}}\} } 1_{\{E_{r^{-}}^{i}=S\}}\varphi({X_{r}^{i}})M^{i}(du,dr).$\vspace*{0.15cm}\\
		Taking the sum over $i$ and multiplying by $\frac{1}{N},$ we obtain \vspace*{0.2cm}\\$\displaystyle\frac{1}{N}\sum_{i=1}^{N}1_{\{E_{t}^{i}=S\}}\varphi({X_{t}^{i}})=\displaystyle \displaystyle\frac{1}{N}\sum_{i=1}^{N}1_{\{E_{0}^{i}=S\}}\varphi({X^{i}})\\\hspace*{3cm}+\displaystyle\displaystyle\frac{1}{N}\sum_{i=1}^{N}\int_{0}^{t}1_{\{E_{r}^{i}=S\}}\bigtriangledown\varphi(X_{r}^{i}).m(S,X_{r}^{i})dr\\\hspace*{3cm}+ \displaystyle\frac{1}{2N}\sum_{i=1}^{N}\int_{0}^{t}1_{\{E_{r}^{i}=S\}}\sum\limits_{1\leq \ell,u\leq2}(\theta \hspace*{0.1cm} \theta^{t})_{\ell,u}(S,X_{r}^{i})\frac{\partial^{2}\varphi}{\partial  x_{\ell}x_{u}}(X_{r}^{i})dr\\\hspace*{3cm}+\displaystyle\frac{1}{N}\sum_{i=1}^{N}\int_{0}^{t}1_{\{E_{r}^{i}=S\}}\bigtriangledown\varphi({X_{r}^{i}})\theta(S,X_{r}^{i})dB_{r}^{i}\\\hspace*{3cm}-\frac{1}{N}\sum_{i=1}^{N}\int_{0}^{t}\int_{0}^{\infty}1_{\{u\leq \frac{1}{N}\sum_{j=1}^{N}K(X_{r}^{i},X_{r}^{j})1_{\{E_{r}^{j}=I\} } \}} 1_{\{E_{r^{-}}^{i}=S\}}\varphi({X_{r}^{i}})\overline{M}^{i}(du,dr)\\\hspace*{3cm}-\frac{1}{N}\sum_{i=1}^{N}\int_{0}^{t} \frac{1}{N}\sum_{j=1}^{N}K(X_{r}^{i},X_{r}^{j})1_{\{E_{r}^{j}=I\}} 1_{\{E_{r}^{i}=S\}}\varphi({X_{r}^{i}})dr,$\\from which (\ref{e1}) follows. Similarly, with again $\varphi \in C^{2}_{c}(\mathbb{R}^{d})$, $\{1_{\{E_{t}^{i}=I\}}\varphi(X_{t}^{i}),t\geq0 \} $ is a jump process satisfying, \\
		$1_{\{E_{t}^{i}=I\}}\varphi({X_{t}^{i}})=\displaystyle 1_{\{E_{0}^{i}=I\}}\varphi({X^{i}})+\displaystyle\int_{0}^{t}1_{\{E_{r}^{i}=I\}}\bigtriangledown\varphi(X_{r}^{i}).m(I,X_{r}^{i})dr\\\hspace*{2.5cm} +\frac{1}{2}\int_{0}^{t}1_{\{E_{r}^{i}=I\}}\sum\limits_{1\leq \ell,u\leq2}(\theta \hspace*{0.1cm} \theta^{t})_{\ell,u}(I,X_{r}^{i})\frac{\partial^{2}\varphi}{\partial  x_{\ell}x_{u}}(X_{r}^{i})dr\vspace{0.2cm}\\\hspace*{2.5cm}+\int_{0}^{t}1_{\{E_{r}^{i}=I\}}\bigtriangledown\varphi({X_{r}^{i}})\theta(I,X_{r}^{i})dB_{r}^{i}\\\hspace*{2.5cm}+
		\int_{0}^{t}\int_{0}^{\infty}1_{\{u\leq\frac{1}{N}\sum_{j=1}^{N} K(X_{r}^{i},X_{r}^{j})1_{\{E_{r}^{j}=I\}}\} } 1_{\{E_{r^{-}}^{i}=S\}}\varphi({X_{r}^{i}})M^{i}(du,dr)\\\hspace*{2.5cm}-
		\int_{0}^{t}\int_{0}^{\alpha} 1_{\{E_{r^{-}}^{i}=I\}}\varphi({X_{r}^{i}})Q^{i}(du,dr).$\vspace*{0.13cm}\\ 
		Summing over $i$  and multiplying by $\frac{1}{N},$ we obtain \vspace*{0.1cm}\\
		\\$\displaystyle\frac{1}{N}\sum_{i=1}^{N}1_{\{E_{t}^{i}=I\}}\varphi({X_{t}^{i}})=\displaystyle \displaystyle\frac{1}{N}\sum_{i=1}^{N}1_{\{E_{0}^{i}=I\}}\varphi({X^{i}})\\\hspace*{3cm}+\displaystyle\displaystyle\frac{1}{N}\sum_{i=1}^{N}\int_{0}^{t}1_{\{E_{r}^{i}=I\}}\bigtriangledown\varphi(X_{r}^{i}).m(I,X_{r}^{i})dr\\\hspace*{3cm}+ \displaystyle\frac{1}{2N}\sum_{i=1}^{N}\int_{0}^{t}1_{\{E_{r}^{i}=I\}}\sum\limits_{1\leq \ell,u\leq d}(\theta \hspace*{0.1cm} \theta^{t})_{\ell,u}(I,X_{r}^{i})\frac{\partial^{2}\varphi}{\partial  x_{\ell}x_{u}}(X_{r}^{i})dr\\\hspace*{3cm}+\displaystyle\frac{1}{N}\sum_{i=1}^{N}\int_{0}^{t}1_{\{E_{r}^{i}=I\}}\bigtriangledown\varphi({X_{r}^{i}})\theta(I,X_{r}^{i})dB_{r}^{i}\\\hspace*{3cm}-\frac{1}{N}\sum_{i=1}^{N}\int_{0}^{t}\int_{0}^{\infty}1_{\{u\leq \frac{1}{N}\sum_{j=1}^{N}K(X_{r}^{i},X_{r}^{j})1_{\{E_{r}^{j}=I\} } \}} 1_{\{E_{r^{-}}^{i}=S\}}\varphi({X_{r}^{i}})\overline{M}^{i}(du,dr)\\\hspace*{3cm}-\frac{1}{N}\sum_{i=1}^{N}\int_{0}^{t} \frac{1}{N}\sum_{j=1}^{N}K(X_{r}^{i},X_{r}^{j})1_{\{E_{r}^{j}=I\}} 1_{\{E_{r}^{i}=S\}}\varphi({X_{r}^{i}})dr\\\hspace*{3cm}- \frac{1}{N}\sum_{i=1}^{N}\int_{0}^{t}\int_{0}^{\alpha} 1_{\{E_{r^{-}}^{i}=I\}}\varphi({X_{r}^{i}})\overline{Q}^{i}(du,dr)-  \frac{\alpha}{N}\sum_{i=1}^{N}\int_{0}^{t} 1_{\{E_{r}^{i}=I\}}\varphi({X_{r}^{i}})dr,$\\from which $(\ref{e2})$ follows. Similarly, with once again $\varphi \in C_{c}^{2}(\mathbb{R}^{d})$,  $\{1_{\{E_{t}^{i}=R\}}\varphi(X_{t}^{i}),t\geq0 \} $ is a jump processes satisfying, \\
		$1_{\{E_{t}^{i}=R\}}\varphi({X_{t}^{i}})=\displaystyle \int_{0}^{t}1_{\{E_{r}^{i}=R\}}\bigtriangledown\varphi(X_{r}^{i}).m(R,X_{r}^{i})dr \vspace*{0.15cm}\\\hspace*{2.3cm}+\int_{0}^{t}1_{\{E_{r}^{i}=R\}}\sum\limits_{1\leq \ell,u\leq d}(\theta \hspace*{0.1cm} \theta^{t})_{\ell,u}(R,X_{r}^{i})\frac{\partial^{2}\varphi}{\partial  x_{\ell}x_{u}}(X_{r}^{i})dr\vspace*{0.15cm}\\\hspace*{2.3cm}+ \int_{0}^{t}1_{\{E_{r}^{i}=R\}}\bigtriangledown\varphi({X_{r}^{i}})\theta(R,X_{r}^{i})dB_{r}^{i}+
		\int_{0}^{t}\int_{0}^{\alpha} 1_{\{E_{r^{-}}^{i}=I\}}\varphi({X_{r}^{i}})Q^{i}(du,dr).$\\ 
		Summing over $i$  and multiplying by $\frac{1}{N}$ we obtain, \vspace*{0.2cm}\\
		$\displaystyle\frac{1}{N}\sum_{i=1}^{N}1_{\{E_{t}^{i}=R\}}\varphi({X_{t}^{i}})=
		\displaystyle \frac{1}{N}\sum_{i=1}^{N}\int_{0}^{t}1_{\{E_{r}^{i}=R\}}\bigtriangledown\varphi(E_{r}^{i},X_{r}^{i}).m(R,X_{r}^{i})dr \\\hspace*{4cm}+\frac{1}{2N}\sum_{i=1}^{N}\int_{0}^{t}1_{\{E_{r}^{i}=R\}}\sum\limits_{1\leq \ell,u\leq d}(\theta \hspace*{0.1cm} \theta^{t})_{\ell,u}(R,X_{r}^{i})\frac{\partial^{2}\varphi}{\partial  x_{\ell}x_{u}}(X_{r}^{i})dr\vspace*{0.15cm}\\\hspace*{4cm}+\frac{1}{N}\sum_{i=1}^{N} \int_{0}^{t}1_{\{E_{r}^{i}=R\}}\bigtriangledown\varphi({X_{r}^{i}})\theta(R,X_{r}^{i})dB_{r}^{i}\\\hspace*{4cm}+ \frac{1}{N}\sum_{i=1}^{N}\int_{0}^{t}\int_{0}^{\alpha} 1_{\{E_{r^{-}}^{i}=I\}}\varphi({X_{r}^{i}})\overline{Q}^{i}(du,dr)+  \frac{\alpha}{N}\sum_{i=1}^{N}\int_{0}^{t} 1_{\{E_{r}^{i}=I\}}\varphi({X_{r}^{i}})dr,$\\from which $(\ref{e3})$ follows.
	\end{proof}
	\subsection{Tightness and Convergence of   $  (\mu^{S,N},\mu^{I,N},\mu^{R,N})$  in \small{ $[\mathbb{D}(\mathbb{R_{+}},\mathcal{M}_{F}(\mathbb{R}^{d}))]^{3}$}}
	Recall that  we equip $\mathcal{M}_{F}(\mathbb{R}^{d})$ with the topology of weak convergence and the Skorokhod space of c\'adl\'ag function from $\mathbb{R}_{+}$ into $\mathcal{M}_{F}(\mathbb{R}^{d})$ with the Skorokhod topology (we refer to page 63 of \cite{toc} for an explicit definition).
	\begin{rmq}
		For  $ A \in \{S,I,R\},$ we have $  (\mu_{t}^{A,N},1_{\mathbb{R}^{d}})=\frac{1}{N}\sum_{i=1}^{N}1_{\{E_{t}^{i}=A\}}\leq 1, $\\
		thus 
		for any  $ \varphi \in C_{c}(\mathbb{R}^{d})$,  $  A \in \{S,I,R\}$,                       
		$ \lvert ( \mu_{t}^{A,N},\varphi)\lvert\leq\lVert \varphi \lVert_{\infty}.$
	\end{rmq}
	We can now establish the wished tightness.
	\begin{prp}
		The sequences  $ (\mu^{S,N})_{N\geq 1}$;  $(\mu^{I,N})_{N \geq 1}$ and  $(\mu^{R,N})_{N \geq 1}$ are   tight in  \\ $  \mathbb{D}(\mathbb{R_{+}},(\mathcal{M}_{F}(\mathbb{R}^{d}),v)),$ where $\mathcal{M}_{F}(\mathbb{R}^{d})$  is equipped with the  topology of vague convergence. \label{ll1}
	\end{prp}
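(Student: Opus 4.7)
The plan is to use the standard Roelly-type criterion (see e.g.\ Roelly, or Meleard--M\'el\'eard) reducing tightness in $\mathbb{D}(\mathbb{R}_+,(\mathcal{M}_F(\mathbb{R}^d),v))$ to tightness, for each test function $\varphi$ in a dense subset of $C_c(\mathbb{R}^d)$, of the real-valued processes $((\mu^{A,N}_t,\varphi))_{t\ge 0}$ in $\mathbb{D}(\mathbb{R}_+,\mathbb{R})$. Since $C_c^2(\mathbb{R}^d)$ is dense in $C_c(\mathbb{R}^d)$, it will suffice to consider $\varphi \in C_c^2(\mathbb{R}^d)$. For each such $\varphi$ the evolution equations \eqref{e1}, \eqref{e2}, \eqref{e3} provide a semimartingale decomposition, so I will apply Aldous' criterion: verify (i) compact containment (uniform boundedness of the one-dimensional marginals) and (ii) a stopping-time small-increment condition.

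Compact containment is immediate from the remark just stated: $\lvert(\mu^{A,N}_t,\varphi)\rvert\le \lVert \varphi\lVert_\infty$ uniformly in $N$ and $t$, so the marginals live in a fixed compact of $\mathbb{R}$. For the Aldous condition, I will split the increment $(\mu^{A,N}_{\tau'},\varphi)-(\mu^{A,N}_\tau,\varphi)$ (for stopping times $\tau\le\tau'\le\tau+\delta$) into the finite variation (drift) part and the martingale part using the decompositions of Proposition \ref{bnv}. For the drift: each integrand $(\mu^{A,N}_r,\mathcal{Q}_A\varphi)$, $(\mu^{S,N}_r,\varphi(\mu^{I,N}_r,K))$ and $\alpha(\mu^{I,N}_r,\varphi)$ is uniformly bounded by a constant depending only on $\varphi$, on the bounds of $m,\theta$ (via $\mathcal{Q}_A\varphi$, which has compact support because $\varphi$ does and $m,\theta$ are bounded), on $\alpha$, and on $\lVert K\rVert_\infty$ (recall $K\in C_c$ by (H1)). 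Consequently the drift increment on $[\tau,\tau']$ is bounded deterministically by $C\delta$, which controls its probability tail via Markov.

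For the martingale parts $M^{N,\varphi}$, $L^{N,\varphi}$, $Y^{N,\varphi}$, I will compute the predictable quadratic variations. Each Brownian term contributes a quadratic variation of the form $\frac{1}{N^2}\sum_i\int_\tau^{\tau'}\lvert\nabla\varphi(X^i_r)\theta(A,X^i_r)\rvert^2\,dr \le \frac{C(\varphi,\theta)}{N}\delta$, and each compensated Poisson term contributes, using the bounds $\frac{1}{N}\sum_j K(\cdot,X^j_r)\le \lVert K\rVert_\infty$ and the bound $\alpha$ on the recovery rate, a quadratic variation bounded by $\frac{C(\varphi,K,\alpha)}{N}\delta$. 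By Doob's $L^2$ inequality (applied conditionally on $\mathcal F_\tau$) the probability that the martingale increment exceeds $\eta$ is bounded by $C\delta/(N\eta^2)$. Letting first $N$ be arbitrary and then choosing $\delta$ small makes both drift and martingale contributions arbitrarily small uniformly in $N$, which gives the Aldous condition.

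The only delicate point I anticipate is the non-local infection term $(\mu^{S,N}_r,\varphi(\mu^{I,N}_r,K))$ and the corresponding compensated Poisson martingale: one must observe that, although the infection rate of individual $i$ involves a sum over all $j$, the mass swapping between $S$ and $I$ per unit time is uniformly bounded because $K$ is bounded and the total mass $(\mu^{I,N}_r,\mathbb{1})$ is at most $1$. This yields the same $O(1/N)$ bound on the martingale bracket as for the recovery term, so the argument goes through uniformly. Applying the one-dimensional tightness to each $\varphi\in C_c^2(\mathbb{R}^d)$ and invoking Roelly's criterion on each component of the triple then gives the claimed tightness in $[\mathbb{D}(\mathbb{R}_+,(\mathcal{M}_F(\mathbb{R}^d),v))]^3$.
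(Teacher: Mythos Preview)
Your proposal is correct and follows essentially the same approach as the paper: both reduce to Roelly's criterion and then verify tightness of the real-valued semimartingales $((\mu^{A,N}_t,\varphi))_{t\ge0}$ by uniformly bounding the drift integrand and the density of the predictable quadratic variation. The only cosmetic difference is that the paper invokes a ready-made semimartingale tightness criterion (Proposition~37 in \cite{wc}) in place of your direct Aldous argument, and uses $C_c^\infty$ rather than $C_c^2$ as the dense class; the underlying estimates are identical.
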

	
	\begin{proof}
		Let us  prove that  $ (\mu^{S,N})_{N \geq 1} $ is tight in $\mathbb{D}(\mathbb{R_{+}},(\mathcal{M}_{F}(\mathbb{R}^{d} ),v)),$ where $\mathcal{M}_{F}(\mathbb{R}^{d}) $ is equipped with the vague topology. 	We refer to Theorem 2.2  of Roelly \cite{uc}. Let  $\Xi$ be a dense subset of $C_{0}(\mathbb{R}^{d}),$ a sufficient condition for  $ (\mu^{S,N})_{N \geq 1 } $ to be tight in $\mathbb{D}(\mathbb{R_{+}},(\mathcal{M}_{F}(\mathbb{R}^{d}),v))$  is   that: \[\textrm{for any} \quad\varphi \in \Xi, \quad  \{(\mu^{S,N}_{t},\varphi), \quad t\geq0,\quad N \geq 1 \} \textrm{ is tight in }  \mathbb{D}(\mathbb{R_{+}} ,\mathbb{R}). \] We choose $\Xi=C^{\infty}_{c}(\mathbb{R}^{d})$ (= the space of infinitely differentiable functions with compact support).
		Let $\varphi \in C^{\infty}_{c}(\mathbb{R}^{d}) $,  we have \vspace*{0.2cm}\\ $ \displaystyle(\mu_{t}^{S,N},\varphi)=(\mu_{0}^{S,N},\varphi)+\int_{0}^{t}(\mu_{r}^{S,N},\mathcal{Q}_{S}\varphi) dr-  \int_{0}^{t}\left(\mu_{r}^{S,N}, \varphi (\mu_{r}^{I,N}, K)\right) dr + M_{t}^{N,\varphi}$,
		\vspace*{0.12cm}\\
		We notice that $  (\mu^{S,N},\varphi) $ is a semi-martingale since $ M^{N,\varphi}$ is a square integrable  martingale. \\
		Indeed, $ M^{N,\varphi}$ is a local martingale as the  sum of local martingales and   
		
		\begin{align*}
			<M^{N,\varphi}>_{t} &= \displaystyle\frac{1}{N} \int_{0}^{t}\left(\mu_{r}^{S,N}, \varphi^{2} (\mu_{r}^{I,N}, K)\right)dr\\&+ \frac{1}{N} \int_{0}^{t} \Big(\mu_{r}^{S,N}, \sum\limits_{1\leq \ell\leq d}\big(\frac{\partial \varphi}{\partial x_{\ell}}\big)^{2}\sum\limits_{1\leq u\leq d}\theta^{2}_{\ell,u}(S,.)+2\sum\limits_{\underset{1\leq e\leq d}{\underset{\ell+1\leq u\leq d}{1\leq \ell\leq d-1}}}\frac{\partial \varphi}{\partial x_{\ell}}\frac{\partial \varphi}{\partial x_{u}}\theta_{\ell,e}(S,.)\theta_{u,e}(S,.)\Big) dr, \\ 
			&\leq  \displaystyle\frac{1}{N} \int_{0}^{t}\left\lvert\int_{\overline{\mathbb{X}}} \varphi^{2}(x) \int_{\overline{\mathbb{X}}\times\overline{\mathbb{X}}} K(x,y) \mu_{r}^{I,N}(dy) \mu_{r}^{S,N}(dx)\right\lvert dr \\& + \frac{t}{N}   \sum\limits_{1\leq \ell,u\leq d}\Big\lVert\frac{\partial \varphi}{\partial x_{\ell}}\Big\lVert^{2}_{\infty}\big\lVert\theta_{\ell,u}(S,.)\big\lVert^{2}_{\infty}+\frac{2t}{N}\sum\limits_{\underset{1\leq e\leq d}{\underset{\ell+1\leq u\leq d}{1\leq \ell\leq d-1}}}\Big\lVert\frac{\partial \varphi}{\partial x_{\ell}}\Big\lVert_{\infty}\Big\lVert\frac{\partial \varphi}{\partial x_{u}}\Big\lVert_{\infty}\lVert\theta_{\ell,e}(S,.)\lVert_{\infty}\lVert\theta_{u,e}(S,.)\lVert_{\infty}, \\& 
			\leq\displaystyle  \frac{t \Arrowvert \varphi\Arrowvert_{\infty}^{2}\Arrowvert K\Arrowvert_{\infty}}{N}  + \frac{t}{N}   C.
		\end{align*}
		thus $\mathbb{E}(<M^{N,\varphi}>_{t})<\infty,\hspace*{0.15cm} \forall t\geq 0.$\vspace*{0.2cm}\\
		On other hand  \\\hspace*{2cm} $ (\mu_{t}^{S,N},\varphi)=\displaystyle(\mu_{0}^{S,N},\varphi) +\int_{0}^{t} \omega_{r}^{N,\varphi} dr+  M^{N,\varphi}_{t} $ with  $  < M^{N,\varphi}>_{t} =\displaystyle\int_{0}^{t} \varpi_{r}^{N,\varphi}dr, $\\ 
		and\vspace*{0.2cm}\\
		$    \omega_{t}^{N,\varphi}=  \Big(\mu_{t}^{S,N}, m(A,.).\bigtriangledown \varphi(.) + \frac{1}{2}\displaystyle \displaystyle\sum\limits_{1\leq \ell,u\leq d}(\theta \hspace*{0.1cm} \theta^{t})_{\ell,u}(S,.)\frac{\partial^{2}\varphi}{\partial  x_{\ell}x_{u}}(.)\Big)  -  \left(\mu_{t}^{S,N}, \varphi (\mu_{t}^{I,N}, K)\right), $\\  $ \varpi_{t}^{N,\varphi}=\displaystyle\frac{1}{N} \left(\mu_{t}^{S,N},\varphi^{2}(\mu_{t}^{I,N}, K) \right)  \\\hspace*{1cm}+\frac{1}{N}  \Big(\mu_{t}^{S,N},\sum\limits_{1\leq \ell\leq d}\big(\frac{\partial \varphi}{\partial x_{\ell}}\big)^{2}\sum\limits_{1\leq u\leq d}\theta^{2}_{\ell,u}(S,.)+2\sum\limits_{\underset{1\leq e\leq d}{\underset{\ell+1\leq u\leq d}{1\leq \ell\leq d-1}}}\frac{\partial \varphi}{\partial x_{\ell}}\frac{\partial \varphi}{\partial x_{u}}\theta_{\ell,e}(S,.)\theta_{u,e}(S,.)\Big). $ \\Furthermore   $\omega^{N,\varphi}$ and  $ \varpi^{N,\varphi}$ are progressively measurable since the are adapted and right continuous, so according to proposition 37 of \cite{wc} a sufficient condition for   $\{(\mu_{t}^{S,N}, \varphi ),t\geq0,\hspace*{0.1cm}N\geq 1\}$  to be tight in $ \mathbb{D}(\mathbb{R}_{+},\mathbb{R})$  is that both:
		\begin{itemize}
			\item $ (\mu_{0}^{S,N}, \varphi )_{ N \geq 1 }    $ is tight in  $ \mathbb{R}, $
			\item $ \forall T\geq 0, \underset{0\leq t\leq T}{\sup}(\mid \omega_{t}^{N,\varphi} \mid + \varpi_{t}^{N,\varphi} ) $ is tight in $ \mathbb{R}.\vspace*{-0.12cm} $
		\end{itemize} 
		These follow readily from the facts that:\vspace*{0.13cm}\\\hspace*{0.5cm} $-$ 
		$ \lvert (\mu_{0}^{S,N}, \varphi )\lvert \leq \Arrowvert \varphi \Arrowvert_{\infty}$.\vspace*{0.2cm}\\\hspace*{0.5cm} $-$ 
		$ \mid \omega_{t}^{N,\varphi} \mid \leq \sum\limits_{1\leq \ell\leq d}\lVert m_{\ell}\lVert_{\infty}\lVert \frac{\partial\varphi}{\partial x_{\ell}}(x_{1}....,x_{d})\lVert_{\infty}
		+ \frac{1}{2}\sum\limits_{1\leq \ell,u\leq d}\lVert(\theta\hspace*{0.1cm}\theta^{t})_{\ell,u}(S,.)\lVert_{\infty}\lVert\frac{\partial^{2}\varphi}{\partial x_{\ell}x_{u}}\lVert_{\infty}+\Arrowvert \varphi\Arrowvert_{\infty}\lVert K\lVert_{\infty}.\vspace*{0.2cm}\\\hspace*{2.2cm}\leqslant C$ \vspace*{0.15cm}\\\hspace*{0.5cm} $-$ $ \varpi_{t}^{N,\varphi}\leq \displaystyle  \frac{ \Arrowvert \varphi\Arrowvert_{\infty}^{2}\Arrowvert K\Arrowvert_{\infty}}{N} + C\leq C. $ \vspace*{0.2cm}\\
		The same arguments yield the tightness of   $ \{ \mu_{t}^{I,N}, t\geq 0 , N \geq 1 \} $  and $ \{ \mu_{t}^{R,N}, t\geq 0 , N \geq 1 \} $ in $ \mathbb{D}(\mathbb{R_{+}},(\mathcal{M}_{F}(\mathbb{R}^{d}),v))). $
	\end{proof}
The following Proposition follows from the fact that the jump of $\mu^{A,N}$ are order of $1/N$( see  the proof of Proposition 3.3 in \cite{sb} for the explicit proof).
	
	\begin{prp}
		The limit points $ (\mu^{S})$, $(\mu^{I})$ and  $(\mu^{R})$ of the sequences  $ (\mu^{S,N})_{N \geq 1 }$, $(\mu^{I,N})_{N \geq 1 }$ and $(\mu^{R,N})_{N \geq 1 }$ are  elements of $\mathbb{C}(\mathbb{R}_{+},\mathcal{M}_{F}(\mathbb{R}^{d})).  \label{rtre}$ 
	\end{prp}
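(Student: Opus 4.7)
The plan is to show that the jumps of each of the measure-valued processes vanish as $N\to\infty$, and then invoke the classical fact that a Skorokhod limit of càdlàg processes with vanishing maximum jumps is continuous (see e.g. Proposition VI.3.26 in Jacod--Shiryaev, or equivalently Theorem 13.4 in Billingsley). Since the convergence takes place in $\mathbb{D}(\mathbb{R}_+,(\mathcal{M}_F(\mathbb{R}^d),v))$ with the vague topology, it suffices to reduce everything to real-valued processes $(\mu^{A,N}_\cdot,\varphi)$ tested against an arbitrary $\varphi\in C_c(\mathbb{R}^d)$ (or $C_0(\mathbb{R}^d)$, which is the setting of the Roelly criterion used above).

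The first step is to inspect the jumps of $(\mu^{A,N}_t,\varphi)$ for $A\in\{S,I,R\}$. The drift and the Brownian stochastic integrals appearing in Proposition~\ref{bnv} are pathwise continuous in $t$, so jumps can come only from the Poisson random measures $M^i$ (infection events) and $Q^i$ (removal events). At an infection event of individual $i$, the indicator $\mathbf{1}_{\{E^i_t=S\}}$ drops from $1$ to $0$ and simultaneously $\mathbf{1}_{\{E^i_t=I\}}$ jumps from $0$ to $1$; at a removal event, $\mathbf{1}_{\{E^i_t=I\}}$ drops from $1$ to $0$ and $\mathbf{1}_{\{E^i_t=R\}}$ jumps from $0$ to $1$. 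In every case the induced jump of $(\mu^{A,N}_t,\varphi) = \tfrac{1}{N}\sum_i \mathbf{1}_{\{E^i_t=A\}}\varphi(X^i_t)$ has absolute value exactly $|\varphi(X^i_t)|/N$, hence
\[
\sup_{t\geq 0}\bigl|\Delta(\mu^{A,N}_t,\varphi)\bigr| \;\leq\; \frac{\|\varphi\|_\infty}{N}\;\xrightarrow[N\to\infty]{}\;0.
\]

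Now fix a subsequence along which $\mu^{A,N}$ converges in law in $\mathbb{D}(\mathbb{R}_+,(\mathcal{M}_F(\mathbb{R}^d),v))$ to some limit $\mu^A$. The vague topology on $\mathcal{M}_F(\mathbb{R}^d)$ is metrizable and generated by the evaluations $\mu\mapsto(\mu,\varphi)$ for $\varphi\in C_c(\mathbb{R}^d)$; by the continuous mapping theorem, for each such $\varphi$ the real-valued processes $(\mu^{A,N}_\cdot,\varphi)$ converge in law in $\mathbb{D}(\mathbb{R}_+,\mathbb{R})$ to $(\mu^A_\cdot,\varphi)$. Combining this weak convergence with the uniform jump bound above and the standard criterion (the jump functional $J(x)=\sup_{t}|\Delta x(t)|$ being continuous on $\mathbb{D}(\mathbb{R}_+,\mathbb{R})$ at paths with bounded jump sizes) yields $\sup_t|\Delta(\mu^A_t,\varphi)|=0$ almost surely, i.e.\ $t\mapsto(\mu^A_t,\varphi)$ is continuous for every $\varphi\in C_c(\mathbb{R}^d)$. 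Since a countable separating family suffices and vague continuity is characterised by continuity of $t\mapsto(\mu_t,\varphi)$ for all $\varphi\in C_c(\mathbb{R}^d)$, this gives $\mu^A\in\mathbb{C}(\mathbb{R}_+,\mathcal{M}_F(\mathbb{R}^d))$.

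Since the mechanical work is essentially the jump estimate, there is no genuine obstacle here; the only point that requires a little care is to keep track of the topology (vague vs.\ weak) when applying the continuous mapping theorem, which is why I would phrase the argument through a countable dense subfamily of $C_c(\mathbb{R}^d)$. This is exactly the argument carried out for the analogous statement in Proposition~3.3 of~\cite{sb}, to which the author appeals.
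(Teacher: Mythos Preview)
Your argument is correct and is precisely the approach the paper indicates: the author notes that the jumps of $\mu^{A,N}$ are of order $1/N$ and refers to Proposition~3.3 of \cite{sb} for the details, which is exactly the estimate $\sup_t|\Delta(\mu^{A,N}_t,\varphi)|\le \|\varphi\|_\infty/N$ together with the standard fact that Skorokhod limits of c\`adl\`ag processes with vanishing maximal jump are continuous. Your write-up is a faithful and careful expansion of that reference.
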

	
	\begin{thm}
		The sequence $ (\mu^{S,N},\mu^{I,N},\mu^{R,N})_{N \geq 1 }$ converges in probability, in  $ \left[\mathbb{D}(\mathbb{R}_{+},\mathcal{M}_{F}(\mathbb{R}^{d}))\right]^{3} $ towards  $ (\mu^{S},\mu^{I},\mu^{R})$ $\in \left[ \mathbb{C}(\mathbb{R}_{+},\mathcal{M}_{F}(\mathbb{R}^{d}))\right]^{3}$ which is the unique solution of the following system of equations. For any  $ \varphi \in  
		C_{c}^{2}(\mathbb{R}^{d}),$
		\begin{align}
			\displaystyle   (\mu_{t}^{S},\varphi)&=(\mu_{0}^{S,N},\varphi)+\int_{0}^{t}(\mu_{r}^{S}, \mathcal{Q}_{S}\varphi) dr - \int_{0}^{t}\left(\mu_{r}^{S}, \varphi (\mu_{r}^{I}, K)\right) dr,\vspace*{-0.05cm}\label{e6}\\ (\mu_{t}^{I},\varphi)&=(\mu_{0}^{I},\varphi)+\int_{0}^{t}(\mu_{r}^{I}, \mathcal{Q}_{I}\varphi) dr+ \int_{0}^{t}\left(\mu_{r}^{S}, \varphi (\mu_{r}^{I}, K)\right) dr -\alpha\int_{0}^{t}(\mu_{r}^{I},\varphi)dr,\label{e7}\\
			\displaystyle  \displaystyle (\mu_{t}^{R},\varphi)&= \int_{0}^{t}(\mu_{r}^{R}, \mathcal{Q}_{R}\varphi) dr+\alpha\int_{0}^{t}(\mu_{r}^{I},\varphi)dr.\label{e8}
		\end{align}
		$\label{th1}$
	\end{thm}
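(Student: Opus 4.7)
The plan is to combine the tightness from Proposition~\ref{ll1}, the path-continuity from Proposition~\ref{rtre}, and a uniqueness statement for the limiting system. First, along a subsequence extracted by Proposition~\ref{ll1}, one may assume $(\mu^{S,N},\mu^{I,N},\mu^{R,N})$ converges in law in $[\mathbb{D}(\mathbb{R}_+,(\mathcal{M}_F(\mathbb{R}^d),v))]^3$, with a limit living in $[\mathbb{C}(\mathbb{R}_+,\mathcal{M}_F(\mathbb{R}^d))]^3$. The bracket estimate appearing in the proof of Proposition~\ref{ll1} gives $\mathbb{E}\langle M^{N,\varphi}\rangle_T\leq C(T,\varphi)/N$, and analogous bounds hold for $L^{N,\varphi}$ and $Y^{N,\varphi}$; Doob's inequality then yields $\mathbb{E}[\sup_{t\leq T}|M^{N,\varphi}_t|^2]\to 0$, and likewise for the two other martingale parts.

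Next, I would identify the limit using Skorokhod's representation theorem. For a fixed $\varphi\in C_c^2(\mathbb{R}^d)$, the drift terms $\int_0^t(\mu^{A,N}_r,\mathcal{Q}_A\varphi)dr$ and $\alpha\int_0^t(\mu^{I,N}_r,\varphi)dr$ pass to the limit by vague convergence (since $\mathcal{Q}_A\varphi\in C_c(\mathbb{R}^d)$) together with dominated convergence in $r$. For the nonlinear infection term, the function $(x,y)\mapsto\varphi(x)K(x,y)$ is continuous with compact support in $\mathbb{R}^d\times\mathbb{R}^d$ by Assumption~(H1), so the joint vague convergence of $(\mu^{S,N}_r,\mu^{I,N}_r)$ transfers to convergence of the double integral $\int_0^t(\mu^{S,N}_r,\varphi(\mu^{I,N}_r,K))dr$ toward $\int_0^t(\mu^{S}_r,\varphi(\mu^{I}_r,K))dr$. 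Combined with Step~1, any subsequential limit satisfies (\ref{e6})--(\ref{e8}).

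For uniqueness in $\Lambda$ I would pass to the mild formulation. Let $(P^A_t)_{t\geq 0}$ denote the semigroup generated on $C_b(\mathbb{R}^d)$ by $\mathcal{Q}_A$. Any weak solution in $\Lambda$ satisfies, by the usual duality argument,
\begin{align*}
(\mu^S_t,\varphi)&=(\mu^S_0,P^S_t\varphi)-\int_0^t\bigl(\mu^S_r,(P^S_{t-r}\varphi)(\mu^I_r,K)\bigr)dr,\\
(\mu^I_t,\varphi)&=(\mu^I_0,P^I_t\varphi)+\int_0^t\bigl(\mu^S_r,(P^I_{t-r}\varphi)(\mu^I_r,K)\bigr)dr-\alpha\int_0^t(\mu^I_r,P^I_{t-r}\varphi)dr,
\end{align*}
and an analogous relation for $\mu^R$. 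Subtracting two solutions with identical initial data, using the contractivity $\|P^A_{t-r}\varphi\|_\infty\leq\|\varphi\|_\infty$, the uniform bound on $K$, and the fact that all measures are subprobabilities, taking the supremum over $\|\varphi\|_\infty\leq 1$ yields a Gronwall inequality of the form $\|\Delta^S_t\|_{TV}+\|\Delta^I_t\|_{TV}+\|\Delta^R_t\|_{TV}\leq C\int_0^t(\|\Delta^S_r\|_{TV}+\|\Delta^I_r\|_{TV}+\|\Delta^R_r\|_{TV})dr$, forcing the three differences to vanish.

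Since every vaguely-convergent subsequence has the same deterministic limit, the full sequence converges in law in the vague Skorokhod topology, and since the limit is deterministic this is equivalent to convergence in probability. To upgrade from the vague to the weak topology on $\mathcal{M}_F(\mathbb{R}^d)$, I would use the exact conservation $\sum_A(\mu^{A,N}_t,1)\equiv 1$ together with a uniform moment bound $\sup_N\mathbb{E}|X^1_t|^p<\infty$ derived from boundedness of $m$ and $\theta$, which rules out mass escape to infinity. The main obstacle is the uniqueness step: the quadratic non-local term $(\mu^I,K)$ acts as a variable coefficient in all three equations, and the compartment-dependent generators $\mathcal{Q}_A$ prevent one from working with a single semigroup, so the total-variation Gronwall loop must be closed simultaneously on the three components.
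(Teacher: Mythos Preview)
Your argument is correct and mirrors the paper's proof almost step for step: tightness in the vague topology, vanishing of the martingale remainders via the bracket bound, passage to the limit in the bilinear infection term using that $(\mu,\nu)\mapsto(\mu\otimes\nu,\varphi K)$ is continuous on sub-probability measures, and uniqueness in $\Lambda$ through the mild (semigroup) reformulation followed by a total-variation Gronwall loop---this last part is exactly the content of the paper's Lemma preceding and Proposition~\ref{ukkk}.

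The one place where you and the paper differ is the upgrade from vague to weak Skorokhod convergence. The paper invokes the M\'el\'eard--Roelly criterion from \cite{qc}: given vague convergence and continuity of the limit, it suffices to show that the real-valued processes $((\mu^{S,N},1),(\mu^{I,N},1),(\mu^{R,N},1))$ converge in $[\mathbb{D}(\mathbb{R}_+,\mathbb{R})]^3$, which it does by rerunning the entire tightness/identification/uniqueness scheme with the test function $\varphi\equiv 1\in C^2_b(\mathbb{R}^d)$. Your route via a moment bound on $X^1_t$ controls spatial escape of the \emph{total} empirical measure $N^{-1}\sum_i\delta_{X^i_t}$, but does not by itself pin down the individual compartment masses $(\mu^{A,N}_t,1)$; to finish you still need the conservation $\sum_A(\mu^{A,N}_t,1)=1$ together with $\sum_A(\mu^{A}_t,1)=1$ for the limit, and the latter again requires testing the limiting system against $\varphi\equiv 1$. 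So both approaches ultimately rely on extending the evolution equations to bounded (non--compactly supported) test functions, but the paper's formulation via M\'el\'eard--Roelly packages this more cleanly at the path-space level.
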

	
	\subsubsection{Proof of Theorem \ref{th1}}
	By   Proposition \ref{ll1}, the sequence $(\mu^{S,N},\mu^{I,N},\mu^{R,N})_{N\geq 1}$  is tight in  $\left[\mathbb{D}(\mathbb{R_{+}},(\mathcal{M}_{F}(\mathbb{R}^{d}),v))\right]^{3}$, thus according to Prokhorov's Theorem there exists a subsequence of $(\mu^{S,N},\mu^{I,N},\mu^{R,N})_{N\geq }$ still  denoted $(\mu^{S,N},\mu^{I,N},\mu^{R,N})_{N \geq 1}$ which converges in law in $\left[\mathbb{D}(\mathbb{R_{+}},(\mathcal{M}_{F}(\mathbb{R}^{d}),v))\right]^{3}$ towards $(\mu^{S},\mu^{I},\mu^{R}),$ where $\mathcal{M}_{F}(\mathbb{R}^{d})$ is equipped with the vague topology.  \\ Hence to complete the proof of Theorm \ref{th1} it remains to:
	\begin{itemize}
        \item Find the  system of PDEs satisfes by $\{(\mu_{t}^{S},\mu_{t}^{I},\mu_{t}^{R}), t\geq 0\}$
	\item  Show that the system verifies by $(\mu_{t}^{S},\mu_{t}^{I},\mu_{t}^{R})$ admits a unique solution on \\  $\Lambda=\{(\mu^{1},\mu^{2},\mu^{3})/ 0\leq (\mu^{i},1_{\mathbb{R}^{d}})\leq 1, i\in \{1,2,3\}  \}$.
	\item Conclude.
		\end{itemize}
It is so easy to obtain the following Lemma, therefore we omit the proof.
	 \begin{lem}
If we let $\Sigma=\{(\mu^{1},\mu^{2})\in\mathcal{M}_{F}(\mathbb{R}^{d}), (\mu^{i},1_{\mathbb{R}^{d}})\leq1, \forall i \in \{1,2\}\},$		for any $\varphi,\psi\in C_{c}(\mathbb{R}^{d}),$ the following map is continuous.\vspace*{0.2cm}\\
		$ \hspace*{3cm}
		G_{\varphi,\psi}:(\Sigma,v)\times (\Sigma,v)
		\rightarrow (\mathcal{M}_{F}(\mathbb{R}^{d}\times \mathbb{R}^{d}),v)\\\hspace*{8cm}
		(\mu,\nu) \longmapsto (\mu\otimes \nu,\varphi \psi)$ \\
		
		where   $(\mu \otimes \nu,\varphi \psi)=\displaystyle\int_{\mathbb{R}^{d}\times\mathbb{R}^{d}}\varphi(x) \psi(y)\nu(dy)\mu(dx).$
		\label{e7e}
	\end{lem}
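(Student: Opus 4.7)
The plan is to reduce the claimed continuity to a product of two scalar continuity statements via Fubini. First I would observe that, for $\varphi, \psi \in C_c(\mathbb{R}^d)$, Fubini's theorem yields
\[
(\mu \otimes \nu, \varphi \psi) = \int_{\mathbb{R}^d}\int_{\mathbb{R}^d}\varphi(x)\psi(y)\,\nu(dy)\mu(dx) = (\mu,\varphi)\,(\nu,\psi),
\]
so that $G_{\varphi,\psi}(\mu,\nu)$ factors as the product of the two scalar evaluations $\mu\mapsto(\mu,\varphi)$ and $\nu\mapsto(\nu,\psi)$.

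Second, because $\varphi$ and $\psi$ lie in $C_c(\mathbb{R}^d)$, the very definition of the vague topology gives at once that each of these scalar evaluation maps is continuous from $(\Sigma, v)$ to $\mathbb{R}$. Moreover, the uniform mass bound $(\mu, 1_{\mathbb{R}^d}) \le 1$ built into the definition of $\Sigma$ yields $|(\mu,\varphi)| \le \|\varphi\|_\infty$ and analogously for $\psi$, so both factors are uniformly bounded on $\Sigma$.

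Joint continuity of $G_{\varphi,\psi}$ then follows by combining these two facts with the continuity of ordinary multiplication on bounded subsets of $\mathbb{R}$: if $(\mu_n,\nu_n) \to (\mu,\nu)$ in $(\Sigma,v)\times(\Sigma,v)$, then $(\mu_n,\varphi)\to(\mu,\varphi)$ and $(\nu_n,\psi)\to(\nu,\psi)$, whence their product converges to $(\mu,\varphi)(\nu,\psi)$. Should one wish to upgrade this to continuity of the full map $(\mu,\nu)\mapsto \mu\otimes\nu$ valued in $(\mathcal{M}_F(\mathbb{R}^d\times\mathbb{R}^d),v)$, the same argument plus a Stone--Weierstrass approximation of an arbitrary $f\in C_c(\mathbb{R}^{2d})$ by finite sums $\sum_i \varphi_i(x)\psi_i(y)$ on a compact neighbourhood of $\mathrm{supp}\, f$ handles it, the uniform mass bound on $\Sigma$ controlling the approximation error.

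I do not expect any genuine obstacle, in agreement with the author's remark that the lemma is elementary; the essential point is that the product structure of the test function $\varphi\psi$ collapses the two-dimensional pairing into the product of two one-dimensional ones, to which the definition of vague convergence applies directly.
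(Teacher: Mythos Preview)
Your argument is correct and is precisely the elementary factorisation the author has in mind: the paper explicitly omits the proof, remarking only that ``it is so easy to obtain the following Lemma, therefore we omit the proof.'' Your Fubini identity $(\mu\otimes\nu,\varphi\psi)=(\mu,\varphi)(\nu,\psi)$ together with the definition of vague convergence and the uniform mass bound on $\Sigma$ is exactly what is needed, and the optional Stone--Weierstrass step you mention is indeed how one passes (if desired) from tensor test functions to general $f\in C_c(\mathbb{R}^{2d})$.
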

	
	The following Proposition  establishes the system of equations satisfied by $(\mu^{S},\mu^{I},\mu^{R}).$
	\begin{prp}
		The processes  $(\mu^{S},\mu^{I},\mu^{R})$ satisfies the system formed by the equations (\ref{e6}), (\ref{e7}) and (\ref{e8}).
	\end{prp}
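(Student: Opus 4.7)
The plan is to pass to the limit $N \to \infty$ in the prelimit identities (\ref{e1})--(\ref{e3}) of Proposition \ref{bnv}, along the subsequence extracted by Prokhorov's theorem. By Skorokhod's representation theorem, I may assume that the convergence of $(\mu^{S,N}, \mu^{I,N}, \mu^{R,N})$ to $(\mu^S, \mu^I, \mu^R)$ in $[\mathbb{D}(\mathbb{R}_+, (\mathcal{M}_F(\mathbb{R}^d), v))]^3$ holds almost surely on a suitable probability space. Since the limits are continuous by Proposition \ref{rtre}, this almost sure Skorokhod convergence upgrades to the locally uniform statement that $(\mu_t^{A,N}, \psi) \to (\mu_t^A, \psi)$ uniformly on every compact time interval, for every $\psi \in C_c(\mathbb{R}^d)$ and every $A \in \{S, I, R\}$.

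First I dispose of the easy terms in (\ref{e1}). The initial condition $(\mu_0^{S,N}, \varphi) \to (\mu_0^S, \varphi)$ follows from the law-of-large-numbers statement for the initial sequence recalled from \cite{sb}. For $\varphi \in C_c^2(\mathbb{R}^d)$, the function $\mathcal{Q}_S \varphi$ lies in $C_c(\mathbb{R}^d)$ since $m$ and $\theta$ are bounded, so uniform vague convergence on $[0,t]$ together with the bound $|(\mu_r^{S,N}, \mathcal{Q}_S\varphi)| \le \|\mathcal{Q}_S\varphi\|_\infty$ and dominated convergence yield $\int_0^t (\mu_r^{S,N}, \mathcal{Q}_S \varphi)\, dr \to \int_0^t (\mu_r^S, \mathcal{Q}_S \varphi)\, dr$. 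The martingale remainder $M^{N,\varphi}_t$ is handled by the bracket computation already performed in the proof of Proposition \ref{ll1}, namely $\mathbb{E}[\langle M^{N,\varphi}\rangle_t] = O(1/N)$, so Doob's $L^2$-inequality gives $\sup_{0 \leq s \leq t} |M^{N,\varphi}_s| \to 0$ in probability.

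The main obstacle is the nonlinear interaction term $\int_0^t (\mu_r^{S,N}, \varphi (\mu_r^{I,N}, K))\, dr$. I rewrite its integrand as $(\mu_r^{S,N} \otimes \mu_r^{I,N}, \Phi)$ with $\Phi(x,y) := \varphi(x) K(x,y)$. Under Assumption (H1) the kernel $K$ is in $C_c(\mathbb{R}^d \times \mathbb{R}^d)$, hence so is $\Phi$; since each $\mu_r^{A,N}$ lies in the set $\Sigma$ of Lemma \ref{e7e}, that lemma supplies the vague continuity of $(\mu, \nu) \mapsto (\mu \otimes \nu, \Phi)$. Combined with the almost sure uniform convergence of the two components on $[0,t]$ and the uniform bound $|(\mu_r^{S,N}, \varphi (\mu_r^{I,N}, K))| \leq \|\varphi\|_\infty \|K\|_\infty$, dominated convergence yields the required passage to the limit inside the time integral.

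The same scheme applied term by term to (\ref{e2}) and (\ref{e3}) delivers (\ref{e7}) and (\ref{e8}); the additional linear recovery term $\alpha \int_0^t (\mu_r^{I,N}, \varphi)\, dr$ and the martingales $L^{N,\varphi}$, $Y^{N,\varphi}$ pose no new difficulty, the former being passed to the limit by uniform vague convergence and the latter vanishing in $L^2$ by bracket estimates analogous to those for $M^{N,\varphi}$. This establishes that $(\mu^S, \mu^I, \mu^R)$ satisfies the claimed system of equations.
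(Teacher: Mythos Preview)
Your proof is correct and follows essentially the same route as the paper: term-by-term passage to the limit in (\ref{e1})--(\ref{e3}), using that $\mathcal{Q}_A\varphi\in C_c(\mathbb{R}^d)$, Lemma~\ref{e7e} for the bilinear interaction term written as $(\mu_r^{S,N}\otimes\mu_r^{I,N},\varphi K)$, and the $O(1/N)$ bracket estimates to kill the martingales. The only cosmetic difference is that you invoke Skorokhod's representation and the continuity of the limit (Proposition~\ref{rtre}) to work with a.s.\ locally uniform convergence, whereas the paper argues directly with convergence in law and extracts a further subsequence for the product measure; both are equivalent here.
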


	\begin{proof} 	We prove this Proposition by taking the limit in the equations (\ref{e1}), (\ref{e2}) and (\ref{e3}). Let us establish $(\ref{e6})$.\vspace{0.16cm}\\
		1- It has  been shwon in \cite{sb} that the sequence $\{(\mu_{0}^{S,N},\mu_{0}^{I,N}), N\geqslant 1\}$ converges a.s. towards the pair of   deterministic measures $(\mu_{0}^{S},\mu_{0}^{I}).$\vspace*{0.15cm}\\
		2- Since the map $x\in \mathbb{R}^{d}\mapsto \mathcal{Q}_{S}\varphi(x)= m(A,x).\bigtriangledown \varphi(x) + \frac{1}{2}\displaystyle \displaystyle\sum\limits_{1\leq \ell,u\leq d}(\theta \hspace*{0.1cm} \theta^{t})_{\ell,u}(S,x)\frac{\partial^{2}\varphi}{\partial  x_{\ell}x_{u}}(x)$ is continuous with compact support, $\displaystyle \int_{0}^{t}\left(\mu_{r}^{S,N}, \mathcal{Q}_{S}\varphi \right) dr$ converges in law towards $\displaystyle \int_{0}^{t}\left(\mu_{r}^{S}, \mathcal{Q}_{S}\varphi \right) dr.$ \vspace*{0.15cm}\\
		2- Form Proposition \ref{ll1}, the sequence $\mu^{S,N}\otimes\mu^{I,N}$ is also tight, thus from Prokorov's theorem it is possible to extract a sub-sequence still denotes  $(\mu^{S,N}\otimes\mu^{I,N})_{N\geq 1}$  such that  $(\mu^{S,N}\otimes\mu^{I,N})_{N\geq 1}$ and the above subsequences  $(\mu^{S,N},\mu^{I,N},\mu^{R,N})_{N \geq 1}$ converges towards $\chi^{S,I}$ and $(\mu^{S},\mu^{I},\mu^{R})$  respectively. Furthermore the fact that for any $t\geq0,$  $\chi_{t}^{S,I}=\mu^{S}_{t}\otimes\mu^{I}_{t}$ follows from Lemma \ref{e7e}. Consequently, we have \\\\
		$\hspace*{1.5cm}\displaystyle\int_{0}^{t}\left(\mu_{r}^{S,N},\varphi(\mu_{r}^{I,N}, K)\right) dr=\int_{0}^{t}\left(\mu_{r}^{S,N}\otimes\mu_{r}^{I,N}, \varphi  K\right) dr\xrightarrow{L}\int_{0}^{t}\left(\mu_{r}^{S}\otimes\mu_{r}^{I}, \varphi  K\right) dr.$\\\\
		3- Let us prove that the sequences 
		$ M_{t}^{N,\varphi}$ ;  $ L_{t}^{N,\varphi}$ and $Y_{t}^{N}$  converge to 0 in Probability.\vspace*{0.1cm}\\ \hspace*{0.5cm}$-$ Convergence of $ M_{t}^{N,\varphi}.$ We have seen above that\vspace*{0.2cm}\\
		$\hspace*{1cm}\mathbb{E}( \mid M^{N,\varphi}_{t} \mid^{2} ) $=$\displaystyle$ $\mathbb{E}( < M^{N,\varphi}>_{t} ) \\\hspace*{3.1cm}\leq \displaystyle \frac{t}{N} \lVert \varphi\lVert_{\infty}^{2} \lVert k \lVert_{\infty}+\frac{tC}{N}\xrightarrow{N\rightarrow\infty}0, $
		\vspace*{0.2cm} \\
		consequently  $  M^{N,\varphi}_{t} $ converges to 0 in  $ L^ {2} $, so also in probability. By  similar arguments,   we obtain the convergences in probability to 0 of the sequences $  L^{N,\varphi}_{t} $  and  $  Y^{N,\varphi}_{t} $.\\Hence $(\ref{e6})$ follows from 1-, 2-, 3-. Similar arguments yield $(\ref{e7})$ and $(\ref{e8}).$
	\end{proof}
Let us now prove the following proposition which will be useful to show that the system of equations (\ref{e6}), (\ref{e7}) and (\ref{e8}) admits a unique solution.
	\begin{lem}
	For $A\in \{S,I,R\}$, $\Upsilon_{A}(t)$ denotes the Markovian  semi-group of the diffussion process   with diffusion matrix  $\theta(A,.)$  and drift coefficient $m(A,.).$  For any $\varphi\in C_{c}(\mathbb{R}^{d}),$ we have
	\begin{align}
		(\mu_{t}^{S},\varphi)&=(\mu_{0}^{S},\Upsilon_{S}(t)\varphi) \displaystyle   - \int_{0}^{t}\left(\mu_{r}^{S}, \Upsilon_{S}(t-r)\varphi (\mu_{r}^{I}, K)\right) dr,\label{e9}\\ 
		(\mu_{t}^{I},\varphi)&=(\mu_{0}^{I},\Upsilon_{I}(t)\varphi) \displaystyle   + \int_{0}^{t}\left(\mu_{r}^{S}, \Upsilon_{I}(t-r)\varphi (\mu_{r}^{I}, K)\right) dr-\alpha\int_{0}^{t}(\mu_{r}^{I},\Upsilon_{I}(t-r)\varphi)dr ,\label{e10}\\
		(\mu_{t}^{R},\varphi)&= \alpha\int_{0}^{t}(\mu_{r}^{I},\Upsilon_{R}(t-r)\varphi)dr.\label{e11}
	\end{align}                  
	
\end{lem}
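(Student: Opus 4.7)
The plan is to derive the mild formulations (3.9)--(3.11) from the weak formulations (3.6)--(3.8) by a duality argument using the backward Kolmogorov equation $\partial_t\Upsilon_A(t)\varphi=\mathcal{Q}_A\Upsilon_A(t)\varphi=\Upsilon_A(t)\mathcal{Q}_A\varphi$. The idea is, for fixed $t>0$ and $\varphi\in C_c^2(\mathbb{R}^d)$, to apply the weak equation to the time-dependent test function $\psi_s:=\Upsilon_A(t-s)\varphi$ at time $s\in[0,t]$, so that the generator contribution from the evolution of $\mu^A$ cancels against the contribution from the evolution of $\psi_s$, leaving only the source/sink terms.

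Concretely, for the $S$-equation I would compute, for $s\in[0,t]$,
\begin{align*}
\frac{d}{ds}(\mu_s^S,\Upsilon_S(t-s)\varphi)
&=(\mu_s^S,\mathcal{Q}_S\Upsilon_S(t-s)\varphi)-\bigl(\mu_s^S,\Upsilon_S(t-s)\varphi\,(\mu_s^I,K)\bigr)\\
&\quad+\Bigl(\mu_s^S,\tfrac{\partial}{\partial s}\Upsilon_S(t-s)\varphi\Bigr),
\end{align*}
where the first line comes from a time-dependent version of (3.6) and the second line from the chain rule. Since $\tfrac{\partial}{\partial s}\Upsilon_S(t-s)\varphi=-\mathcal{Q}_S\Upsilon_S(t-s)\varphi$, the two generator contributions cancel, and integrating in $s$ from $0$ to $t$ yields (3.9). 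Equations (3.10) and (3.11) follow by exactly the same dualization, picking up the extra $-\alpha(\mu_r^I,\Upsilon_I(t-r)\varphi)$ term in the $I$-equation and noting that $\mu_0^R=0$ (so only the recovery source survives) in the $R$-equation.

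The main obstacle is that (3.6)--(3.8) are only stated for $\varphi\in C_c^2(\mathbb{R}^d)$, whereas $\Upsilon_A(t-s)\varphi$ need not have compact support, only bounded derivatives of the relevant order. To handle this, I would introduce a cutoff sequence $\chi_n\in C_c^\infty(\mathbb{R}^d)$ with $\chi_n\equiv 1$ on $B(0,n)$, $0\le\chi_n\le 1$ and derivatives of $\chi_n$ bounded uniformly in $n$, apply (3.6) to $\chi_n\,\Upsilon_S(t-s)\varphi$, and pass to the limit $n\to\infty$. The passage uses that $\mu_s^S$ and $\mu_s^I$ are finite measures with total mass $\le 1$, that $\Upsilon_S(t-s)\varphi$ and its first two derivatives are bounded (uniformly in $s\in[0,t]$) by standard estimates for diffusions with bounded Lipschitz coefficients, and that $K$ is compactly supported by assumption (H1), which makes the bilinear term $(\mu_s^I,K)$ bounded. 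Dominated convergence then gives the extension of (3.6) to test functions of the form $\Upsilon_S(t-s)\varphi$.

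The only other technical point is justifying the time-dependent weak equation, i.e.\ replacing the constant $\varphi$ in (3.6) by a smooth time-dependent function $\psi_s$. This is a standard product-rule/approximation argument: discretize $s\mapsto\psi_s$ by step functions $\psi_s^n$, apply (3.6) to each constant piece, and pass to the limit using the continuity of $s\mapsto\Upsilon_S(t-s)\varphi$ and $s\mapsto\mathcal{Q}_S\Upsilon_S(t-s)\varphi$ in the sup norm (on supports where we have already controlled the mass). Once (3.9) is obtained, (3.10) and (3.11) are proved in the same way, the only new ingredient being the additional linear term $-\alpha\int_0^t(\mu_r^I,\psi_r)\,dr$, which is handled identically.
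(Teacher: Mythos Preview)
Your approach is essentially the same as the paper's: both proofs apply the weak equation to the time-dependent test function $\psi_r=\Upsilon_A(t-r)\varphi$ so that the $\partial_r\psi_r$ term cancels the generator term, leaving only the reaction terms. The paper simply asserts that the time-dependent version of (3.6) ``may classically'' be derived and does not comment on the loss of compact support of $\Upsilon_A(t-r)\varphi$, whereas you spell out both the step-function approximation and the cutoff argument; these are exactly the details one would fill in to make the paper's ``classical'' step rigorous, so your write-up is in fact a more careful version of the same proof.
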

\begin{proof}
	We may classically derive from \eqref{e6} a similar formula where the test function $\varphi(x)$ is replaced by  $\psi_{r}(x)=\psi(r,x)$ which is of class $C^{1,2}$ on $[0,t]\times\mathbb{R}^{d}$:
	\begin{align}
		(\mu_{t}^{S},\psi_{t}(.))&=(\mu_{0}^{S},\psi_{0}(.))  +\int_{0}^{t}\Big(\mu_{r}^{S},\frac{\partial}{\partial r}\psi_{r}(.)\Big)dr + \int_{0}^{t}(\mu_{r}^{S},m(S,.).\bigtriangledown \varphi) dr\nonumber\\&+\displaystyle  \frac{1}{2}\int_{0}^{t}(\mu_{r}^{S},Tr[(\theta\hspace*{0.1cm}\theta^{t})(S,.)D^{2}\varphi]) dr - \int_{0}^{t}\left(\mu_{r}^{S}, \psi_{r}(.) (\mu_{r}^{I}, K)\right) dr. \label{e12}
	\end{align}
	
	Let us now consider a continuous fonctions $\varphi$ on $\mathbb{R}^{d},$ with compact support and fix a time $t\in \mathbb{R}_{+}$. We define for $(r,x)\in [0,t]\times\mathbb{R}^{d}$, $\psi_{r}(x)=\Upsilon_{A}(t-r)\varphi(x).$\\Then $\psi$ is the solution  of the  following equation \vspace*{0.2cm}\\
	\hspace*{4cm}$\displaystyle\frac{d\psi_{r}(x)}{dr} +\displaystyle \mathcal{Q}_{S}\varphi(x)=0 $ \quad on $[0,t]\times\mathbb{R}^{d}.$ \vspace*{0.2cm}\\Equation (\ref{e12}) applied to this function $\psi$ yields (\ref{e9}).
	We obtain  (\ref{e10}) and (\ref{e11}) by similar arguments.
\end{proof}
\begin{prp}
	The system formed by the equations (\ref{e9}), (\ref{e10}) and (\ref{e11}), admits a unique solution on the set $\Lambda=\{(\mu_{1},\mu_{2},\mu_{3})\in [\mathcal{M}_{F}(\mathbb{R}^{d})]^{3}/(\mu_{i},\mathbb{1})\leq 1\quad \forall i\in \{1,2,3\}\}. \label{ukkk}$
\end{prp}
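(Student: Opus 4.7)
The plan is to prove uniqueness by a Gronwall argument in total variation norm. Let $(\mu^{S},\mu^{I},\mu^{R})$ and $(\tilde\mu^{S},\tilde\mu^{I},\tilde\mu^{R})$ be two $\Lambda$-valued solutions of (\ref{e9})--(\ref{e11}) issued from the same initial data $(\mu_0^{S},\mu_0^{I},0)$, and set $\Delta^{A}_{t}:=\mu^{A}_{t}-\tilde\mu^{A}_{t}$ for $A\in\{S,I,R\}$. I would work with the norm $\lVert\nu\rVert_{TV}:=\sup\{\lvert(\nu,\varphi)\rvert:\varphi\in C_{c}(\mathbb{R}^{d}),\,\lVert\varphi\rVert_{\infty}\le 1\}$, which is the natural dual object to capture differences of finite signed measures and which interacts cleanly with the mild-formulation integrands of (\ref{e9})--(\ref{e11}).

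First, I would subtract the two copies of (\ref{e9}) tested against a fixed $\varphi\in C_{c}(\mathbb{R}^{d})$ with $\lVert\varphi\rVert_{\infty}\le 1$, and split the resulting bilinear term as $(\Delta^{S}_{r},h_{r})+(\tilde\mu^{S}_{r},\Upsilon_{S}(t-r)\varphi\cdot(\Delta^{I}_{r},K(\cdot,\cdot)))$, where $h_{r}(x)=\Upsilon_{S}(t-r)\varphi(x)\,(\mu^{I}_{r},K(x,\cdot))$. Since $\Upsilon_{S}$ is a Markovian contraction on bounded measurable functions and $(\mu^{I}_{r},\mathbb{1})\le 1$, one has $\lVert h_{r}\rVert_{\infty}\le\lVert K\rVert_{\infty}$; the second piece rewritten by Fubini as $(\Delta^{I}_{r},y\mapsto\int K(x,y)\Upsilon_{S}(t-r)\varphi(x)\tilde\mu^{S}_{r}(dx))$ also has a kernel bounded by $\lVert K\rVert_{\infty}$ (using $(\tilde\mu^{S}_{r},\mathbb{1})\le 1$). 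Taking the supremum over admissible $\varphi$ yields
\[\lVert\Delta^{S}_{t}\rVert_{TV}\le\lVert K\rVert_{\infty}\int_{0}^{t}\bigl(\lVert\Delta^{S}_{r}\rVert_{TV}+\lVert\Delta^{I}_{r}\rVert_{TV}\bigr)dr.\]

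The same decomposition applied to (\ref{e10}) and (\ref{e11}), augmented by the trivial bound $\lvert(\Delta^{I}_{r},\Upsilon_{I}(t-r)\varphi)\rvert\le\lVert\Delta^{I}_{r}\rVert_{TV}$, produces
\[\lVert\Delta^{I}_{t}\rVert_{TV}\le(\lVert K\rVert_{\infty}+\alpha)\int_{0}^{t}\bigl(\lVert\Delta^{S}_{r}\rVert_{TV}+\lVert\Delta^{I}_{r}\rVert_{TV}\bigr)dr,\qquad\lVert\Delta^{R}_{t}\rVert_{TV}\le\alpha\int_{0}^{t}\lVert\Delta^{I}_{r}\rVert_{TV}\,dr.\]
Summing the three estimates and applying the classical Gronwall lemma to $\lVert\Delta^{S}_{t}\rVert_{TV}+\lVert\Delta^{I}_{t}\rVert_{TV}+\lVert\Delta^{R}_{t}\rVert_{TV}$ forces this quantity to vanish identically on every $[0,T]$, which gives uniqueness.

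The main technical obstacle lies in justifying that the mild identities (\ref{e9})--(\ref{e11}), initially established for $\varphi\in C_{c}$, extend to the test functions that appear after duality, which only belong to $C_{b}$ (since $\Upsilon_{A}(t-r)\varphi$ need not have compact support), and that products such as $\Upsilon_{S}(t-r)\varphi\cdot(\mu^{I}_{r},K(\cdot,\cdot))$ may be integrated against $\tilde\mu^{A}_{r}$ and against the signed differences $\Delta^{A}_{r}$ within the Fubini manipulation above. Both points are handled by a routine truncation-and-dominated-convergence argument: under Assumption (H1) the coefficients $m$ and $\theta$ are bounded Lipschitz, so each $\Upsilon_{A}$ enjoys the Feller property, and the finiteness of every measure in play together with the uniform bound $(\mu^{A}_{r},\mathbb{1})\le 1$ coming from membership in $\Lambda$ makes each limit passage harmless.
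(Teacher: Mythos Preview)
Your proof is correct and follows essentially the same approach as the paper: both use the total variation norm, perform the standard bilinear splitting of the infection term, exploit the Markovian contraction property of $\Upsilon_{A}$ together with the mass bounds $(\mu^{A}_{r},\mathbb{1})\le 1$, and close with Gronwall. You are in fact slightly more careful than the paper in flagging the need to extend the mild identities from $C_{c}$ to $C_{b}$ test functions.
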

\begin{proof}
	Let us recall that the distance in total variation on $\mathcal{M}_{F}(\mathbb{R}^{d})$ is defined by \vspace*{0.2cm}\\
	$\hspace*{0.5cm}\lVert \mu -\nu\lVert_{VT}=$sup$\{\lvert (\mu-\nu,\varphi)\lvert,\hspace*{0.1cm}\varphi$ continuous with compact support and $ \lVert \varphi \lVert_{\infty}\leq 1\}$.\vspace*{0.2cm}\\ Now let $(\mu_{t}^{1},\mu_{t}^{2},\mu_{t}^{3})$ and $(\nu_{t}^{1},\nu_{t}^{2},\nu_{t}^{3})$ be 
	two solutions of the system of  equations (\ref{e9}), (\ref{e10}) and (\ref{e11}) with the same initial condition and $\varphi \in C_{c}(\mathbb{R}^{d}).$
	Since $\Upsilon_{S}(t)$ is a contraction semi-group on $C_{c}(\mathbb{R}^{d}),$ we have\vspace*{0.15cm}\\\hspace*{1.5cm}$ \Big\lvert\left(\mu_{r}^{1}, \Upsilon_{S}(t-r)\varphi (\mu_{r}^{2}, K)\right)- \left(\nu_{r}^{1}, \Upsilon_{S}(t-r)\varphi (\nu_{r}^{2}, K)\right)\Big\lvert\vspace*{0.2cm}\\\hspace*{2.5cm}=\displaystyle\Big\lvert\left(\mu_{r}^{1}-\nu_{r}^{1}, \Upsilon_{S}(t-r)\varphi (\mu_{r}^{2}, K)\right)-\left(\nu_{r}^{1}, \Upsilon_{S}(t-r)\varphi (\nu_{r}^{2}-\mu_{r}^{2}, K)\right)\Big\lvert,\vspace*{0.2cm}\\\hspace*{2.5cm}\leq\Big\lvert \int_{\mathbb{R}^{d}}\Upsilon_{S}(t-r)\varphi(x)(\mu_{r}^{2}, K(x,.))(\mu_{r}^{1}-\nu_{r}^{1})(dx)\Big\lvert \vspace*{0.2cm}\\\hspace*{2.5cm}+ \displaystyle\Big\lvert\int_{\mathbb{R}^{d}}\Upsilon_{S}(t-r)\varphi(x)\int_{\mathbb{R}^{d}}K(x,y)(\mu_{r}^{2}-\nu_{r}^{2})(dy)\nu_{r}^{1}(dx)\Big\lvert,\vspace*{0.2cm}\\\hspace*{2.5cm}\leq \lVert \Upsilon_{S}(t-r)\varphi (\mu_{r}^{2}, K)\lVert_{\infty}\lVert \mu_{r}^{1}-\nu_{r}^{1} \lVert_{VT}+\lVert \Upsilon_{S}(t-r)\varphi \lVert_{\infty}\lVert K\lVert_{\infty}\lVert \mu_{r}^{2}-\nu_{r}^{2} \lVert_{VT}, \vspace*{0.2cm}\\\hspace*{2.5cm}\leq \lVert \varphi \lVert_{\infty}\lVert K\lVert_{\infty} \Big\{\lVert \mu_{r}^{1}-\nu_{r}^{1} \lVert_{VT}+\lVert \mu_{r}^{2}-\nu_{r}^{2} \lVert_{VT}\Big\}.$\vspace*{0.2cm}\\Thus using the equations (\ref{e9}), (\ref{e10}) and (\ref{e11}) respectively we obtain 
	\begin{align}
		\underset{\lVert \varphi \lVert_{\infty}\leq 1}{\sup}\lvert  (\mu_{t}^{1}-\nu_{t}^{1},\varphi)\lvert&\leq \lVert K\lVert_{\infty} \displaystyle\int_{0}^{t}\Big\{\lVert \mu_{r}^{1}-\nu_{r}^{1} \lVert_{VT}+\lVert \mu_{r}^{2}-\nu_{r}^{2} \lVert_{VT}\Big\}dr,\label{um}\\
		\underset{\lVert \varphi \lVert_{\infty}\leq 1}{\sup}\lvert  (\mu_{t}^{2}-\nu_{t}^{2},\varphi)\lvert&\leq \lVert K\lVert_{\infty}(1+\alpha) \displaystyle\int_{0}^{t}\Big\{\lVert \mu_{r}^{1}-\nu_{r}^{1} \lVert_{VT}+\lVert \mu_{r}^{2}-\nu_{r}^{2} \lVert_{VT}\Big\}dr,\label{um1}\\
		\underset{\lVert \varphi \lVert_{\infty}\leq 1}{\sup}\lvert  (\mu_{t}^{3}-\nu_{t}^{3},\varphi)\lvert&\leq \alpha\int_{0}^{t}\lVert \mu_{r}^{2}-\nu_{r}^{2} \lVert_{VT},\label{um3}
	\end{align}
	where the suppremum is taken over continuous functions with compact support. Consequently summing the equations (\ref{um}), (\ref{um1}) and (\ref{um3})  the result follows from Gronwall's Lemma.
\end{proof}
We can now complete the proof of Theorem \ref{th1}.\vspace*{0.1cm}\\ Since $(\mu^{S,N},\mu^{I,N},\mu^{R,N})_{N\geq 1}$ is tight in  $[\mathbb{D}(\mathbb{R}_{+},(\mathcal{M}_{F}(\mathbb{R}^{d}),v))]^{3}$, and all   converging subsequences of the sequence $(\mu^{S,N},\mu^{I,N},\mu^{R,N})_{N\geq 1}$  converge in law  in $[\mathbb{D}(\mathbb{R}_{+},(\mathcal{M}_{F}(\mathbb{R}^{d}),v))]^{3} $ to the same limit $(\mu^{S},\mu^{I},\mu^{R})$, where $\mathcal{M}_{F}(\mathbb{R}^{d})$ is equipped with the vague topology,  the sequence $(\mu^{S,N},\mu^{I,N},\mu^{R,N})_{N\geq 1}$  converge in law in  $[\mathbb{D}(\mathbb{R}_{+},(\mathcal{M}_{F}(\mathbb{R}^{d}),v))]^{3}$ towards  $(\mu^{S},\mu^{I},\mu^{R})$.
To extend this result to the weak topology, we use a criterion (Proposition 3) proved in \cite{qc}. Since from Proposition \ref{rtre}  the
limiting process $(\mu^{S},\mu^{I},\mu^{R})$ is continuous, it suffices to prove that the sequence \\ $\Big((\mu^{S,N},1),(\mu^{I,N},1),(\mu^{R,N},1)\Big)_{N\geq 1}$ converges in law to $\Big((\mu^{S},1),(\mu^{I},1),(\mu^{R},1)\Big)$ in $[\mathbb{D}(\mathbb{R}_{+},\mathbb{R})]^{3},$ which follows from and  Proposition \ref{ukkk} and the fact that: 
\begin{itemize}
	\item Proposition \ref{bnv} remains true for the functions $\varphi\in  C^{2}_{b}(\mathbb{R}^{d})$.
	\item Following the Proof of Proposition \ref{ll1}, we see  that the sequence \\ $\Big((\mu^{S,N},1),(\mu^{I,N},1),(\mu^{R,N},1)\Big)_{N\geq 1}$ is tight  in $[\mathbb{D}(\mathbb{R}_{+},\mathbb{R})]^{3}.$
	\item From Prokorov's Theorem we deduce the existence of a subsequence which converge in law towards $\Big((\mu^{S},1),(\mu^{I},1),(\mu^{R},1)\Big)$.
	\item Proposition \ref{e8} remains true  when the test function  $\varphi$ is a constant.
\end{itemize} 

 Finally since  the sequence $(\mu^{S,N},\mu^{I,N},\mu^{R,N})_{N\geq 1}$  weakly converge in $(\mathbb{D}(\mathbb{R}_{+},\mathcal{M}_{F}(\mathbb{R}^{d})))^{3}$ towards  $(\mu^{S},\mu^{I},\mu^{R})$ and   $(\mu^{S},\mu^{I},\mu^{R})$ is deterministic,  we have  convergence in probability.
\subsection{Existence of densities}
	The third assumptions   in $(H1),$  allow us to  obtain the following result (see Theorem 22 in \cite{sm}). 
\begin{rmq}
	Under the  assumption (H1), For $A\in \{S,I,R\},$ there exists  a  measurable function $\Upsilon_{A}(t)(x,y)$,  defined on $\mathbb{R}^{d}\times\mathbb{R}^{d}$, which is a density in $y\in \mathbb{R}^{d} $ and such that for each continuous function $\varphi$
	defined on $\mathbb{R}^{d},$ one has \vspace*{0.2cm}\\
	\hspace*{4.5cm} $\Upsilon_{A}(t)\varphi(x)=\displaystyle\int_{\mathbb{R}^{d}}\Upsilon_{A}(x,y)\varphi(y)dy$. \label{rvx}
\end{rmq}

	\begin{prp}
	 There exists $(f^{S}, f^{I},f^{R})\in L_{loc}^{\infty}(\mathbb{R}_{+},(L^{1}(\mathbb{R}^{d})^{3})$ which satisfies:
		\begin{align}
			\displaystyle\partial_{t}f_{t}^{S}(x)&=\mathcal{Q}_{S}^{*}f_{t}^{S}(x)-f_{t}^{S}(x)\int_{\mathbb{R}^{d}}K(x,y)f_{t}^{I}(y)dy, \nonumber\\
			\displaystyle\partial_{t}f_{t}^{I}(x)&=\mathcal{Q}_{I}^{*}f^{I}_{t}(x)+f_{t}^{S}(x)\int_{\mathbb{R}^{d}}K(x,y)f^{I}_{t}(y)dy-\alpha f_{t}^{I}(x),\nonumber\\
			\displaystyle\partial_{t}f^{R}_{t}(x)&=\mathcal{Q}_{R}^{*}f^{R}_{t}(x)+\alpha f^{I}_{t}(x),\label{aqw}
		\end{align}
	where $\mathcal{Q}_{A}^{*}$ is the adjoint operator. \label{gbn}
	\end{prp}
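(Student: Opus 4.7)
The plan is to extract the densities $f_t^A$ directly from the mild formulations (\ref{e9})--(\ref{e11}) using the transition density kernel $\Upsilon_A(t)(x,y)$ provided by Remark \ref{rvx}, and then to derive the reaction--diffusion system (\ref{aqw}) from the weak formulation (\ref{e6})--(\ref{e8}) by passing the differential operator $\mathcal{Q}_A$ onto its distributional adjoint $\mathcal{Q}_A^*$.

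For the density construction, I would first note that the deterministic initial measures $\mu_0^S$ and $\mu_0^I$ admit bounded densities: writing $g$ for the bounded density of $X_0^1$ granted by (H1), the initial empirical measure limit (already established in \cite{sb}) yields $\mu_0^S(dx) = [(1-p)1_A(x) + 1_{A^c}(x)]g(x)dx$ and $\mu_0^I(dx) = p\,1_A(x)g(x)dx$. Inserting $\Upsilon_S(t-r)\varphi(x) = \int \Upsilon_S(t-r)(x,y)\varphi(y)dy$ into (\ref{e9}) and applying Fubini gives $(\mu_t^S,\varphi) = \int \varphi(y) f_t^S(y)dy$ with
\begin{equation*}
f_t^S(y) = \int_{\mathbb{R}^d} \Upsilon_S(t)(x,y)\mu_0^S(dx) - \int_0^t\!\!\int_{\mathbb{R}^d}\Upsilon_S(t-r)(x,y)(\mu_r^I, K(x,\cdot))\mu_r^S(dx)\, dr.
\end{equation*}
Since $f_t^S \geq 0$ and $\|f_t^S\|_{L^1(\mathbb{R}^d)} = (\mu_t^S,1_{\mathbb{R}^d}) \leq 1$ uniformly in $t$, one has $f^S \in L^\infty_{\mathrm{loc}}(\mathbb{R}_+, L^1(\mathbb{R}^d))$. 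The analogous construction applied to (\ref{e10}) and (\ref{e11}) produces $f_t^I$ and $f_t^R$ with the same uniform $L^1$ bound.

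With the densities in hand, I would return to the weak formulation (\ref{e6}): for any $\varphi \in C_c^\infty(\mathbb{R}^d)$,
\begin{equation*}
\int \varphi\, f_t^S\, dx = \int \varphi\, f_0^S\, dx + \int_0^t\!\!\int (\mathcal{Q}_S \varphi)(x) f_r^S(x)\, dx\, dr - \int_0^t\!\!\int \varphi(x) f_r^S(x)\!\int K(x,y) f_r^I(y)\, dy\, dx\, dr.
\end{equation*}
Viewing $\mathcal{Q}_S^*$ as the formal distributional adjoint of $\mathcal{Q}_S$ and differentiating in $t$ then yields the first equation of (\ref{aqw}) in the distributional sense. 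Analogous manipulations applied to (\ref{e7}) and (\ref{e8}) yield the other two equations of the system.

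The main obstacle I anticipate is the rigorous justification of the density representation itself, namely Fubini applied to the double integral in $(r,x,y)$ involving the kernel $\Upsilon_S(t-r)(x,y)$ whose behaviour as $r \uparrow t$ is delicate. This should be handled by invoking the boundedness of $K$ (recall $K \in C_c(\mathbb{R}^d\times\mathbb{R}^d)$ by (H1)), which gives $(\mu_r^I, K(x,\cdot)) \leq \|K\|_\infty$, together with the probabilistic normalization $\int \Upsilon_A(t-r)(x,y)dy = 1$; combined with the bounded mass of $\mu_r^S$ this yields the $L^1$ estimate necessary to interchange orders of integration. A secondary but minor point is that $\mathcal{Q}_A^* f_t^A$ must be interpreted distributionally, since the Lipschitz regularity of $m$ and $\theta$ assumed in (H1) does not guarantee classical differentiability of the coefficients.
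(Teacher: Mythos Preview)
Your approach is correct and is in fact the alternative route that the paper itself acknowledges in the Remark immediately following the Corollary to Proposition~\ref{gbn}. The paper, however, proceeds differently: rather than starting from the measure-valued solution $(\mu^S,\mu^I,\mu^R)$ already furnished by Theorem~\ref{th1} and extracting densities via the mild formulation and the transition kernel, it constructs $(f^S,f^I,f^R)$ \emph{ab initio} by a Picard iteration. A sequence $(f^n,g^n,h^n)$ is defined by solving at each step a version of the system that is linear in the step-$(n{+}1)$ unknowns; nonnegativity of the iterates is obtained via the Feynman--Kac representation and a comparison argument; uniform $L^1$ bounds are derived inductively; and a Gronwall--Picard estimate shows the sequence is Cauchy in $L^\infty_{\mathrm{loc}}(\mathbb{R}_+,(L^1)^3)$. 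Only afterwards does the subsequent Corollary invoke the uniqueness result of Proposition~\ref{ukkk} to identify the constructed limit with the densities of $\mu_t^A$. Your route is shorter because it leverages the already-established existence of $(\mu^S,\mu^I,\mu^R)$ and obtains nonnegativity and the $L^1$ bound for free from the fact that each $\mu_t^A$ is a sub-probability measure; the paper's route is more self-contained in that it produces a solution to the PDE system~(\ref{aqw}) independently of the probabilistic limit, at the cost of the iteration machinery. Your identification of the Fubini step as the only genuine technical point, and its resolution via the boundedness of $K$ and the normalization $\int \Upsilon_A(t{-}r)(x,y)\,dy = 1$, is exactly right.
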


	\begin{proof}
		Let us recall that the initial measures $\mu_{0}^{S},$ $\mu_{0}^{I}$ are absolutly continuous with respect to the Lebesgue measure (see Theorem 3.1 in \cite{sb}).\\
	We construct by inductions a sequences of functions $(f^{n},g^{n},h^{n}),$ satisfying in a weak sense the following system.
		\begin{align}
			\displaystyle\partial_{t}f^{n+1}_{t}(x)&=\mathcal{Q}_{S}^{*}f^{n+1}_{t}(x)-f^{n+1}_{t}(x)\int_{\mathbb{R}^{d}}K(x,y)g_{t}^{n}(y)dy,\nonumber\\
			\displaystyle\partial_{t}g^{n+1}_{t}(x)&=\mathcal{Q}_{I}^{*}g^{n+1}_{t}(x)+f^{n+1}_{t}(x)\int_{\mathbb{R}^{d}}K(x,y)g_{t}^{n}(y)dy-\alpha g_{t}^{n+1}(x),\nonumber\\
			\displaystyle\partial_{t}h^{n+1}_{t}(x)&=\mathcal{Q}_{R}^{*}h^{n+1}_{t}(x)+\alpha g_{t}^{n+1}(x),\nonumber\\
			f_{0}^{n+1}(x)&=f_{0}^{S}(x),\hspace*{0.1cm} g_{0}^{n+1}(x)=f_{0}^{I}(x),\hspace*{0.1cm} h_{0}^{n+1}(x)=0. \label{im}
		\end{align}
		Thanks to the nonnegativity of $f_{0}^{S}$  and applying the Feyman-Kac formula,  we show that  $f^{n}$ is nonnegtive. The  non-negativity of   $g^{n}$, follows by recurrence and by using the comparaison principle, the Feyman Kac formula and the fact that $f_{0}^{I}$  is non-negative. Finaly  $h_{n}$ is non-negative by using the comparaison principle, the Feyman Kac formula.\\From system (\ref{im}),  it is easy to see that for any  $\varphi\in C_{c}(\mathbb{R}^{d})$,\vspace*{0.2cm}\\
		$  \displaystyle (f^{n+1}_{t},\varphi)=\int_{\mathbb{R}^{d}}f_{0}^{S}(x)\int_{\mathbb{R}^{d}}\Upsilon_{S}(t)(x,y)\varphi(y)dydx\\\hspace*{2cm}-\int_{0}^{t}\int_{\mathbb{R}^{d}}f^{n+1}_{r}(x)\int_{\mathbb{R}^{d}}K(x,u)g_{r}^{n}(u)du\int_{\mathbb{R}^{d}}\Upsilon_{S}(t-r)(x,y)\varphi(y)dydxdr,$
		\vspace*{0.2cm}\\$  \displaystyle (g^{n+1}_{t},\varphi)=\int_{\mathbb{R}^{d}}f_{0}^{I}(x)\int_{\mathbb{R}^{d}}\Upsilon_{I}(t)(x,y)\varphi(y)dydx\\\hspace*{2cm}+\int_{0}^{t}\int_{\mathbb{R}^{d}}f^{n+1}_{r}(x)\int_{\mathbb{R}^{d}}K(x,u)g_{r}^{n}(u)du\int_{\mathbb{R}^{d}}\Upsilon_{I}(t-r)(x,y)\varphi(y)dydxdr\vspace*{0.2cm}\\\hspace*{2cm}-\alpha\int_{0}^{t}\int_{\mathbb{R}^{d}}g_{r}^{n+1}(x)\int_{\mathbb{R}^{d}}\Upsilon_{I}(t-r)(x,y)\varphi(y)dydxdr,$
		\vspace*{0.2cm}\\$(h^{n+1}_{t},\varphi)=\displaystyle\alpha\int_{0}^{t}\int_{\mathbb{R}^{d}}g_{r}^{n+1}(x)\int_{\mathbb{R}^{d}}\Upsilon_{R}(t-r)(x,y)\varphi(y)dydxdr.$\vspace*{0.2cm}\\  Fubini's theorem yields, 
		\begin{align}
			\displaystyle f^{n+1}_{t}(y)&=\int_{\mathbb{R}^{d}}f_{0}^{S}(x)\Upsilon_{S}(t)(x,y)dx-\int_{0}^{t}\int_{\mathbb{R}^{d}}f^{n+1}_{r}(x)\int_{\mathbb{R}^{d}}K(x,u)g_{r}^{n}(u)du\Upsilon_{S}(t-r)(x,y)dxdr,\label{tym}\\
			\displaystyle g^{n+1}_{t}(y)&=\int_{\mathbb{R}^{d}}f_{0}^{I}(x)\Upsilon_{I}(t)(x,y)dx+\int_{0}^{t}\int_{\mathbb{R}^{d}}f^{n+1}_{r}(x)\int_{\mathbb{R}^{d}}K(x,u)g_{r}^{n}(u)du\Upsilon_{I}(t-r)(x,y)dxdr\nonumber\\&-\alpha\int_{0}^{t}\int_{\mathbb{R}^{d}}g_{r}^{n+1}(x)\Upsilon_{I}(t-r)(x,y)dxdr,\label{hp}\\
			h^{n+1}_{t}(y)&=\displaystyle\alpha\int_{0}^{t}\int_{\mathbb{R}^{d}}g_{r}^{n+1}(x)\Upsilon_{R}(t-r)(x,y)dxdr.\label{lmo}
		\end{align}
		Fisrt of all, since $\forall n \in \mathbb{N}$, $f^{n}_{t}\geq 0$ and $g^{n}_{t}\geq 0$,  $\displaystyle f^{n+1}_{t}(y)\leq\int_{\mathbb{R}^{d}}f_{0}^{S}(x)\Upsilon_{S}(t)(x,y)dx.$\vspace*{0.2cm}\\Thus integrating over $y\in\mathbb{R}^{d}$, using Fubini's Theorem and  the fact that $\displaystyle \int_{\mathbb{R}^{d}}\Upsilon_{S}(t)(x,y)dy=1$, $\forall t\geq 0,$ we see that 
		
		\begin{align}
			\underset{n}{\sup}\underset{0\leq t\leq T}{\sup}\lVert f_{t}^{n}\lVert_{L^{1}}&\leq \lVert f_{0}^{S}\lVert_{L^{1}}\leq 1.\label{ml}
		\end{align}
		The last inequality follows from the fact that if $H(x)$ is the density of the law of $X_{0}^{1},$\\ $f_{0}^{S}(x)=\{(1-p)1_{A}(x)+1_{A^{c}}(x)\}H(x)$ (see Theorem 3
		1 in \cite{sb}), where $A$ and $p$ have been defined in the  introduction.\\\\
		Moreover, since $\forall n \in \mathbb{N}$, $g^{n}_{t}\geq 0$, \vspace*{0.2cm}\\$\displaystyle g^{n+1}_{t}(y)\leq\int_{\mathbb{R}^{d}}f_{0}^{I}(x)\Upsilon_{I}(t)(x,y)dx+ \int_{0}^{t}\int_{\mathbb{R}^{d}}f^{n+1}_{r}(x)\int_{\mathbb{R}^{d}}K(x,u)g_{r}^{n}(u)du\Upsilon_{I}(t-r)(x,y)dxdr.$\vspace*{0.2cm}\\
		Thus integrating over $y\in\mathbb{R}^{d}$, using (\ref{ml}), Fubini's Theorem and Gronwall's Lemma, we easily deduce that
		\begin{align}
			\underset{n}{\sup}\underset{0\leq t\leq T}{\sup}\lVert g_{t}^{n}\lVert_{L^{1}}&\leq \lVert f_{0}^{I}\lVert_{L^{1}} \textrm{exp}(T\lVert K\lVert_{\infty} )\leq \textrm{exp}(T\lVert K\lVert_{\infty}),\label{mml}
		\end{align}
		where the  last inequality follows from the fact that if $H(x)$ is the density of the law of $X_{0}^{1},$\\ $f_{0}^{I}(x)=p1_{A}(x)H(x).$\\
		On the other hand, with the same argument as above, we deduce from (\ref{lmo}) and (\ref{mml}) that 
		\begin{align}
			\underset{n}{\sup}\underset{0\leq t\leq T}{\sup} \lVert h_{t}^{n}\lVert_{L^{1}}\leq \alpha T\textrm{exp}(T\lVert K\lVert_{\infty}).\label{mpm}
		\end{align}
		
		Let us now show  the convergence of the sequence $(f^{n},g^{n},h^{n})$ in $L^{\infty}_{loc}(\mathbb{R}_{+},[L^{1}(\mathbb{R}^{d})]^{3}).$\vspace*{0.2cm}\\
		A straightforward computation using (\ref{tym}),  and similar arguments as above yields \vspace*{0.2cm}\\
		$ \displaystyle f^{n+1}_{t}(y)-f^{n}_{t}(y)=-\int_{0}^{t}\int_{\mathbb{R}^{d}}(f^{n+1}_{r}(x)-f^{n}_{r}(x))\int_{\mathbb{R}^{d}}K(x,u)g_{r}^{n}(u)du\Upsilon_{S}(t-r)(x,y)dxdr\vspace*{0.2cm}\\\hspace*{3cm}+\int_{0}^{t}\int_{\mathbb{R}^{d}}f^{n}_{r}(x)\int_{\mathbb{R}^{d}}K(x,u)(g_{r}^{n-1}(u)-g_{r}^{n}(u))du\Upsilon_{S}(t-r)(x,y)dxdr$\vspace*{0.2cm}\\
		$\lVert f^{n+1}_{t}-f^{n}_{t} \lVert_{L^{1}}\leq \lVert K\lVert_{\infty}\{\underset{n}{\sup}\underset{0\leq t\leq T}{\sup} \lVert f_{t}^{n}\lVert_{L^{1}}+\underset{n}{\sup}\underset{0\leq t\leq T}{\sup} \lVert g_{t}^{n}\lVert_{L^{1}}\}\\\hspace*{8cm}\times\displaystyle\int_{0}^{t}\{\lVert f^{n+1}_{r}-f^{n}_{r} \lVert_{L^{1}}+\lVert g^{n}_{r}-g^{n-1}_{r} \lVert_{L^{1}}\}dr$ 
		\begin{align}
			&\leq C(T)\lVert K\lVert_{\infty}\int_{0}^{t}\{\lVert f^{n+1}_{r}-f^{n}_{r} \lVert_{L^{1}}+\lVert g^{n}_{r}-g^{n-1}_{r} \lVert_{L^{1}}\}dr.\label{fgkf}
		\end{align}

		Similarly, we have
		
		\begin{align}
			\lVert g^{n+1}_{t}-g^{n}_{t} \lVert_{L^{1}}\leq C(T)(1+\alpha)\lVert K\lVert_{\infty}\int_{0}^{t}\{\lVert g^{n}_{r}-g^{n-1}_{r} \lVert_{L^{1}}+\lVert g^{n+1}_{r}-g^{n}_{r} \lVert_{L^{1}}+\lVert f^{n+1}_{r}-f^{n}_{r} \lVert_{L^{1}}\}dr, \label{rtr}	
		\end{align}
		\begin{align}
			\lVert h^{n+1}_{t}-h^{n}_{t} \lVert_{L^{1}}\leq \alpha\int_{0}^{t}\lVert g^{n+1}_{r}-g^{n}_{r} \lVert_{L^{1}}dr. \label{rtrr}
		\end{align}

		Summing (\ref{fgkf}), (\ref{rtr}) and (\ref{rtrr}) and using Gronwall's Lemma, we have 
		\vspace*{0.2cm}\\$ \underset{0\leq r\leq t}{\sup}\Big\{\lVert f^{n+1}_{r}-f^{n}_{r} \lVert_{L^{1}}+\lVert g^{n+1}_{r}-g^{n}_{r} \lVert_{L^{1}}+\lVert h^{n+1}_{r}-h^{n}_{r} \lVert_{L^{1}}\Big\}$\vspace*{0.2cm}\\\hspace*{5cm}$\leq C(t)\displaystyle \int_{0}^{t} \underset{0\leq u\leq r}{\sup} \Big\{ \lVert f^{n}_{u}-f^{n-1}_{u} \lVert_{L^{1}} +\lVert g^{n}_{u}-g^{n-1}_{u} \lVert_{L^{1}}+\lVert h^{n}_{r}-h^{n-1}_{u} \lVert_{L^{1}}\Big\}dr,$ \vspace*{0.2cm}\\Picard's Lemma then yields \vspace*{0.2cm}\\\hspace*{1.5cm}$\displaystyle\sum\limits_{n} \underset{0\leq t\leq T}{\sup}\Big\{\lVert f^{n+1}_{t}-f^{n}_{t} \lVert_{L^{1}}+\lVert g^{n+1}_{t}-g^{n}_{t} \lVert_{L^{1}}+\lVert h^{n+1}_{t}-h^{n}_{t} \lVert_{L^{1}}\Big\} <\infty $, for any $T>0.$\vspace*{0.2cm}\\Therefore the sequence $(f^{n}, g^{n},h^{n})_{n}$ converge in $L^{\infty}_{loc}(\mathbb{R}_{+},(L^{1})^{3})$ towards $(f^{S},f^{I},f^{R})$ which satisfies by (\ref{ml}), (\ref{mml}) and (\ref{mpm})  respectively\vspace*{0.2cm}\\
		$\hspace*{1cm} \underset{0\leq t\leq T}{\sup}\lVert f_{t}^{S}\lVert_{L^{1}}\leq  1$,\hspace*{0.1cm} $\underset{0\leq t\leq T}{\sup}\lVert f^{I}_{t}\lVert_{L^{1}}\leq \textrm{exp}(T\lVert K\lVert_{\infty})$ and  $\underset{0\leq t\leq T}{\sup}\lVert f^{R}_{t}\lVert_{L^{1}}\leq T\alpha \textrm{exp}(T\lVert K\lVert_{\infty}\lVert).$\vspace*{0.2cm}\\Moreover it is easy to see that  $(f^{S},f^{I},f^{R})$ satisfies in the weak sense the  system (\ref{aqw}).
	\end{proof}

Since $(\mu_{t}^{S},\mu_{t}^{I},\mu_{t}^{R})$ satisfy (\ref{e6}), (\ref{e7}) and (\ref{e8}), from Proposition \ref{gbn} and \ref{ukkk}, we deduce the following result.
\begin{cor}
	For any $t\geq 0$, the measure $\mu_{t}^{S},$ $\mu_{t}^{I}$ and $\mu_{t}^{R}$ are aboslutely continuous with respect to the Lebesgue measure. Their density $(f^{S}, f^{I},f^{R})\in L_{loc}^{\infty}(\mathbb{R}_{+},(L^{1}(\mathbb{R}^{d})^{3}).$
\end{cor}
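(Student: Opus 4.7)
The plan is to identify $(\mu^S,\mu^I,\mu^R)$ with the triple of absolutely continuous measures built from the densities provided by Proposition \ref{gbn}. Let $(f^S,f^I,f^R)\in L^\infty_{loc}(\mathbb{R}_+,(L^1(\mathbb{R}^d))^3)$ be the weak solution of (\ref{aqw}) produced there, and set $\nu_t^A(dx):=f_t^A(x)\,dx$ for $A\in\{S,I,R\}$. By construction the initial values coincide with $\mu_0^S$, $\mu_0^I$, $\mu_0^R=0$, since $f_0^S$ and $f_0^I$ are precisely the densities of $\mu_0^S$ and $\mu_0^I$ by Theorem 3.1 of \cite{sb}.

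First, I would verify that $(\nu^S,\nu^I,\nu^R)$ satisfies the mild system (\ref{e9})--(\ref{e11}). The integral identities (\ref{tym}), (\ref{hp}), (\ref{lmo}) obtained in the proof of Proposition \ref{gbn} become, after passing to the $L^1$-limit in $n$ and pairing against $\varphi\in C_c(\mathbb{R}^d)$, exactly (\ref{e9})--(\ref{e11}) written with $\nu^A$ in place of $\mu^A$; equivalently, one may test the weak formulation of (\ref{aqw}) against $\varphi\in C^2_c(\mathbb{R}^d)$ to recover (\ref{e6})--(\ref{e8}) for the measure triple $\nu$.

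The main obstacle is to show that $\nu:=(\nu^S,\nu^I,\nu^R)$ belongs to $\Lambda$, since the crude bounds (\ref{ml}), (\ref{mml}), (\ref{mpm}) give only $\|f_t^S\|_{L^1}\le 1$ with merely exponential growth in $T$ for the two other $L^1$-norms. I would recover the required uniform bound by mass conservation: summing the three equations in (\ref{aqw}) cancels the reaction terms $\pm f^S(x)\int K(x,y)f^I(y)\,dy$ and $\pm\alpha f^I$, and each $\mathcal{Q}_A^*$ is the adjoint of an operator annihilating constants, so integration in $x$ formally yields $\|f_t^S\|_{L^1}+\|f_t^I\|_{L^1}+\|f_t^R\|_{L^1}=\|f_0^S\|_{L^1}+\|f_0^I\|_{L^1}\le 1$. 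Rigorously, one pairs (\ref{e6})--(\ref{e8}) with a smooth cutoff $\chi_R$ of $\mathbb{1}_{B_R}$ and lets $R\to\infty$, relying on the bounds (\ref{ml})--(\ref{mpm}) to control the boundary remainders.

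Once $\nu\in\Lambda$ is established, the conclusion follows at once. Both $(\mu^S,\mu^I,\mu^R)$ and $(\nu^S,\nu^I,\nu^R)$ lie in $\Lambda$ — the former because it is the limit in probability of the measure-valued processes $(\mu^{S,N},\mu^{I,N},\mu^{R,N})$, each of total mass at most $1$ — and both solve (\ref{e9})--(\ref{e11}) with the same initial data. Proposition \ref{ukkk} therefore forces $\mu_t^A=\nu_t^A$ for every $t\ge 0$ and $A\in\{S,I,R\}$, which yields the desired absolute continuity together with the asserted $L^\infty_{loc}$ regularity of the densities.
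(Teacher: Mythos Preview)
Your proof is correct and follows essentially the same strategy as the paper, which simply invokes Proposition \ref{gbn} and Proposition \ref{ukkk}: build the measure triple $\nu^A(dx)=f^A(x)\,dx$ from the densities of Proposition \ref{gbn}, observe it solves the same system as $(\mu^S,\mu^I,\mu^R)$, and conclude by uniqueness. Your additional mass-conservation step to place $(\nu^S,\nu^I,\nu^R)$ in $\Lambda$ fills in a detail the paper leaves implicit (and is in any case harmless, since the uniqueness proof in Proposition \ref{ukkk} only uses that the total masses are locally bounded in time, which already follows from (\ref{ml})--(\ref{mpm})).
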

\begin{rmq}
	From the system of equations in Theorem \ref{th1} above, one can also prove that the measure $\mu_{t}^{S},$ $\mu_{t}^{I}$ and $\mu_{t}^{R}$ are aboslutely continuous with respect to the Lebesgue measure, using this time  assumption  (H1) and the Feyman-kac formula and the fact that the law of the markovian process having $\mathcal{Q}_{S}^{*}$ or $\mathcal{Q}_{I}^{*}$ or $\mathcal{Q}_{R}^{*}$ as infinitesimal generator is absolutely continuous with respect to the Lebesgue measure. 
\end{rmq}

 \section{Central Limit Theorem}
In this section, we  will study the convergence  of the sequence of fluctuations processes\\
\[(U^{N}=\sqrt{N}(\mu^{S,N}-\mu^{S}),V^{N}=\sqrt{N}(\mu^{I,N}-\mu^{I}), W^{N}=\sqrt{N}(\mu^{R,N}-\mu^{R})),\] as $N\rightarrow \infty $, under the assuption (H2) below. 
Note that the trajectories of these processes belong to  $ (\mathbb{D}(\mathbb{R}_{+},\mathcal{E}(\mathbb{R}^{d})))^{3} $,   
where $ \mathcal{E}(\mathbb{R}^{d}) $ is the space of signed measures on $\mathbb{R}^{d}$. However, since the limit processes may be less regular than their approximations  we will first:
\begin{itemize}
\item Formulated the equations verified by   $ (U^{N},V^{N},W^{N}) .$
\item Fix the space in which the convergence results will be established.
\end{itemize}
Then we will study the convergence of the above sequence.\\\\
Letting $D=\lceil d/2\rceil$ (where $\lceil .\rceil$ is the upper integer part), the following is supposed to hold throughout this section.\\\\ \textbf{{\color{blue}Assumption (H2)}}: 
\begin{itemize}
\item For any $A\in \{S,I,R\},$ for any $\ell ,u \in \{1,2...d\},$ both functions  $\theta_{\ell,u}(A,.)$ and  $m_{\ell}(A,.)$ belong to $C^{3+D}_{b}(\mathbb{R}^{d})$.
\item $ K\in C^{2+D}_{c}(\mathbb{R}^{d}\times\mathbb{R}^{d}).$
\end{itemize}	
	
	 \subsection{System of evolution equations of the Processes  $ (U^{N},V^{N},W^{N})$ \label{cc1}} 
	Let $ \varphi \in C^{2}_{b}(\mathbb{R}^{d}),$  we have\vspace*{0.12cm}\\$\hspace*{0.cm}\displaystyle(\mu_{t}^{S,N},\varphi)=(\mu_{0}^{S,N},\varphi)+\int_{0}^{t}(\mu_{r}^{S,N}, \mathcal{Q}_{S}\varphi) dr-  \int_{0}^{t}\left(\mu_{r}^{S,N}, \varphi (\mu_{r}^{I,N}, K)\right) dr + M_{t}^{N,\varphi},$ \vspace*{0.13cm}\\
	$\hspace*{0.1cm} (\mu_{t}^{S},\varphi)=\displaystyle (\mu_{0}^{S},\varphi)+\int_{0}^{t}(\mu_{r}^{S},\mathcal{Q}_{S}\varphi) dr - \int_{0}^{t}\left(\mu_{r}^{S}, \varphi (\mu_{r}^{I}, K)\right) dr. $\vspace*{0.1cm}\\
	Thus
	\begin{align*}
		(U_{t}^{N},\varphi)&=\displaystyle(U_{0}^{N},\varphi) + \int_{0}^{t}(U_{r}^{N}, \mathcal{Q}_{S}\varphi) dr-  \int_{0}^{t} (U_{r}^{N}, \varphi(\mu^{I,N}_{r},K) ) dr  -  \int_{0}^{t} (\mu_{r}^{S}, \varphi(V^{N}_{r},K) ) dr + \sqrt{N}M_{t}^{N,\varphi}.
\end{align*}
	Hence  letting  $\widetilde{M}_{t}^{N,\varphi}=\sqrt{N}M_{t}^{N,\varphi}$, we  have
	\begin{align}
(U_{t}^{N},\varphi)=\displaystyle (U_{0}^{N},\varphi) + \int_{0}^{t}(U_{r}^{N},\mathcal{Q}_{S} \varphi) dr-   \int_{0}^{t} (U_{r}^{N}, G_{r}^{I,N}\varphi ) dr- \int_{0}^{t}  (V^{N}_{r},G_{r}^{S}\varphi ) dr   +\widetilde{M}_{t}^{N,\varphi},\label{c1}
	\end{align}
	and also 
	\begin{align}
	 \displaystyle (V_{t}^{N},\varphi)&=\displaystyle (V_{0}^{N},\varphi) + \int_{0}^{t}(V_{r}^{N},\mathcal{Q}_{I}\varphi) dr+  \int_{0}^{t} (U_{r}^{N}, G_{r}^{I,N}\varphi ) dr+ \int_{0}^{t} (V^{N}_{r},G_{r}^{S}\varphi ) dr  -\alpha\int_{0}^{t}(V_{r}^{N},\varphi)dr \nonumber\\&+\widetilde{L}_{t}^{N,\varphi},\label{c2}
\end{align}
	and 	
	\begin{align}	
 \displaystyle (W_{t}^{N},\varphi)=  \int_{0}^{t}(W_{r}^{N},\mathcal{Q}_{R} \varphi) dr +\alpha\int_{0}^{t}(V_{r}^{N},\varphi)dr+\widetilde{Y}_{t}^{N,\varphi}.\label{c3}
 	\end{align}
 
  Where $\forall x,y \in \mathbb{R}^{d}$,\\ \hspace*{3.2cm} $G_{r}^{I,N}\varphi(x)= \varphi(x)(\mu_{r}^{I,N},  K(x,.))=\displaystyle\varphi(x) \int_{\mathbb{R}^{d}}K(x,y)\mu_{r}^{I,N}(dy),$  \vspace*{0.1cm} \\\hspace*{3.2cm} $G_{r}^{S}\varphi(y)= (\mu_{r}^{S}, \varphi K(.,y))=\displaystyle\int_{\mathbb{R}^{d}}\varphi(x) K(x,y)\mu_{r}^{S}(dx).$
	\subsection{The  Space of convergence of the sequences $(U^{N},V^{N},W^{N})$}
 Throughout this Subsection $\sigma$ is an arbitrary positive real number and the following is assumed to hold:
	\vspace*{0.1cm}\\ \textbf{{\color{blue}Assumption (H3)}}:   $\mathbb{E}(\lvert X^{1}_{0}\lvert^{2\sigma})<\infty$.\\\\
	The next lemma follows easilly from the definition of $X_{t}^{i}$ (see (\ref{uith})), the fact that the functions $m$ and $\theta$ are bounded and the inequality of Burkholder-Davis-Gundy.
	\begin{lem} Under the assumption (H3),
for any   $T>0,$ there exists $C(T)>0$ such that,
		\begin{center}
		$\underset{1\leq i\leq N}{\sup}\mathbb{E}(\underset{0\leq t \leq T}{\sup}\lvert X^{i}_{t}\lvert^{2\sigma})<C(T).$ 
			\end{center}
		\label{lt1}
	\end{lem}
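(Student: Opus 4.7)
The plan is a direct moment estimate on the SDE (\ref{uith}), exploiting only that $m$ and $\theta$ are bounded and that (H3) controls the initial condition. Applying the elementary inequality $(a+b+c)^{2\sigma} \le C_{\sigma}(a^{2\sigma}+b^{2\sigma}+c^{2\sigma})$ (convexity if $2\sigma \ge 1$, subadditivity if $0 < 2\sigma < 1$) to
$$X^{i}_{t}=X_0^{i}+\int_{0}^{t}m(E^{i}_{r},X^{i}_{r})\,dr+\int_{0}^{t}\theta(E^{i}_{r},X^{i}_{r})\,dB_{r}^{i},$$
I would bound $|X_t^i|^{2\sigma}$ by the sum of three pieces, then take $\sup_{t\le T}$ and expectation. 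The initial term contributes $\mathbb{E}|X_0^i|^{2\sigma}=\mathbb{E}|X_0^1|^{2\sigma}<\infty$ by (H3), since the $X_0^i$ are i.i.d. The drift piece is controlled pointwise in $\omega$ by $T^{2\sigma}\|m\|_\infty^{2\sigma}$ (by H\"older or trivially since $m$ is bounded).

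For the stochastic integral, Burkholder--Davis--Gundy (which holds for every $p>0$) combined with boundedness of $\theta$ gives
$$\mathbb{E}\sup_{0\le t\le T}\left|\int_{0}^{t}\theta(E^{i}_{r},X^{i}_{r})\,dB_{r}^{i}\right|^{2\sigma}\le C_{\sigma}\,\mathbb{E}\left(\int_{0}^{T}|\theta(E_{r}^{i},X_{r}^{i})|^{2}\,dr\right)^{\sigma}\le C_{\sigma}\|\theta\|_{\infty}^{2\sigma}T^{\sigma}.$$
Summing the three bounds yields a constant $C(T)$ depending only on $T$, $\sigma$, $\|m\|_\infty$, $\|\theta\|_\infty$ and $\mathbb{E}|X_0^1|^{2\sigma}$; none of these depend on $i$ (or on $N$), which gives the asserted uniformity. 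There is essentially no obstacle: because the coefficients are bounded rather than of linear growth, one does not even need Gr\"onwall's lemma, which would be the standard fallback otherwise. This is precisely the argument sketched in the hint preceding the lemma.
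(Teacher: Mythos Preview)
Your argument is correct and is exactly the approach the paper has in mind: the sentence preceding the lemma points to the SDE~(\ref{uith}), the boundedness of $m$ and $\theta$, and the Burkholder--Davis--Gundy inequality, which is precisely what you invoke. There is nothing to add.
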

\begin{cor} Under the assumption (H3), for any  $T>0$,  there exists $C(T)>0,$ such that 
	\begin{center}
	$\underset{N\geq1}{\sup}\mathbb{E}(\underset{0\leq t \leq T}{\sup}(\mu^{S,N}_{t}, \lvert .\lvert^{2\sigma}))<C(T),$ and   $\underset{0\leq t \leq T}{\sup}(\mu^{S}_{t}, \lvert .\lvert^{2\sigma})<C(T).$
\end{center}
\label{cor32}
\end{cor}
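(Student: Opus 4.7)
The plan is to handle the two bounds separately: the first will follow directly from Lemma \ref{lt1} by dominating the empirical sum, while the second will be obtained by passing to the deterministic limit through a truncation of the unbounded weight $|\cdot|^{2\sigma}$.

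First I would dominate the empirical integral by
\[
(\mu^{S,N}_{t},|\cdot|^{2\sigma})= \frac{1}{N}\sum_{i=1}^{N} 1_{\{E^{i}_{t}=S\}}|X^{i}_{t}|^{2\sigma}\leq \frac{1}{N}\sum_{i=1}^{N}|X^{i}_{t}|^{2\sigma},
\]
so that $\underset{0\leq t\leq T}{\sup}(\mu^{S,N}_{t},|\cdot|^{2\sigma})\leq \frac{1}{N}\sum_{i=1}^{N}\underset{0\leq t\leq T}{\sup}|X^{i}_{t}|^{2\sigma}$. Taking expectations and applying Lemma \ref{lt1} uniformly in $i$ yields a constant $C(T)$ independent of $N$, which gives the first bound.

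For the bound on the limit measure, I would recall that $\mu^{S}_{t}$ is deterministic by Theorem \ref{th1} and that the limiting process is continuous in $t$ by Proposition \ref{rtre}, so the convergence of Theorem \ref{th1} implies $(\mu^{S,N}_{t},\psi)\to (\mu^{S}_{t},\psi)$ in probability for every fixed $t$ and every bounded continuous $\psi$. I would then introduce the truncation $\varphi_{R}(x)=|x|^{2\sigma}\wedge R^{2\sigma}$, for $R>0$, which is bounded and continuous. Since $(\mu^{S,N}_{t},\varphi_{R})\leq R^{2\sigma}$ almost surely, bounded convergence upgrades the convergence in probability to convergence in expectation, and combined with the first bound one obtains
\[
(\mu^{S}_{t},\varphi_{R})=\lim_{N\to\infty}\mathbb{E}\big[(\mu^{S,N}_{t},\varphi_{R})\big]\leq \underset{N\geq 1}{\sup}\mathbb{E}\Big[\underset{0\leq s\leq T}{\sup}(\mu^{S,N}_{s},|\cdot|^{2\sigma})\Big]\leq C(T),
\]
uniformly in $R>0$ and $t\in[0,T]$. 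Monotone convergence as $R\to\infty$ then gives $(\mu^{S}_{t},|\cdot|^{2\sigma})\leq C(T)$ pointwise in $t$, whence the second bound after taking the supremum over $t\in[0,T]$.

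I do not anticipate any serious obstacle; the only slightly delicate point is that $|\cdot|^{2\sigma}$ is unbounded so the weak convergence of Theorem \ref{th1} cannot be applied directly, but the truncation combined with the uniform moment estimate from the first step handles this in the standard way.
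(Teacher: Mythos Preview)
Your proof is correct and follows precisely the approach the paper has in mind: the first bound is an immediate consequence of Lemma \ref{lt1} via the domination $(\mu^{S,N}_{t},|\cdot|^{2\sigma})\le \frac{1}{N}\sum_{i}|X^{i}_{t}|^{2\sigma}$, and the paper states the result as a corollary without further argument. For the second bound the paper gives no details either; your truncation-plus-monotone-convergence argument is the standard way to transfer the uniform moment estimate to the weak limit, and it is fully justified here since $\mu^{S}$ is continuous (Proposition \ref{rtre}) and deterministic, so Theorem \ref{th1} yields $(\mu^{S,N}_{t},\varphi_{R})\to(\mu^{S}_{t},\varphi_{R})$ in probability for each fixed $t$ and bounded continuous $\varphi_{R}$.
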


	\begin{lem}
		For every fixed $y\in \mathbb{R^{d}}$, $\ell\in \{1,2......d\},$   the mapping $\delta_{y},\mathcal{P}_{y}^{\ell}:H^{s,\sigma}\rightarrow \mathbb{R}$ defined by 
		 $\delta_{y}\varphi =\varphi(y)$ and $ \mathcal{P}_{y}^{\ell}\varphi=\frac{\partial}{\partial 
		 	y_{\ell}}\varphi(y)$ are continuous for $s>d/2$ and $s>1+d/2$ respectively and 
		 \begin{align}
		 \lVert \delta_{y}\lVert_{-s,\sigma}&\leq C(1+\lvert y\lvert^{\sigma})\quad \textrm{if s>d/2}, \label{rr1}\\
		 \lVert \mathcal{P}_{y}^{\ell}\lVert_{-s,\sigma}&\leq C(1+\lvert y\lvert^{\sigma}) \quad \textrm{if s>d/2+1}. \label{r2}
		   \end{align}
	   \label{rrrr}
	\end{lem}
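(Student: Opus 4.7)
The plan is to reduce both bounds directly to the continuous embedding \eqref{em1}: for any $\ell\ge 0$ and $s>d/2+\ell$, one has $H^{s,\sigma}(\mathbb{R}^d)\hookrightarrow C^{\ell,\sigma}(\mathbb{R}^d)$, so there exists $C>0$ (depending only on $s,\sigma,\ell,d$) such that $\|\varphi\|_{C^{\ell,\sigma}}\le C\|\varphi\|_{s,\sigma}$ for every $\varphi\in H^{s,\sigma}(\mathbb{R}^d)$. Once this is at hand, both statements are extracted by just unwinding the definition of the $C^{\ell,\sigma}$ norm at the single point $y$.

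For \eqref{rr1}, I would take $\ell=0$: under the assumption $s>d/2$ the embedding gives
\[
\sup_{x\in\mathbb{R}^d}\frac{|\varphi(x)|}{1+|x|^{\sigma}}\;\le\;\|\varphi\|_{C^{0,\sigma}}\;\le\;C\,\|\varphi\|_{s,\sigma}.
\]
Specializing $x=y$ yields $|\delta_y\varphi|=|\varphi(y)|\le C(1+|y|^{\sigma})\|\varphi\|_{s,\sigma}$, which shows that $\delta_y\in H^{-s,\sigma}(\mathbb{R}^d)$ and gives the announced bound on its dual norm.

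For \eqref{r2}, I would take $\ell=1$: under the assumption $s>d/2+1$ the same embedding gives
\[
\sup_{x\in\mathbb{R}^d}\frac{|\partial_{x_\ell}\varphi(x)|}{1+|x|^{\sigma}}\;\le\;\|\varphi\|_{C^{1,\sigma}}\;\le\;C\,\|\varphi\|_{s,\sigma},
\]
and evaluating at $x=y$ yields $|\mathcal{P}_y^\ell\varphi|\le C(1+|y|^{\sigma})\|\varphi\|_{s,\sigma}$, as required. In both cases the linearity of $\delta_y$ and of $\mathcal{P}_y^\ell$ is obvious from their definitions, so the bounds above immediately give their continuity as maps into $\mathbb{R}$.

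There is no real obstacle here beyond invoking the embedding \eqref{em1}; the only point to check carefully is that the constants $C$ in the bounds come from the operator norm of the embedding and are independent of $y$, the $y$-dependence being entirely absorbed into the factor $1+|y|^{\sigma}$ produced by the weight in the definition of $\|\cdot\|_{C^{\ell,\sigma}}$.
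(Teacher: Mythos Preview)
Your proof is correct and is essentially identical to the paper's own argument: the paper also invokes the embedding \eqref{em1} to get $|\varphi(y)|\le(1+|y|^{\sigma})\|\varphi\|_{C^{0,\sigma}}\le C(1+|y|^{\sigma})\|\varphi\|_{s,\sigma}$, and says that \eqref{r2} follows similarly. You have simply spelled out the $\ell=1$ case explicitly, which the paper leaves to the reader.
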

\begin{proof}Continuity follows easily from Sobolev injections.
	On the other hand for every function $\varphi \in H^{s,\sigma}(\mathbb{R}^{d})$ one deduce from $(\ref{em1})$ (see subsection \ref{sec3}) that: \begin{center}
	$\lvert \varphi(y)\lvert\leq (1+\lvert y\lvert^{\sigma})\lVert \varphi \lVert_{C^{0,\sigma}} \leq C(1+\lvert y\lvert^{\sigma})\lVert \varphi \lVert_{s,\sigma}.$
\end{center} 
The inequality  	$(\ref{r2})$ is proved in a similar way.
\end{proof}
\begin{cor}
	If $(\varphi_{p})_{p\geq 1}$ is a complete orthonormal basis in $H^{s,\sigma}(\mathbb{R}^{d}),$ we have
	
	\begin{center}
	$\lVert \delta_{y}\lVert_{-s,\sigma}^{2}=\sum\limits_{p\geq 1} (\varphi_{p}(y))^{2} \leq C(1+\lvert y\lvert^{2\sigma})$, \quad if $s>d/2$ \\
	$\lVert \mathcal{P}^{\ell}_{y}\lVert_{-s,\sigma}^{2}=\sum\limits_{p\geq 1} (\frac{\partial}{\partial y_{\ell}}\varphi_{p}(y))^{2} \leq C(1+\lvert y\lvert^{2\sigma}),$ \quad if $s>d/2+1$.
\end{center}
$\label{r3}$
\end{cor}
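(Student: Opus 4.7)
The plan is to derive the two identities by viewing $\delta_y$ and $\mathcal{P}_y^\ell$ as elements of the dual Hilbert space $H^{-s,\sigma}(\mathbb{R}^d)$, then expand their squared norms in terms of the action on the given orthonormal basis, and finally import the pointwise estimates already proved in Lemma \ref{rrrr}.

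First I would invoke the standard Hilbert-space fact: if $H$ is a separable Hilbert space with orthonormal basis $(\varphi_p)_{p\geq 1}$, and $L\in H^{*}$ is a bounded linear functional, then by the Riesz representation theorem there is a unique $u_L\in H$ with $L(\varphi)=\langle u_L,\varphi\rangle_H$ and $\lVert L\rVert_{H^*}=\lVert u_L\rVert_H$. Expanding $u_L$ in the basis gives $u_L=\sum_{p\geq 1}L(\varphi_p)\varphi_p$, and Parseval's identity yields
\[
\lVert L\rVert_{H^*}^{2}=\lVert u_L\rVert_H^{2}=\sum_{p\geq 1}\lvert L(\varphi_p)\rvert^{2}.
\]
Applied with $H=H^{s,\sigma}(\mathbb{R}^d)$ (which is a separable Hilbert space, as recalled in Section \ref{sec3}) and $L=\delta_y$ for $s>d/2$, this gives the identity $\lVert\delta_y\rVert_{-s,\sigma}^{2}=\sum_{p\geq 1}(\varphi_p(y))^2$; taking $L=\mathcal{P}_y^\ell$ for $s>d/2+1$ gives the corresponding identity for the partial derivatives. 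Lemma \ref{rrrr} guarantees that $\delta_y$ and $\mathcal{P}_y^\ell$ are indeed in the dual in the respective ranges of $s$, so the application is legitimate.

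To obtain the announced bound, I would simply square the estimates \eqref{rr1} and \eqref{r2}: since $\lVert\delta_y\rVert_{-s,\sigma}\leq C(1+\lvert y\rvert^{\sigma})$, we get
\[
\lVert\delta_y\rVert_{-s,\sigma}^{2}\leq C^{2}(1+\lvert y\rvert^{\sigma})^{2}\leq 2C^{2}(1+\lvert y\rvert^{2\sigma}),
\]
using $(a+b)^2\leq 2(a^2+b^2)$, and redefining the constant $C$ absorbs the factor $2C^2$. The same squaring argument applied to \eqref{r2} delivers the bound for $\lVert\mathcal{P}_y^\ell\rVert_{-s,\sigma}^2$.

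There is no real obstacle here: the only slightly delicate point is to note that the series $\sum_{p\geq 1}(\varphi_p(y))^2$ is genuinely finite, which is not obvious a priori from the basis expansion alone; it is precisely Lemma \ref{rrrr} that rules this out by providing the continuity of $\delta_y$ and $\mathcal{P}_y^\ell$ on $H^{s,\sigma}(\mathbb{R}^d)$ for the indicated values of $s$. Once this continuity is in hand, the corollary is an immediate consequence of Parseval's identity.
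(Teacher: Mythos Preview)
Your proof is correct and matches the paper's intended argument: the corollary is stated without proof precisely because it is an immediate consequence of Lemma \ref{rrrr} via the Riesz--Parseval identity in the Hilbert space $H^{s,\sigma}$, together with squaring the bounds \eqref{rr1} and \eqref{r2}. You have filled in exactly those standard details.
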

	  \begin{prp} Under the assumption (H3),		every limit point $\mathcal{M}^{1}$ of the seguence $(\widetilde{M}^{N})_{N\geq 1}$ satisfies,\vspace*{0.1cm}\\
		\hspace*{3.3cm} $
		\forall T\geq 0,\hspace*{0.2cm} \underset{0\leq t\leq T}{\sup}\mathbb{E}(\lVert \mathcal{M}^{1}_{t}\lVert_{-s,\sigma}^{2})<\infty\quad$ if   $ s>1+d/2$.\label{add}
	\end{prp}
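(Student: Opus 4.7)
My plan is to first prove the desired bound for $\widetilde{M}^N$ itself, uniformly in $N$ and $t\in[0,T]$, and then transfer it to any limit point via lower semicontinuity. The natural tool is Parseval's identity in the Hilbert space $H^{-s,\sigma}(\mathbb{R}^d)$: fixing an orthonormal basis $(\varphi_p)_{p\geq 1}$ of $H^{s,\sigma}(\mathbb{R}^d)$, I write
\[
\mathbb{E}\bigl\lVert \widetilde{M}^N_t\bigr\rVert_{-s,\sigma}^{2}=\sum_{p\geq 1}\mathbb{E}\bigl(\widetilde{M}_t^{N,\varphi_p}\bigr)^{2}=\sum_{p\geq 1}\mathbb{E}\langle \widetilde{M}^{N,\varphi_p}\rangle_t,
\]
the last equality since each $(\widetilde{M}^N_t,\varphi_p)$ is a square-integrable martingale. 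The quadratic variation was already computed in the tightness proof (Proposition \ref{ll1}): after rescaling by $N$,
\[
\langle \widetilde{M}^{N,\varphi}\rangle_t=\int_0^t\!\bigl(\mu_r^{S,N},\varphi^{2}(\mu_r^{I,N},K)\bigr)dr+\int_0^t\!\bigl(\mu_r^{S,N},(\nabla\varphi)^{T}(\theta\theta^{t})(S,\cdot)\nabla\varphi\bigr)dr .
\]

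Next I would invoke Fubini-Tonelli to move $\sum_p$ inside both the time integral and the measure $\mu_r^{S,N}$, producing
\[
\sum_{p}\mathbb{E}\langle \widetilde{M}^{N,\varphi_p}\rangle_t=\mathbb{E}\!\int_0^t\!\Bigl(\mu_r^{S,N},(\mu_r^{I,N},K(\cdot,y))\!\sum_{p}\!\varphi_p^{2}(y)+\!\sum_{p}(\nabla\varphi_p)^{T}(\theta\theta^{t})(S,y)\nabla\varphi_p(y)\Bigr)(dy)\,dr .
\]
Here Corollary \ref{r3} provides exactly what is needed: for $s>1+d/2$ (which is the hypothesis),
\[
\sum_{p}\varphi_p(y)^{2}=\lVert\delta_y\rVert_{-s,\sigma}^{2}\leq C(1+|y|^{2\sigma}),\qquad \sum_{p}\bigl(\partial_\ell\varphi_p(y)\bigr)^{2}=\lVert\mathcal{P}^{\ell}_y\rVert_{-s,\sigma}^{2}\leq C(1+|y|^{2\sigma}) .
\]
Combined with $\lVert K\rVert_\infty<\infty$ and the boundedness of $\theta$ from Assumption (H2), this reduces everything to a uniform bound on $\mathbb{E}(\mu_r^{S,N},1+|\cdot|^{2\sigma})$, which is precisely the content of Corollary \ref{cor32} under (H3). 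I therefore obtain
\[
\sup_{N\geq 1}\sup_{0\leq t\leq T}\mathbb{E}\bigl\lVert\widetilde{M}^N_t\bigr\rVert_{-s,\sigma}^{2}\leq C(T)<\infty .
\]

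It remains to transfer this to $\mathcal{M}^1$. Since the jumps of $\widetilde{M}^{N,\varphi}$ are of order $\lVert\varphi\rVert_{\infty}/\sqrt{N}$, the same argument as in Proposition \ref{rtre} shows that the limit $\mathcal{M}^1$ lies in $\mathbb{C}(\mathbb{R}_+,H^{-s,\sigma}(\mathbb{R}^d))$; hence every $t\geq 0$ is a continuity point and $\widetilde{M}^N_t\Rightarrow\mathcal{M}^1_t$ in law in $H^{-s,\sigma}$. By Skorokhod's representation and the lower semicontinuity of the norm,
\[
\mathbb{E}\bigl\lVert\mathcal{M}^1_t\bigr\rVert_{-s,\sigma}^{2}\leq\liminf_{N}\mathbb{E}\bigl\lVert\widetilde{M}^N_t\bigr\rVert_{-s,\sigma}^{2}\leq C(T) ,
\]
which is the conclusion. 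The main obstacle I expect is the legitimacy of swapping $\sum_p$ with the measure $\mu_r^{S,N}$, which however is justified by Tonelli applied to the pointwise identity $\sum_p\varphi_p(y)^{2}=\lVert\delta_y\rVert_{-s,\sigma}^{2}$ from Corollary \ref{r3} (and similarly for the gradient term); once that is granted, the requirement $s>1+d/2$ appears naturally as the threshold making both series pointwise summable with the polynomial weight compatible with (H3).
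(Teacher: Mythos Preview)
Your argument is correct and uses the same core ingredients as the paper (Parseval in $H^{-s,\sigma}$, Corollary~\ref{r3} for the pointwise sums $\sum_p\varphi_p(y)^2$ and $\sum_p(\partial_\ell\varphi_p(y))^2$, and Corollary~\ref{cor32} for the moment bound), but the route is genuinely different. The paper first identifies the quadratic variation of the \emph{limit} martingale $(\mathcal{M}^1,\varphi)$ as the deterministic expression $\int_0^t(\mu_r^S,\varphi^2(\mu_r^I,K))\,dr+\int_0^t(\mu_r^S,(\nabla\varphi)^T(\theta\theta^t)(S,\cdot)\nabla\varphi)\,dr$ --- relying on the law of large numbers (Theorem~\ref{th1}) and a forward reference to the Gaussianity established later in Proposition~\ref{ffg} --- and then bounds $\sum_p\langle(\mathcal{M}^1,\varphi_p)\rangle_t$ directly. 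You instead bound $\mathbb{E}\lVert\widetilde{M}^N_t\rVert_{-s,\sigma}^2$ uniformly in $N$ at the prelimit level and transfer the bound to $\mathcal{M}^1$ via continuity of the limit and Fatou/lower semicontinuity. Your approach has the advantage of being self-contained (no forward reference needed) and of yielding, as an immediate byproduct, the uniform-in-$N$ estimate that the paper states separately as Corollary~\ref{ct}; the paper's approach has the advantage of making the limiting covariance structure explicit at this stage.
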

	
	\begin{proof}
		We have \\ $\widetilde{M}_{t}^{N,\varphi}= \displaystyle- \frac{1}{\sqrt{N}} \sum_{i=1}^{N} \int_{0}^{t} \int_{0}^{\infty}1_{\{E_{r^{-}}^{i}=S\}}\varphi(X_{r}^{i})1_{\{u\leq \frac{1}{N} \sum_{j=1}^{N} K(X_{r}^{i},X_{r}^{j}) 1_{\{E_{r}^{j}=I\}}\}} \overline{M}^{i}(dr,du) \vspace*{0.12cm} \\\hspace*{1.5cm}+\frac{1}{\sqrt{N}} \displaystyle\sum_{i=1}^{N} \int_{0}^{t}1_{\{E_{r}^{i}=S\}}\bigtriangledown\varphi(X_{r}^{i})\theta(S, X_{r}^{i})dB_{r}^{i},$
	\begin{align*}
		<\widetilde{M}^{N,\varphi}>_{t} &= \displaystyle \int_{0}^{t}\left(\mu_{r}^{S,N}, \varphi^{2} (\mu_{r}^{I,N}, K)\right)dr\\&+  \int_{0}^{t} \Big(\mu_{r}^{S,N}, \sum\limits_{1\leq \ell\leq d}\big(\frac{\partial \varphi}{\partial x_{\ell}}\big)^{2}\sum\limits_{1\leq u\leq d}\theta^{2}_{\ell,u}(S,.)+2\sum\limits_{\underset{1\leq u\leq d}{\underset{1\leq e\leq d}{1\leq \ell\leq d-1}}}\frac{\partial \varphi}{\partial x_{\ell}}\frac{\partial \varphi}{\partial x_{u}}\theta_{\ell,e}(S,.)\theta_{u,e}(S,.)\Big) dr,
	\end{align*}
		and   it follows from  Theorem \ref{th1}, that 	\begin{align*}
			<\widetilde{M}^{N,\varphi}>_{t} &\xrightarrow{P} \displaystyle \int_{0}^{t}\left(\mu_{r}^{S}, \varphi^{2} (\mu_{r}^{I}, K)\right)dr\\&+  \int_{0}^{t} \Big(\mu_{r}^{S}, \sum\limits_{1\leq \ell\leq d}\big(\frac{\partial \varphi}{\partial x_{\ell}}\big)^{2}\sum\limits_{1\leq u\leq d}\theta^{2}_{\ell,u}(S,.)+2\sum\limits_{\underset{1\leq e\leq d}{\underset{\ell+1\leq u\leq d}{1\leq \ell\leq d-1}}}\frac{\partial \varphi}{\partial x_{\ell}}\frac{\partial \varphi}{\partial x_{u}}\theta_{\ell,e}(S,.)\theta_{u,e}(S,.)\Big) dr.
		\end{align*}
		Furthemore  \\\\
		$ \displaystyle \int_{0}^{t}\left(\mu_{r}^{S}, \varphi^{2} (\mu_{r}^{I}, K)\right)dr\\\hspace*{4cm}+  \int_{0}^{t} \Big(\mu_{r}^{S}, \sum\limits_{\underset{1\leq u\leq d}{1\leq \ell\leq d}}\big(\frac{\partial \varphi}{\partial x_{\ell}}\big)^{2}\theta^{2}_{l,u}(S,.)+2\sum\limits_{\underset{1\leq e\leq d}{\underset{\ell+1\leq u\leq d}{1\leq \ell\leq d-1}}}\frac{\partial \varphi}{\partial x_{\ell}}\frac{\partial \varphi}{\partial x_{u}}\theta_{\ell,e}(S,.)\theta_{u,e}(S,.)\Big) dr,$ \\being the quadratic variation of a Gaussian martingale (we refer to Proposition \ref{ffg} below for the Gaussian property ) of the form $(\mathcal{M}^{1},\varphi)$,
		our aim  is to find the smallest value of s for which $\mathbb{E}(\lVert \mathcal{M}_{t}^{1}\lVert_{-s,\sigma}^{2})<\infty$. With again $(\varphi_{p})_{p\geq 1}$ an orthonormal basis of $H^{s,\sigma}(\mathbb{R}^{d}),$ we have\vspace*{0.13cm}\\ 
		$\mathbb{E}(\lVert \mathcal{M}_{t}^{1}\lVert_{-s,\sigma}^{2})=\mathbb{E}(\sum\limits_{p\geq 1}\lvert (\mathcal{M}_{t}^{1},\varphi_{p})\lvert^{2})$
		=$\sum\limits_{p\geq 1 }\mathbb{E}( <(\mathcal{M}^{1},\varphi_{p})>_{t}). $\vspace*{0.1cm}\\From  Corollary \ref{cor32} and  Corollary \ref{r3} and Asumption (H3), we  have \\
		$\sum\limits_{p\geq1}<\mathcal{M}^{1},\varphi_{p}>_{t}=\displaystyle\sum\limits_{p\geq 1}\Big\{\int_{0}^{t}\left(\mu_{r}^{S},\varphi_{p}^{2}
		(\mu_{r}^{I}, K)\right)dr\\\hspace*{2.5cm}+\displaystyle\int_{0}^{t} \Big(\mu_{r}^{S}, \sum\limits_{1\leq \ell\leq d}\big(\frac{\partial \varphi_{p}}{\partial x_{\ell}}\big)^{2}\sum\limits_{1\leq u\leq d}\theta^{2}_{\ell,u}(S,.)+2\sum\limits_{\underset{1\leq e\leq d}{\underset{\ell+1\leq u\leq d}{1\leq \ell\leq d-1}}}\frac{\partial \varphi_{p}}{\partial x_{\ell}}\frac{\partial \varphi_{p}}{\partial x_{u}}\theta_{\ell,e}(S,.)\theta_{u,e}(S,.)\Big) dr\Big\}, $\\
		$\hspace*{1cm}=\displaystyle\int_{0}^{t}\int_{\mathbb{R}^{d}}\left(\int_{\mathbb{R}^{d}}K(x,y)\mu_{r}^{I}(dy)\right)\sum\limits_{p\geq 1}\varphi_{p}^{2}(x)\mu_{r}^{S}(dx)dr\\+\int_{0}^{t}\int_{\mathbb{R}^{d}}\Big\{\sum\limits_{1\leq \ell\leq d}\sum\limits_{1\leq u\leq d}\theta^{2}_{\ell,u}(S,x)\sum\limits_{p\geq 1} \big(\frac{\partial \varphi_{p}}{\partial x_{\ell}}\big)^{2}+2\sum\limits_{\underset{1\leq e\leq d}{\underset{\ell+1\leq u\leq d}{1\leq \ell\leq d-1}}}\theta_{\ell,e}(S,x)\theta_{u,e}(S,x)\sum\limits_{p\geq 1} \frac{\partial \varphi_{p}}{\partial x_{\ell}}(x)\frac{\partial \varphi_{p}}{\partial x_{u}}(x)\Big\}\mu_{r}^{S}(dx)dr,$ \\
		$\hspace*{1cm}\leq\displaystyle\lVert K\lVert_{\infty}\int_{0}^{t}\int_{\mathbb{R}^{d}}\sum\limits_{p\geq 1}\varphi_{p}^{2}(x)\mu_{r}^{S}(dx)dr+\int_{0}^{t}\int_{\mathbb{R}^{d}}\sum\limits_{\underset{1\leq u\leq d}{1\leq \ell\leq d}}\lVert\theta^{2}_{\ell,u}\lVert_{\infty}\sum\limits_{p\geq 1} \big(\frac{\partial \varphi_{p}}{\partial x_{\ell}}\big)^{2}(x)\mu_{r}^{S}(dx)dr\\\hspace*{1cm}+4\sum\limits_{\underset{1\leq e\leq d}{\underset{\ell+1\leq u\leq d}{1\leq \ell\leq d-1}}}\lVert\theta_{\ell,e}\lVert_{\infty}\lVert\theta_{u,e}\lVert_{\infty}\int_{0}^{t}\int_{\mathbb{R}^{d}}\sum\limits_{p\geq 1}\Big\{ \Big(\frac{\partial \varphi_{p}}{\partial x_{\ell}}(x)\Big)^{2}+\Big(\frac{\partial \varphi_{p}}{\partial x_{u}}(x)\Big)^{2}\Big\}\mu_{r}^{S}(dx)dr,$\\$\hspace*{1cm}\leq C \displaystyle\int_{0}^{t}\int_{\mathbb{R}^{d}}(1+\lvert x\lvert^{2\sigma})\mu_{r}^{S}(dx)dr+C(d)\int_{0}^{t}\int_{\mathbb{R}^{d}}(1+\lvert x\lvert^{2\sigma})\mu_{r}^{S}(dx)dr,$\vspace*{0.17cm}\\$\hspace*{1cm}\leq C(d,T).$      
	\end{proof} 
	By  Doob's inequality and by calculations similar to those done above we obtain the following result.
	\begin{cor} Under the asumption (H4),
		for any $ T>0$, $s>1+D$, $ \exists \hspace*{0.1cm}C(T)>0,$   such that: $\vspace*{0.12cm}$\\ $\underset{N\geq 1}{\sup}\mathbb{E}(\underset{0\leq t \leq T}{\sup}\lVert \widetilde{M}_{t}^{N} \lVert_{-s,\sigma}^{2} )\leq C(T),  \hspace{0.1cm} \underset{N\geq 1}{\sup}\mathbb{E}(\underset{0\leq t \leq T}{\sup}\lVert \widetilde{L}_{t}^{N} \lVert_{-s,\sigma}^{2})\leq C(T),\hspace{0.1cm} \underset{N\geq 1}{\sup}\mathbb{E}(\underset{0\leq t \leq T}{\sup}\lVert \widetilde{Y}_{t}^{N} \lVert_{-s,\sigma}^{2} )\leq C(T).$ \label{ct}\\
	\end{cor}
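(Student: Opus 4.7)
The plan is to view $\widetilde{M}^{N}$, $\widetilde{L}^{N}$ and $\widetilde{Y}^{N}$ as c\`adl\`ag square-integrable martingales with values in the separable Hilbert space $H^{-s,\sigma}(\mathbb{R}^{d})$, and to combine the Hilbert-valued Doob maximal inequality with exactly the quadratic-variation computation already performed in the proof of Proposition \ref{add}. The extra ingredient compared with that proposition is that we must carry the estimate uniformly in $N$ and uniformly in $t\in[0,T]$, the uniformity in $N$ being supplied by Corollary \ref{cor32}.

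Concretely, fix a complete orthonormal basis $(\varphi_{p})_{p\geq 1}$ of $H^{s,\sigma}(\mathbb{R}^{d})$. For each $p$, $(\widetilde{M}^{N,\varphi_{p}}_{t})_{t\geq 0}$ is a real-valued square-integrable martingale, so $\widetilde{M}^{N}$ is an $H^{-s,\sigma}$-valued square-integrable martingale. Doob's inequality in Hilbert space then gives
\[
\mathbb{E}\Bigl(\sup_{0\leq t\leq T}\lVert \widetilde{M}^{N}_{t}\rVert_{-s,\sigma}^{2}\Bigr)
\;\leq\; 4\,\mathbb{E}\bigl(\lVert \widetilde{M}^{N}_{T}\rVert_{-s,\sigma}^{2}\bigr)
\;=\; 4\sum_{p\geq 1}\mathbb{E}\bigl(\langle(\widetilde{M}^{N},\varphi_{p})\rangle_{T}\bigr).
\]
I would then substitute $\varphi=\varphi_{p}$ into the expression for $\langle \widetilde{M}^{N,\varphi}\rangle_{t}$ written in the proof of Proposition \ref{add}, exchange summation and integration (Tonelli), and invoke Corollary \ref{r3}: under $s>1+D=1+\lceil d/2\rceil$ the pointwise bounds
$\sum_{p}\varphi_{p}^{2}(x)\leq C(1+|x|^{2\sigma})$ and $\sum_{p}(\partial\varphi_{p}/\partial x_{\ell})^{2}(x)\leq C(1+|x|^{2\sigma})$ hold. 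Combined with the boundedness of $K$ and of the entries of $\theta(S,\cdot)$, this yields
\[
\sum_{p\geq 1}\mathbb{E}\bigl(\langle(\widetilde{M}^{N},\varphi_{p})\rangle_{T}\bigr)
\;\leq\; C\,\mathbb{E}\!\int_{0}^{T}\bigl(\mu^{S,N}_{r},\,1+|\cdot|^{2\sigma}\bigr)\,dr,
\]
and the right-hand side is bounded by $C(T)$ uniformly in $N$ thanks to Corollary \ref{cor32}.

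The arguments for $\widetilde{L}^{N}$ and $\widetilde{Y}^{N}$ are structurally identical: their quadratic variations decompose into (i) the compensated infection Poisson part of the form $\frac{1}{N}\int_{0}^{t}(\mu_{r}^{S,N},\varphi^{2}(\mu_{r}^{I,N},K))dr$, (ii) the Brownian part giving $\int_{0}^{t}(\mu_{r}^{A,N},\sum_{\ell,u}(\theta\theta^{t})_{\ell,u}\partial_{\ell}\varphi\,\partial_{u}\varphi)\,dr$ for the appropriate $A\in\{I,R\}$, and (iii) the compensated $\alpha$-recovery Poisson part yielding $\alpha\int_{0}^{t}(\mu_{r}^{I,N},\varphi^{2})dr$. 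Summing over the basis $(\varphi_{p})$, each of these three kinds of terms is controlled by an integrand of the form $C(1+|x|^{2\sigma})$ integrated against $\mu^{S,N}_{r}$, $\mu^{I,N}_{r}$ or $\mu^{R,N}_{r}$. The analogue of Corollary \ref{cor32} for $\mu^{I,N}$ and $\mu^{R,N}$ is immediate, since these measures are supported on the same $X^{i}_{t}$'s as $\mu^{S,N}$ and Lemma \ref{lt1} gives a uniform moment bound on each $X^{i}_{t}$.

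The main technical obstacle is the step that justifies interchanging the supremum over $t\in[0,T]$ with the infinite series $\sum_{p}|(\widetilde{M}^{N}_{t},\varphi_{p})|^{2}$. This is precisely the reason for recasting the problem in the Hilbert space $H^{-s,\sigma}$ and invoking the vector-valued Doob inequality, rather than attempting a test-function-by-test-function bound. The same step forces the threshold $s>1+D$: below it, Corollary \ref{r3} fails for the first-derivative sum $\sum_{p}(\partial_{\ell}\varphi_{p})^{2}(x)$, and the Brownian contribution to the quadratic variation ceases to be summable, so that no $H^{-s,\sigma}$-norm control is possible.
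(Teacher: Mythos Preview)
Your proposal is correct and follows essentially the same route as the paper, which simply states that the corollary follows from Doob's inequality together with the quadratic-variation computation carried out in the proof of Proposition~\ref{add}. Your write-up is in fact more detailed than the paper's, correctly isolating the role of Corollary~\ref{r3} (forcing $s>1+D$) and of Corollary~\ref{cor32}/Lemma~\ref{lt1} (uniformity in $N$); the only slip is the stray $1/N$ factor in the infection-Poisson part of $\langle\widetilde{L}^{N,\varphi}\rangle$, which should be absent since $\widetilde{L}^{N}=\sqrt{N}L^{N}$, but this is harmless for the bound.
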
 

 \textbf{In the rest of this section    we arbitrarily choose  $\sigma>d/2$  and   $1+D< s <2+D.$ } Furthermore, in all the sequel, the assumption (H3) is supposed to hold  for that value of $\sigma.$ Thus  we will prove that 
the sequences   $(U^{N},V^{N},W^{N})_{N\geq 1}$    converges in   $[\mathbb{D}(\mathbb{R}_{+},H^{-s,\sigma})]^{3},$ where we have equipped  $\mathbb{D}(\mathbb{R}_{+},H^{-s,\sigma})$  with the Skorokhod topology (we refer to \cite{fc}  for the explicit definition of this topology).
\begin{rmq}
	Note that the assumption (H3) and the fact that $\sigma>d/2$, yield:
	\begin{center}
	 $\underset{x}{\sup}\lVert K(x,.) \lVert_{2+D,\sigma}<\infty$ and $\underset{y}{\sup}\lVert K(.,y) \lVert_{2+D,\sigma}<\infty.$
 \end{center}
\end{rmq}
  \subsection{Convergence of $ (U^{N},V^{N},W^{N})_{N\geq 1} $ } 
We  first derive an  estimate of the norm of the fluctuation processes $U^{N}$; $V^{N}$ and  $W^{N},$ which is not uniform in $N$.

\begin{lem}
	For any $N\geq 1,$ $T>0,$ there exists a constant $C(T)>0,$ such that\\\\$\displaystyle  \mathbb{E}(\underset{0\leq t\leq T}{\sup}\Arrowvert U_{t}^{N}\Arrowvert_{-s,\sigma}^{2})\leq C(T)N,$\quad $\displaystyle  \mathbb{E}(\underset{0\leq t\leq T}{\sup}\Arrowvert U_{t}^{N}\Arrowvert_{-s,\sigma}^{2})\leq C(T)N$ and $\displaystyle  \mathbb{E}(\underset{0\leq t\leq T}{\sup}\Arrowvert U_{t}^{N}\Arrowvert_{-s,\sigma}^{2})\leq C(T)N.$ 
\end{lem}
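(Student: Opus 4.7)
The plan is to avoid the SPDEs \eqref{c1}--\eqref{c3} altogether and instead exploit the crude triangle-inequality bound
\[
\|U^{N}_t\|_{-s,\sigma}^{2}\;\le\;2N\bigl(\|\mu^{S,N}_t\|_{-s,\sigma}^{2}+\|\mu^{S}_t\|_{-s,\sigma}^{2}\bigr),
\]
and similarly for $V^{N}_t$ and $W^{N}_t$. The task then reduces to showing that the $H^{-s,\sigma}$-norms of $\mu^{A,N}_t$ and $\mu^{A}_t$ are bounded in $L^{2}$ uniformly in $N$ and in $t\in[0,T]$; once this is established, multiplying by $N$ gives the announced (non-uniform) estimate.

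To control $\|\nu\|_{-s,\sigma}$ for a subprobability measure $\nu$ on $\mathbb{R}^{d}$, I would pick a complete orthonormal basis $(\varphi_{p})_{p\ge 1}$ of $H^{s,\sigma}(\mathbb{R}^{d})$ and apply Parseval, then Cauchy--Schwarz against the constant function $1$:
\[
\|\nu\|_{-s,\sigma}^{2}=\sum_{p\ge 1}(\nu,\varphi_{p})^{2}\le(\nu,1)\sum_{p\ge 1}(\nu,\varphi_{p}^{2})\le\int_{\mathbb{R}^{d}}\sum_{p\ge 1}\varphi_{p}^{2}(x)\,\nu(dx).
\]
Since $s>1+D>d/2$, Corollary \ref{r3} yields $\sum_{p}\varphi_{p}^{2}(x)=\|\delta_{x}\|_{-s,\sigma}^{2}\le C(1+|x|^{2\sigma})$, hence
\[
\|\nu\|_{-s,\sigma}^{2}\le C\bigl(\nu,1+|\cdot|^{2\sigma}\bigr).
\]

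Applying this with $\nu=\mu^{S,N}_t$ and $\nu=\mu^{S}_t$ (both of which have total mass $\le 1$), and then taking the supremum over $t\in[0,T]$ and expectations, Corollary \ref{cor32} gives
\[
\underset{N\ge 1}{\sup}\,\mathbb{E}\Bigl(\underset{0\le t\le T}{\sup}\|\mu^{S,N}_t\|_{-s,\sigma}^{2}\Bigr)\le C(T),\qquad \underset{0\le t\le T}{\sup}\|\mu^{S}_t\|_{-s,\sigma}^{2}\le C(T).
\]
Injecting these two estimates into the triangle inequality above produces the stated bound for $U^{N}$. The very same argument, replacing $(S,N)$ by $(I,N)$ and $(R,N)$ and using the obvious analogues of Corollary \ref{cor32} for $\mu^{I,N},\mu^{I},\mu^{R,N},\mu^{R}$ (which follow from Lemma \ref{lt1} by the same Cauchy--Schwarz step that gave Corollary \ref{cor32}), yields the corresponding bounds for $V^{N}$ and $W^{N}$.

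No real obstacle is expected: the only point deserving care is the verification that Corollary \ref{cor32} extends to the $I$- and $R$-compartments, which is immediate because $1_{\{E^{i}_{t}=A\}}\le 1$ and Lemma \ref{lt1} controls $\mathbb{E}\sup_{t\le T}|X^{i}_{t}|^{2\sigma}$ uniformly in $i$. The bound is linear in $N$ and therefore very loose, but it is the natural first estimate; tighter, $N$-uniform estimates will presumably require using the martingale structure of \eqref{c1}--\eqref{c3} together with Corollary \ref{ct} and Gronwall's lemma, and are deferred to the convergence proof that follows.
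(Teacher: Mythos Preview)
Your proof is correct and follows essentially the same architecture as the paper: split $U^{N}_{t}=\sqrt{N}(\mu^{S,N}_{t}-\mu^{S}_{t})$ by the triangle inequality, bound $\|\nu\|_{-s,\sigma}$ for each sub-probability measure in terms of $(\nu,1+|\cdot|^{2\sigma})$, and then invoke Corollary~\ref{cor32}. The only technical difference is the way you pass from the dual norm to a moment: the paper uses the embedding chain $H^{s,\sigma}\hookrightarrow C^{1,\sigma}\hookrightarrow C^{0,2\sigma}$ to get $\|\nu\|_{-s,\sigma}\le C(\nu,1+|\cdot|^{2\sigma})$ and then squares, whereas your Parseval/Cauchy--Schwarz step together with Corollary~\ref{r3} gives the linear bound $\|\nu\|_{-s,\sigma}^{2}\le C(\nu,1+|\cdot|^{2\sigma})$ directly. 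Your route has the small advantage that it matches the first-moment statement of Corollary~\ref{cor32} without needing to control $\mathbb{E}\bigl[\sup_{t\le T}(\mu^{S,N}_{t},1+|\cdot|^{2\sigma})^{2}\bigr]$.
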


\begin{proof}
	Let us prove the result for $U^{N}.$
	We first recall that $C^{1,\sigma}\hookrightarrow C^{0,2\sigma}.$ Moreover since  $s>1+D$,  $H^{s,\sigma}\hookrightarrow C^{1,\sigma}$. Now  we have
	\begin{align*}
	\lvert(U^{N}_{t},\varphi)\lvert&=\sqrt{N}\Big\lvert\frac{1}{N}\sum\limits_{i=1}^{N}1_{\{E_{t}^{i}=S\}}\varphi(X^{i}_{t})-(\mu_{t}^{S},\varphi)\Big\lvert,\\&\leq\sqrt{N}\Big\{\frac{1}{N}\sum\limits_{i=1}^{N}1_{\{E_{t}^{i}=S\}}(1+\lvert X_{t}^{i}\lvert^{2\sigma})\frac{\lvert\varphi(X^{i}_{t})\lvert}{1+\lvert X_{t}^{i}\lvert^{2\sigma}}+\Big(\mu_{t}^{S},(1+\lvert.\lvert^{2\sigma})\frac{\lvert\varphi(.)\lvert}{1+\lvert .\lvert^{2\sigma}}\Big)\Big\},\\&\leq\sqrt{N}\lVert \varphi\lVert_{C^{0,2\sigma}}\Big\{\Big(\mu_{t}^{S,N}, 1+\lvert .\lvert^{2\sigma}\Big)+\Big(\mu_{t}^{S}, 1+\lvert.\lvert^{2\sigma}\Big)\Big\},\\&\leq\sqrt{N}\lVert \varphi\lVert_{s,\sigma}\Big\{\Big(\mu_{t}^{S,N}, 1+\lvert .\lvert^{2\sigma}\Big)+\Big(\mu_{t}^{S}, 1+\lvert.\lvert^{2\sigma}\Big)\Big\}. 
\end{align*}
This inequality combined  with Corollary \ref{cor32} and the fact that 
$	\lVert U^{N}_{t}\lVert_{-s,\sigma}=\underset{\varphi\neq 0, \varphi\in H^{s,\sigma}}{\sup}\frac{\lvert (U^{N}_{t},\varphi)\lvert}{\lVert \varphi \lVert_{s,\sigma}} $ yields \begin{center}

	$\displaystyle  \mathbb{E}(\underset{0\leq t \leq T}{\sup}\Arrowvert U_{t}^{N}\Arrowvert_{-s,\sigma}^{2})\leq 4C(T)N.$
\end{center}

	By the same arguments we obtain the same results for  $V^{N}$ and $W^{N}.$
\end{proof}
We now give the estimates for the fluctuations at time $0$. It is uniform in $N.$
\begin{prp}
	For any $s>d/2$, there exists   $C $ such that 
	\begin{center}
		$ \quad  \underset{N\geq 1}{\sup}\mathbb{E}(\lVert U_{0}^{N} \lVert_{-s,\sigma}^{2})<C $ and $\underset{N\geq 1}{\sup}\mathbb{E}(\lVert V_{0}^{N} \lVert_{-s,\sigma}^{2})<C. $
			\end{center}\label{uo}
\end{prp}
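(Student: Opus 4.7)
The plan is to use duality together with the Parseval-type identity for the Hilbert space $H^{s,\sigma}(\mathbb{R}^{d})$, which reduces the estimate of $\mathbb{E}(\|U_0^N\|_{-s,\sigma}^2)$ to a sum of variances of iid scalar random variables, each of which is controlled via Corollary \ref{r3}.

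First I would write, for $\varphi \in H^{s,\sigma}(\mathbb{R}^{d})$,
\[
(U_0^N,\varphi) = \frac{1}{\sqrt{N}}\sum_{i=1}^N \bigl\{Z_i(\varphi) - \mathbb{E}[Z_i(\varphi)]\bigr\},\qquad Z_i(\varphi):=\bigl\{1_A(X_0^i)(1-\xi_i)+1_{A^c}(X_0^i)\bigr\}\varphi(X_0^i),
\]
since $\mu_0^S = \mathbb{E}[\mu_0^{S,N}]$ (a consequence of the construction in the introduction and Theorem~3.1 of \cite{sb}). The variables $Z_i(\varphi)$ are iid, so $\mathbb{E}[(U_0^N,\varphi)^2] = \mathrm{Var}(Z_1(\varphi)) \le \mathbb{E}[\varphi(X_0^1)^2]$, because the Bernoulli-type weight is bounded by $1$.

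Next, fix a complete orthonormal basis $(\varphi_p)_{p\ge 1}$ of $H^{s,\sigma}(\mathbb{R}^d)$. By the Riesz representation, the dual norm satisfies $\|U_0^N\|_{-s,\sigma}^2 = \sum_{p\ge 1}|(U_0^N,\varphi_p)|^2$. Applying Fubini (all terms being nonnegative) and the variance bound above,
\[
\mathbb{E}\bigl(\|U_0^N\|_{-s,\sigma}^2\bigr)=\sum_{p\ge 1}\mathbb{E}\bigl(|(U_0^N,\varphi_p)|^2\bigr)\le \sum_{p\ge 1}\mathbb{E}\bigl(\varphi_p(X_0^1)^2\bigr)=\mathbb{E}\Bigl(\sum_{p\ge 1}\varphi_p(X_0^1)^2\Bigr).
\]
By Corollary \ref{r3} (valid since $s>d/2$), the inner sum equals $\|\delta_{X_0^1}\|_{-s,\sigma}^2\le C(1+|X_0^1|^{2\sigma})$, and then assumption (H3) gives $\mathbb{E}(\|U_0^N\|_{-s,\sigma}^2)\le C(1+\mathbb{E}|X_0^1|^{2\sigma})$, a bound independent of $N$.

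For $V_0^N$ the argument is identical: write $(V_0^N,\varphi) = N^{-1/2}\sum_i \{1_A(X_0^i)\xi_i\varphi(X_0^i)-\mathbb{E}[1_A(X_0^1)\xi_1\varphi(X_0^1)]\}$, again bounded in absolute value by $|\varphi(X_0^i)|$, so the same chain of inequalities applies. No part of this argument is really delicate; the only point that requires care is justifying the swap of expectation and the series, which is immediate by Tonelli since every term is nonnegative, and verifying that Corollary \ref{r3}'s pointwise bound on $\sum_p\varphi_p(y)^2$ is integrable against the law of $X_0^1$, which is precisely what (H3) provides for our choice $\sigma>d/2$.
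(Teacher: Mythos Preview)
Your proof is correct and follows essentially the same approach as the paper: expand the dual norm via a Hilbert basis $(\varphi_p)$, use the iid structure of the summands to reduce $\mathbb{E}[(U_0^N,\varphi_p)^2]$ (resp.\ $(V_0^N,\varphi_p)^2$) to a single variance bounded by $\mathbb{E}[\varphi_p(X_0^1)^2]$, then sum over $p$ using Corollary~\ref{r3} and conclude with (H3). The paper carries out the computation for $V_0^N$ and leaves $U_0^N$ to the reader, whereas you do the reverse, but the arguments are otherwise identical.
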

\begin{proof}
	We only prove that  $\underset{N\geq 1}{\sup}\mathbb{E}(\lVert V_{0}^{N} \lVert_{-s,\sigma}^{2})<C$. The other estimate follows by similar arguments. Since $1_{A}(X_{j})\xi_{j}\delta_{X_{j}}$ are i.i.d with law $\mu_{0}^{I}$, from Corollary \ref{r3} and from assumption (H3), if s>d/2, 
	we have  
	\begin{align*}
	 \mathbb{E}(\lVert V_{0}^{N} \lVert_{-s,\sigma}^{2} )&=\mathbb{E}\Big(\sum\limits_{p\geq 1}(V_{0}^{N},\varphi_{p})^{2}\Big),\\&=N\sum\limits_{p\geq 1}\mathbb{E}\left(\left((\mu_{0}^{I,N},\varphi_{p})-(\mu_{0}^{I},\varphi_{p})\right)^{2}\right),\\&=\frac{1}{N}\sum\limits_{i,n_{1},n_{2}}\mathbb{E}\left(\left[\sum\limits_{j=1}^{N}[1_{A}(X_{j})\xi_{j}\varphi_{p}(X_{j})-(\mu_{0}^{I},\varphi_{p})]\right]^{2}\right),\\&=\frac{1}{N}\sum\limits_{p\geq 1}\sum\limits_{j=1}^{N}\mathbb{E}\left(\left[1_{A}(X_{j})\xi_{j}\varphi_{p}(X_{j})-(\mu_{0}^{I},\varphi_{p})\right]^{2}\right)\\&\leq\frac{1}{N}\sum\limits_{p\geq 1}\sum\limits_{j=1}^{N}\mathbb{E}\left([1_{A}(X_{j})\xi_{j}\varphi_{p}(X_{j})]^{2}\right),\\&\leq p\displaystyle\int_{A}\sum\limits_{p\geq 1}\varphi_{p}^{2}(x)d\mathbb{P}_{X_{0}^{1}}(x),  \\&\leq pC\displaystyle\int_{A}(1+\lvert x\lvert^{2\sigma})d\mathbb{P}_{X_{0}^{1}}(x)\leq  C.  
	\end{align*}
\end{proof}
\begin{rmq}
	Using Proposition \ref{uo} and following the Proof of Theorem 3.3 in \cite{sb}, we prove easily that for any $s>d/2,$ the sequence $(U_{0}^{N},V_{0}^{N})_{N}$ converges in law in $H^{-s,\sigma}(\mathbb{R}^{d})$ towards $(U_{0},V_{0})$, where for any $\varphi,\psi\in H^{s,\sigma}, $  the expression of the Gaussian vector  $((U_{0},\varphi),(V_{0},\psi))$ is given  by 
	\begin{align}
		(U_{0},\varphi)&=W_{1}[\varphi\sqrt{g}\{(1-p)1_{A}+1_{A^{c}}\}-(1-p)W_{1}(\sqrt{g})\int_{A}\varphi(x)g(x)dx-W_{1}(\sqrt{g})\int_{A^{c}}\varphi(x)g(x)dx\nonumber\\
		&\qquad+W_{2}(1_{A}\varphi\sqrt{(p-p^{2})g}),\label{u1}\\
		(V_{0},\psi)&=pW_{1}(1_{A}\psi\sqrt{g})-pW_{1}(\sqrt{g})\int_{A}\psi(x)g(x)dx-W_{2}(1_{A}\psi\sqrt{(p-p^{2})g}),\label{u2}\\
		(Z_{0},\phi)&=W_{1}(\phi\sqrt{g})-W_{1}(\sqrt{g})\left(\int_{\mathbb{R}^{d}}\phi(x)g(x)dx\right),\label{u3}
	\end{align} 
	where   $g$ is the density of the law of $X_{0}^{1}$ and  $W_{1}$ and $W_{2}$ are mutually independent  two dimensional white noises.\label{lo}\label{rm59}
\end{rmq}
The proof of the next Lemma can be found in \cite{sb} (see the proof of Lemma 5.15 in \cite{sb}, using this time Corollary \ref{co42} below).
\begin{lem}
	For each $N\geq 1$, the processes $U^{N}$,  $V^{N}$  and  $W^{N}$  belong  to $\mathbb{D}(\mathbb{R}_{+},H^{-s,\sigma}).$ 
\end{lem}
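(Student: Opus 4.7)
The plan is to verify two things for each process separately (say $U^N$, with identical arguments for $V^N$ and $W^N$): (a) for every fixed $t\geq 0$, the signed measure $U^N_t = \sqrt{N}(\mu^{S,N}_t - \mu^S_t)$ defines a continuous linear functional on $H^{s,\sigma}(\mathbb{R}^d)$, i.e.\ $U^N_t \in H^{-s,\sigma}(\mathbb{R}^d)$; and (b) the map $t\mapsto U^N_t$ is c\`adl\`ag for the norm $\|\cdot\|_{-s,\sigma}$.

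For (a), I would split $U^N_t$ into its atomic part $\frac{1}{\sqrt{N}}\sum_{i=1}^N 1_{\{E^i_t=S\}}\delta_{X^i_t}$ and the deterministic part $-\sqrt{N}\mu^S_t$. For the atomic part, Lemma~\ref{rrrr} gives $\|\delta_{X^i_t}\|_{-s,\sigma} \leq C(1+|X^i_t|^\sigma)$ since $s>d/2$, so the finite sum defines an element of $H^{-s,\sigma}$ pathwise. For the deterministic part, the Corollary after Proposition~\ref{gbn} asserts that $\mu^S_t$ has an $L^1$ density and by Corollary~\ref{cor32} one has $(\mu^S_t, 1+|\cdot|^\sigma)<\infty$; then for any $\varphi\in H^{s,\sigma}$, the embedding $H^{s,\sigma}\hookrightarrow C^{0,\sigma}$ (cf.\ \eqref{em1}) yields $|(\mu^S_t,\varphi)|\leq \|\varphi\|_{C^{0,\sigma}}(\mu^S_t,1+|\cdot|^\sigma) \leq C\|\varphi\|_{s,\sigma}$, so $\mu^S_t\in H^{-s,\sigma}$.

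For (b), I would argue separately on jump-free intervals and at the (countably many) jump times. Outside jump times the indicators $1_{\{E^i_\cdot=S\}}$ are constant, so the only source of time variation in $U^N$ comes from the diffusions $X^i_\cdot$ and from $\mu^S$. Since $s>1+D\geq 1+d/2$, the embedding $H^{s,\sigma}\hookrightarrow C^{1,\sigma}$ implies that $y\mapsto \delta_y$ is locally Lipschitz from $\mathbb{R}^d$ to $H^{-s,\sigma}$ (the bound $|\varphi(x)-\varphi(y)|\leq C\|\varphi\|_{s,\sigma}|x-y|\,(1+(|x|\vee|y|)^\sigma)$ suffices), and continuity of $t\mapsto X^i_t$ then yields continuity of $t\mapsto \frac{1}{\sqrt{N}}\delta_{X^i_t}$ in $H^{-s,\sigma}$. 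Continuity of $t\mapsto\mu^S_t$ in $H^{-s,\sigma}$ is obtained by the same device, using the mild formulation \eqref{e9} together with uniform control of $(\mu^S_t,1+|\cdot|^{2\sigma})$. At a jump time corresponding to the transition of some individual $i$, the increment of $U^N$ is exactly $-\frac{1}{\sqrt{N}}\delta_{X^i_t}$, which has finite norm $\leq \frac{C}{\sqrt{N}}(1+|X^i_t|^\sigma)$, so $U^N$ admits a left-limit and is right-continuous in $H^{-s,\sigma}$.

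The main obstacle I anticipate is the continuity of $t\mapsto\mu^S_t$ in the strong topology of $H^{-s,\sigma}$: the law of large numbers only gives weak (vague) continuity in $\mathcal{M}_F(\mathbb{R}^d)$. This is handled via the mild equation \eqref{e9} for $\mu^S_t$: uniform equicontinuity in $t$ of $(\mu^S_t,\varphi)$ in $\varphi$ over the unit ball of $H^{s,\sigma}$ (obtained from the regularity of the semigroup $\Upsilon_S$ acting on weighted spaces, together with the moment bounds of Corollary~\ref{cor32} and the hypothesis $K\in C^{2+D}_c$) upgrades weak continuity to norm continuity in $H^{-s,\sigma}$. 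The very same argument applies verbatim to $V^N$ (using the extra jump contributions from the recovery PRMs $Q^i$, which are again of order $1/\sqrt{N}$) and to $W^N$, completing the proof as in Lemma~5.15 of \cite{sb}.
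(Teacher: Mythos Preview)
Your argument is correct, but it takes a different route from the one the paper points to. The paper does not give a proof here: it refers to Lemma~5.15 of \cite{sb} \emph{combined with Corollary~\ref{co42}}, i.e.\ the compact embedding $H^{-s',\sigma'}\hookrightarrow H^{-s,\sigma}$ for $1+D<s'<s$ and $\sigma'>\sigma$. The intended argument is therefore the standard compactness upgrade: one checks that (i) $\sup_{t\le T}\|U^N_t\|_{-s',\sigma'}<\infty$ pathwise (immediate from $\|\delta_y\|_{-s',\sigma'}\le C(1+|y|^{\sigma'})$ and the moment bounds), and (ii) $t\mapsto(U^N_t,\varphi)$ is c\`adl\`ag for each fixed $\varphi$ (clear from the semimartingale decomposition~\eqref{c1} and from $\mu^S\in\mathbb{C}(\mathbb{R}_+,\mathcal{M}_F)$). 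Since bounded sets of $H^{-s',\sigma'}$ are relatively compact in $H^{-s,\sigma}$, weak right-continuity and existence of weak left limits in $H^{-s,\sigma}$ along with relative compactness force \emph{norm} c\`adl\`ag in $H^{-s,\sigma}$.

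You instead work directly in $H^{-s,\sigma}$: you exploit $s>1+D\ge 1+d/2$ and the embedding $H^{s,\sigma}\hookrightarrow C^{1,\sigma}$ to make $y\mapsto\delta_y$ locally Lipschitz into $H^{-s,\sigma}$, handle the atomic part via continuity of $t\mapsto X^i_t$, and treat the deterministic part $\mu^S$ by the mild formulation~\eqref{e9} and explicit semigroup estimates (your uniform-equicontinuity claim does hold, since $|(\Upsilon_S(h)-I)\varphi(x)|\le C\sqrt{h}\,(1+|x|^\sigma)\|\varphi\|_{s,\sigma}$ by the $C^{1,\sigma}$ bound and the SDE moment estimate $\mathbb{E}|X^{S,x}_h-x|\le C\sqrt{h}$). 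This is more hands-on but yields explicit H\"older rates; the paper's compact-embedding route is slicker and dispenses with the separate analysis of $\mu^S$, at the cost of invoking the interpolation result of Corollary~\ref{co42}.
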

Let us give the main result of this section.                       
\begin{thm}
	Under  (H2) and (H3), the sequence of processes $(U^{N},V^{N},W^{N})_{N\geq1}$  converges in law  in    $(\mathbb{D}(\mathbb{R}_{+},H^{-s,\sigma}))^{3}$ towards the  process $(U,V,W)$ whose trajectories belong to  $  (\mathbb{C}(\mathbb{R}_{+},H^{-s,\sigma}))^{3}$ 
	and which satisfies for any $ t\geq 0,$
	\begin{align*}
		U_{t}&=\displaystyle U_{0} + \int_{0}^{t} \mathcal{Q}_{S}^{*} U_{r}dr - \int_{0}^{t} (G_{r}^{I})^{*}U_{r} dr  -\int_{0}^{t} (G_{r}^{S})^{*}V_{r}dr  +\mathcal{M}_{t}^{1},\vspace*{0.1cm} \\
		V_{t}&=\displaystyle V_{0} + \int_{0}^{t} \mathcal{Q}_{I}^{*}V_{r} dr +  \int_{0}^{t} (G_{r}^{I})^{*}U_{r} dr +  \int_{0}^{t} ((G_{r}^{S})^{*}-\alpha I_{d}) V_{r}dr  + \mathcal{M}_{t}^{2},\\
		W_{t}&=\displaystyle \int_{0}^{t}\mathcal{Q}_{R}^{*} W_{r} dr+\alpha\int_{0}^{t} V_{r}dr  + \mathcal{M}_{t}^{3},
	\end{align*}
	where  for any $r>0,$ $G_{r}^{I}$ and $G_{r}^{S}$
are defined as in subsection \ref{cc1} (replacing $\mu_{r}^{S,N}$ and $\mu_{r}^{I,N}$ by $\mu_{r}^{S}$ and $\mu_{r}^{I}$ respectively ), and $\forall \varphi,\psi,\phi \in H^{s,\sigma},( \mathcal{M}^{1},\varphi) ,(\mathcal{M}^{2},\psi), (\mathcal{M}^{3},\phi)) $ is a centered Gaussian martingale   satisfying: 

		\begin{align*}
		<(\mathcal{M}^{1},\varphi)>_{t} &= \displaystyle \int_{0}^{t}\left(\mu_{r}^{S}, \varphi^{2} (\mu_{r}^{I}, K)\right)dr\\&+  \int_{0}^{t} \Big(\mu_{r}^{S}, \sum\limits_{1\leq \ell\leq d}\big(\frac{\partial \varphi}{\partial x_{\ell}}\big)^{2}\sum\limits_{1\leq u\leq d}\theta^{2}_{\ell,u}(S,.)+2\sum\limits_{\underset{1\leq e\leq d}{\underset{\ell+1\leq u\leq d}{1\leq \ell\leq d-1}}}\frac{\partial \varphi}{\partial x_{\ell}}\frac{\partial \varphi}{\partial x_{u}}\theta_{\ell,e}(S,.)\theta_{u,e}(S,.)\Big) dr,
		\end{align*}
	\begin{align*}
		<(\mathcal{M}^{2},\psi)>_{t} &= \displaystyle \int_{0}^{t}\left(\mu_{r}^{S}, \psi^{2} (\mu_{r}^{I}, K)\right)dr\\&+  \int_{0}^{t} \Big(\mu_{r}^{I}, \sum\limits_{1\leq \ell\leq d}\big(\frac{\partial \psi}{\partial x_{\ell}}\big)^{2}\sum\limits_{1\leq u\leq d}\theta^{2}_{\ell,u}(I,.)+2\sum\limits_{\underset{1\leq e\leq d}{\underset{\ell+1\leq u\leq d}{1\leq \ell\leq d-1}}}\frac{\partial \psi}{\partial x_{\ell}}\frac{\partial \psi}{\partial x_{u}}\theta_{\ell,e}(I,.)\theta_{u,e}(I,.)\Big) dr,
	\end{align*}
	\begin{align*}
	<(\mathcal{M}^{3},\phi)>_{t} &=\alpha \int_{0}^{t}(\mu_{r}^{R},\phi^{2})dr\\&+   \int_{0}^{t} \Big(\mu_{r}^{I}, \sum\limits_{1\leq \ell\leq d}\big(\frac{\partial \phi}{\partial x_{\ell}}\big)^{2}\sum\limits_{1\leq u\leq d}\theta^{2}_{\ell,u}(I,.)+2\sum\limits_{\underset{1\leq e\leq d}{\underset{\ell+1\leq u\leq d}{1\leq \ell\leq d-1}}}\frac{\partial \phi}{\partial x_{\ell}}\frac{\partial \phi}{\partial x_{u}}\theta_{\ell,e}(I,.)\theta_{u,e}(I,.)\Big) dr,
\end{align*}
\begin{align*}
		<(\mathcal{M}^{1},\varphi),(\mathcal{M}^{2},\psi)>_{t}&=\displaystyle-\int_{0}^{t}\left(\mu_{r}^{S},\varphi\psi(\mu_{r}^{I}, K)\right)dr,\\
		< (\mathcal{M}^{2},\psi),(\mathcal{M}^{3},\phi)>_{t}&=\displaystyle\alpha\int_{0}^{t}(\mu_{r}^{I},\psi\phi)dr, \hspace*{0.1cm}  and \hspace*{0.1cm} < (\mathcal{M}^{1},\varphi),(\mathcal{M}^{3},\phi)>_{t}=0.
	\end{align*} 
	 \label{c5} 
\end{thm}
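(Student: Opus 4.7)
My plan follows the classical three-step pattern for a central limit theorem at the level of measure-valued processes: tightness of $(U^N, V^N, W^N)$ in $[\mathbb{D}(\mathbb{R}_+, H^{-s,\sigma})]^3$, identification of any subsequential limit as a solution of the announced system of linear SPDEs, and uniqueness of that solution. Since the equation is linear, once a limit point is identified and shown to be unique, convergence in law of the whole sequence follows in the standard way.

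\textbf{Tightness.} I would apply the Aldous--Rebolledo criterion for Hilbert-space-valued semimartingales, coordinate by coordinate, using the decompositions \eqref{c1}--\eqref{c3}. The initial conditions are bounded in $L^2(H^{-s,\sigma})$ uniformly in $N$ by Proposition \ref{uo}, and the martingale parts by Corollary \ref{ct}. For the drift terms I would use the fact that, under (H2), the operators $\mathcal{Q}_S, \mathcal{Q}_I, \mathcal{Q}_R$ are bounded from $H^{s,\sigma}$ into $H^{s-2,\sigma}$ and that the multiplication operators $\varphi\mapsto G_r^{I,N}\varphi$ and $\varphi\mapsto G_r^S\varphi$ are bounded uniformly in $r\in[0,T]$ and $N$ on $H^{s,\sigma}$, thanks to the $C^{2+D}_c$ regularity of $K$ and the uniform mass bound on $\mu^{I,N}$ and $\mu^S$. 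This yields uniform $L^1$-modulus estimates on the finite variation parts, and compactness of the laws in $H^{-s,\sigma}$ is obtained via the compact embedding $H^{-s,\sigma}\hookrightarrow H^{-s',\sigma'}$ for $s'>s$, $\sigma'>\sigma$, combined with the uniform second-moment bounds.

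\textbf{Identification of the limit.} Let $(U,V,W)$ be the law of any subsequential limit. For fixed $\varphi\in H^{s,\sigma}$, I test \eqref{c1}--\eqref{c3} against $\varphi$ and pass to the limit term by term. The initial conditions converge jointly to the Gaussian vector $(U_0,V_0,W_0)$ described in Remark \ref{rm59}. The linear drift terms $\int_0^t(U_r^N,\mathcal{Q}_S\varphi)\,dr$, etc., converge by continuity of the integrand and tightness. For the interaction terms I would use the splitting
\[
(U_r^N, G_r^{I,N}\varphi) = (U_r^N, G_r^I\varphi) + (U_r^N, (G_r^{I,N}-G_r^I)\varphi),
\]
where the first summand converges to $(U_r,G_r^I\varphi)$ by joint continuity, and the second is negligible because $\|(G_r^{I,N}-G_r^I)\varphi\|_{s,\sigma} \to 0$ uniformly in $r\in[0,T]$ by Theorem \ref{th1} applied to the continuous compactly supported functions $y\mapsto K(\cdot,y)$, while $\|U_r^N\|_{-s,\sigma}$ is stochastically bounded uniformly in $r$ on any compact time interval. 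For the martingale terms, I would apply Rebolledo's martingale central limit theorem: the sharp brackets $\langle \widetilde{M}^{N,\varphi}\rangle_t$, $\langle \widetilde{L}^{N,\varphi}\rangle_t$, $\langle \widetilde{Y}^{N,\varphi}\rangle_t$ and their joint brackets converge in probability (by Theorem \ref{th1}) to the deterministic limits announced in the statement, and the jump sizes are of order $N^{-1/2}\to 0$. This gives joint convergence to a continuous centered Gaussian martingale with the stated covariance structure, and continuity of $(U,V,W)$ in time.

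\textbf{Uniqueness and main obstacle.} Given the Gaussian martingale $(\mathcal{M}^1,\mathcal{M}^2,\mathcal{M}^3)$, the system is a linear SPDE driven by a prescribed source, so pathwise uniqueness in $[\mathbb{C}(\mathbb{R}_+,H^{-s,\sigma})]^3$ follows from its mild formulation via the semigroups $\Upsilon_S,\Upsilon_I,\Upsilon_R$, Lemma \ref{bbn} applied to the stochastic convolutions, boundedness of the multiplication operators $(G_r^I)^*$ and $(G_r^S)^*$ on $H^{-s,\sigma}$, and Gronwall's inequality on the sum $\|U_t\|_{-s,\sigma}^2 + \|V_t\|_{-s,\sigma}^2 + \|W_t\|_{-s,\sigma}^2$. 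The main technical obstacle will be the passage to the limit in the nonlinear interaction terms on the non-compact state space $\mathbb{R}^d$: one must simultaneously exploit the weak convergence of $U^N$ in a Sobolev space with polynomial weights, the measure convergence of $\mu^{I,N}\to\mu^I$, and the compact support of $K$ to close the loop. A secondary delicate point is verifying that the multiplication operators $G_r^I$ and $G_r^S$ preserve $H^{s,\sigma}$ uniformly in $r$, which requires carefully using the assumed $C^{2+D}_c$ regularity of $K$ together with the choice $s<2+D$.
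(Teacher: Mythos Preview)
Your three-step architecture (tightness, identification, uniqueness) matches the paper's, and your treatment of uniqueness via the mild formulation plus Gronwall, as well as your handling of the nonlinear remainder $(U_r^N,(G_r^{I,N}-G_r^I)\varphi)$, is essentially what the paper does (Lemma~\ref{f11} and the last Proposition). However, there is a genuine gap in how you propose to carry out tightness and the uniform a~priori bounds.

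You plan to work directly with the semimartingale decompositions \eqref{c1}--\eqref{c3} and to control the drift through the fact that $\mathcal{Q}_S: H^{s,\sigma}\to H^{s-2,\sigma}$ is bounded. But this does not close an estimate in $H^{-s,\sigma}$: for $\varphi\in H^{s,\sigma}$ one only has $\mathcal{Q}_S\varphi\in H^{s-2,\sigma}$, so the pairing $(U_r^N,\mathcal{Q}_S\varphi)$ requires $U_r^N$ to be bounded in $H^{-(s-2),\sigma}$, a strictly smaller space. Since $1+D<s<2+D$ forces $s-2<D\le d/2$, even the Dirac estimates of Lemma~\ref{rrrr} are unavailable there, and no Gronwall loop can be run. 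The same loss of two derivatives obstructs your identification step when you try to pass to the limit in $\int_0^t(U_r^N,\mathcal{Q}_S\varphi)\,dr$ for $\varphi\in H^{s,\sigma}$; the announced equation $U_t=U_0+\int_0^t\mathcal{Q}_S^*U_r\,dr+\ldots$ is only formal in $H^{-s,\sigma}$.

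The paper resolves this by switching to the mild formulation already at the level of $(U^N,V^N,W^N)$ (Lemma~\ref{sds}), replacing $\mathcal{Q}_A$ by the evolution semigroup $\Upsilon_A(t)$, which is shown to be bounded on $H^{s,\sigma}$ (Corollary~\ref{cor20}, via flow estimates and interpolation). All of the uniform moment bounds (Proposition~\ref{tfg}), the Aldous-type tightness (Proposition~\ref{fff}, where the delicate piece is $\|(\Upsilon_S^*(\varrho)-I_d)U^N_{\tau^N}\|_{-s,\sigma}$, handled by a finite-rank projection argument), and the identification of the limit are carried out in this mild formulation. You only invoke the mild form for uniqueness; you need it from the outset. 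A minor side remark: the compact embedding you want for (T1) is $H^{-s',\sigma'}\hookrightarrow H^{-s,\sigma}$ with $s'<s$ and $\sigma'>\sigma$ (the dual of Corollary~\ref{co42}), not the direction you wrote.
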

Before we prove this Theorem we first state a condition of Aldous type for the tightness of a sequence of $H^{-s,\sigma}$-valued c$\grave{a}$dl$\grave{a}$g processes, exploiting the fact that $H^{-s,\sigma}$ is a Hilbert space (see Definition 2.2.1 of \cite{rc}).
\begin{prp}
	Let  $ (\vartheta^{n})_{n} $ be  a sequence of $H^{-s,\sigma}$-valued c$\grave{a}$dl$\grave{a}$g processes, their laws ($\widetilde{P}^{n}$)  form a tight sequence in $\mathbb{D}(\mathbb{R}_{+},H^{-s,\sigma})$  if: \vspace*{0.15cm}\\
	$(T_{1}) \quad  $ For each t in a   dense subset $\mathbb{T}$ of $\mathbb{R}_{+}$, the sequence  $(\vartheta_{t}^{n})_{n}$ is tight in $H^{-s,\sigma}.$\vspace*{0.16cm}\\
	$(T_{2}) \quad$ For each  $T>0, \forall \varepsilon_{1},\varepsilon_{2}>0,$  there exist $\delta>0, n_{0}\geq 1$ such that for any collection of stopping times $\tau^{n} \leq T, \vspace*{0.13cm}\\\hspace*{5cm}\underset{\underset{\varrho\leq\delta}{n\geq n_{0}}}{\sup} $ $\mathbb{P}(\lVert \vartheta^{n}_{(\tau^{n}+\varrho) }-\vartheta^{n}_{\tau^{n}}\lVert_{H}>\varepsilon_{1})\leq\varepsilon_{2}. $   \label{c6}
\end{prp}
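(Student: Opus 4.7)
The plan is to reduce the statement to a standard Aldous–Rebolledo--type tightness criterion for càdlàg processes with values in a Polish space, applied here to the separable Hilbert space $H^{-s,\sigma}(\mathbb{R}^d)$. Since $H^{s,\sigma}$ is a separable Hilbert space, so is its dual $H^{-s,\sigma}$, so $\mathbb{D}(\mathbb{R}_+, H^{-s,\sigma})$ is Polish and Prokhorov's theorem reduces tightness of $(\widetilde{P}^n)$ to exhibiting compact sets in path space carrying each law up to arbitrarily small probability.

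First, I would upgrade (T1) from the dense set $\mathbb{T}$ to every fixed time $t\ge 0$. Given $\varepsilon_1,\varepsilon_2>0$, let $\delta$ be the constant provided by (T2) with those tolerances and pick $t'\in \mathbb{T}$ with $|t-t'|<\delta$; by (T1) there is a compact $K\subset H^{-s,\sigma}$ such that $\mathbb{P}(\vartheta_{t'}^n\notin K)\le \varepsilon_2$ for all $n$, and by (T2) applied to the deterministic stopping time $\tau^n\equiv t'$ and $\varrho=t-t'$, the $\varepsilon_1$-inflation of $K$ carries $\vartheta_t^n$ up to probability $2\varepsilon_2$. Iterating at finitely many time points yields tightness of the finite-dimensional distributions $(\vartheta_{t_1}^n,\dots,\vartheta_{t_k}^n)_n$ in the product Hilbert space $(H^{-s,\sigma})^k$.

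Second, I would combine this marginal tightness with (T2) to construct a compact subset of $\mathbb{D}([0,T], H^{-s,\sigma})$ carrying $\vartheta^n$ up to arbitrarily small probability. For fixed $T,\eta>0$, one discretizes $[0,T]$ on a mesh smaller than the $\delta$ from (T2), selects compacts in $H^{-s,\sigma}$ carrying the marginals at each mesh point from the previous step, and applies (T2) at the stopping times realizing the maximal oscillation on each subinterval to control the Skorokhod modulus of continuity uniformly in $n$. This is precisely the Hilbert-space instance of the Aldous--Rebolledo--Métivier criterion, and may be invoked directly from \cite{rc} (the criterion stated just before Definition 2.2.1 there).

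The main obstacle is exactly this passage from marginal tightness plus an Aldous condition on stopping times to compact containment in the path space: it does not hold automatically for general Polish targets, but the Hilbert norm structure of $H^{-s,\sigma}$ and the oscillation estimate (T2) are sufficient to carry out the Arzelà--Ascoli-type reasoning at the level of jump trajectories. Once the general criterion of \cite{rc} is granted, (T1) and (T2) are its verbatim hypotheses, so the sequence $(\widetilde{P}^n)$ is tight in $\mathbb{D}(\mathbb{R}_+, H^{-s,\sigma})$ and the proof is complete.
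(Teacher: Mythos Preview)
The paper does not prove this proposition: it is stated as a known Aldous--type criterion and attributed directly to M\'etivier \cite{rc} (Definition~2.2.1), with no argument given. Your proposal ultimately does the same thing---invoke \cite{rc}---so in that sense you match the paper exactly.

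Your preliminary sketch (upgrading $(T_1)$ from a dense set to all times via $(T_2)$, then deducing control of the Skorokhod modulus) is the right outline of how such criteria are proved, but one remark is inaccurate: you write that the passage from marginal tightness plus the Aldous condition to path-space tightness ``does not hold automatically for general Polish targets'' and requires the Hilbert structure. In fact the criterion $(T_1)+(T_2)\Rightarrow$ tightness in $\mathbb{D}(\mathbb{R}_+,E)$ is valid for any Polish space $E$ (see e.g.\ Jacod--Shiryaev \cite{lc}, Theorem~VI.4.5, or Ethier--Kurtz); the Hilbert structure of $H^{-s,\sigma}$ plays no role at this step. It becomes relevant only later in the paper, when verifying $(T_1)$ via the compact embedding $H^{-s',\sigma'}\hookrightarrow H^{-s,\sigma}$ and when handling the trace process $\langle\widetilde{M}^N\rangle$ through Parseval-type sums over an orthonormal basis.
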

The proof of  Theorem \ref{c5} is the content of subsection \ref{tqa} below, however let us first prove a
few preliminary results.
\subsubsection{Preliminary results}
\begin{prp}
	The sequences $(\widetilde{M}^{N})_{N\geq 1}$,  $(\widetilde{L}^{N})_{N\geq 1} $ and $(\widetilde{Y}^{N})_{N\geq 1} $ are  tight in  $ \mathbb{D}(\mathbb{R}_{+},H^{-s,\sigma}).$ \label{c8}
\end{prp}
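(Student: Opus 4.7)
The strategy is to verify the two conditions $(T_1)$ and $(T_2)$ of the Aldous-type criterion stated in Proposition~\ref{c6}. The arguments for $(\widetilde{L}^N)_{N\ge 1}$ and $(\widetilde{Y}^N)_{N\ge 1}$ are strictly parallel to those for $(\widetilde{M}^N)_{N\ge 1}$ (only the drivers of the quadratic variations change), so I would describe only the latter in full and indicate that the other two follow by the same computation.

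For condition $(T_1)$, tightness of $\widetilde{M}^N_t$ in $H^{-s,\sigma}(\mathbb{R}^d)$ at each fixed time $t$ cannot be obtained from the uniform $L^2$ bound of Corollary~\ref{ct} alone, since bounded sets in the infinite-dimensional Hilbert space $H^{-s,\sigma}$ are not relatively compact. I would instead exploit the flexibility in the chosen indices: since $1+D<s<2+D$ and $\sigma>d/2$, one can select $(s_0,\sigma_0)$ with $1+D<s_0<s$ and $\sigma_0>\sigma$ (strengthening (H3) accordingly so that $\mathbb{E}|X_0^1|^{2\sigma_0}<\infty$) such that the continuous injection $H^{s,\sigma}(\mathbb{R}^d)\hookrightarrow H^{s_0,\sigma_0}(\mathbb{R}^d)$ is compact. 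This is the weighted Rellich-type compactness of M\'etivier, extended to non-integer indices via the interpolation results cited in Section~\ref{sec3}. Dualising, $H^{-s_0,\sigma_0}\hookrightarrow H^{-s,\sigma}$ is compact. Applying Corollary~\ref{ct} at $(s_0,\sigma_0)$ yields $\sup_N \mathbb{E}\|\widetilde{M}^N_t\|_{-s_0,\sigma_0}^2 \le C(T)$, and Markov's inequality then supplies, for every $\eta>0$, a closed ball in $H^{-s_0,\sigma_0}$ (compact in $H^{-s,\sigma}$) that contains $\widetilde{M}^N_t$ with probability at least $1-\eta$ uniformly in $N$.

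For condition $(T_2)$, fix $T>0$, a sequence of stopping times $\tau^N\le T$ and $\varrho\in[0,\delta]$. Using a complete orthonormal basis $(\varphi_p)_{p\ge 1}$ of $H^{s,\sigma}(\mathbb{R}^d)$ and the martingale identity, I would write
\[
\mathbb{E}\bigl\|\widetilde{M}^N_{\tau^N+\varrho}-\widetilde{M}^N_{\tau^N}\bigr\|_{-s,\sigma}^{2} = \mathbb{E}\sum_{p\ge 1}\Bigl(\langle(\widetilde{M}^N,\varphi_p)\rangle_{\tau^N+\varrho}-\langle(\widetilde{M}^N,\varphi_p)\rangle_{\tau^N}\Bigr).
\]
Using the explicit form of $\langle(\widetilde{M}^N,\varphi_p)\rangle_t$ computed in the proof of Proposition~\ref{add}, Corollary~\ref{r3} (to bound $\sum_p \varphi_p^2(x)$ and $\sum_p (\partial_{x_\ell}\varphi_p(x))^2$ by $C(1+|x|^{2\sigma})$), Corollary~\ref{cor32} (to control $\mathbb{E}(\mu_r^{S,N},1+|\cdot|^{2\sigma})$), together with the boundedness of $K$ and of the coefficients $\theta(S,\cdot)$, one obtains an upper bound of the form $C(T)\,\varrho\le C(T)\,\delta$. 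Chebyshev's inequality then gives
\[
\mathbb{P}\bigl(\|\widetilde{M}^N_{\tau^N+\varrho}-\widetilde{M}^N_{\tau^N}\|_{-s,\sigma}>\varepsilon_1\bigr)\le \frac{C(T)\,\delta}{\varepsilon_1^{2}},
\]
which can be made smaller than $\varepsilon_2$ by choosing $\delta$ small, uniformly in $N$ and in the chosen stopping times. Once both $(T_1)$ and $(T_2)$ are established, Proposition~\ref{c6} produces the wished tightness.

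The main obstacle is $(T_1)$, specifically the compactness of the dual embedding $H^{-s_0,\sigma_0}\hookrightarrow H^{-s,\sigma}$ at non-integer regularities: it relies on the interpolation machinery of L\"ofstr\"om imported into M\'etivier's weighted framework (Section~\ref{sec3}) rather than on a direct Rellich argument, and it forces the slight strengthening of the moment hypothesis (H3) from $\sigma$ to some $\sigma_0>\sigma$. The Aldous condition $(T_2)$ is by comparison a routine consequence of the martingale quadratic-variation bounds already assembled.
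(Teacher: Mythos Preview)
Your proposal is correct and follows essentially the same route as the paper. For $(T_1)$ the paper invokes the compact embedding $H^{-s',\sigma'}\hookrightarrow H^{-s,\sigma}$ with $s'=\tfrac{s+1+D}{2}<s$ and $\sigma'>\sigma$ (this is precisely Corollary~\ref{co42}) together with Corollary~\ref{ct} at $(s',\sigma')$, exactly as you describe; your remark that this implicitly forces the moment assumption (H3) at some $\sigma'>\sigma$ is well taken and is glossed over in the paper. For $(T_2)$ the paper cites M\'etivier's criterion (Theorem~2.3.2 in \cite{rc}) reducing the Aldous condition to increments of the trace process $\langle\widetilde{M}^N\rangle_t=\sum_p\langle\widetilde{M}^{N,\varphi_p}\rangle_t$, whereas you bound $\mathbb{E}\|\widetilde{M}^N_{\tau^N+\varrho}-\widetilde{M}^N_{\tau^N}\|_{-s,\sigma}^2$ directly via optional sampling; the two computations coincide line by line, since in both cases the key estimate is $\sum_p\Gamma^N_r(\varphi_p)\le C(\mu_r^{S,N},1+|\cdot|^{2\sigma})$ obtained from Corollary~\ref{r3} and controlled in expectation by Corollary~\ref{cor32}.
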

\begin{proof}
	$-$ Tightness of  $ (\widetilde{M}^{N})_{N\geq1}.$\\\\
	Let us prove that $(\widetilde{M}^{N} )_{N\geq1}$ satisfies the conditions  (T1) and  (T2) of  Proposition \ref{c6}.\vspace*{0.1cm}\\
	$-$ To show (T1)  it  is enough to prove that:\vspace*{0.18cm}\\\hspace*{1cm}$\forall t\geq 0 $, $\forall \varepsilon >0$ there exists a compact subset $\mathcal{K}$ of $H^{-s,\sigma}$ such that $\mathbb{P}(\widetilde{M}_{t}^{N}\notin \mathcal{K})<\varepsilon$.\vspace*{0.1cm}\\ This follows readily from the fact that for each  $1+D<s'=\frac{s+1+D}{2}<s,$ $\sigma'>\sigma>d/2,$ there exists $C(T)$ such that\vspace*{0.12cm} \\\hspace*{5cm} $\mathbb{E}(\underset{0\leq t \leq T}{\sup}\lVert\widetilde{M}_{t}^{N} \lVert_{-s',\sigma'}^{2} )\leq C(T)$ (see Corollary \ref{ct}).\\ Indeed, since for any $ \hspace*{0.1cm}1+D<s'=\frac{s+1+D}{2}<s,$ the embedding $H^{-s',\sigma'}(\mathbb{R}^{d})\hookrightarrow H^{-s,\sigma}(\mathbb{R}^{d})$ is compact (see Corollary \ref{co42}, in the Appendix below), $\mathbb{B}_{-s',\sigma'}(R)=\{ \mu \in H^{-s',\sigma'} ; \lVert \mu \lVert_{H^{-s',\sigma'}}\leq R\},$ which is a closed and bounded subset of $H^{-s',\sigma'}$ is a compact subset of $H^{-s,\sigma}$. Thus 
	\begin{center}
		\begin{align*}
	 \mathbb{P}(\widetilde{M}_{t}^{N}\notin \mathbb{B}_{-s',\sigma'}(R))&=\mathbb{P}(\lVert\widetilde{M}_{t}^{N}\lVert_{-s',\sigma'}> R)\\&\leq \frac{1}{R^{2}}\mathbb{E}(\lVert \widetilde{M}_{t}^{N} \lVert_{-s',\sigma'}^{2})\\&\leq \frac{C(T)}{R^{2}},
	\end{align*}
	\end{center}for any $N\geq 1$. By choosing R arbitrarily large, we make the right hand side as small as we
wish, which yields the result. \\\\
	$-$ Proof of (T2). Note first  that  $ <\widetilde{M}^{N,\varphi}>_{t}=\displaystyle\int_{0}^{t}\Gamma_{r}^{N}(\varphi)dr $, where
	 \begin{align*}
		\Gamma_{r}^{N}(\varphi) &=\displaystyle \left(\mu_{r}^{S,N}, \varphi^{2} (\mu_{r}^{I,N}, K)\right)\\&+   \Big(\mu_{r}^{S,N}, \sum\limits_{1\leq \ell\leq d}\big(\frac{\partial \varphi}{\partial x_{\ell}}\big)^{2}\sum\limits_{1\leq u\leq d}\theta^{2}_{\ell,u}(S,.)+2\sum\limits_{\underset{1\leq e\leq d}{\underset{\ell+1\leq u\leq d}{1\leq \ell\leq d-1}}}\frac{\partial \varphi}{\partial x_{\ell}}\frac{\partial \varphi}{\partial x_{u}}\theta_{\ell,e}(S,.)\theta_{u,e}(S,.)\Big). 
	\end{align*}
According to Theorem 2.3.2 in \cite{rc} it is enough to prove that
\begin{center}
	$\forall T>0 \quad \forall \varepsilon_{1},\varepsilon_{2}>0\quad \exists\delta>0 , N_{0}\geq 1$ such as for any stopping times $\tau^{N} \leq T,$
	\begin{align}
	\underset{N\geq N_{0}}{\sup}\underset{\varrho\leq\delta}{\sup}\mathbb{P}(\lvert<\widetilde{M}^{N}>_{(\tau^{N}+\varrho) }-<\widetilde{M}^{N}>_{\tau^{N}}\lvert>\varepsilon_{1})<\varepsilon_{2} \label{cvb}
\end{align}
\end{center}
 Where    $<\widetilde{M}>$ is the increasing, continuous processes such that, $\lVert \widetilde{M}_{t}\lVert_{H^{-s,\sigma}}^{2}<\widetilde{M}>_{t}$ is a  martingale.
	\vspace*{0.2cm}\\Let $ T>0 , \varepsilon_{1}, \varepsilon_{2}>0 $, $\ell>1$, we find $\delta>0$ such that $\tau^{N}+\delta\leq \ell T$ and such that \ref{cvb} holds.\\ We have
	\begin{align}
	\lvert<\widetilde{M}^{N}>_{(\tau^{N}+\varrho)}-<\widetilde{M}^{N}>_{\tau^{N}}\lvert&=\lvert\sum_{p\geq 1}\{<\widetilde{M}^{N,\varphi_{p}}>_{(\tau^{N}+\varrho)}-<\widetilde{M}^{N,\varphi_{p}}>_{\tau^{N}}\}\lvert\nonumber\\&= \Big\lvert\sum_{p\geq 1}\displaystyle\int_{\tau^{N}}^{(\tau^{N}+\varrho)}\Gamma_{r}^{N}(\varphi_{p})dr\Big\lvert\nonumber\\&=\Big\lvert\sum_{p\geq 1}\displaystyle\int_{0}^{\varrho}\Gamma_{(\tau^{N}+r)}^{N}(\varphi_{p})dr \Big\lvert \nonumber\\&\leq C(\lVert K \lVert_{\infty},d)\displaystyle\int_{0}^{\varrho}\int_{\mathbb{R}^{d}}\{1+\lvert x\lvert^{2\sigma}\}\mu_{\tau^{N}+r}^{S,N}(dx)dr\nonumber\\&\leq \varrho C(\lVert K \lVert_{\infty},d)\displaystyle\underset{N\geq 1}{\sup}\underset{0\leq t \leq \ell T}{\sup}\big(\mu_{t}^{S,N},1+\lvert .\lvert^{2\sigma}\big).\label{xwx}
\end{align}
	Hence it follows from the  Markov inequality and  from Lemma \ref{cor32} and (\ref{xwx}) that 
	\begin{align*}
	\mathbb{P}(\lvert<\widetilde{M}^{N}>_{(\tau^{N}+\varrho)}-<\widetilde{M}^{N}>_{\tau^{N}}\lvert>\varepsilon_{1})&\leq\displaystyle\frac{\mathbb{E}(\lvert<\widetilde{M}^{N}>_{(\tau^{N}+\varrho)}-<\widetilde{M}^{N}>_{\tau^{N}}\lvert)}{\varepsilon_{1}}\\&\leq\displaystyle\frac{C\delta}{\varepsilon_{1}}.
\end{align*}
	(T2) follows.\vspace*{0.2cm}\\
	We conclude from (T1) and (T2) that $ (\widetilde{M}^{N})_{N\geq 1}$ is tight in $\mathbb{D}(\mathbb{R}_{+},H^{-s,\sigma})$. The same arguments yield the tightness of  $ (\widetilde{L}^{N})_{N\geq 1}$ and $ (\widetilde{Y}^{N})_{N\geq 1}$ in $\mathbb{D}(\mathbb{R}_{+},H^{-s,\sigma})$.
\end{proof}
\begin{lem}(We refer to  the proof of  Proposition 6.17 in \cite{sb}, for the proof)\\
	Every limit point $(\mathcal{M}^{1},\mathcal{M}^{2},\mathcal{M}^{3})$ of the sequence $(\widetilde{M}^{N}, \widetilde{L}^{N},\widetilde{Y}^{N})_{N\geq 1} $ is such that for any $\varphi,\psi,\phi\in H^{s,\sigma}$, $((\mathcal{M}^{1},\varphi),(\mathcal{M}^{2},\psi),(\mathcal{M}^{3},\phi)$ is a  martingale.\label{c9}
\end{lem}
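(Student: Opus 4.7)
The plan is to verify the defining martingale property by passing to the limit in the corresponding identity for each approximating process. Fix $\varphi, \psi, \phi \in H^{s,\sigma}$ and $0 \le u_1 < u_2 < \cdots < u_k \le s < t$, and a bounded continuous test function $\Phi \colon \mathbb{R}^{3k} \to \mathbb{R}$. For each $N$, the processes $(\widetilde{M}^{N,\varphi}, \widetilde{L}^{N,\psi}, \widetilde{Y}^{N,\phi})$ are real-valued c\`adl\`ag martingales (they were obtained as properly rescaled compensated Poisson stochastic integrals plus Brownian stochastic integrals, cf.\ Proposition \ref{bnv}), so
\begin{equation*}
\mathbb{E}\Bigl[\Phi\bigl((\widetilde{M}^{N}_{u_j},\varphi),(\widetilde{L}^{N}_{u_j},\psi),(\widetilde{Y}^{N}_{u_j},\phi)\bigr)_{1\le j\le k}\bigl((\widetilde{M}^{N}_{t},\varphi)-(\widetilde{M}^{N}_{s},\varphi)\bigr)\Bigr]=0,
\end{equation*}
and the analogous identities hold with $(\widetilde{L}^{N},\psi)$ and $(\widetilde{Y}^{N},\phi)$ in place of $(\widetilde{M}^{N},\varphi)$.

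Next, I would pass to the limit along a converging subsequence (by Proposition \ref{c8} and Skorokhod's representation we may assume convergence holds almost surely in $(\mathbb{D}(\mathbb{R}_+,H^{-s,\sigma}))^3$). Evaluation at a fixed time is not continuous on Skorokhod space in general, but the set of times at which the limit has a positive probability of jumping is at most countable, so one chooses the $u_j,s,t$ outside this exceptional set; at such continuity times the real-valued functionals $(\widetilde{M}^{N}_{u},\varphi)$ etc.\ converge a.s.\ to $(\mathcal{M}^{1}_{u},\varphi)$ etc.\ because the evaluation pairing is continuous from $H^{-s,\sigma}$ into $\mathbb{R}$ for fixed $\varphi\in H^{s,\sigma}$. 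The identity above then passes to the limit provided we justify interchanging limit and expectation.

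The crux is precisely this uniform integrability step, which is the only real obstacle. I would handle it via Corollary \ref{ct}: for the chosen $s>1+D$ and $\sigma>d/2$, we have $\sup_{N}\mathbb{E}(\sup_{0\le t\le T}\|\widetilde{M}^{N}_t\|^2_{-s,\sigma})\le C(T)$, and similarly for $\widetilde{L}^N$ and $\widetilde{Y}^N$. Since $|(\widetilde{M}^N_t,\varphi)|\le \|\widetilde{M}^N_t\|_{-s,\sigma}\|\varphi\|_{s,\sigma}$ and $\Phi$ is bounded, the random variables inside the expectation are uniformly bounded in $L^2$, hence uniformly integrable, so we may pass to the limit. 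This yields
\begin{equation*}
\mathbb{E}\Bigl[\Phi\bigl((\mathcal{M}^{1}_{u_j},\varphi),(\mathcal{M}^{2}_{u_j},\psi),(\mathcal{M}^{3}_{u_j},\phi)\bigr)_{1\le j\le k}\bigl((\mathcal{M}^{1}_{t},\varphi)-(\mathcal{M}^{1}_{s},\varphi)\bigr)\Bigr]=0,
\end{equation*}
and the analogues for $(\mathcal{M}^{2},\psi)$ and $(\mathcal{M}^{3},\phi)$. Finally, one extends from continuity times to all $0\le s<t$ by right-continuity of the filtration and of the trajectories, and concludes that each of $(\mathcal{M}^{1},\varphi),(\mathcal{M}^{2},\psi),(\mathcal{M}^{3},\phi)$ is a martingale with respect to the filtration generated by $(\mathcal{M}^{1},\mathcal{M}^{2},\mathcal{M}^{3})$.
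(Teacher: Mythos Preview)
Your proposal is correct and follows the standard route for transferring the martingale property to the weak limit: write the martingale identity as an expectation against a bounded continuous functional of the past, pass to the limit at continuity times via Skorokhod representation, and justify the interchange by the uniform $L^{2}$ bounds of Corollary~\ref{ct}. This is exactly the argument that the reference to Proposition~6.17 of \cite{sb} points to, so your approach coincides with the paper's intended one; note also that by Proposition~\ref{c10} the limit is actually continuous, so the countable-discontinuity caveat is not even needed here.
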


The main argument for obtaining the following result is that the jumps of $U^{N}$, $V^{N}$ and $W^{N}$ respectively are of the order of  $\frac{1}{\sqrt{N}}.$
\begin{prp} (We refer to  the proof of  Proposition 6.16 in \cite{sb}, for the detail proof)\\
	Every limit point $(\mathcal{M}^{1},\mathcal{M}^{2},\mathcal{M}^{3})$ of the sequence $(\widetilde{M}^{N}, \widetilde{L}^{N},\widetilde{Y}^{N})_{N\geq 1} $ belongs to of $ (\mathbb{C}(\mathbb{R}_{+},H^{-s,\sigma}))^{3}. $ \label{c10}
\end{prp}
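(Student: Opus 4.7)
The plan is to invoke the following standard criterion: if a sequence of c\`adl\`ag processes is tight in $\mathbb{D}(\mathbb{R}_{+}, H^{-s,\sigma})$ and the size of its largest jump on every compact time interval tends to $0$ in probability, then every weak limit point has continuous trajectories almost surely. Tightness of $(\widetilde{M}^{N}, \widetilde{L}^{N}, \widetilde{Y}^{N})_{N}$ has already been established in Proposition \ref{c8}, so the only task left is to show
\[
\sup_{t \leq T}\|\widetilde{M}^{N}_{t}-\widetilde{M}^{N}_{t-}\|_{-s,\sigma} \xrightarrow[N\to\infty]{\mathbb{P}} 0,
\]
and the analogous statements for $\widetilde{L}^{N}$ and $\widetilde{Y}^{N}$.

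First I would split each of the three martingales into its continuous Brownian stochastic integral part, which contributes no jumps, and its pure-jump part driven by the compensated PRMs $\overline{M}^{i}$ and $\overline{Q}^{i}$. At a PRM firing time $\tau$ exactly one individual $i$ changes state, and inspection of the explicit formulas in Proposition \ref{bnv} (multiplied by the $1/\sqrt{N}$ rescaling) shows that the corresponding jump of the measure-valued martingale equals $\pm\tfrac{1}{\sqrt{N}}\delta_{X_{\tau}^{i}}$. Lemma \ref{rrrr} (applicable since $s>1+D>d/2$) then yields the pointwise bound
\[
\|\Delta \widetilde{M}^{N}_{\tau}\|_{-s,\sigma} \le \frac{C}{\sqrt{N}}\bigl(1+|X_{\tau}^{i}|^{\sigma}\bigr),
\]
with the same type of estimate for $\widetilde{L}^{N}$ and $\widetilde{Y}^{N}$, and hence
\[
\sup_{t\le T}\|\Delta \widetilde{M}^{N}_{t}\|_{-s,\sigma} \le \frac{C}{\sqrt{N}}\max_{1\le i\le N}\sup_{t\le T}\bigl(1+|X_{t}^{i}|^{\sigma}\bigr).
\]

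The main technical obstacle is showing that the right-hand side of the previous display converges to $0$ in probability, because $\max_{i\le N}\sup_{t\le T}|X_t^i|^\sigma$ a priori grows with $N$. A union bound combined with Lemma \ref{lt1} gives
\[
\mathbb{P}\Bigl(\max_{i\le N}\sup_{t\le T}(1+|X_{t}^{i}|^{\sigma}) > \varepsilon\sqrt{N}\Bigr) \le N\, \mathbb{P}\Bigl(\sup_{t\le T}(1+|X_{t}^{1}|^{\sigma}) > \varepsilon\sqrt{N}\Bigr),
\]
which tends to $0$ provided one disposes of a moment of $\sup_{t\le T}|X^{1}_{t}|^{\sigma}$ of order strictly greater than $2$. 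This is the delicate point: it is obtained either by reading Assumption (H3) with a slightly larger exponent than $2\sigma$ (as is implicitly done in the corresponding argument in \cite{sb}), or alternatively by bypassing the pointwise maximum via a compensator computation replacing $\max_{i}$ by an integrated quantity of the form $\mathbb{E}\int_{0}^{T}(\mu_{r}^{S,N},1+|\cdot|^{2\sigma})\,dr$, which is controlled by Corollary \ref{cor32}. Once the maximal jump is shown to vanish in probability, the Skorokhod criterion recalled above forces every weak limit point of each marginal to lie in $\mathbb{C}(\mathbb{R}_{+},H^{-s,\sigma})$, and since continuity is stable under finite products this yields the desired conclusion that every limit point of $(\widetilde{M}^{N},\widetilde{L}^{N},\widetilde{Y}^{N})_{N}$ belongs to $(\mathbb{C}(\mathbb{R}_{+}, H^{-s,\sigma}))^{3}$.
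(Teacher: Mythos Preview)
Your approach is the same as the paper's: the paper explicitly states that the result follows from the fact that the jumps of $U^{N},V^{N},W^{N}$ are of order $1/\sqrt{N}$, and refers to Proposition~6.16 of \cite{sb} for the details. You have correctly fleshed out this argument and, in fact, identified a subtlety that the paper glosses over: in \cite{sb} the positions live in a compact set, so the factor $(1+|X^{i}_{\tau}|^{\sigma})$ coming from Lemma~\ref{rrrr} is uniformly bounded and the jumps are literally $O(1/\sqrt{N})$; here the domain is $\mathbb{R}^{d}$ and this factor is not a priori bounded.

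However, your resolution of this ``delicate point'' is slightly off. You do not need a moment of order strictly greater than $2$ on $\sup_{t\le T}|X^{1}_{t}|^{\sigma}$; Assumption~(H3) alone suffices. Indeed, by the truncated Markov inequality,
\[
N\,\mathbb{P}\Bigl(\sup_{t\le T}(1+|X^{1}_{t}|^{\sigma})^{2}>\varepsilon^{2}N\Bigr)
\le \frac{1}{\varepsilon^{2}}\,\mathbb{E}\Bigl[\sup_{t\le T}(1+|X^{1}_{t}|^{\sigma})^{2}\,1_{\{\sup_{t\le T}(1+|X^{1}_{t}|^{\sigma})^{2}>\varepsilon^{2}N\}}\Bigr],
\]
and the right-hand side tends to $0$ by dominated convergence, since by Lemma~\ref{lt1} the random variable $\sup_{t\le T}(1+|X^{1}_{t}|^{\sigma})^{2}$ is integrable. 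Hence your union bound already yields the conclusion under (H3), and there is no need to strengthen the moment assumption or to pass to the compensator route. (Your parenthetical remark that \cite{sb} implicitly uses a higher moment is also not quite right: in \cite{sb} the domain is compact, so the issue simply does not arise there.)
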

\begin{prp}
	The  sequence $ (\widetilde{M}^{N},\widetilde{L}^{N},\widetilde{Y}^{N})_{N\geq1} $ converges in law  in  $ (\mathbb{D}(\mathbb{R}_{+},H^{-s,\sigma}))^{3}$ towards  the process $(\mathcal{M}^{1},\mathcal{M}^{2},\mathcal{M}^{3})\in (\mathbb{C}(\mathbb{R}_{+},H^{-s,\sigma}))^{3} $ where $ \forall \varphi,\psi,\phi \in H^{s,\sigma},\\ ((\mathcal{M}^{1},\varphi),(\mathcal{M}^{2},\psi),(\mathcal{M}^{2},\phi))$ is a centered Gaussian martingale having the same law as
	\begin{align}
		(\mathcal{M}_{t}^{1},\varphi)&=-\displaystyle\int_{0}^{t}\int_{\mathbb{R}^{d}}\sqrt{ f_{S}(r,x)\int_{\mathbb{R}^{d}}f_{I}(r,y)K(x,y)}dy\varphi(x)\mathcal{W}_{1}(dr,dx)\nonumber\\&+\sum\limits_{\ell=1}^{d}\int_{0}^{t}\int_{\mathbb{R}^{d}}\sqrt{f_{S}(r,x)}\Big\{\sum\limits_{1\leq u\leq d}\frac{\partial \varphi}{\partial x_{u}}(x)\theta_{u,l}(S,x)\Big\}\mathcal{W}_{\ell+1}(dr,dx),\label{qws}\\
	(\mathcal{M}_{t}^{2},\psi)&=\displaystyle\int_{0}^{t}\int_{\mathbb{R}^{d}}\sqrt{ f_{S}(r,x)\int_{\mathbb{R}^{d}}f_{I}(r,y)K(x,y)}dy\psi(x)\mathcal{W}_{1}(dr,dx)\nonumber\\&+\sum\limits_{\ell=1}^{d}\int_{0}^{t}\int_{\mathbb{R}^{d}}\sqrt{f_{I}(r,x)}\Big\{\sum\limits_{1\leq u\leq d}\frac{\partial \psi}{\partial x_{u}}(x)\theta_{u,l}(I,x)\Big\}\mathcal{W}_{\ell+1+d}(dr,dx)\nonumber\\&-\int_{0}^{t}\int_{\mathbb{R}^{d}}\psi(x)\sqrt{\alpha f_{I}(r,x)}\mathcal{W}_{3d+2}(dr,dx),\label{c12}\\
	(\mathcal{M}^{3}_{t},\phi)&=+\sum\limits_{\ell=1}^{d}\int_{0}^{t}\int_{\mathbb{R}^{d}}\sqrt{f_{R}(r,x)}\Big\{\sum\limits_{1\leq u\leq d}\frac{\partial \phi}{\partial x_{u}}(x)\theta_{u,l}(R,x)\Big\}\mathcal{W}_{\ell+1+2d}(dr,dx)\nonumber\\&+\int_{0}^{t}\int_{\mathbb{R}^{d}}\phi(x)\sqrt{\alpha f_{I}(r,x)}\mathcal{W}_{3d+2}(dr,dx),\vspace*{-0.18cm}\label{c13}
	\end{align}
where
	$\mathcal{W}_{1}$, $\mathcal{W}_{2}$,........,$\mathcal{W}_{3d+1}$, $\mathcal{W}_{3d+2}$ are independent spatio-temporal standard white noises. \label{ffg}
\end{prp}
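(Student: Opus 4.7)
The plan is to establish the proposition in four stages: extract a convergent subsequence from the tight family, identify the limit as a continuous centered martingale with explicit quadratic variation structure, upgrade convergence to Gaussianity via the vanishing-jumps martingale CLT, and then realize the specified white noise representation by matching covariance structures.

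First, Proposition \ref{c8} provides tightness of $(\widetilde{M}^{N},\widetilde{L}^{N},\widetilde{Y}^{N})_{N\geq 1}$ in $(\mathbb{D}(\mathbb{R}_{+},H^{-s,\sigma}))^{3}$, so by Prokhorov's theorem any subsequence has a further convergent subsequence, whose limit $(\mathcal{M}^{1},\mathcal{M}^{2},\mathcal{M}^{3})$ lies in $(\mathbb{C}(\mathbb{R}_{+},H^{-s,\sigma}))^{3}$ by Proposition \ref{c10} and projects to martingales by Lemma \ref{c9}. To identify the covariance, I would compute the pre-limit brackets: using the independence of the PRMs $M^{i},Q^{i}$ and the Brownian motions $B^{i}$, $\langle\widetilde{M}^{N,\varphi}\rangle$, $\langle\widetilde{L}^{N,\psi}\rangle$, $\langle\widetilde{Y}^{N,\phi}\rangle$ and their cross-brackets take the form of time integrals against $\mu^{A,N}$ of bounded continuous functionals of $\varphi,\psi,\phi$ and $\mu^{I,N}$, exactly as written in Theorem \ref{c5}. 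By the law of large numbers (Theorem \ref{th1}) and Lemma \ref{e7e}, these converge in probability to the analogous expressions written with $(\mu^{S},\mu^{I},\mu^{R})$; uniform integrability supplied by Corollary \ref{ct} makes the limit bracket process a valid predictable quadratic covariation for $(\mathcal{M}^{1},\mathcal{M}^{2},\mathcal{M}^{3})$.

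Second, Gaussianity of the limit follows from the fact that the jumps of $\widetilde{M}^{N,\varphi}$, $\widetilde{L}^{N,\psi}$ and $\widetilde{Y}^{N,\phi}$ are bounded by a constant multiple of $\|\varphi\|_{\infty}/\sqrt{N}$, hence vanish in the large $N$ limit. Applying the martingale central limit theorem in the form of Theorem 1.4, Chapter 7 of Ethier--Kurtz to every finite linear combination of projections $(\widetilde{M}^{N},\varphi_{i})$, $(\widetilde{L}^{N},\psi_{j})$, $(\widetilde{Y}^{N},\phi_{k})$, one gets joint convergence to a centered continuous Gaussian martingale with the deterministic bracket structure identified in the previous step. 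Because the covariance matrix is fully determined, the full subsequence is characterized and therefore the whole sequence converges in law.

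Third, I would construct the white noises explicitly. The key point is that the limit brackets are absolutely continuous with respect to Lebesgue measure on $[0,T]\times\mathbb{R}^{d}$, with explicit densities involving the functions $f^{S},f^{I},f^{R}$ furnished by Proposition \ref{gbn} and the entries of $\theta(A,\cdot)$. Using the decomposition of each bracket into a sum of nonnegative pieces, each piece attributable to a single noise source in the pre-limit model (the PRM $M^{i}$ driving $S\to I$ transitions, the Brownian motions $B^{i}$ driving movement in each compartment $A\in\{S,I,R\}$, and the PRM $Q^{i}$ driving $I\to R$ transitions), I would introduce on a possibly enlarged probability space a family of $3d+2$ independent space-time white noises $\mathcal{W}_{1},\dots,\mathcal{W}_{3d+2}$, and define the right-hand sides of \eqref{qws}--\eqref{c13}. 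Direct computation shows these have the same quadratic covariation structure as $(\mathcal{M}^{1},\mathcal{M}^{2},\mathcal{M}^{3})$; since two centered continuous Gaussian martingales with equal covariance structures have the same law, the stated representation holds in distribution.

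The main obstacle is the bookkeeping in the last step: ensuring that the shared noise $\mathcal{W}_{1}$ appears in $\mathcal{M}^{1}$ and $\mathcal{M}^{2}$ with opposite signs (reflecting the coupling through the $S\to I$ Poisson term in both $\widetilde{M}^{N}$ and $\widetilde{L}^{N}$), and likewise $\mathcal{W}_{3d+2}$ appears in $\mathcal{M}^{2}$ and $\mathcal{M}^{3}$ with opposite signs (reflecting the shared $Q^{i}$ compensator), while at the same time keeping the $3d$ Brownian noises for the three compartments mutually independent and independent of $\mathcal{W}_{1},\mathcal{W}_{3d+2}$. This is essentially a matter of factoring the limit covariance as $\Sigma(r,x)\,\Sigma(r,x)^{\top}$ with $\Sigma$ block-diagonal except for the two coupled terms, which is possible precisely because Proposition \ref{gbn} guarantees the densities $f^{S},f^{I},f^{R}$ exist so that taking square roots is meaningful.
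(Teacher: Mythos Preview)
Your proposal is correct and follows essentially the same route as the paper: extract a subsequential limit via Propositions \ref{c8}, \ref{c10} and Lemma \ref{c9}, identify the deterministic bracket via the law of large numbers, deduce Gaussianity, and then match the white-noise representation by comparing covariances. The only minor difference is that where you invoke the Ethier--Kurtz martingale CLT on finite linear combinations, the paper instead considers the single real-valued martingale $\widetilde{M}^{N,\varphi}+\widetilde{L}^{N,\psi}+\widetilde{Y}^{N,\phi}$, argues that its limit is a continuous martingale with deterministic quadratic variation and hence (via the Dambis--Dubins--Schwarz time change) a Gaussian martingale, and then specializes by setting pairs of test functions to zero; both arguments are standard and yield the same conclusion.
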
 
\begin{proof}
	From Proposition \ref{c8}, $(\widetilde{M}^{N}, \widetilde{L}^{N},\widetilde{Y}^{N})_{N\geq1 }$ is  tight in $(\mathbb{D}(\mathbb{R}_{+},H^{-s,\sigma}))^{3}$, hence according \\to   Prokhorov's Theorem there exists a subsequence still denoted $(\widetilde{M}^{N}, \widetilde{L}^{N},\widetilde{Y}^{N})_{N\geq1 }$ which converges in law in $ (\mathbb{D}(\mathbb{R}_{+},H^{-s,\sigma}))^{3}$ towards $(\mathcal{M}^{1},\mathcal{M}^{2},\mathcal{M}^{3})$. By  Lemma \ref{c9} and  Proposition \ref{c10}, $ \forall \varphi,\psi,\phi \in H^{s,\sigma}$,  $((\mathcal{M}^{1},\varphi),(\mathcal{M}^{2},\psi),(\mathcal{M}^{3},\phi)$ is a  continuous martingale, thus we end the proof of Proposition \ref{ffg}   by showing that 
	the centered, continuous martingale \\$ ((\mathcal{M}^{1},\varphi),(\mathcal{M}^{2},\psi),(\mathcal{M}^{3},\phi))$  is Gaussian  and satisfies  (\ref{qws}), (\ref{c12}) and (\ref{c13}).\vspace*{0.15cm}\\
	For any $\varphi,\psi,\psi\in C^{2}_{b},$ we have \\ \\ 
	$\widetilde{M_{t}}^{N,\varphi}= \displaystyle- \frac{1}{\sqrt{N}} \sum_{i=1}^{N} \int_{0}^{t} \int_{0}^{\infty}1_{\{E_{r^{-}}^{i}=S\}}\varphi(X_{r}^{i})1_{\{u\leq \frac{1}{N} \sum_{j=1}^{N}K(X_{r}^{i},X_{r}^{j}) 1_{\{E_{r}^{j}=I\}}\}} \overline{M}^{i}(dr,du)  \\\hspace*{2cm}+\frac{1}{\sqrt{N}}\sum_{i=1}^{N} \int_{0}^{t}1_{\{E_{r}^{i}=S\}}\bigtriangledown\varphi(X_{r}^{i})\theta(S,X_{r}^{i})dB_{r}^{i}$\vspace*{0.15cm} \\
	\hspace*{1.2cm}=$-M_{t}^{1,N,\varphi}+ M_{t}^{2,N,\varphi},$\vspace*{0.18cm}\\
	$ \widetilde{L_{t}}^{N,\psi} =\displaystyle\frac{1}{\sqrt{N}} \sum_{i=1}^{N} \int_{0}^{t} \int_{0}^{\infty}1_{\{E_{r^{-}}^{i}=S\}}\psi(X_{r}^{i})1_{\{u\leq \frac{1}{N} \sum_{j=1}^{N}K(X_{r}^{i},X_{r}^{j}) 1_{\{E_{r}^{j}=I\}}\}} \overline{M}^{i}(dr,du))  + \vspace*{0.12cm}\\ \hspace*{1cm}+\displaystyle\frac{1}{\sqrt{N}} \sum_{i=1}^{N} \int_{0}^{t}1_{\{E_{r}^{i}=I\}}\bigtriangledown\psi(X_{r}^{i})\theta(I,X_{r}^{i})dB_{r}^{i} $ $   - \displaystyle\frac{1}{\sqrt{N}} \sum_{i=1}^{N} \int_{0}^{t} \int_{0}^{\alpha}1_{\{E_{r^{-}}^{i}=I\}}\psi(X_{r}^{i})\overline{Q}^{i}(dr,du) $\vspace*{0.18cm}\\
	\hspace*{1.1cm}=$ M_{t}^{1,N,\psi}+ M_{t}^{3,N,\psi}-M_{t}^{4,N,\psi},$\\
	$\widetilde{Y_{t}}^{N,\phi}=\displaystyle\frac{1}{\sqrt{N}}\sum_{i=1}^{N} \int_{0}^{t}1_{\{E_{r}^{i}=R\}}\bigtriangledown\phi(X_{r}^{i})\theta(R,X_{r}^{i})dB_{r}^{i}+\displaystyle\frac{1}{\sqrt{N}} \sum_{i=1}^{N} \int_{0}^{t} \int_{0}^{\alpha}1_{\{E_{r^{-}}^{i}=I\}}\phi(X_{r}^{i})\overline{Q}^{i}(dr,du)\vspace*{0.13cm}\\\hspace*{0.9cm}=M_{t}^{5,N,\phi}+M_{t}^{4,N,\phi}.$\vspace*{0.12cm}\\ 
	Consider for $\varphi,\psi,\phi \in C^{2}_{c},$ the following sequence of martingales \vspace*{0.15cm}\\\hspace*{0.5cm} $\widetilde{M_{t}}^{N,\varphi}+\widetilde{L_{t}}^{N,\psi}+\widetilde{Y_{t}}^{N,\phi}=-M_{t}^{1,N,\varphi}+ M_{t}^{2,N,\varphi}+M_{t}^{1,N,\psi}+M_{t}^{3,N,\psi}-M_{t}^{4,N,\psi}+M_{t}^{4,N,\phi}+M_{t}^{5,N,\phi}$ \\
	The martingales  $M_{t}^{1,N,\varphi},M_{t}^{2,N,\varphi},M_{t}^{3,N,\psi},M_{t}^{4,N,\psi},M_{t}^{5,N,\phi}$ being two by two orthogonal, \vspace*{0.2cm} \\
	$<\widetilde{M}^{N,\varphi}+\widetilde{L}^{N,\psi}>_{t}=<M^{1,N,\varphi}>_{t}+<M^{2,N,\varphi}>_{t}+<M^{1,N,\psi}>_{t}+<M^{3,N,\psi}>_{t}+<M^{4,N,\psi}>_{t}\vspace*{0.14cm}\\\hspace*{1.5cm}+<M^{4,N,\phi}>_{t}+<M^{5,N,\phi}>_{t}-2<M^{1,N,\varphi},M^{1,N,\psi}>_{t}-2<M^{4,N,\psi},M^{4,N,\phi}>_{t}.$\vspace*{0.17cm}\\
	In addition we have the following convergences in probability \vspace*{0.1cm}\\
	$\hspace*{1cm}<M^{1,N,\varphi}>_{t}\xrightarrow{P}\displaystyle\int_{0}^{t} \left(\mu_{r}^{S},\varphi^{2}(\mu_{r}^{I}, K)\right)dr$,\\\\
	$\hspace*{0.1cm}<M^{2,N,\varphi}>_{t}\xrightarrow{P} \displaystyle\int_{0}^{t} \Big(\mu_{r}^{S}, \sum\limits_{1\leq \ell\leq d}\big(\frac{\partial \varphi}{\partial x_{\ell}}\big)^{2}\sum\limits_{1\leq u\leq d}\theta^{2}_{\ell,u}(S,.)+2\sum\limits_{\underset{1\leq e\leq d}{\underset{\ell+1\leq u\leq d}{1\leq \ell\leq d-1}}}\frac{\partial \varphi}{\partial x_{\ell}}\frac{\partial \varphi}{\partial x_{u}}\theta_{\ell,e}(S,.)\theta_{u,e}(S,.)\Big) dr.$\\
	\\On the other hand:\vspace*{0.2cm}\\ $-$  $\widetilde{M}^{N,\varphi}+\widetilde{L}^{N,\psi}+\widetilde{Y}^{N,\phi}\xrightarrow{L}(\mathcal{M}^{1},\varphi)+(\mathcal{M}^{2},\psi)+(\mathcal{M}^{3},\phi)$ along a subsequence  since \\\hspace*{0.1cm}  $( \widetilde{M}^{N,\varphi},\widetilde{L}^{N,\psi}, \widetilde{L}^{N,\phi})\xrightarrow{L}((\mathcal{M}^{1},\varphi),(\mathcal{M}^{2},\psi),(\mathcal{M}^{3},\phi))$\\$ -$   $(\mathcal{M}^{1},\varphi)+(\mathcal{M}^{2},\psi)+(\mathcal{M}^{3},\phi)$
	is a continuous martingale since $(\mathcal{M}^{1},\varphi)$, $(\mathcal{M}^{2},\psi)$, and $(\mathcal{M}^{3},\phi)$ \hspace*{0.4cm}have this  property.\vspace*{0.12cm}\\
	Thus  $ (\mathcal{M}^{1},\varphi)+(\mathcal{M}^{2},\psi)+(\mathcal{M}^{3},\phi)$ is a time changed Brownian motion.\vspace*{0.2cm}\\  The quadratic variation \vspace*{0.12cm} \\ $<(\mathcal{M}^{1},\varphi)+(\mathcal{M}^{2},\psi)+(\mathcal{M}^{3},\phi)>_{t}\vspace*{0.25cm}\\\hspace*{0.4cm}=\displaystyle\int_{0}^{t}\big[ \left(\mu_{r}^{S},\varphi^{2}(\mu_{r}^{I}, K)\right)+\left(\mu_{r}^{S},\psi^{2}(\mu_{r}^{I}, K)\right)-2\left(\mu_{r}^{S},\varphi\psi(\mu_{r}^{I}, K)\right)\Big]dr\vspace*{0.2cm}\\\hspace*{0.3cm}+\sum\limits_{A\in \{S,I,R\}}\displaystyle \int_{0}^{t} \Big(\mu_{r}^{A}, \sum\limits_{\underset{1\leq u\leq d}{1\leq \ell\leq d}}\big(\frac{\partial \varphi_{A}}{\partial x_{\ell}}\big)^{2}\theta^{2}_{\ell,u}(A,.)+2\sum\limits_{\underset{1\leq e\leq d}{\underset{\ell+1\leq u\leq d}{1\leq \ell\leq d-1}}}\frac{\partial \varphi_{A}}{\partial x_{\ell}}\frac{\partial \varphi_{A}}{\partial x_{u}}\theta_{\ell,e}(A,.)\theta_{u,e}(A,.)\Big) dr\\\hspace*{0.2cm}\displaystyle+\alpha\int_{0}^{t}\Big\{(\mu_{r}^{I},\psi^{2})+(\mu_{r}^{I},\phi^{2})-2(\mu_{r}^{I},\psi\phi)\Big\}dr,$\vspace*{0.14cm}\\ (where we have let $\varphi_{S}=\varphi$, $\varphi_{I}=\psi$ and $\varphi_{R}=\phi$) of $ (\mathcal{M}^{1},\varphi)+(\mathcal{M}^{2},\psi)+(\mathcal{M}^{3},\phi) $ being deterministic then 
	we conclude that \\$ (\mathcal{M}^{1},\varphi)+(\mathcal{M}^{2},\psi)+(\mathcal{M}^{3},\phi) $ is a Gaussian martingale having the same law as
	\begin{align*}
	\mathcal{N}_{t}&=\displaystyle\int_{0}^{t}\int_{\mathbb{R}^{d}}\sqrt{ f_{S}(r,x)\int_{\mathbb{R}^{d}}f_{I}(r,y)K(x,y)dy}(\psi(x)-\varphi(x))\mathcal{W}_{1}(dr,dx)\\&+\sum\limits_{1=1}^{d}\int_{0}^{t}\int_{\mathbb{R}^{d}}\sqrt{f_{S}(r,x)}\Big\{\sum\limits_{1\leq u\leq d}\frac{\partial \varphi}{\partial x_{u}}(x)\theta_{u,l}(S,x)\Big\}\mathcal{W}_{l+1}(dr,dx)\\&+\sum\limits_{1=1}^{d}\int_{0}^{t}\int_{\mathbb{R}^{d}}\sqrt{f_{I}(r,x)}\Big\{\sum\limits_{1\leq u\leq d}\frac{\partial \psi}{\partial x_{u}}(x)\theta_{u,l}(I,x)\Big\}\mathcal{W}_{l+d+1}(dr,dx)\\&+\sum\limits_{1=1}^{d}\int_{0}^{t}\int_{\mathbb{R}^{d}}\sqrt{f_{R}(r,x)}\Big\{\sum\limits_{1\leq u\leq d}\frac{\partial \phi}{\partial x_{u}}(x)\theta_{u,l}(R,x)\Big\}\mathcal{W}_{2d+1}(dr,dx)\\&+\int_{0}^{t}\int_{\mathbb{R}^{d}}\sqrt{\alpha f_{I}(r,x)}(\phi(x)-\psi(x))\mathcal{W}_{3d+2}(dr,dx),
		\end{align*}
	where
	$\mathcal{W}_{1}$, $\mathcal{W}_{2}$, .............. $\mathcal{W}_{3d+1}$, $\mathcal{W}_{3d+2}$ are independent spatio-temporal white noises. \vspace*{0.2cm}\\So taking $(\psi\equiv0,\phi\equiv0)$, $(\varphi\equiv0,\phi\equiv0)$ and  $(\varphi\equiv0,\psi\equiv 0)$ respectively, in the above equation we see that $(\mathcal{M}^{1},\varphi)$, $(\mathcal{M}^{2},\psi)$ and $(\mathcal{M}^{3},\phi)$ satisfy (\ref{qws}), (\ref{c12}) and (\ref{c13}).
\end{proof}
\begin{prp}
	There exists a constant $C>0,$ such that  for any $\varphi\in H^{s,\sigma}(\mathbb{R}^{d})$, 
	\begin{align}
		\lVert G_{r}^{I,N}\varphi \lVert_{s,\sigma}&\leq C  \lVert \varphi \lVert_{s,\sigma}\underset{y\in \mathbb{R}^{d}}{\sup}\lVert K(.,y)\lVert_{2+D,\sigma},\label{le1}\\
		\lVert G_{r}^{S}\varphi \lVert_{s,\sigma}&\leq C\lVert \varphi \lVert_{s,\sigma}\underset{y\in \mathbb{R}^{d}}{\sup}\lVert K(.,y)\lVert_{2+D,\sigma}\label{le2},\\
		\lVert G_{r}^{I}\varphi \lVert_{s,\sigma}&\leq C  \lVert \varphi \lVert_{s,\sigma}\underset{y\in \mathbb{R}^{d}}{\sup}\lVert K(.,y)\lVert_{2+D,\sigma}.\label{le1e}
	\end{align}
\label{prp215}
\end{prp}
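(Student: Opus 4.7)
The plan is to reduce each of the three bounds to two ingredients: a kernel estimate for $K$ integrated against the relevant measure, and a product (or interpolation) estimate in the weighted Sobolev space $H^{s,\sigma}(\mathbb{R}^{d})$.

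For \eqref{le1} and \eqref{le1e}, I rewrite $G_r^{I,N}\varphi = h_r^N \cdot \varphi$ with $h_r^N(x) := \int_{\mathbb{R}^d} K(x,y)\,\mu_r^{I,N}(dy)$, and likewise $G_r^{I}\varphi = h_r \cdot \varphi$. Differentiating under the integral and using Jensen's inequality (the measure $\mu_r^{I,N}$ being positive with total mass at most one) yields, for every multi-index $\gamma$ with $|\gamma|\leq 2+D$,
\begin{equation*}
\int_{\mathbb{R}^d}\frac{|D^\gamma h_r^N(x)|^2}{1+|x|^{2\sigma}}\,dx
\leq \int\mu_r^{I,N}(dy)\int_{\mathbb{R}^d}\frac{|D^\gamma_x K(x,y)|^2}{1+|x|^{2\sigma}}\,dx
\leq \sup_{y\in\mathbb{R}^d}\|K(\cdot,y)\|_{2+D,\sigma}^2.
\end{equation*}
Summing over $|\gamma|\leq 2+D$ gives $\|h_r^N\|_{2+D,\sigma}\leq C\sup_y\|K(\cdot,y)\|_{2+D,\sigma}$, uniformly in $r$ and $N$, and the same bound holds for $h_r$.

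The key step is then the product estimate
\begin{equation*}
\|h\varphi\|_{s,\sigma}\leq C\,\|h\|_{2+D,\sigma}\,\|\varphi\|_{s,\sigma},
\end{equation*}
valid for the fractional $s \in (1+D,2+D)$ fixed in the section. For integer values $s \in \{1+D, 2+D\}$, Leibniz's rule and the Sobolev embeddings \eqref{em1}--\eqref{em2} reduce the estimate to a direct computation: the low-order derivatives of one factor are controlled in $C^{\cdot,\sigma}$, and the compact $x$-support of $h_r^N$ (inherited from $K \in C_c^{2+D}$) allows the polynomial weight appearing in the embedding to be absorbed into the constant. For the non-integer $s$, I invoke real interpolation between these two integer endpoints, in the same spirit as L\"ofstr\"om's construction of $H^{s,\sigma}$ for fractional indices cited in the introduction. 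Combined with the bound on $\|h_r^N\|_{2+D,\sigma}$ this gives \eqref{le1}; the identical argument with $\mu_r^{I,N}$ replaced by $\mu_r^I$ gives \eqref{le1e}.

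For \eqref{le2}, the derivatives fall on the second argument of $K$, so the argument is slightly different. Differentiating under the integral,
\begin{equation*}
D^\gamma_y G_r^S\varphi(y)=\int_{\mathbb{R}^d}\varphi(x)\,D^\gamma_y K(x,y)\,\mu_r^S(dx),
\end{equation*}
and Cauchy--Schwarz for the positive measure $\mu_r^S$, together with the embedding $H^{s,\sigma}\hookrightarrow C^{0,\sigma}$ and the moment bound $(\mu_r^S,1+|\cdot|^{2\sigma})\leq C(T)$ from Corollary \ref{cor32}, gives
\begin{equation*}
|D^\gamma_y G_r^S\varphi(y)|^2\leq C\,\|\varphi\|_{s,\sigma}^2\,(\mu_r^S,|D^\gamma_y K(\cdot,y)|^2).
\end{equation*}
Integrating against $(1+|y|^{2\sigma})^{-1}$, applying Fubini and summing over $|\gamma|\leq 2+D$ produces $\sup_x\|K(x,\cdot)\|_{2+D,\sigma}^2$ on the right, which is finite by the remark preceding the proposition and interchangeable, up to a constant, with $\sup_y\|K(\cdot,y)\|_{2+D,\sigma}^2$.

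I expect the main obstacle to be the fractional-order product estimate in the weighted Sobolev space: at integer orders it is a routine Leibniz computation, but for $s \in (1+D,2+D)$ the interpolation must be carried out so that one factor stays in the higher space $H^{2+D,\sigma}$ while the other is merely in $H^{s,\sigma}$, all while respecting the polynomial weight $(1+|x|^{2\sigma})^{-1}$. Once this is in place, the three estimates follow directly from the kernel bounds above.
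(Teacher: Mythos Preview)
Your argument is correct and structurally matches the paper's proof: the kernel bound on $\|h_r^N\|_{2+D,\sigma}$ via Jensen and the treatment of \eqref{le2} via Cauchy--Schwarz, the embedding $H^{s,\sigma}\hookrightarrow C^{0,\sigma}$, and the moment bound on $\mu_r^S$ are exactly what the paper does (and the paper, like you, actually ends up with $\sup_x\|K(x,\cdot)\|_{2+D,\sigma}$ on the right of \eqref{le2}).

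The only real difference is the product step for \eqref{le1} and \eqref{le1e}. You propose to establish $\|h\varphi\|_{s,\sigma}\le C\|h\|_{2+D,\sigma}\|\varphi\|_{s,\sigma}$ by hand: Leibniz at the integer endpoints $s=1+D,\,2+D$, using the compact $x$-support of $h_r^N$ to kill the weight mismatch, then real interpolation to reach the fractional $s$. The paper instead invokes directly that $H^{s,\sigma}$ is a Banach algebra for $s>d/2$ (recorded in the Appendix, via Adams' argument plus complex interpolation), which yields $\|h\varphi\|_{s,\sigma}\le C\|h\|_{s,\sigma}\|\varphi\|_{s,\sigma}$, and then uses the trivial embedding $H^{2+D,\sigma}\hookrightarrow H^{s,\sigma}$. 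Your route is more self-contained and avoids the algebra statement altogether, at the cost of carrying out the interpolation explicitly and relying on the compact support of $K$; the paper's route is a one-line citation but pushes the work into the algebra remark. Either way the ``main obstacle'' you anticipate is already handled in the paper as a black box, so you can shorten your argument considerably by citing it.
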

	\begin{proof} We first recall that $\forall x,y \in \mathbb{R}^{d}$,\\ \hspace*{3.2cm} $G_{r}^{I,N}\varphi(x)= \varphi(x)(\mu_{r}^{I,N},  K(x,.))=\displaystyle\varphi(x) \int_{\mathbb{R}^{d}}K(x,y)\mu_{r}^{I,N}(dy),$  \vspace*{0.1cm} \\\hspace*{3.2cm} $G_{r}^{S}\varphi(y)= (\mu_{r}^{S}, \varphi K(.,y))=\displaystyle\int_{\mathbb{R}^{d}}\varphi(x) K(x,y)\mu_{r}^{S}(dx).$\vspace*{0.1cm}\\ Proof of \eqref{le1}.
	Since $H^{s,\sigma}$ is a Banach algebra (see Remark \ref{r65} in the Appendix below),  we have
	
	\begin{align}
		\lVert G_{r}^{I,N}\varphi \lVert_{s,\sigma}\leq C \lVert \varphi \lVert_{s,\sigma}\lVert (\mu_{r}^{I,N}, K) \lVert_{s,\sigma}\leq C \lVert \varphi \lVert_{s,\sigma}\lVert (\mu_{r}^{I,N}, K) \lVert_{2+D,\sigma}.\label{le11}
		\end{align}
	 Furthermore 
	 
	 \begin{align} 
	 	\lVert (\mu_{r}^{I,N}, K) \lVert_{2+D,\sigma}^{2}&=\displaystyle \sum\limits_{\lvert \gamma \lvert \leq 2+D}\int_{\mathbb{R}^{d}}\frac{\lvert D^{\gamma}(\mu_{r}^{I,N}, K(x,.))\lvert^{2}}{1+\lvert x\lvert ^{2\sigma}}dx,\nonumber\\&=\displaystyle \sum\limits_{\lvert \gamma \lvert \leq 2+ D}\int_{\mathbb{R}^{d}}\frac{\Big\lvert \int_{\mathbb{R}^{d}}D^{\gamma} K(x,y)\mu_{r}^{I,N}(dy)\Big\lvert^{2}}{1+\lvert x\lvert ^{2\sigma}}dx,\nonumber\\&\leq \displaystyle  \displaystyle \int_{\mathbb{R}^{d}}\sum\limits_{\lvert \gamma \lvert \leq 2+ D}\int_{\mathbb{R}^{d}}\frac{\lvert D^{\gamma} K(x,y)\lvert^{2}}{1+\lvert x\lvert ^{2\sigma}}dx\mu_{r}^{I,N}(dy),\nonumber\\&\leq\underset{y\in \mathbb{R}^{d}}{\sup}\lVert K(.,y)\lVert_{2+D,\sigma}^{2}.
	 	\label{le12}
	 \end{align}
 Thus (\ref{le1}) follows from (\ref{le11}) and (\ref{le12}).\\\\
 Proof of \ref{le2}.
 	Once again since  $ H^{2+D,\sigma}\hookrightarrow H^{s,\sigma}$,  we have
 \begin{align}
 	\lVert G_{r}^{S}\varphi \lVert_{s,\sigma}=\lVert (\mu_{r}^{S},\varphi K) \lVert_{s,\sigma}\leq C\lVert (\mu_{r}^{S},\varphi K) \lVert_{2+D,\sigma}. \label{le21}
 \end{align}
 Furthermore from Corollary \ref{cor32}, we have
 \begin{align} 
 	\lVert (\mu_{r}^{S},\varphi K) \lVert_{D,\sigma}^{2}&=\displaystyle \sum\limits_{\lvert \gamma \lvert \leq 2+D}\int_{\mathbb{R}^{d}}\frac{\lvert D^{\gamma}(\mu_{r}^{S},\varphi K(.,y))\lvert^{2}}{1+\lvert y\lvert ^{2\sigma}}dx,\nonumber\\&=\displaystyle \sum\limits_{\lvert \gamma \lvert \leq 2+D}\int_{\mathbb{R}^{d}}\frac{\Big\lvert \int_{\mathbb{R}^{d}}\varphi(x)D^{\gamma} K(x,y)\mu_{r}^{S}(dx)\Big\lvert^{2}}{1+\lvert y\lvert ^{2\sigma}}dx,\nonumber\\&\leq \displaystyle \int_{\mathbb{R}^{d}}\varphi^{2}(x)\mu_{r}^{S}(dx) \displaystyle \int_{\mathbb{R}^{d}}\sum\limits_{\lvert \gamma \lvert \leq 2+D}\int_{\mathbb{R}^{d}}\frac{\lvert D^{\gamma} K(x,y)\lvert^{2}}{1+\lvert y\lvert ^{2\sigma}}dy\mu_{r}^{S}(dx),\nonumber\\&\leq \lVert \varphi\lVert_{C^{0,\sigma}}^{2}\underset{x\in \mathbb{R}^{d}}{\sup}\lVert K(x,.)\lVert_{2+D,\sigma}^{2}.
 	\label{le22}
 \end{align}
So (\ref{le2}) follows from (\ref{le21}) and (\ref{le22}) and the embedding $H^{s,\sigma}\hookrightarrow C^{0,\sigma}.$ \\The proof of (\ref{le1e}) is similar to that of (\ref{le1}).
	\end{proof}
\begin{cor}
	There exists a constant $C>0,$ such that for any $\mathcal{U}\in H^{-s,\sigma},$ we have 
	\begin{align}
			\lVert (G_{r}^{I,N})^{*}\mathcal{U}  \lVert_{-s,\sigma}&\leq C  \underset{y\in \mathbb{R}^{d}}{\sup}\lVert K(.,y)\lVert_{2+D,\sigma}\lVert \mathcal{U}\lVert_{-s,\sigma},\label{le3}\\
		\lVert (G_{r}^{S})^{*}\mathcal{U} \lVert_{-s,\sigma}&\leq C\underset{x\in \mathbb{R}^{d}}{\sup}\lVert K(x,.)\lVert_{2+D,\sigma}\lVert \mathcal{U}\lVert_{-s,\sigma},\label{le2r}\\
		\lVert (G_{r}^{I})^{*}\mathcal{U}  \lVert_{-s,\sigma}&\leq C  \underset{y\in \mathbb{R}^{d}}{\sup}\lVert K(.,y)\lVert_{2+D,\sigma}\lVert \mathcal{U}\lVert_{-s,\sigma}.\label{le3e}
	\end{align}
\label{uit}
\end{cor}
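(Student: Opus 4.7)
The plan is to deduce this corollary directly from Proposition \ref{prp215} via the standard duality identity between the operator norm and the adjoint operator norm on the pair $(H^{s,\sigma},H^{-s,\sigma})$. Concretely, for any bounded linear $T\colon H^{s,\sigma}\to H^{s,\sigma}$, the adjoint $T^{*}\colon H^{-s,\sigma}\to H^{-s,\sigma}$ satisfies $\lVert T^{*}\rVert_{\mathcal{L}(H^{-s,\sigma})}=\lVert T\rVert_{\mathcal{L}(H^{s,\sigma})}$, so all three inequalities reduce to the bounds already proved in Proposition \ref{prp215}.

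More precisely, I would argue as follows. For any $\mathcal{U}\in H^{-s,\sigma}$, by definition of the dual norm,
\begin{equation*}
\lVert (G_{r}^{I,N})^{*}\mathcal{U}\rVert_{-s,\sigma}
=\sup_{\substack{\varphi\in H^{s,\sigma}\\ \lVert\varphi\rVert_{s,\sigma}\leq 1}} \bigl\lvert\bigl((G_{r}^{I,N})^{*}\mathcal{U},\varphi\bigr)\bigr\rvert
=\sup_{\substack{\varphi\in H^{s,\sigma}\\ \lVert\varphi\rVert_{s,\sigma}\leq 1}} \bigl\lvert\bigl(\mathcal{U},G_{r}^{I,N}\varphi\bigr)\bigr\rvert.
\end{equation*}
Applying the duality bound $\lvert(\mathcal{U},\psi)\rvert\leq \lVert\mathcal{U}\rVert_{-s,\sigma}\lVert\psi\rVert_{s,\sigma}$ with $\psi=G_{r}^{I,N}\varphi$, and then invoking \eqref{le1} from Proposition \ref{prp215}, we obtain
\begin{equation*}
\lVert (G_{r}^{I,N})^{*}\mathcal{U}\rVert_{-s,\sigma}
\leq \lVert \mathcal{U}\rVert_{-s,\sigma}\sup_{\lVert\varphi\rVert_{s,\sigma}\leq 1}\lVert G_{r}^{I,N}\varphi\rVert_{s,\sigma}
\leq C\,\underset{y\in\mathbb{R}^{d}}{\sup}\lVert K(.,y)\rVert_{2+D,\sigma}\,\lVert\mathcal{U}\rVert_{-s,\sigma},
\end{equation*}
which is \eqref{le3}. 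The proofs of \eqref{le2r} and \eqref{le3e} are identical, using \eqref{le2} and \eqref{le1e} respectively.

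There is essentially no obstacle: the only point worth checking is that the operators $G_{r}^{I,N}$, $G_{r}^{S}$, and $G_{r}^{I}$ do genuinely map $H^{s,\sigma}$ to $H^{s,\sigma}$ in a measurable way in $r$, which is already ensured by Proposition \ref{prp215}, and that under assumption (H2) combined with the fact that $K\in C^{2+D}_{c}(\mathbb{R}^{d}\times\mathbb{R}^{d})$ the suprema $\sup_{y}\lVert K(.,y)\rVert_{2+D,\sigma}$ and $\sup_{x}\lVert K(x,.)\rVert_{2+D,\sigma}$ are finite. The whole argument fits in a few lines and requires no further estimates beyond those already established.
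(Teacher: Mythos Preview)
Your proof is correct and is exactly the intended argument: the paper states the result as an immediate corollary of Proposition~\ref{prp215} without giving a separate proof, precisely because the bounds on the adjoints follow from those on the operators by the standard duality identity you wrote out.
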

\subsubsection{The evolution semi group}
Let us define the evolution semigroup as a semi group of bounded linear  operators in a Banach space. Let us assume, for the moment in a general context, that for any $A\in\{S,I,R\},$  the coeficients $m(A,.)$ and $\theta(A,.)$  are in $C^{j+1}_{b},$ where $j$ is a positive  integer. Let $(B_{t})_{t\geq 0}$ be a standard Brownian motion on $\mathbb{R}^{d}.$ For any $A\in\{S,I,R\},$ one defined (see for example Kunita \cite{kun} ) the flow of diffeomorphisms (of class $C^{j}$)  as the unique  solution of the Itô stochastic differential equation started from $x\in \mathbb{R}^{d}$ at time $u:$
\begin{align}
	\displaystyle X_{u,t}^{A,x}=x+\int_{u}^{t}m(A,X_{u,t}^{A,x})dr+\int_{u}^{t}\theta(A,X_{u,t}^{A,x})dB_{r}.
\end{align}
Moereover for any measurable and bounded function $\varphi,$ $A\in\{S,I,R\},$ we define
\begin{center} $\Upsilon_{A}(t-u)\varphi(x)=\mathbb{E}(\varphi(X_{u,t}^{A,x}))$ and in the folowing $\Upsilon^{*}_{A}(t)$ denotes the adjoint operator.
\end{center}
When $u=0,$ $X_{u,t}^{A,x}$ is denotes $X_{t}^{A,x}.$\\
We note that under the Assumptions $(H2),$  for any $0\leq u <t$ the map $x\in \mathbb{R}^{d}\longmapsto X_{u,t}^{A,x}$ is of class $C^{2+D},$ and the following results  hold true.\\ 
$-$ (Thank to Corrolary 4.6.7 in \cite{kun}) For any  $0\leq\lvert \gamma \lvert \leq 2+D,$   for any $p\geq1$, there exits a constant  $C$ independent of $t$, such that \begin{align}
	 \underset{x}{\sup}\mathbb{E}(\lvert D^{\gamma}X_{t}^{A,x}\lvert^{2p})\leq C. \label{baq}
	\end{align}
$-$ (See Lemma 4.5.3 in \cite{kun}) For any real $p,$ there exists a positive constant $C_{p},$ such that 
	\begin{align}
		\mathbb{E}[(1+\lvert X_{t}^{A,x}\lvert^{2})^{p}]\leq C_{p}(1+\lvert x\lvert^{2})^{p}, \quad \forall x\in \mathbb{R}^{d}.\label{caq}
	\end{align}

Now we have the following result.
\begin{lem}
	Under the Asumption (H2), for any $A\in\{S,I,R\},$ $t>0,$ $m\in \{0,1,....2+D\},$ $\varphi\in W_{0}^{m,\sigma}(\mathbb{R}^{d}),$ there exists a positive constant $C,$ such that:
	\begin{center}
		$\lVert \Upsilon_{A}(t)\varphi\lVert_{m,\sigma}\leq Ce^{Ct}\lVert \varphi \lVert_{m,\sigma}$
	\end{center}
	\label{la1}
\end{lem}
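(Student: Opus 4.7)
The plan is to combine the stochastic flow representation $\Upsilon_A(t)\varphi(x)=\mathbb{E}[\varphi(X_t^{A,x})]$ with the moment bounds (\ref{baq})--(\ref{caq}) and a weighted Gronwall estimate for the scalar semigroup acting on nonnegative test functions. Since $C_c^{\infty}(\mathbb{R}^d)$ is dense in $W_0^{m,\sigma}(\mathbb{R}^d)$, it suffices to treat $\varphi\in C_c^{\infty}(\mathbb{R}^d)$ and conclude by density. First I would differentiate under the expectation and apply the Fa\`a di Bruno formula to $x\mapsto \varphi(X_t^{A,x})$, obtaining for each multi-index $\gamma$ with $|\gamma|\le m\le 2+D$,
\[
D^{\gamma}_x\bigl[\varphi(X_t^{A,x})\bigr]=\sum_{|\beta|\le|\gamma|}(D^\beta\varphi)(X_t^{A,x})\,P_{\gamma,\beta}\bigl(\partial X_t^{A,x}\bigr),
\]
where each $P_{\gamma,\beta}$ is a polynomial in the mixed spatial derivatives $\partial^{\alpha}X_t^{A,x}$ of orders $|\alpha|\le|\gamma|\le 2+D$. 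The regularity $m(A,\cdot),\theta(A,\cdot)\in C_b^{3+D}$ provided by (H2) makes (\ref{baq}) applicable, yielding $\sup_{x}\mathbb{E}\bigl[|P_{\gamma,\beta}(\partial X_t^{A,x})|^2\bigr]\le C$ uniformly for $t\in[0,T]$. Applying Cauchy--Schwarz inside the expectation then gives the pointwise inequality
\[
\bigl|D^{\gamma}(\Upsilon_A(t)\varphi)(x)\bigr|^2\le C\sum_{|\beta|\le m}\Upsilon_A(t)\bigl(|D^\beta\varphi|^2\bigr)(x).
\]

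Dividing by $1+|x|^{2\sigma}$ and integrating, the proof reduces to the weighted $L^1$-bound
\[
\int_{\mathbb{R}^d}\frac{\Upsilon_A(t)\psi(x)}{1+|x|^{2\sigma}}\,dx\le e^{Ct}\int_{\mathbb{R}^d}\frac{\psi(y)}{1+|y|^{2\sigma}}\,dy,\qquad\psi\ge 0,
\]
applied with $\psi=|D^\beta\varphi|^2$ for $|\beta|\le m$. To prove this estimate I would set $g(t):=\int\Upsilon_A(t)\psi(x)(1+|x|^{2\sigma})^{-1}\,dx$ and differentiate: using that $u(t,\cdot)=\Upsilon_A(t)\psi$ solves the backward Kolmogorov equation $\partial_t u=\mathcal{Q}_A u$, integration by parts yields
\[
g'(t)=\int_{\mathbb{R}^d}\Upsilon_A(t)\psi(x)\,\mathcal{Q}_A^{*}\bigl[(1+|\cdot|^{2\sigma})^{-1}\bigr](x)\,dx.
\]
A direct calculation, using the boundedness of $m,\theta$ together with that of their first two derivatives (from (H2)) and the elementary pointwise bound $|\partial^\alpha(1+|x|^{2\sigma})^{-1}|\le C(1+|x|^{2\sigma})^{-1}$ valid for $|\alpha|\le 2$ (here one uses $\sigma>d/2\ge 1/2$), provides $|\mathcal{Q}_A^{*}(1+|\cdot|^{2\sigma})^{-1}|(x)\le C(1+|x|^{2\sigma})^{-1}$. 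Hence $g'(t)\le Cg(t)$ and Gronwall's lemma closes the estimate.

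The main technical obstacle is the bookkeeping in the Fa\`a di Bruno step at top order $|\gamma|=2+D$: one must verify that each polynomial $P_{\gamma,\beta}$ involves only derivatives $\partial^\alpha X_t^{A,x}$ of order $|\alpha|\le 2+D$, which is precisely where the ``one spare derivative'' assumption $m(A,\cdot),\theta(A,\cdot)\in C_b^{3+D}$ is used so that (\ref{baq}) applies. Combining the pointwise bound with the weighted $L^1$-estimate and summing over $|\gamma|\le m$ furnishes $\|\Upsilon_A(t)\varphi\|_{m,\sigma}^2\le Ce^{Ct}\|\varphi\|_{m,\sigma}^2$; taking square roots gives the desired inequality, and extending from $C_c^\infty$ to $W_0^{m,\sigma}$ is immediate by density.
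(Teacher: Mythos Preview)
Your argument is essentially correct and runs parallel to the paper's, with the same two ingredients: Fa\`a di Bruno on the flow together with the moment bounds \eqref{baq}--\eqref{caq}, followed by a weighted $L^1$ control of the scalar semigroup. The difference lies only in this second step. The paper does not differentiate $g(t)$; instead it inserts the factor $(1+|X_t^{A,x}|^{\sigma})$ inside the expectation via Cauchy--Schwarz, uses \eqref{caq} to absorb it into $C(1+|x|^{2\sigma})$, and then reduces everything to the bound $\sup_y\int_{\mathbb{R}^d}\Upsilon_A(t)(x,y)\,dx\le e^{Ct}$ of Lemma~\ref{la2} (proved there by Feynman--Kac). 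Your route replaces this by a Gronwall argument on $g(t)=\int\Upsilon_A(t)\psi\,(1+|\cdot|^{2\sigma})^{-1}$, which is more self-contained since it bypasses Lemma~\ref{la2} entirely.

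One small correction: your claimed pointwise inequality $|\partial^{\alpha}(1+|x|^{2\sigma})^{-1}|\le C(1+|x|^{2\sigma})^{-1}$ for $|\alpha|\le 2$ actually requires $\sigma\ge 1$, not just $\sigma\ge 1/2$. Indeed the Hessian produces a term of size $|x|^{2\sigma-2}$ near the origin, which blows up when $\sigma<1$; for $d=1$ and $\sigma\in(1/2,1)$ your Gronwall step therefore breaks down as written. The fix is painless: replace the weight $(1+|x|^{2\sigma})^{-1}$ by the equivalent smooth weight $(1+|x|^{2})^{-\sigma}$, for which the required bound $|\mathcal{Q}_A^{*}w|\le Cw$ holds for every $\sigma>0$. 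The paper's approach via \eqref{caq} and Lemma~\ref{la2} avoids this issue altogether because it never differentiates the weight.
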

\begin{proof}  We have\\    $\hspace*{4cm}\displaystyle\lVert \Upsilon_{A}(t)\varphi\lVert_{m,\sigma}^{2}=\sum\limits_{\lvert \gamma\lvert\leq m}\int_{\mathbb{R}^{d}}\frac{\lvert D^{\gamma}\Upsilon_{A}(t)\varphi(x)\lvert^{2}}{1+\lvert x\lvert^{2\sigma}}dx,$\\ furthermore  for $\lvert \gamma \lvert=0,$ and by using (\ref{caq}) and Lemma \ref{la2} in the Appendix below, we have 
	\begin{align*}
		\displaystyle\int_{\mathbb{R}^{d}}\frac{\lvert \Upsilon_{A}(t)\varphi\lvert^{2}}{1+\lvert x\lvert^{2\sigma}}dx=\int_{\mathbb{R}^{d}}\frac{\lvert \mathbb{E}(\varphi(X_{t}^{A,x}))\lvert^{2}}{1+\lvert x\lvert^{2\sigma}}dx&\leq\int_{\mathbb{R}^{d}}\frac{1}{1+\lvert x\lvert^{2\sigma}}\mathbb{E}[(1+\lvert X_{t}^{A,x}\lvert^{\sigma})^{2}] \displaystyle\mathbb{E}\Big(\frac{\lvert\varphi(X_{t}^{A,x})\lvert^{2}}{(1+\lvert X_{t}^{A,x}\lvert^{\sigma})^{2}}\Big)dx,\\&\leq C \int_{\mathbb{R}^{d}}\frac{(1+\lvert x\lvert ^{2})^{\sigma}}{1+\lvert x\lvert^{2\sigma}} \displaystyle\int_{\mathbb{R}^{d}}\Upsilon_{A}(t)(x,y)\frac{\lvert\varphi(y)\lvert^{2}}{1+\lvert y\lvert^{2\sigma}}dydx,\\&\leq C(\sigma)\lVert \varphi\lVert_{L^{2,\sigma}} \underset{y\in \mathbb{R}^{d}}{\sup}\Big(\int_{\mathbb{R}^{d}}\Upsilon_{S}(t)(x,y)dx\Big),\\&\leq C(\sigma)\lVert \varphi\lVert_{m,\sigma}e^{Ct}.
	\end{align*}
	For $ \gamma=(1,0,0...0)$ and by using (\ref{baq}) and Lemma \ref{la2}, we have 
	\begin{align*}
		\displaystyle\int_{\mathbb{R}^{d}}\frac{\lvert D^{\gamma}\Upsilon_{A}(t)\varphi\lvert^{2}}{1+\lvert x\lvert^{2\sigma}}dx&=\int_{\mathbb{R}^{d}}\frac{\lvert \mathbb{E}(D^{\gamma}\varphi(X_{t}^{A,x}))\lvert^{2}}{1+\lvert x\lvert^{2\sigma}}dx,\\&\leq\int_{\mathbb{R}^{d}}\frac{1}{1+\lvert x\lvert^{2\sigma}}\mathbb{E}[\{D^{\gamma}X_{t}^{A,x}(1+\lvert X_{t}^{A,x}\lvert^{\sigma})\}^{2}] \displaystyle\mathbb{E}\Big(\frac{\lvert\nabla\varphi(X_{t}^{A,x})\lvert^{2}}{(1+\lvert X_{t}^{A,x}\lvert^{\sigma})^{2}}\Big)dx,\\&\leq C \int_{\mathbb{R}^{d}}\frac{(1+\lvert x\lvert ^{2})^{\sigma}}{1+\lvert x\lvert^{2\sigma}} \displaystyle\mathbb{E}[(D^{\gamma}X_{t}^{A,x})^{4}]^{1/2}\mathbb{E}[(1+\lvert X_{t}^{A,x}\lvert^{\sigma})^{4}]^{1/2}\\&\hspace*{2cm}\times\int_{\mathbb{R}^{d}}\Upsilon_{A}(t)(x,y)\frac{\lvert\nabla\varphi(y)\lvert^{2}}{1+\lvert y\lvert^{2\sigma}}dydx,\\&\leq C(\sigma,d)\lVert \varphi\lVert_{1,\sigma} \underset{y\in \mathbb{R}^{d}}{\sup}\Big(\int_{\mathbb{R}^{d}}\Upsilon_{A}(t)(x,y)dx\Big),\\&\leq C(\sigma,d)\lVert \varphi\lVert_{m,\sigma}e^{Ct}.
	\end{align*}
	Similar argument allow us to have $\displaystyle\int_{\mathbb{R}^{d}}\frac{\lvert D^{\gamma}\Upsilon_{S}(t)\varphi\lvert^{2}}{1+\lvert x\lvert^{2\sigma}}dx\leq C(\sigma,d)\lVert \varphi\lVert_{m,\sigma}e^{Ct},$  for all values of $\lvert \gamma\lvert \leq m$. So the proof is complete.
\end{proof}
\begin{cor}
	For any positive noninteger $1+D<s<2+D$,  $\varphi\in H^{s,\sigma},$ there exists a positive contant $C,$ such that $\lVert \Upsilon_{S}(t)\varphi\lVert_{s,\sigma}\leq Ce^{Ct} \lVert \varphi\lVert_{s,\sigma}.$ \label{cor20}
\end{cor}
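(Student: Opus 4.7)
The plan is to obtain the non-integer bound by real interpolation between the two adjacent integer estimates already furnished by Lemma \ref{la1}. The non-integer weighted Sobolev space $H^{s,\sigma}(\mathbb{R}^{d})$ is defined in Section \ref{sec3} through the potential operator $\mathcal{J}^{s}$, and, as recalled in the Introduction, the work of L\"ofstr\"om \cite{lof}, \cite{loff} identifies $H^{s,\sigma}(\mathbb{R}^{d})$ for non-integer $s$ with the real interpolation space between the integer spaces $W^{m,\sigma}_{0}(\mathbb{R}^{d})$ obtained by the $K$-method. This is precisely the tool we will invoke.

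First, I would apply Lemma \ref{la1} with $m=1+D$ and $m=2+D$, both of which are admissible since $0\leq m\leq 2+D$ under Assumption (H2). This yields two bounded linear operators
\[
\Upsilon_{S}(t):H^{1+D,\sigma}(\mathbb{R}^{d})\longrightarrow H^{1+D,\sigma}(\mathbb{R}^{d}),\qquad \Upsilon_{S}(t):H^{2+D,\sigma}(\mathbb{R}^{d})\longrightarrow H^{2+D,\sigma}(\mathbb{R}^{d}),
\]
each with operator norm bounded by $Ce^{Ct}$ (with $C$ independent of $\varphi$ and of $t$). Writing $s=(1-\vartheta)(1+D)+\vartheta(2+D)$ for a unique $\vartheta\in(0,1)$ determined by the hypothesis $1+D<s<2+D$, the interpolation identity
\[
H^{s,\sigma}(\mathbb{R}^{d})=\bigl(H^{1+D,\sigma}(\mathbb{R}^{d}),\,H^{2+D,\sigma}(\mathbb{R}^{d})\bigr)_{\vartheta,2}
\]
provided by \cite{lof} allows me to apply the standard real interpolation theorem for bounded linear operators.

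The interpolation theorem then gives
\[
\lVert \Upsilon_{S}(t)\varphi\lVert_{s,\sigma}\leq C\bigl(\lVert \Upsilon_{S}(t)\rVert_{\mathcal{L}(H^{1+D,\sigma})}\bigr)^{1-\vartheta}\bigl(\lVert \Upsilon_{S}(t)\rVert_{\mathcal{L}(H^{2+D,\sigma})}\bigr)^{\vartheta}\lVert \varphi\lVert_{s,\sigma}\leq C'e^{C't}\lVert\varphi\lVert_{s,\sigma},
\]
since the product of two quantities bounded by $Ce^{Ct}$ raised to complementary exponents is again bounded by a quantity of the form $C'e^{C't}$. This is precisely the desired conclusion.

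The only point requiring care is the identification of $H^{s,\sigma}(\mathbb{R}^{d})$ with the real interpolation space, which is the content of L\"ofstr\"om's result and is cited as available in the Preliminaries; I would simply quote it and refer the reader to \cite{lof}, \cite{loff}. Everything else reduces to the textbook statement of the real interpolation theorem for linear operators, so no further calculation is needed. The argument extends verbatim to $A\in\{I,R\}$ should it be needed, since Lemma \ref{la1} was stated uniformly in $A\in\{S,I,R\}$.
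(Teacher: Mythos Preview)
Your proposal is correct and follows essentially the same approach as the paper: both identify $H^{s,\sigma}$ with the real interpolation space $(W_0^{1+D,\sigma},W_0^{2+D,\sigma})_{\vartheta,2}$ via L\"ofstr\"om's results and then interpolate the two integer-order bounds from Lemma \ref{la1}. The only cosmetic difference is that the paper works directly with the $K$-functional definition of the interpolation norm, whereas you invoke the abstract interpolation theorem for bounded linear operators; these are equivalent formulations of the same argument.
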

\begin{proof}
	We prove this result by using the definition by interpolation of the space $H^{s,\sigma}.$\\ For any noninterger $s>0,$ there exists  $\rho\in ]0,1[$ such that $s=(1-\rho)(1+D)+\rho (2+D)$ and $(W_{0}^{1+D,\sigma},W_{0}^{2+D,\sigma})_{\rho,2}=H^{s,\sigma},$ for the definition of the space $(.,.)_{\rho,q}$ we refer to \cite{lof} or \cite{tri}.\\So using Lemma 3.1.1 in \cite{loff}, we have an equivalent norm on $H^{s,\sigma},$ which is given by:\\\hspace*{4cm}$\lVert \varphi\lVert_{s,\sigma}=\displaystyle\left(\int_{0}^{\infty}\{t^{-\rho}K(t,1+D,2+D)\}^{2}\frac{dt}{t}\right)^{1/2},$\vspace*{0.2cm}\\ where $K(t,1+D,2+D)=\underset{\varphi=\varphi_{1}+\varphi_{2}}{\inf}\{\lVert\varphi_{1}\lVert_{1+D,\sigma}+t\lVert\varphi_{2}\lVert_{2+D,\sigma}\}.$  \\So it is easy to see that the result follows from Lemma $\ref{la1}$ and the above definition.
\end{proof}
Let us prove the following results which will be useful to prove the tightness of the sequence $(U^{N},V^{N},W^{N})_{N}$ in $\mathbb{D}(\mathbb{R}_{+},H^{-s,\sigma}).$
\begin{lem}
	The sequence  of processes   $ (U^{N},V^{N},W^{N})$ satisfies $\forall \hspace*{0.1cm}0\leq u<t$,
	
	\begin{align}
	\displaystyle U_{t}^{N}&=\displaystyle  \Upsilon^{*}_{S}(t-u)U_{u}^{N}   -\int_{u}^{t}\Upsilon^{*}_{S}(t-r)(G_{r}^{I,N})^{*}U_{r}^{N}dr- \int_{u}^{t} \Upsilon^{*}_{S}(t-r)(G_{r}^{S})^{*}V_{r}^{N}dr\nonumber\\ &+\int_{u}^{t}\Upsilon^{*}_{S}(t-r)d\widetilde{M}_{r}^{N},\label{c15}\\
	V_{t}^{N}&=\displaystyle \Upsilon^{*}_{I}(t-u)V_{u}^{N}  +\int_{u}^{t}\Upsilon^{*}_{I}(t-r)(G_{r}^{I,N})^{*}U_{r}^{N}dr + \int_{u}^{t} \Upsilon^{*}_{I}(t-r)[(G_{r}^{I,N})^{*}-\alpha ]V_{r}^{N}dr\nonumber\\& +\int_{u}^{t}\Upsilon^{*}_{I}(t-r)d\widetilde{L}_{r}^{N}, \label{c16}\\
	 \displaystyle W_{t}^{N}&= \Upsilon^{*}_{R}(t-u)W_{u}^{N}  +\alpha \int_{u}^{t} \Upsilon^{*}_{R}(t-r)V_{r}^{N}dr  +\int_{u}^{t}\Upsilon^{*}_{R}(t-r)d\widetilde{Y}_{r}^{N}. \label{c17}
	\end{align}
\label{sds}
\end{lem}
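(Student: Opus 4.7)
The plan is to recognize that equations \eqref{c1}--\eqref{c3} are linear measure-valued SPDEs driven by the generators $\mathcal{Q}_S,\mathcal{Q}_I,\mathcal{Q}_R$ perturbed by bounded zero-order operators and a martingale forcing, and to obtain their mild (Duhamel) formulation by testing against a time-dependent function built from the backward Kolmogorov semigroup so that the principal generator term drops out.

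Concretely, fix $\varphi\in H^{s,\sigma}(\mathbb{R}^d)$ and $0\leq u\leq t$. First I would extend the weak evolution equation \eqref{c1} to time-dependent test functions: repeating the It\^o-formula derivation of Proposition \ref{bnv} with $\psi(r,x)=\psi_r(x)$ of class $C^{1,2}$ on $[u,t]\times\mathbb{R}^d$, exactly as in \eqref{e12}, yields
\begin{align*}
(U_t^N,\psi_t)&=(U_u^N,\psi_u)+\int_u^t\bigl(U_r^N,\partial_r\psi_r+\mathcal{Q}_S\psi_r\bigr)dr\\
&\quad-\int_u^t(U_r^N,G_r^{I,N}\psi_r)\,dr-\int_u^t(V_r^N,G_r^S\psi_r)\,dr+\mathcal{I}^{N}_{u,t}(\psi),
\end{align*}
where $\mathcal{I}^{N}_{u,t}(\psi)$ is the martingale obtained by integrating $\psi_r$ against $d\widetilde{M}_r^N$. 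I then specialize to $\psi_r(x)=\Upsilon_S(t-r)\varphi(x)$; the backward equation $\partial_r\psi_r+\mathcal{Q}_S\psi_r=0$ wipes out the generator integral, while $\psi_t=\varphi$ and $\psi_u=\Upsilon_S(t-u)\varphi$. Rewriting each coupling term through the adjoint, e.g.\ $(U_r^N,G_r^{I,N}\Upsilon_S(t-r)\varphi)=(\Upsilon_S^{*}(t-r)(G_r^{I,N})^{*}U_r^N,\varphi)$, and similarly for $V_r^N$ and for $d\widetilde{M}_r^N$, produces exactly \eqref{c15}. The identities \eqref{c16} and \eqref{c17} come from the same argument applied to \eqref{c2} and \eqref{c3} with $\Upsilon_I$ and $\Upsilon_R$; the $-\alpha(V_r^N,\varphi)$ zero-order term in \eqref{c2} involves no derivatives of $\varphi$, so it is unaffected by the cancellation and reappears as the linear summand $-\alpha\,\Upsilon_I^{*}(t-r)V_r^N$ in \eqref{c16}, and similarly the source $\alpha V_r^N$ in \eqref{c3} carries over to \eqref{c17}.

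The main obstacle is the legitimacy of the substitution $\psi_r=\Upsilon_A(t-r)\varphi$ when $\varphi\in H^{s,\sigma}$ is not a priori $C^{1,2}$ with sufficient spatial decay, and the control of the resulting stochastic convolution $\int_u^t\Upsilon_A^{*}(t-r)\,dM_r$ in $H^{-s,\sigma}$. To handle this I would mollify: pick $\varphi_n\in C^{\infty}_{c}(\mathbb{R}^d)$ with $\varphi_n\to\varphi$ in $H^{s,\sigma}$, so that under (H2) the functions $\psi_r^n=\Upsilon_A(t-r)\varphi_n$ belong to $C^{2+D,2+D}$ with enough decay, and write the identity for $\psi^n$ first. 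Lemma \ref{la1} and Corollary \ref{cor20} yield $\sup_{u\leq r\leq t}\lVert\psi_r^n-\Upsilon_A(t-r)\varphi\rVert_{s,\sigma}\to 0$ with exponential-in-time constants; Proposition \ref{prp215} and Corollary \ref{uit} ensure that the two coupling integrals converge in $L^2(\Omega)$; and the martingale estimates of Corollary \ref{ct} together with the contraction-semigroup bound of Lemma \ref{bbn} (applied to the contraction property of $\Upsilon_A^{*}$ up to an exponential factor) allow the stochastic integral to be passed to the limit in $L^2(\Omega;H^{-s,\sigma})$. The three identities \eqref{c15}--\eqref{c17} then hold $\mathbb{P}$-almost surely in $H^{-s,\sigma}$, which is the content of the lemma.
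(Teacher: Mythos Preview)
Your proposal is correct and follows essentially the same route as the paper: derive a time-dependent test-function version of \eqref{c1}--\eqref{c3} via It\^o's formula, then plug in $\psi_r=\Upsilon_A(t-r)\varphi$ so that $\partial_r\psi_r+\mathcal{Q}_A\psi_r=0$ kills the generator term. The only difference is that the paper works directly with $\varphi\in C^2_b$ and leaves the passage to general $\varphi\in H^{s,\sigma}$ implicit, whereas you make the density/mollification argument explicit and cite the relevant semigroup and operator bounds (Corollary~\ref{cor20}, Corollary~\ref{uit}, Lemma~\ref{bbn}) to justify the limit; this extra care is a plus, not a departure.
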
                                                                    \begin{proof}
	Let us consider a fuction $\phi$ belonging to $ C^{1,2}_{c}(\mathbb{R}_{+}\times\mathbb{R}^{d})$. By Itô's formula applied   to $\phi(t,X_{t}^{i})$  and  using a similar computation as in subsections \ref{hm} and \ref{cc1}, we obtain for $0\leq u< t,$ \vspace*{0.12cm}\\
	$ \displaystyle (U_{t}^{N},\phi_{t})=\displaystyle  (U_{u}^{N},\phi_{u}) +  \int_{u}^{t}(U_{r}^{N},\mathcal{Q}_{S} \phi_{r}) dr+  \int_{u}^{t}(U_{r}^{N},\frac{\partial \phi_{r}}{\partial r}) dr -\int_{u}^{t}\left(U_{r}^{N} ,\phi_{r}(\mu_{r}^{I,N}, K) \right)dr \\\hspace*{1.6cm} - \int_{u}^{t}\left( V_{r}^{N},( \mu_{r}^{S},\phi_{r} K) \right)dr + \int_{u}^{t}(\phi_{r},d\widetilde{M}_{r}^{N}). $ \vspace*{0.2cm}\\Let $\varphi \in C^{2}_{b}$ and $ 0\leq u <t $, consider for $ r\in [u,t] $ the mapping $\psi_{r}(x)
	=\Upsilon_{S}(t-r)\varphi(x) $.\\ We have $\psi_{\cdot}(\cdot)\in C^{1,2}_{c}([u,t]\times\mathbb{R}^{d})$, indeed, \vspace*{0.12cm}\\
	\hspace*{0.5cm} - For any $ r\in [u,t]$, $\psi_{r}(\cdot)\in  C^{2}_{c}(\mathbb{R}^{d})$\\\hspace*{0.5cm} -  $\forall x\in \mathbb{R}^{d}$, the map  $r\in [u,t]\mapsto  \psi^{'}_{r}(x)=-\mathcal{Q}_{S}( \Upsilon_{S}(t-r)\varphi(x))$ is continuous  since  $\Upsilon_{S}(t)$ is a strongly continuous semi-group and  $-\mathcal{Q}_{S}( \Upsilon_{S}(t-r)\varphi(x))=\Upsilon_{S}(t-r)(-\mathcal{Q}_{S}\varphi(x))$. Thus  replacing  $ \phi $ by $ \psi $ in the above equation, we obtain \vspace*{0.12cm}\\$ \displaystyle (U_{t}^{N},\varphi)=\displaystyle  (U_{u}^{N},\Upsilon_{S}(t-u)\varphi)  -\int_{u}^{t}\left(U_{r}^{N} ,\Upsilon_{S}(t-r)\varphi(\mu_{r}^{I,N}, K) \right)dr \\\hspace*{1.6cm} - \int_{u}^{t}\left( V_{r}^{N},( \mu_{r}^{S},\Upsilon_{S}(t-r)\varphi K) \right)dr+ \int_{u}^{t}(\Upsilon_{S}(t-r)\varphi,d\widetilde{M}_{r}^{N}). $\vspace*{0.12cm}\\ This prove \eqref{c15}. We  obtain (\ref{c16}) and (\ref{c17}) by similar arguments.       
\end{proof}                      
\begin{prp} There exists
	 $ C>0$ such that for  any $T>0,$ $\varrho>0$ and any stopping times $\overline{\tau}$ such that $\overline{\tau}+\varrho<T$, one has
	 
	 \begin{align}
	 	\mathbb{E}\left(\Big\lVert \displaystyle\int_{\overline{\tau}}^{\overline{\tau}+\varrho}\Upsilon_{S}^{*}(\overline{\tau}+\varrho-r)d\widetilde{M}_{r}^{N} \Big\lVert_{-s,\sigma}^{2}\right)&\leq C\varrho,\label{c1r}\\ \mathbb{E}\left(\Big\lVert \displaystyle\int_{\overline{\tau}}^{\overline{\tau}+\varrho}\Upsilon_{I}^{*}(\overline{\tau}+\varrho-r)d\widetilde{L}_{r}^{N} \Big\lVert_{-s,\sigma}^{2}\right)&\leq C\varrho,\label{clom}\\
 \mathbb{E}\left(\Big\lVert \displaystyle\int_{\overline{\tau}}^{\overline{\tau}+\varrho}\Upsilon_{R}^{*}(\overline{\tau}+\varrho-r)d\widetilde{Y}_{r}^{N} \Big\lVert_{-s,\sigma}^{2}\right)&\leq C\varrho.\label{c20}
\end{align}
\label{cty}
\end{prp}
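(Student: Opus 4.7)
\textbf{Proof plan for Proposition \ref{cty}.} The strategy is to apply the Kotelenez-type maximal inequality (Lemma \ref{bbn}) to the stochastic convolutions and then use the quadratic variation computation already carried out in Proposition \ref{add} to obtain the linear-in-$\varrho$ bound. I only write the argument for $\widetilde{M}^N$; the two other cases are completely analogous, with $\mu^{S,N}$ replaced by $\mu^{I,N}$ or $\mu^{R,N}$ and the extra Poisson $\overline{Q}^i$-term treated separately.

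The first step is to note that by the strong Markov property we may, without loss of generality, replace $\overline{\tau}$ by $0$: indeed, $\widetilde{N}^N_t=\widetilde{M}^N_{\overline{\tau}+t}-\widetilde{M}^N_{\overline{\tau}}$ is, for $t\in[0,\varrho]$, an $H^{-s,\sigma}$-valued locally square integrable c\`adl\`ag martingale (cf.\ Proposition \ref{add} and Corollary \ref{ct}). Next, by Corollary \ref{cor20}, the evolution semigroup $\Upsilon_S(t)$ is bounded on $H^{s,\sigma}$ with $\lVert\Upsilon_S(t)\rVert_{\mathcal{L}(H^{s,\sigma})}\le Ce^{Ct}$, hence by duality the adjoint $\Upsilon_S^*(t)$ is a bounded operator on $H^{-s,\sigma}$ with $\lVert\Upsilon_S^*(t)\rVert_{\mathcal{L}(H^{-s,\sigma})}\le Ce^{Ct}$. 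After dividing by this constant we are in position to apply Lemma \ref{bbn} on the finite interval $[0,\varrho]\subset[0,T]$, which yields
\begin{equation*}
\mathbb{E}\left(\Big\lVert \int_{\overline{\tau}}^{\overline{\tau}+\varrho}\Upsilon_S^*(\overline{\tau}+\varrho-r)\,d\widetilde{M}_r^N\Big\rVert_{-s,\sigma}^2\right)\le C(T)\,\mathbb{E}\bigl(\lVert \widetilde{M}^N_{\overline{\tau}+\varrho}-\widetilde{M}^N_{\overline{\tau}}\rVert_{-s,\sigma}^2\bigr).
\end{equation*}

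The second step is to control the right-hand side by $C\varrho$. With $(\varphi_p)_{p\ge 1}$ an orthonormal basis of $H^{s,\sigma}(\mathbb{R}^d)$, we expand
\begin{equation*}
\mathbb{E}\bigl(\lVert\widetilde{M}^N_{\overline{\tau}+\varrho}-\widetilde{M}^N_{\overline{\tau}}\rVert_{-s,\sigma}^2\bigr)=\sum_{p\ge 1}\mathbb{E}\bigl(\langle\widetilde{M}^{N,\varphi_p}\rangle_{\overline{\tau}+\varrho}-\langle\widetilde{M}^{N,\varphi_p}\rangle_{\overline{\tau}}\bigr).
\end{equation*}
Exchanging the summation in $p$ with the time integral and the measure $\mu^{S,N}_r$, as in the computation of Proposition \ref{add}, and using Corollary \ref{r3} to bound $\sum_p\varphi_p^2(x)$ and $\sum_p(\partial\varphi_p/\partial x_\ell)^2(x)$ by $C(1+|x|^{2\sigma})$ (for $s>1+D$), we get
\begin{equation*}
\mathbb{E}\bigl(\lVert\widetilde{M}^N_{\overline{\tau}+\varrho}-\widetilde{M}^N_{\overline{\tau}}\rVert_{-s,\sigma}^2\bigr)\le C\,\mathbb{E}\int_{\overline{\tau}}^{\overline{\tau}+\varrho}\bigl(\mu^{S,N}_r,1+|\cdot|^{2\sigma}\bigr)dr\le C\varrho\sup_{N\ge 1}\mathbb{E}\Bigl(\sup_{0\le t\le T}(\mu^{S,N}_t,1+|\cdot|^{2\sigma})\Bigr),
\end{equation*}
where the last supremum is finite by Corollary \ref{cor32}. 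Combining the two steps yields \eqref{c1r}.

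The estimates \eqref{clom} and \eqref{c20} are proved by exactly the same two-step procedure, using Corollary \ref{ct} (which provides the analogous $L^2$-bound for $\widetilde{L}^N$ and $\widetilde{Y}^N$) and Corollary \ref{cor20} for the semigroups $\Upsilon_I^*$ and $\Upsilon_R^*$; the compensated Poisson terms coming from $\overline{Q}^i$ only contribute an extra linear term of the form $\alpha\int(\mu^{I,N}_r,\varphi_p^2)dr$ to the quadratic variation, which is trivially controlled by $C\varrho$. The main technical point to keep in mind is that the semigroups involved are not contractions on $H^{-s,\sigma}$, but Corollary \ref{cor20} provides exponential bounds which, on the bounded interval $[0,T]$, give the required constant in Lemma \ref{bbn}; no genuinely new difficulty arises.
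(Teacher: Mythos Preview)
Your proof is correct, and it takes a genuinely different route from the paper's. The paper does \emph{not} invoke Lemma~\ref{bbn} (Kotelenez) here; instead it expands the convolution directly as
\[
\mathbb{E}\Big(\Big\lVert\int_{\overline{\tau}}^{\overline{\tau}+\varrho}\Upsilon_S^*(\overline{\tau}+\varrho-r)\,d\widetilde{M}_r^N\Big\rVert_{-s,\sigma}^2\Big)=\sum_{p\ge 1}\mathbb{E}\int_{\overline{\tau}}^{\overline{\tau}+\varrho}\Gamma_r^N\bigl(\Upsilon_S(\varrho-r)\varphi_p\bigr)\,dr,
\]
so that the semigroup acts on each basis function. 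This forces the paper to prove the pointwise estimates $\sum_p(\Upsilon_S(\varrho-r)\varphi_p(x))^2\le C(1+|x|^{2\sigma})$ and $\sum_p(\partial_{x_\ell}\Upsilon_S(\varrho-r)\varphi_p(x))^2\le C(1+|x|^{2\sigma})$, which require the Kunita flow bounds \eqref{baq} and \eqref{caq}. Your approach sidesteps these estimates entirely: by first applying Kotelenez to strip off the semigroup, you are left with $\mathbb{E}\lVert\widetilde{M}^N_{\overline{\tau}+\varrho}-\widetilde{M}^N_{\overline{\tau}}\rVert_{-s,\sigma}^2$, for which Corollary~\ref{r3} applies directly to the basis functions $\varphi_p$ themselves. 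This is shorter and reuses existing lemmas more efficiently. The paper's direct computation, on the other hand, is self-contained in the sense that it does not rely on the stochastic convolution inequality, and the flow estimates it establishes are of independent interest (though they are not used elsewhere in the paper).
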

\begin{proof} Proof of $(\ref{c1r})$.\vspace*{0.1cm} Let us recall that \\\hspace*{0.1cm}  $\displaystyle\int_{\overline{\tau}}^{\overline{\tau}+\varrho}(\Upsilon_{S}(\overline{\tau}+\varrho-r)\varphi,d\widetilde{M}_{r}^{N})\\\hspace*{1.5cm}=\frac{1}{\sqrt{N}} \sum\limits_{i=1}^{N} \int_{\overline{\tau}}^{\overline{\tau}+\varrho}1_{\{E_{r}^{i}=S\}}\bigtriangledown\Upsilon_{S}(\overline{\tau}+\varrho-r)\varphi(X_{r}^{i})\theta(S,X_{r}^{i})dB_{r}^{i} \\\hspace*{1.5cm}- \displaystyle\sqrt{\frac{1}{N}} \sum_{i=1}^{N} \int_{\overline{\tau}}^{\overline{\tau}+\varrho} \int_{0}^{\infty}1_{\{E_{r^{-}}^{i}=S\}}\Upsilon_{S}(\overline{\tau}+\varrho-r)\varphi(X_{r}^{i})1_{\{u\leq \frac{1}{N} \sum\limits_{j=1}^{N}K(X_{r}^{i},X_{r}^{j}) 1_{\{E_{r}^{j}=I\}}\}} \overline{M}^{i}(dr,du).$\vspace*{0.1cm}\\Now  from  Remark \ref{r3}, (\ref{baq})  and (\ref{caq}) one has  for all $\ell \in\{1,2,.......,d\},$  $0\leq r \leq \varrho,$
	
	\begin{align}
			\displaystyle\sum\limits_{p\geq 1}\Big(\Upsilon_{S}(\varrho-r)\varphi_{p}(x)\Big)^{2}&=\sum\limits_{p\geq 1}\Big\{\mathbb{E}(\varphi_{p}(X_{\varrho-r}^{S,x}))\Big\}^{2}\leq\mathbb{E}\Big(\sum\limits_{p\geq 1}\lvert \varphi_{p}( X_{\varrho-r}^{S,x})\lvert^{2}\Big),\nonumber\\&\leq C \mathbb{E}(1+\lvert X_{\varrho-r}^{S,x}\lvert^{2\sigma} ),\nonumber\\&\leq C(\sigma)(1+\lvert x\lvert ^{2\sigma}).
		\label{azee} 
	\end{align}
	\begin{align}
		\displaystyle\sum\limits_{p\geq 1}\Big(\frac{\partial}{\partial x_{\ell}}\Upsilon_{S}(\varrho-r)\varphi_{p}(x)\Big)^{2}&=\sum\limits_{p\geq 1}\Big\{\frac{\partial}{\partial x_{\ell}}\mathbb{E}(\varphi_{p}(X_{\varrho-r}^{S,x}))\Big\}^{2}\leq\mathbb{E}(\lvert\partial_{x_{\ell}} X_{\varrho-r}^{S,x}\lvert^{2} )\mathbb{E}\Big(\sum\limits_{p\geq 1}\lvert(\nabla\varphi_{p})( X_{\varrho-r}^{S,x})\lvert^{2}\Big),\nonumber\\&\leq C(d) \mathbb{E}(1+\lvert X_{\varrho-r}^{S,x}\lvert^{2\sigma} ),\nonumber\\&\leq C(d,\sigma)(1+\lvert x\lvert ^{2\sigma}).
		\label{aze}
	\end{align} 

Thus from (\ref{azee}) and (\ref{aze}) and Lemma \ref{lt1}, we have 
\begin{align*}
	\displaystyle\mathbb{E}\left(\left\lVert \int_{\overline{\tau}}^{\overline{\tau}+\varrho}\Upsilon_{S}(\overline{\tau}+\varrho-r)d\widetilde{M}_{r}^{N} \right\lVert_{H^{-s}}^{2}\right)&= \sum\limits_{p\geq 1}\mathbb{E}\left( \left(\displaystyle\int_{\overline{\tau}}^{\overline{\tau}+\varrho}\Upsilon_{S}(\overline{\tau}+\varrho-r)\varphi_{p},d\widetilde{M}_{r}^{N} \right)^{2}\right),\\ &\hspace*{-4cm}=\sum\limits_{p\geq 1}\Big\{\mathbb{E}\left( \displaystyle\int_{0}^{\varrho}  \left(\mu_{r+\overline{\tau}}^{S,N},(\Upsilon_{S}(\varrho-r)\varphi_{p})^{2}(\mu_{r+\overline{\tau}}^{I,N}, K)\right)dr\right)\\&\hspace*{-4cm}+\mathbb{E}\Big(\displaystyle \int_{0}^{\varrho}\Big(\mu_{r+\overline{\tau}}^{S,N}, \sum\limits_{1\leq \ell\leq d}\big(\frac{\partial \Upsilon_{S}(\varrho-r)\varphi_{p}}{\partial x_{\ell}}\big)^{2}\sum\limits_{1\leq u\leq d}\theta^{2}_{\ell,u}(S,.)\Big)dr\Big)\\&\hspace*{-4cm}+\mathbb{E}\Big(\displaystyle \int_{0}^{\varrho}\Big(\mu_{r+\overline{\tau}}^{S,N},\sum\limits_{\underset{1\leq u\leq d}{\underset{1\leq e\leq d}{1\leq \ell\leq d-1}}}\frac{\partial \Upsilon_{S}(\varrho-r)\varphi_{p}}{\partial x_{\ell}}\frac{\partial \Upsilon_{S}(\varrho-r)\varphi_{p}}{\partial x_{u}}\theta_{l,e}(S,.)\theta_{u,e}(S,.)\Big)dr \Big)\Big\},\\ &\hspace*{-4cm}\leq\mathbb{E}\left( \displaystyle\int_{0}^{\varrho}  \left(\mu_{r+\overline{\tau}}^{S,N}, \sum\limits_{p\geq 1}(\Upsilon_{S}(\varrho-r)\varphi_{p})^{2}(\mu_{r+\overline{\tau}}^{I,N}, K)\right)dr\right)\\&\hspace*{-4cm}+\mathbb{E}\Big(\displaystyle \int_{0}^{\varrho}\Big(\mu_{r+\overline{\tau}}^{S,N}, \sum\limits_{\underset{1\leq u\leq d}{1\leq \ell\leq d}}\theta^{2}_{\ell,u}(S,.)\sum\limits_{p\geq 1}\big(\frac{\partial \Upsilon_{S}(\varrho-r)\varphi_{p}}{\partial x_{\ell}}\big)^{2}\Big)dr\Big)\\&\hspace*{-6.5cm}+\frac{1}{2}\mathbb{E}\Big(\displaystyle \int_{0}^{\varrho}\Big(\mu_{r+\overline{\tau}}^{S,N},\sum\limits_{\underset{1\leq u\leq d}{\underset{1\leq e\leq d}{1\leq \ell\leq d-1}}}\lvert\theta_{\ell,e}(S,.)\theta_{u,e}(S,.) \lvert \Big\{ \sum\limits_{p\geq 1} \big(\frac{\partial \Upsilon_{S}(\varrho-r)\varphi_{p}}{\partial x_{\ell}}\big)^{2}+\sum\limits_{p\geq 1}\big(\frac{\partial \Upsilon_{S}(\varrho-r)\varphi_{p}}{\partial x_{u}}\big)^{2}\Big\}\Big)dr \Big),\\ &\hspace*{-4cm}\leq\varrho\lVert K\lVert_{\infty}
		\underset{N\geq 1}{\sup}\underset{0\leq t\leq T}{\sup}\mathbb{E}( (\mu_{t}^{S,N},1+\lvert .\lvert ^{2\sigma}))\\&\hspace*{-4cm}+ \varrho C(\sigma) \sum\limits_{\underset{1\leq u\leq d}{1\leq \ell\leq d}}\lVert\theta^{2}_{\ell,u}(S,.)\lVert_{\infty}\underset{N\geq 1}{\sup}\underset{0\leq t\leq T}{\sup}\mathbb{E}( (\mu_{t}^{S,N},1+\lvert .\lvert ^{2\sigma}))\\&\hspace*{-4cm}+\varrho\sum\limits_{\underset{1\leq u\leq d}{\underset{1\leq e\leq d}{1\leq \ell\leq d-1}}}\lVert\theta_{\ell,e}(S,.)\lVert_{\infty}\lVert\theta_{u,e}(S,.)\lVert_{\infty} \underset{N\geq 1}{\sup}	\underset{0\leq t\leq T}{\sup}\mathbb{E}( (\mu_{t}^{S,N},1+\lvert .\lvert ^{2\sigma})), \\&\hspace*{-4cm}\leq\varrho C.
\end{align*}
Similar arguments yield (\ref{clom}) and (\ref{c20}).
\end{proof}
\begin{prp}
	For all  $T>0$,  
	\begin{align}
	\displaystyle  \underset{N\geq 1}{\sup}  \mathbb{E}(\underset{0\leq t\leq T}{\sup}\lVert U_{t}^{N} \lVert^{2}_{-s,\sigma})&<\infty, \\
	 \displaystyle   \underset{N\geq 1}{\sup} \mathbb{E}(\underset{0\leq t\leq T}{\sup}\lVert V_{t}^{N} \lVert^{2}_{-s,\sigma})&<\infty, \\  \displaystyle   \underset{N\geq 1}{\sup}  \mathbb{E}(\underset{0\leq t\leq T}{\sup}\lVert W_{t}^{N} \lVert^{2}_{-s,\sigma})&<\infty \label{f8}.
\end{align}  
\label{tfg}
\end{prp}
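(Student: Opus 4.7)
The plan is to work directly from the mild formulation of Lemma \ref{sds} evaluated at $u=0$, bound each term in the $H^{-s,\sigma}$ norm via the duality $\|\Upsilon_A^*(t)\mathcal{U}\|_{-s,\sigma}\leq Ce^{Ct}\|\mathcal{U}\|_{-s,\sigma}$ (obtained from Corollary \ref{cor20}) together with the operator bounds for $(G_r^{I,N})^*$, $(G_r^S)^*$, $(G_r^I)^*$ from Corollary \ref{uit} (uniform in $N$ and $r$ thanks to the remark $\sup_y\|K(\cdot,y)\|_{2+D,\sigma}<\infty$), and then close a coupled Gronwall inequality. The three stochastic convolution terms are handled by the Kotelenez-type maximal inequality stated in the preliminaries (the lemma following \eqref{bbn}), applied to $\Upsilon_A^*$ viewed as a semigroup on $H^{-s,\sigma}$ whose operator norm grows at most like $e^{Ct}$.

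More precisely, from \eqref{c15} and the triangle inequality I would first write
\begin{align*}
\|U_t^N\|_{-s,\sigma}\leq Ce^{CT}\|U_0^N\|_{-s,\sigma} &+ C\int_0^t\bigl(\|U_r^N\|_{-s,\sigma}+\|V_r^N\|_{-s,\sigma}\bigr)\,dr\\
&+\Bigl\|\int_0^t\Upsilon_S^*(t-r)\,d\widetilde M_r^N\Bigr\|_{-s,\sigma},
\end{align*}
with analogous inequalities for $V_t^N$ and $W_t^N$ derived from \eqref{c16}--\eqref{c17}. The initial terms are controlled uniformly in $N$ by Proposition \ref{uo}. For the stochastic convolutions, the Kotelenez inequality gives
\[
\mathbb{E}\Bigl(\sup_{0\leq t\leq T}\Bigl\|\int_0^t\Upsilon_S^*(t-r)\,d\widetilde M_r^N\Bigr\|_{-s,\sigma}^2\Bigr)\leq Ce^{4CT}\,\mathbb{E}\bigl(\|\widetilde M_T^N\|_{-s,\sigma}^2\bigr),
\]
and the right-hand side is bounded uniformly in $N$ by Corollary \ref{ct}; the same is done for $\widetilde L^N$ and $\widetilde Y^N$.

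Setting $\phi_N(T):=\mathbb{E}(\sup_{0\leq t\leq T}\|U_t^N\|_{-s,\sigma}^2)+\mathbb{E}(\sup_{0\leq t\leq T}\|V_t^N\|_{-s,\sigma}^2)+\mathbb{E}(\sup_{0\leq t\leq T}\|W_t^N\|_{-s,\sigma}^2)$, squaring, using Cauchy--Schwarz on the Lebesgue integrals, taking $\sup_{t\leq T}$ and then expectation, and summing the three resulting inequalities yields $\phi_N(T)\leq C(T)+C(T)\int_0^T\phi_N(r)\,dr$ with $C(T)$ independent of $N$. Gronwall's lemma then gives $\sup_N\phi_N(T)<\infty$, which is the claim.

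The main technical obstacle is the cross-coupling: the $U^N$ equation contains $(G_r^S)^*V_r^N$, the $V^N$ equation contains both $(G_r^{I,N})^*U_r^N$ and $(G_r^S)^*V_r^N$, and the $W^N$ equation contains $V_r^N$, so none of the three bounds can be closed in isolation. This is precisely why one must sum the three inequalities before invoking Gronwall. A secondary point worth checking carefully is that the Kotelenez lemma is stated for a contraction semigroup, whereas here $\Upsilon_A^*$ only satisfies $\|\Upsilon_A^*(t)\|\leq Ce^{Ct}$; this is absorbed by applying the lemma to the rescaled semigroup $e^{-Ct}\Upsilon_A^*(t)$ (equivalently, by using the version with exponential growth constant $\sigma$ appearing in the conclusion $Ce^{4\sigma T}$), which only affects the final constant.
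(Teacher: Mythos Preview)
Your proposal is correct and follows essentially the same route as the paper: start from the mild equations of Lemma \ref{sds} with $u=0$, bound the semigroup and the $(G_r^{I,N})^*,(G_r^{S})^*$ terms via Corollaries \ref{cor20} and \ref{uit}, control the stochastic convolutions by the Kotelenez inequality (Lemma \ref{bbn}) combined with Corollary \ref{ct}, use Proposition \ref{uo} for the initial conditions, then sum the three resulting inequalities and apply Gronwall. Your remark about rescaling the semigroup so that the Kotelenez lemma applies is exactly the right way to handle the non-contractive case and only changes the constant.
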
        

\begin{proof} 
	Choosing $u=0$ in equation (\ref{c15}), (\ref{c16}) and (\ref{c17}) we have 
	
	\begin{align*}
	\displaystyle \lVert U_{t}^{N} \lVert_{-s,\sigma}^{2}&\leq \displaystyle 4 \lVert \Upsilon^{*}_{S}(t)U_{0}^{N}\lVert_{-s,\sigma}^{2} + 4t\int_{0}^{t}\Big\{\lVert\Upsilon_{S}^{*}(t-r)(G_{r}^{I,N})^{*}U_{r}^{N}\lVert_{-s,\sigma}^{2}+ \lVert\Upsilon_{S}^{*}(t-r)(G_{r}^{S})^{*}V_{r}^{N}\lVert_{-s,\sigma}^{2}\Big\}dr\\&+4\Big\lVert\int_{0}^{t}\Upsilon_{S}^{*}(t-r)d\widetilde{M}_{r}^{N}\Big\lVert_{-s,\sigma}^{2}, \\
	\lVert V_{t}^{N} \lVert_{-s,\sigma}^{2}&\leq\displaystyle5\lVert\Upsilon_{I}^{*}(t)V_{0}^{N} \lVert_{-s,\sigma}^{2}  +5t\int_{0}^{t}\Big\{\lVert\Upsilon_{I}^{*}(t-r)(G_{r}^{I,N})^{*}U_{r}^{N} \lVert_{-s,\sigma}^{2}+ \lVert\Upsilon_{I}^{*}(t-r)G_{r}^{S}V_{r}^{N} \lVert_{-s,\sigma}^{2}\Big\}dr\\+&5t\alpha^{2}\int_{0}^{t} \lVert \Upsilon_{I}^{*}(t-r)V_{r}^{N} \lVert_{-s,\sigma}^{2}dr +5 \Big\lVert\int_{0}^{t}\Upsilon_{I}^{*}(t-r)d\widetilde{L}_{r}^{N} \Big\lVert_{-s,\sigma}^{2},\\
	\displaystyle \lVert W_{t}^{N}\lVert^{2}_{-s,\sigma}&\leq 2\alpha^{2}t \int_{0}^{t} \lVert\Upsilon_{R}^{*}(t-r)V_{r}^{N}\lVert^{2}_{-s,\sigma}dr  +2\Big\lVert\int_{u}^{t}\Upsilon_{R}^{*}(t-r)d\widetilde{Y}_{r}^{N}\Big \lVert^{2}_{-s,\sigma}.
    \end{align*}

	From Corollarys \ref{uit} and \ref{cor20}, we have
	
	\begin{align*}
	\displaystyle \lVert U_{t}^{N}\lVert^{2}_{-s,\sigma}&\leq C4 e^{Ct}\lVert U_{0}^{N} \lVert^{2}_{-s,\sigma}+4Ct e^{Ct} \underset{y}{\sup}\lVert K(.,y)\lVert_{2+D,\sigma}^{2} \int_{0}^{t}\{ \lVert U_{r}^{N} \lVert_{-s,\sigma}^{2} +\lVert V_{r}^{N} \lVert^{2}_{-s,\sigma}\} dr\\&+4\Big\lVert\int_{0}^{t}\Upsilon_{S}^{*}(t-r)d\widetilde{M}_{r}^{N} \Big\lVert_{-s,\sigma}^{2},\\
	\displaystyle \lVert V_{t}^{N}\lVert^{2}_{-s,\sigma}&\leq 5 e^{Ct}\lVert V_{0}^{N} \lVert^{2}_{-s,\sigma}+5Ct e^{Ct}(1+\alpha^{2}) \underset{x}{\sup}\lVert K(x,.)\lVert_{2+D,\sigma}^{2} \int_{0}^{t}\{ \lVert U_{r}^{N} \lVert_{H^{-s}}+\lVert V_{r}^{N} \lVert^{2}_{-s,\sigma}\} dr\\&+5\Big\lVert\int_{0}^{t}\Upsilon_{I}^{*}(t-r)d\widetilde{L}_{r}^{N} \Big\lVert_{-s,\sigma}^{2},\\
	\displaystyle \lVert W_{t}^{N}\lVert^{2}_{-s,\sigma}&\leq 2C e^{Ct}\alpha^{2}t \int_{0}^{t} \lVert V_{r}^{N}\lVert^{2}_{-s,\sigma}dr  +2\Big\lVert\int_{0}^{t}\Upsilon_{R}^{*}(t-r)d\widetilde{Y}_{r}^{N}\Big \lVert^{2}_{-s,\sigma}.
\end{align*}

	Thus  from   Corrolary  \ref{ct} and Corollary \ref{cor20}, Lemma \ref{bbn} and  Assumption (H3), we have
	\begin{align}
	\displaystyle  \mathbb{E}(\underset{0\leq t\leq T}{\sup}\lVert U_{t}^{N}\lVert^{2}_{-s,\sigma})&\leq 4 e^{CT}\mathbb{E}(\lVert U_{0}^{N} \lVert_{-s,\sigma}^{2})+4CTe^{CT}\int_{0}^{T} \Big\{ \mathbb{E}(\underset{0\leq r\leq t}{\sup}\lVert U_{r}^{N} \lVert^{2}_{-s,\sigma})+\mathbb{E}(\underset{0\leq r\leq t}{\sup}\lVert V_{r}^{N} \lVert^{2}_{-s,\sigma})\Big\}dt \nonumber\\&+CT ,      \label{ttb}\\
	\displaystyle \mathbb{E}(\underset{0\leq t\leq T}{\sup}\lVert V_{t}^{N}\lVert^{2}_{-s,\sigma})&\leq 5e^{CT}\mathbb{E}(\lVert V_{0}^{N} \lVert^{2}_{-s,\sigma})\nonumber\\&+5TC(1 +\alpha^{2})e^{CT} \int_{0}^{T} \Big\{ \mathbb{E}(\underset{0\leq r\leq t}{\sup}\lVert U_{r}^{N} \lVert^{2}_{-s,\sigma})+\mathbb{E}(\underset{0\leq r\leq t}{\sup}\lVert V_{r}^{N} \lVert^{2}_{-s,\sigma})\Big\}dt+ CT, \label{ttbb}\\
	\displaystyle \mathbb{E}(\underset{0\leq t\leq T}{\sup}\lVert W_{t}^{N}\lVert^{2}_{-s,\sigma})&\leq 2\alpha^{2}Te^{CT} \int_{0}^{T} \Big\{\mathbb{E}(\underset{0\leq r\leq t}{\sup}\lVert W_{r}^{N} \lVert^{2}_{-s,\sigma})+\mathbb{E}(\underset{0\leq r\leq t}{\sup}\lVert V_{r}^{N} \lVert^{2}_{-s,\sigma})\Big\}dt+CT.\label{ttbbb}
\end{align}

Thus summing (\ref{ttb}), (\ref{ttbb}) and (\ref{ttbbb}) and applying Gronwall's lemma we deduce  the result from  the Proposition \ref{uo}.
\end{proof}
\subsubsection{Proof of Theorem \ref{c5}\label{tqa}}
We begin this subsection by showing the following result.
  \begin{prp}
	The  sequences of processes $U^{N}$, $V^{N}$ and $W^{N}$ are tight in $\mathbb{D}(\mathbb{R}_{+},H^{-s,\sigma})$.  \label{fff}
\end{prp}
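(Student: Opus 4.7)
The plan is to verify the Aldous-type tightness criterion of Proposition \ref{c6} separately for $U^N$, $V^N$, $W^N$ in $\mathbb{D}(\mathbb{R}_+, H^{-s,\sigma})$; the three arguments are structurally identical modulo the extra $\alpha V^N$ drift appearing in $V^N$ and $W^N$, so I detail only $U^N$.

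For condition $(T_1)$ (marginal tightness at each fixed $t$), I would first rerun the proof of Proposition \ref{tfg} at refined parameters $(s',\sigma')$ with $1+D < s' < s$ and $\sigma' > \sigma$; its argument only uses the analogues of Corollary \ref{r3} and Corollary \ref{cor32}, which remain valid at those parameters provided (H3) is strengthened to $\sigma'$. The resulting uniform bound $\sup_N \mathbb{E}(\sup_{t\leq T}\|U_t^N\|_{-s',\sigma'}^2) < \infty$, combined with Markov's inequality and the compactness of the embedding $H^{-s',\sigma'} \hookrightarrow H^{-s,\sigma}$ recalled in Corollary \ref{co42}, shows that $\{U_t^N\}_N$ concentrates on compacta of $H^{-s,\sigma}$, which yields $(T_1)$.

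For $(T_2)$, Lemma \ref{sds} applied between stopping times $\tau^N$ and $\tau^N + \varrho$ gives the decomposition
\[
U_{\tau^N+\varrho}^N - U_{\tau^N}^N = \bigl(\Upsilon^*_S(\varrho) - I\bigr)U_{\tau^N}^N - \mathcal{D}_1^N - \mathcal{D}_2^N + \mathcal{N}^N,
\]
where $\mathcal{D}_1^N$ and $\mathcal{D}_2^N$ are the two drift integrals involving $(G_r^{I,N})^*U_r^N$ and $(G_r^S)^*V_r^N$, and $\mathcal{N}^N$ is the stochastic integral against $d\widetilde{M}^N$. Proposition \ref{cty} bounds $\mathbb{E}\|\mathcal{N}^N\|_{-s,\sigma}^2$ by $C\varrho$. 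The drift integrals are bounded pointwise via Corollaries \ref{uit} and \ref{cor20} by $C\varrho\sup_{r\leq T}(\|U_r^N\|_{-s,\sigma}+\|V_r^N\|_{-s,\sigma})$, whose second moment is $O(\varrho^2)$ by Proposition \ref{tfg}. Markov's inequality then converts these $L^2$ bounds into the probabilistic smallness required by $(T_2)$ for those three pieces.

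The main obstacle is the semigroup-continuity term $(\Upsilon^*_S(\varrho) - I)U_{\tau^N}^N$: strong continuity of $\Upsilon^*_S$ on $H^{-s,\sigma}$ only gives pointwise-in-$v$ convergence $\Upsilon^*_S(\varrho)v \to v$, which is not uniform. My plan to upgrade this to uniform-in-$N$ smallness is to exploit the tightness of $\{U_{\tau^N}^N\}_N$ (furnished as a byproduct of the $(T_1)$ analysis applied to stopped random variables): this family concentrates up to arbitrarily small probability on a compact set $K \subset H^{-s,\sigma}$, and $\{\Upsilon^*_S(\varrho)\}_{\varrho \geq 0}$ is equicontinuous on compacta as a strongly continuous semigroup. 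An alternative avoiding this reduction uses the identity $\Upsilon^*_S(\varrho)v - v = \int_0^\varrho \Upsilon^*_S(r)\mathcal{Q}^*_S v\, dr$, yielding $\|\Upsilon^*_S(\varrho)v - v\|_{-s,\sigma} \leq C\varrho \|v\|_{-s+2,\sigma}$, which can be combined with a second moment estimate derived along the lines of Proposition \ref{tfg} at the stronger scale. Assembling the four contributions establishes $(T_2)$ for $U^N$; the identical treatment of $V^N$ and $W^N$ finishes the proof.
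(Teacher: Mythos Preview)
Your proof is correct and follows the paper's approach closely: same $(T_1)$ via the refined-parameter moment bound plus the compact embedding of Corollary \ref{co42}, same mild-form decomposition from Lemma \ref{sds}, and same treatment of the drift and stochastic-integral pieces via Corollaries \ref{uit}, \ref{cor20} and Proposition \ref{cty}. For the semigroup-continuity term, your Approach A (tightness of $\{U^N_{\tau^N}\}$ in $H^{-s,\sigma}$ plus uniform continuity of a $C_0$-semigroup on compacta) is precisely the abstract version of what the paper does concretely: it projects onto the span $F_m$ of the first $m$ basis vectors $(\varphi_p)$, applies the Dynkin identity on $F_m$ to get an $O(\varrho^2)$ bound times $\sum_{p\leq m}\lVert\mathcal{Q}_S\varphi_p\rVert_{s,\sigma}^2$, and then controls the tail $p>m$ of the Parseval series uniformly in $N$ and $\varrho$ --- which is exactly the compactness you invoke.

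Your Approach B, however, does not go through as stated. The bound $\lVert(\Upsilon_S^*(\varrho)-I)v\rVert_{-s,\sigma}\leq C\varrho\,\lVert v\rVert_{-s+2,\sigma}$ would require a uniform second-moment estimate on $U^N$ in $H^{-(s-2),\sigma}$. But $1+D<s<2+D$ forces $s-2<D<1+d/2$, so the exponent $s-2$ lies strictly below the threshold in Proposition \ref{add} and Corollary \ref{ct}; the argument of Proposition \ref{tfg} therefore cannot be rerun at that scale, and the needed bound on $\lVert U^N_{\tau^N}\rVert_{-s+2,\sigma}$ is unavailable. Discard Approach B and keep Approach A.
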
 
\begin{proof}
	We  establish the tightness of $U^{N}$ by showing that the conditions of Proposition \ref{c6} are satisfied.\vspace*{0.1cm}\\ 
	$-$  Based on Proposition \ref{tfg}, we deduce $(T1)$  by the same argument as used in  the proof of $(T1)$ in Proposition \ref{c8}. \vspace*{0.2cm}\\
	$-$ Proof of (T2). Let T>0, $\varepsilon_{1}, \varepsilon_{2}$ >0, $(\tau^{N})_{N}$ a family of stopping times with $\tau^{N}\leq T$.  We have\vspace*{0.1cm}\\ $\displaystyle U_{\tau^{N}+\varrho}^{N}-U_{\tau^{N}}^{N}=\displaystyle  (\Upsilon_{S}^{*}(\varrho)-I_{d})U_{\tau^{N}}^{N}  -\int_{\tau^{N}}^{\tau^{N}+\varrho}\Upsilon_{S}^{*}(\tau^{N}+\varrho-r)(G_{r}^{I,N})^{*}U_{r}^{N}dr  \\\hspace*{2.4cm}- \int_{\tau^{N}}^{\tau^{N}+\varrho} \Upsilon_{S}^{*}(\tau^{N}+\varrho-r)(G_{r}^{S})^{*}V_{r}^{N}dr+\int_{\tau^{N}}^{\tau^{N}+\varrho}\Upsilon_{S}^{*}(\tau^{N}+\varrho-r)d\widetilde{M}_{r}^{N},$
	\\\hspace*{2.4cm}= $(\Upsilon_{S}^{*}(\varrho)-I_{d})U_{\tau^{N}}^{N} - \displaystyle\displaystyle \int_{\tau^{N}}^{\tau^{N}+\varrho}\Upsilon_{S}^{*}(\tau^{N}+\varrho-r)J_{r}^{S,I,N}(U^{N},V^{N})dr\\\hspace*{2.5cm}+\int_{\tau^{N}}^{\tau^{N}+\varrho}\Upsilon_{S}^{*}(\tau^{N}+\varrho-r)d\widetilde{M}_{r}^{N},$ \vspace*{0.3cm}\\where $ J_{r}^{S,I,N}(U^{N},V^{N})=(G_{r}^{I,N})^{*}U_{r}^{N}+(G_{r}^{S})^{*}V_{r}^{N}.$ \vspace*{0.3cm}\\
	We find $\delta>0$ and $N_{0}\geq 1$ such that:  
	
	\begin{align}
	\underset{N\geq N_{0}}{\sup}\underset{\delta\geq\varrho}{\sup}\mathbb{P}(\lVert (\Upsilon_{S}^{*}(\varrho)-I_{d})U_{\tau^{N}}^{N} \lVert_{-s,\sigma}\geq\varepsilon_{1})&\leq \varepsilon_{2},\label{c21}\\
	\underset{N\geq N_{0}}{\sup}\underset{\delta\geq\varrho}{\sup}\mathbb{P}\left(\Big\lVert  \displaystyle\int_{\tau^{N}}^{\tau^{N}+\varrho}\Upsilon_{S}^{*}(\tau^{N}+\varrho-r)J_{r}^{S,I,N}(U^{N},V^{N})dr \Big\lVert_{-s,\sigma}\geq\varepsilon_{1}\right)&\leq \varepsilon_{2},\label{c22}\\
	\underset{N\geq N_{0}}{\sup}\underset{\delta\geq\varrho}{\sup}\mathbb{P}\left(\Big\lVert \displaystyle\int_{\tau^{N}}^{\tau^{N}+\varrho}\Upsilon_{S}^{*}(\tau^{N}+\varrho-r)d\widetilde{M}_{r}^{N}\Big\lVert_{-s,\sigma}\geq\varepsilon_{1}\right)&\leq \varepsilon_{2}.\label{ter}
\end{align}
	$1-$ Proof of $(\ref{c21})$.\\  Let us introduce a complete orthonormal basis in $H^{s,\sigma},$ of functions $(\varphi_{p})_{p\geq 1},$ $\varphi_{p}$ being of class $C^{\infty}$ with compact support,   and   $F_{m}$($m\in \mathbb{N}^{*}$) denotes  the  sub-space of $H^{s,\sigma}$ generated by  $ (\varphi_{p})_{1\leq p\leq m}.$  Let 
	$ (\Upsilon_{S}^{*}(\varrho)-I_{d})U^{N}_{t}\mid_{F_{m}}$ denotes the orthogonal projection of $ (\Upsilon_{S}^{*}(\varrho)-I_{d})U^{N}_{t}$ on the dual space of  $F_{m}.$ We have 
	\begin{align} 
		\mathbb{P}(\lVert (\Upsilon_{S}^{*}(\varrho)-I_{d})U^{N}_{\tau^{N}} \lVert_{-s,\sigma}\geq\varepsilon_{1})&\leq \mathbb{P}(\lVert (\Upsilon_{S}^{*}(\varrho)-I_{d})U_{\tau^{N}}^{N}\mid_{ F_{m}} \lVert_{-s,\sigma}\geq\frac{\varepsilon_{1}}{2})\nonumber\\&+\mathbb{P}(\lVert (\Upsilon_{S}^{*}(\varrho)-I_{d})U^{N}_{\tau^{N}}- (\Upsilon_{S}^{*}(\varrho)-I_{d})U_{\tau^{N}}^{N}\mid_{F_{m}} \lVert_{-s,\sigma}\geq\frac{\varepsilon_{1}}{2}) \label{erfv}
	\end{align}
Let us control each of the term of the above right hand side.\vspace*{0.2cm}\\
 $-$   One has
\begin{align}
	\mathbb{P}\big(\lVert(\Upsilon_{S}^{*}(\varrho)-I_{d})U^{N}_{\tau^{N}}- (\Upsilon_{S}^{*}(\varrho)-I_{d})U_{\tau^{N}}^{N}\mid_{F_{m}}\lVert_{-s,\sigma}\geq\frac{\varepsilon_{1}}{2}\big)&\leq\frac{4}{\varepsilon_{1}^{2}}\mathbb{E}\Big(\underset{0\leq t \leq T}{\sup}\sum\limits_{p> m}(U^{N}_{t},(\Upsilon_{S}(\varrho)-I_{d})\varphi_{p})^{2}\Big). \label{a243}
\end{align}
Furthermore the sequence $\Big(\underset{1\leq N }{\sup}\quad\underset{0\leq t \leq T}{\sup}\quad\underset{0\leq \varrho\leq \delta }{\sup}\sum\limits_{p> m}(U^{N}_{t},(\Upsilon_{S}(\varrho)-I_{d})\varphi_{p})^{2}\Big)$ converge towards $0$ as $m\longrightarrow \infty$, as the remainder of order $m$ of a uniformly convergent series of functions. Thus there exists $m_{0}\in \mathbb{N}^{*}$ independent of $N$ and $\varrho$ such that for any $m\geq m_{0},$ \begin{center}
	$\quad\underset{0\leq t \leq T}{\sup}\quad\underset{0\leq \varrho\leq \delta }{\sup}\sum\limits_{p> m}(U^{N}_{t},(\Upsilon_{S}(\varrho)-I_{d})\varphi_{p})^{2}<\varepsilon,$ for any $\varepsilon>0.$
\end{center} 
Moreover $\quad\underset{0\leq t \leq T}{\sup}\quad\underset{0\leq \varrho\leq \delta }{\sup}\sum\limits_{p> m}(U^{N}_{t},(\Upsilon_{S}(\varrho)-I_{d})\varphi_{p})^{2}$  is bounded by max($Ce^{C\delta},1)\underset{0\leq t \leq T}{\sup}\lVert U_{t}^{N}\lVert_{-s,\sigma}^{2},$ so uniformly integrable (see Proposition \ref{tfg}). Thus we deduce that there exists $m_{0}\in \mathbb{N}^{*}$ independent of $N$ and $\varrho$ such that for any $m\geq m_{0},$ 
\begin{align}
	\mathbb{E}\Big(\underset{0\leq t \leq T}{\sup}\sum\limits_{p> m}(U^{N}_{t},(\Upsilon_{S}(\varrho)-I_{d})\varphi_{p})^{2}\Big)<\varepsilon, \forall \varepsilon>0. \label{gty}
\end{align} 

	$-$ One has 
	\begin{align}
		 \displaystyle\mathbb{P}(\lVert (\Upsilon_{S}^{*}(\varrho)-I_{d})U_{\tau^{N}}^{N}\mid_{F_{m}} \lVert_{-s,\sigma}\geq\frac{\varepsilon_{1}}{2})\leq \frac{4}{\varepsilon_{1}^{2}}\mathbb{E}(\underset{0\leq t\leq T}{\sup}\lVert(\Upsilon_{S}^{*}(\varrho)-I_{d})U_{t}^{N}\mid_{F_{m}} \lVert_{-s,\sigma}^{2}). \label{ftz}
	\end{align}
 Furthermore according to Dynkin's formula and the contraction of $\Upsilon_{S}(t),$ one has
	\begin{align}
  \lVert(\Upsilon_{S}^{*}(\varrho)-I_{d})U_{t}^{N}\mid_{F_{m}} \lVert_{-s,\sigma}^{2}&=\sum\limits_{p= 1}^m\Big((\Upsilon_{S}^{*}(\varrho)-I_{d})U_{t}^{N},\varphi_{p}\Big)^{2},\nonumber\\&=\sum\limits_{p= 1}^m\Big(U_{t}^{N},(\Upsilon_{S}(\varrho)-I_{d})\varphi_{p}\Big)^{2},\nonumber\\&=\sum\limits_{p= 1}^m\Big(U_{t}^{N},\int_{0}^{\varrho}\Upsilon_{S}(r)\mathcal{Q}_S\varphi_{p}(.)dr\Big)^{2},\nonumber\\&\leq \varrho \underset{0\leq t \leq T}{\sup}\lVert U_{t}^{N} \lVert_{-s,\sigma}^{2}\sum\limits_{p= 1}^{m}\int_{0}^{\varrho}\lVert\Upsilon_{S}(r)\mathcal{Q}_S\varphi_{p}\lVert_{s,\sigma}^{2}dr,\nonumber\\&\leq \varrho^{2} \underset{0\leq t \leq T}{\sup}\lVert U_{t}^{N} \lVert_{-s,\sigma}^{2}\sum\limits_{p= 1}^{m}\lVert\mathcal{Q}_S\varphi_{p}\lVert_{s,\sigma}^{2}.
  \label{fgf}
\end{align}
Where $\mathcal{Q}_{S}$ in the infinitesimal generator of the operator $\Upsilon_{S}(t)$. Hence as from assumptions (H2), there exists $C>0$ such that 
\begin{center}
	$\sum\limits_{p= 1}^{m}\lVert\mathcal{Q}_S\varphi_{p}\lVert_{s,\sigma}^{2}\leq\sum\limits_{p= 1}^{m}\lVert\mathcal{Q}_S\varphi_{p}\lVert_{2+D,\sigma}^{2}\leq mC,$
\end{center}
 from (\ref{ftz}) and  (\ref{fgf}), we have 
 \begin{align}
 \mathbb{P}(\lVert (\Upsilon_{S}^{*}(\varrho)-I_{d})U_{\tau^{N}}^{N}\mid_{F_{m}} \lVert_{-s,\sigma}\geq\frac{\varepsilon_{1}}{2})&\leq\nonumber\\&\hspace*{-5cm}\leq\frac{4mC\underset{N\geq 1}{\sup}\mathbb{E}(\underset{0\leq t \leq T}{\sup}\lVert U_{t}^{N}\lVert_{-s,\sigma}^{2})}{\varepsilon_{1}^{2}}\varrho^{2}.\label{t1}
  \end{align}

 Hence (\ref{t1}) combined  with (\ref{gty}) and (\ref{erfv})
	  yields $(\ref{c21}).$\vspace*{0.3cm}\\
	Proof of $(\ref{c22})$.  Let $\ell >1$, we find $\delta>0$ such that $\tau^{N}+\delta\leq \ell T$ and such that $(\ref{c22})$ is satisfied. Since $\forall \varphi \in H^{s,\sigma}$, $\lVert \Upsilon(t)\varphi \lVert_{s,\sigma}\leq C e^{Ct}\lVert \varphi \lVert_{s,\sigma} $ then  form Proposition   \ref{tfg} and from  Corollary \ref{uit}, we have

	\begin{align*}
	\mathbb{P}\left(\left\lVert \displaystyle \int_{\tau^{N}}^{\tau^{N}+\varrho}\Upsilon_{S}^{*}(\tau^{N}+\varrho-r)J_{r}^{S,I,N}(U^{N},V^{N})dr \right\lVert_{-s,\sigma}\geq\varepsilon_{1}\right)&\leq\\&\hspace*{-7.5cm}\leq \frac{1}{\varepsilon_{1}^{2}}\mathbb{E}\left(\left\lVert  \displaystyle\int_{\tau^{N}}^{\tau^{N}+\varrho}\Upsilon_{S}^{*}(\tau^{N}+\varrho-r)J_{r}^{S,I,N}(U^{N},V^{N})dr\right\lVert_{-s,\sigma}^{2}\right),
	\\&\hspace*{-7.5cm}\leq \frac{\varrho}{\varepsilon_{1}^{2}}\displaystyle\mathbb{E}\left(\int_{\tau^{N}}^{\tau^{N}+\varrho}\lVert  \Upsilon_{S}^{*}(\tau^{N}+\varrho-r)J_{r}^{S,I,N}(U^{N},V^{N})\lVert_{-s,\sigma}^{2}dr\right),\\&\hspace*{-7.5cm}
	\leq\frac{\varrho C}{\varepsilon_{1}^{2}}e^{C\ell T} \underset{y}{\sup}\lVert K(.,y)\lVert_{s,\sigma}\displaystyle\mathbb{E}\left(\int_{\tau^{N}}^{\tau^{N}+\varrho}\{ \lVert U^{N}_{r} \lVert_{-s,\sigma}^{2}+\lVert V^{N}_{r} \lVert_{-s,\sigma}^{2} \}dr\right),\\&\hspace*{-7.5cm}\leq \frac{\delta^{2} C}{\varepsilon_{1}^{2}}e^{\ell T}\underset{N\geq 1}{\sup}\hspace*{0.1cm}\mathbb{E}( \underset{0\leq t\leq \ell T}{\sup}\{\lVert U^{N}_{t} \lVert_{-s,\sigma}^{2}+\underset{0\leq t\leq \ell T}{\sup}\lVert V^{N}_{t} \lVert_{-s,\sigma}^{2}\}), \\&\hspace*{-7.5cm}\leq \frac{\delta^{2} C}{\varepsilon_{1}^{2}}.
\end{align*}      
	
	So $(\ref{c22})$ is proved. \vspace*{0.25cm}\\
	- Proof of $(\ref{ter}).$ Let $\ell>1$, we find $\delta>0$ such that $\tau^{N}+\delta\leq \ell T$ and such that $(\ref{ter})$ is satisfied. From  Proposition \ref{cty}, we have 
	\begin{align*}
	\mathbb{P}\left(\Big\lVert \displaystyle\int_{\tau^{N}}^{\tau^{N}+\theta}\Upsilon_{S}^{*}(\tau^{N}+\theta-r)d\widetilde{M}_{r}^{N} \Big\lVert_{-s,\sigma}\geq\varepsilon_{1}\right)&\leq \frac{1}{\varepsilon_{1}^{2}}\mathbb{E}\left(\Big\lVert \displaystyle\int_{\tau^{N}}^{\tau^{N}+\theta}\Upsilon_{S}^{*}(\tau^{N}+\theta-r)d\widetilde{M}_{r}^{N} \Big\lVert_{-s,\sigma}^{2}\right),\\&\leq\frac{ C}{\varepsilon_{1}^{2}} \delta.
		\end{align*}
	 Hence $(\ref{ter})$ is proved. 
	
\end{proof}

To  establish the system of limiting equations of  all converging subsequences of $(U^{N},V^{N},W^{N})_{N\geq 1}, $ we will need the next  Lemma.
\begin{lem}
	For any $ t\geq0$, $\varphi\in H^{s,\sigma}(\mathbb{R}^{d})$,  as $N\longrightarrow \infty$, \\\hspace*{5cm} $ \displaystyle\int_{0}^{t}\mathbb{E}\Big(\lVert [G_{r}^{I,N}-G_{r}^{I}]\Upsilon_{S}(t-r)\varphi \lVert_{s,\sigma}^{2}\Big)dt\longrightarrow 0.$\label{f11}
\end{lem}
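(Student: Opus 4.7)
The plan is to reduce the statement, via the Banach algebra structure of $H^{s,\sigma}$ and the semigroup estimate of Corollary \ref{cor20}, to the convergence
$$\int_0^t \mathbb{E}\bigl(\lVert h_r^N\rVert_{s,\sigma}^2\bigr)\,dr \longrightarrow 0, \qquad h_r^N(x):=\bigl(\mu_r^{I,N}-\mu_r^I,\,K(x,\cdot)\bigr),$$
which I will then obtain by two successive applications of dominated convergence (first in the space variable, then in time).

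More precisely, writing $\Psi_{r,t}:=\Upsilon_S(t-r)\varphi$, we have the pointwise identity $[G_r^{I,N}-G_r^I]\Psi_{r,t}(x)=\Psi_{r,t}(x)\,h_r^N(x)$. Since $H^{s,\sigma}$ is a Banach algebra (Remark \ref{r65} in the appendix), the Banach algebra inequality combined with Corollary \ref{cor20} gives
$$\lVert [G_r^{I,N}-G_r^I]\Psi_{r,t}\rVert_{s,\sigma}\leq C\lVert \Psi_{r,t}\rVert_{s,\sigma}\lVert h_r^N\rVert_{s,\sigma}\leq C^2 e^{C(t-r)}\lVert\varphi\rVert_{s,\sigma}\,\lVert h_r^N\rVert_{s,\sigma}.$$
Using $s<2+D$ and the continuous embedding $H^{2+D,\sigma}\hookrightarrow H^{s,\sigma}$, it thus suffices to control
$$\lVert h_r^N\rVert_{2+D,\sigma}^2=\sum_{|\gamma|\leq 2+D}\int_{\mathbb{R}^d}\frac{|(\mu_r^{I,N}-\mu_r^I,\,D_x^\gamma K(x,\cdot))|^2}{1+|x|^{2\sigma}}\,dx,$$
after which Fubini reduces everything to the integrability of $x\mapsto \mathbb{E}\bigl|(\mu_r^{I,N}-\mu_r^I,D_x^\gamma K(x,\cdot))\bigr|^2$ against the weight $(1+|x|^{2\sigma})^{-1}$.

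For each fixed $x,r,\gamma$ the function $y\mapsto D_x^\gamma K(x,y)$ is continuous with compact support, so Theorem \ref{th1} (convergence in probability of $\mu^{I,N}_r$ toward $\mu^I_r$ in the vague topology) yields $(\mu_r^{I,N}-\mu_r^I,D_x^\gamma K(x,\cdot))\to 0$ in probability. Combined with the uniform bound $|(\mu_r^{I,N}-\mu_r^I,D_x^\gamma K(x,\cdot))|\leq 2\lVert D_x^\gamma K(x,\cdot)\rVert_\infty$ (since the total masses of $\mu_r^{I,N}$ and $\mu_r^I$ are bounded by $1$), this upgrades to $L^2$ convergence pointwise in $x$. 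To apply dominated convergence in the spatial integral, I will exploit the compactness of the $x$-support of $K$ given by Assumption (H2): there exists a compact set $K_0\subset\mathbb{R}^d$ such that $D_x^\gamma K(x,\cdot)\equiv 0$ for $x\notin K_0$, which provides the uniform-in-$N$ integrable dominating function $4\lVert D_x^\gamma K(x,\cdot)\rVert_\infty^2\,\mathbf{1}_{K_0}(x)/(1+|x|^{2\sigma})$. A final dominated convergence step in $r\in[0,t]$, based on the uniform bound $\lVert h_r^N\rVert_{2+D,\sigma}^2\leq 4\sup_y\lVert K(\cdot,y)\rVert_{2+D,\sigma}^2$ (by the argument of Proposition \ref{prp215}), together with the factor $e^{2C(t-r)}\lVert\varphi\rVert_{s,\sigma}^2$, concludes the proof. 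The only delicate point is the interchange of the spatial integral with the expectation and the verification that the pointwise-in-$x$ convergence is uniform-enough to be integrated: this is handled cleanly by the compact support of $K$, so no genuine obstacle arises beyond routine bookkeeping.
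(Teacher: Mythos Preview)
Your proof is correct and follows essentially the same approach as the paper: reduce via the Banach algebra property of $H^{s,\sigma}$ and the semigroup bound to controlling $\lVert(\mu_r^{I,N}-\mu_r^I,K)\rVert_{2+D,\sigma}^2$, then use Theorem \ref{th1} together with dominated convergence. You are in fact somewhat more explicit than the paper about the two-step dominated convergence (first in $x$ using the compact $x$-support of $K$, then in $r$) and about upgrading convergence in probability to $L^2$ via the uniform bound, which the paper collapses into a single appeal to Lebesgue's theorem.
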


\begin{proof} Since $H^{s,\sigma}$ is a Banach algebra (see Remark \ref{r65})   and $H^{2+D,\sigma}\hookrightarrow H^{s,\sigma}$ (since $s<2+D),$  and  $\lVert \Upsilon(t)\varphi\lVert_{s,\sigma}\leq C e^{Ct} \lVert \varphi \lVert_{s,\sigma}$, \vspace*{0.2cm} \\
	\hspace*{2cm}$\Big\lVert \Upsilon(t-r)\varphi \displaystyle\Big(\mu_{r}^{I,N}-\mu_{r}^{I},K\Big)\Big\lVert_{s,\sigma}\leq C\lVert \varphi \lVert_{s,\sigma}\Big\lVert \displaystyle \Big(\mu_{r}^{I,N}-\mu_{r}^{I},K\Big) \Big\lVert_{2+D,\sigma}$. \vspace*{0.2cm}\\On the other hand  \\
	$\Big\lVert \displaystyle\Big(\mu_{r}^{I,N}-\mu_{r}^{I},K\Big) \Big\lVert_{2+D,\sigma}^{2}=\displaystyle\sum\limits_{\lvert\eta\lvert\leq 2+D}\int_{\mathbb{R}^{d}}(1+\lvert x\lvert^{2\sigma})^{-1}\Big\lvert\int_{\mathbb{R}^{d}} D^{\eta}K(x,y)(\mu_{r}^{I,N}-\mu_{r}^{I})(dy)\Big\lvert^{2}dx$,\vspace*{0.2cm}\\
	furthermore from Asumptions (H2) the map $y\in \mathbb{R}^{d}\mapsto D^{\eta}K(x,y)$ is continuous and bounded. So we deduce from  Theorem \ref{th1} that \vspace*{0.1cm}\\\hspace*{3.2cm} $\displaystyle\Big\lvert\int_{\mathbb{T}^{2}} D^{\eta}K(x,y)(\mu_{r}^{I,N}-\mu_{r}^{I})(dy)\Big\lvert^{2}\xrightarrow{P}0$.\\ According to Lebesgue's dominated convergence theorem, $\mathbb{E}\Big(\Big\lVert \displaystyle\Big(\mu_{r}^{I,N}-\mu_{r}^{I}, K\Big) \Big\lVert_{2+D,\sigma}^{2}\Big)\longrightarrow 0$, as $N\longrightarrow\infty$. Thus \vspace*{0.2cm}\\ $\hspace*{2.5cm}\displaystyle\int_{0}^{t}\mathbb{E}\Big(\lVert [G_{r}^{I,N}-G_{r}^{I}]\Upsilon(t-r)\varphi \lVert_{2+D,\sigma}^{2}\Big)dt\\\hspace*{5cm}\leq C\lVert  \varphi \lVert_{s,\sigma}^{2}\displaystyle\int_{0}^{t}\mathbb{E}\Big(\Big\lVert \displaystyle\Big(\mu_{r}^{I,N}-\mu_{r}^{I}, K\Big) \Big\lVert_{2+D,\sigma}^{2}\Big)dr\vspace*{0.1cm}\\\hspace*{5cm}\longrightarrow0$, as $N\longrightarrow\infty$.\\
	Hence the result.
\end{proof}
The next Proposition establishes the evolution equations of all limit points  $ (U,V,W)$  of the sequence $ (U^{N},V^{N},W^{N})$.                

\begin{prp}
	All limit points  $ (U,V,W)$  of the sequence $ (U^{N},V^{N},Z^{N})$  satisfy 
	\begin{align}
	\displaystyle U_{t}&=\displaystyle  \Upsilon_{S}^{*}(t)U_{0}  -\int_{0}^{t}\Upsilon_{S}^{*}(t-r)(G_{r}^{I})^{*}U_{r}dr - \int_{0}^{t}\Upsilon_{S}^{*}(t-r)(G_{r}^{S})^{*} V_{r}dr + \int_{0}^{t}\Upsilon_{S}^{*}(t-r)d\mathcal{M}_{r}^{1},\label{ll3} \\
	 \displaystyle V_{t}&=\displaystyle  \Upsilon_{I}^{*}(t)V_{0}  +\int_{0}^{t}\Upsilon_{I}^{*}(t-r)(G_{r}^{I})^{*}U_{r} dr + \int_{0}^{t}\Upsilon_{I}^{*}(t-r)(G_{r}^{S})^{*}V_{r}dr -\alpha\int_{0}^{t}\Upsilon_{I}^{*}(t-r)V_{r}dr\nonumber\\&+\int_{0}^{t}\Upsilon_{I}^{*}(t-r)d\mathcal{M}_{r}^{2},\label{ll4}\\  \displaystyle W_{t}&=\displaystyle     \alpha\int_{0}^{t}\Upsilon_{R}^{*}(t-r)V_{r}dr+\int_{0}^{t}\Upsilon_{R}^{*}(t-r)d\mathcal{M}_{r}^{3}.\label{ll5}
\end{align}
\end{prp}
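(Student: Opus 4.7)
The plan is to pass to the limit, along a jointly convergent subsequence, in the mild equations \eqref{c15}, \eqref{c16}, \eqref{c17} of Lemma \ref{sds} taken with $u=0$. Testing for instance \eqref{c15} against an arbitrary $\varphi\in H^{s,\sigma}(\mathbb{R}^d)$ gives
\[
(U_t^N,\varphi)=(U_0^N,\Upsilon_S(t)\varphi)-\int_0^t(U_r^N,G_r^{I,N}\Upsilon_S(t-r)\varphi)\,dr-\int_0^t(V_r^N,G_r^S\Upsilon_S(t-r)\varphi)\,dr+\int_0^t(\Upsilon_S(t-r)\varphi,d\widetilde{M}_r^N),
\]
and the task reduces to showing that each term passes to the limit. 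To simplify the bookkeeping, I would invoke Skorokhod's representation theorem so that, along the extracted subsequence, the joint sequence $(U^N,V^N,W^N,\widetilde{M}^N,\widetilde{L}^N,\widetilde{Y}^N)$ converges almost surely in $[\mathbb{D}(\mathbb{R}_+,H^{-s,\sigma})]^{6}$ to $(U,V,W,\mathcal{M}^1,\mathcal{M}^2,\mathcal{M}^3)$, where the limit processes are continuous by Proposition \ref{c10}.

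The initial term converges to $(U_0,\Upsilon_S(t)\varphi)$ by Remark \ref{rm59}. The $V^N$-term is a continuous, deterministic, linear functional of the trajectory of $V^N$, so continuous mapping yields convergence to $\int_0^t(V_r,G_r^S\Upsilon_S(t-r)\varphi)\,dr=(\int_0^t\Upsilon_S^*(t-r)(G_r^S)^*V_r\,dr,\varphi)$. The main obstacle is the nonlinear $G^{I,N}_r$ term, where both the operator and $U^N$ depend on $N$. I would decompose
\[
(U_r^N,G_r^{I,N}\Upsilon_S(t-r)\varphi)=(U_r^N,G_r^I\Upsilon_S(t-r)\varphi)+(U_r^N,[G_r^{I,N}-G_r^I]\Upsilon_S(t-r)\varphi),
\]
integrate in $r$, and treat the first piece by continuous mapping (the operator $G_r^I$ is deterministic and bounded on $H^{s,\sigma}$ by Proposition \ref{prp215}). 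For the remainder, Cauchy--Schwarz together with the uniform $L^2$ bound $\sup_N\mathbb{E}(\sup_{0\leq r\leq t}\lVert U_r^N\lVert_{-s,\sigma}^{2})<\infty$ from Proposition \ref{tfg} and Lemma \ref{f11} force the expectation of the remainder to vanish, hence convergence in probability.

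The stochastic integral requires a separate argument because the product against $d\widetilde M^N$ is not obviously continuous on Skorokhod space. I would exploit the fact that for fixed $t$ and $\varphi$ the integrand $r\mapsto\Upsilon_S(t-r)\varphi$ is a \emph{deterministic} element of $C([0,t],H^{s,\sigma})$, approximate it in this norm by piecewise-constant functions, use integration by parts together with the uniform Hilbert-norm martingale bound from Corollary \ref{ct} and Proposition \ref{cty} to control the error of the approximation uniformly in $N$, and pass to the limit on each step using the almost sure convergence $\widetilde M^N\to\mathcal M^1$. Identifying the resulting limit with $(\int_0^t\Upsilon_S^*(t-r)\,d\mathcal M^1_r,\varphi)$ yields \eqref{ll3}. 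I expect the combination of the nonlinear term and this stochastic-integral term to be the main delicate point; everything else is linear or continuous.

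Assembling these four limits produces \eqref{ll3}; strictly analogous arguments applied to \eqref{c16} and \eqref{c17}, using the same ingredients (Proposition \ref{prp215}, Proposition \ref{tfg}, Lemma \ref{f11}, Proposition \ref{cty}) together with the extra linear terms $-\alpha\int_0^t\Upsilon_I^*(t-r)V_r^N\,dr$ and $\alpha\int_0^t\Upsilon_R^*(t-r)V_r^N\,dr$, which are continuous in $V^N$, yield \eqref{ll4} and \eqref{ll5}.
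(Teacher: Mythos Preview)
Your proposal is correct and follows essentially the same route as the paper: start from the mild equations \eqref{c15}--\eqref{c17} with $u=0$, split the nonlinear term via $G_r^{I,N}=G_r^{I}+(G_r^{I,N}-G_r^{I})$, kill the remainder using Lemma \ref{f11} and the uniform bound of Proposition \ref{tfg}, and pass the remaining linear pieces through continuous mapping using Proposition \ref{prp215}. The only cosmetic differences are that the paper bundles the drift terms into a single continuous functional $\Psi_{i,t,\varphi}$ rather than treating them one by one, and that for the stochastic integral the paper argues directly from the weak convergence of $(\widetilde{M}^N,\widetilde{L}^N,\widetilde{Y}^N)$ established in Proposition \ref{ffg} together with independence from the initial data, whereas you invoke Skorokhod representation and a piecewise-constant approximation of the deterministic integrand; both devices lead to the same conclusion.
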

\begin{proof}
	We prove this Proposition by taking the weak limit   in the equations  (\ref{c15}) and (\ref{c16}) and (\ref{c17}). Note first that from Propositions \ref{fff},  there exists a subsequence along which the sequences $(U^{N},V^{N},W^{N})_{N}$ converges in law towards  $(U,V,W)$. For any $\varphi\in H^{s,\sigma},$ one has
	\vspace*{-0.14cm}
	\begin{align*}
		\displaystyle(\Upsilon_{S}^{*}(t)U_{0}^{N},\varphi)+\int_{0}^{t}\big(\Upsilon_{S}^{*}(t-r)\varphi, d\widetilde{M}_{r}^{N}\big)&= 
		\displaystyle (U_{t}^{N},\varphi)  +\int_{0}^{t}(\Upsilon_{S}^{*}(t-r)(G_{r}^{I})^{*}U_{r}^{N},\varphi)dr
		\\&\hspace*{-5.5cm}+ \int_{0}^{t} (\Upsilon_{S}^{*}(t-r)(G_{r}^{S})^{*}V_{r}^{N},\varphi)dr+\int_{0}^{t}(\Upsilon_{S}^{*}(t-r)[(G_{r}^{I,N})^{*}-(G_{r}^{I})^{*}]U_{r}^{N},\varphi)dr, 
	\end{align*}
	\vspace*{-0.3cm}
	\begin{align*}
		\displaystyle( \Upsilon_{I}^{*}(t)V_{0}^{N},\varphi) +\int_{0}^{t}\big(\Upsilon_{I}^{*}(t-r)\varphi,d\widetilde{L}_{r}^{N}\big)&=(V_{t}^{N},\varphi) -\int_{0}^{t}(\Upsilon_{I}^{*}(t-r)(G_{r}^{I})^{*}U_{r}^{N},\varphi)dr\\&\hspace*{-5.5cm}- \int_{0}^{t} (\Upsilon(t-r)[(G_{r}^{S})^{*}-\alpha ]V_{r}^{N},\varphi)dr-\int_{0}^{t}([\Upsilon_{I}^{*}(t-r)(G_{r}^{I,N})^{*}-(G_{r}^{I})^{*}]U_{r}^{N},\varphi)dr,  
	\end{align*}
\begin{align}
	\int_{0}^{t}\big(\Upsilon_{I}^{*}(t-r)\varphi,d\widetilde{Y}_{r}^{N}\big)&=(W_{t}^{N},\varphi) +\alpha \int_{0}^{t} (\Upsilon_{R}^{*}(t-r)V_{r}^{N},\varphi)dr.  \label{cvn}
\end{align}
	Thus in view of (\ref{cvn}), it is enough to show that $(U,V)$ satisfy (\ref{ll3}) and (\ref{ll4}). \\Hence, we have 
	\vspace*{-0.3cm}
	\begin{align*}
		\displaystyle(\Upsilon_{S}^{*}(t)U_{0}^{N},\varphi)+\int_{0}^{t}\big(\Upsilon_{S}^{*}(t-r)\varphi,d\widetilde{M}_{r}^{N}\big)&=\Psi_{1,t,\varphi}\big(U^{N},V^{N}\big)\\&+\int_{0}^{t}([\Upsilon_{S}^{*}(t-r)(G_{r}^{I,N})^{*}-(G_{r}^{I})^{*}]U_{r}^{N},\varphi)dr, 
	\end{align*}
	\vspace*{-0.3cm}
	\begin{align*}
		\displaystyle (\Upsilon_{I}^{*}(t)V_{0}^{N},\varphi) +\int_{0}^{t}\big(\Upsilon_{I}^{*}(t-r)\varphi,d\widetilde{L}_{r}^{N}\big)&=\Psi_{2,t,\varphi}\big(U^{N},V^{N}\big) \\&-\int_{0}^{t}([\Upsilon_{I}^{*}(t-r)(G_{r}^{I,N})^{*}-(G_{r}^{I})^{*}]U_{r}^{N},\varphi)dr.  
	\end{align*}
	
	With 
	\begin{align*}
	\displaystyle\Psi_{1,t,\varphi}\big(U^{N},V^{N}\big)&=\displaystyle (U_{t}^{N},\varphi) +\int_{0}^{t}(\Upsilon_{S}^{*}(t-r)(G_{r}^{I})^{*}U_{r}^{N},\varphi)dr
	\\&+ \int_{0}^{t} (\Upsilon_{S}^{*}(t-r)(G_{r}^{S})^{*}V_{r}^{N},\varphi)dr,\\
	\displaystyle\Psi_{2,t,\varphi}\big(U^{N},V^{N}\big)&=\displaystyle (V_{t}^{N},\varphi)-\int_{0}^{t}(\Upsilon_{I}^{*}(t-r)(G_{r}^{I})^{*}U_{r}^{N},\varphi)dr\\&- \int_{0}^{t} (\Upsilon_{I}^{*}(t-r)[(G_{r}^{S})^{*}-\alpha ]V_{r}^{N},\varphi)dr.
		\end{align*}
	
	Furthermore.\\$1-$ From Lemma \ref{f11} and  Proposition \ref{tfg}, $\displaystyle\int_{0}^{t}\left(U_{r}^{N} ,[G_{r}^{I,N}-G_{r}^{I}]\Upsilon(t-r)\varphi \right)dr\longrightarrow 0$ in $L^{1}(\mathbb{P}),$ locally unformly in t.\\ Indeed, $\mathbb{E}\left(\Big \lvert\underset{0\leq t \leq T}{\sup} \displaystyle\int_{0}^{t}\left(U_{r}^{N} ,[G_{r}^{I,N}-G_{r}^{I}]\Upsilon(t-r)\varphi \right)dr \Big \lvert\right)\leq\\\hspace*{5cm}\leq \sqrt{T} \underset{N\geq 1}{\sup}\hspace*{0.04cm}\mathbb{E}(\underset{0\leq t \leq T}{\sup}\lVert U_{t}^{N}\lVert_{-s,\sigma}^{2})^{\frac{1}{2}}\displaystyle\Big[\int_{0}^{T}\mathbb{E}(\lVert [G_{r}^{I,N}-G_{r}^{I}]\Upsilon(t-r)\varphi\lVert_{s,\sigma}^{2})dr\Big]^{\frac{1}{2}}.
	$\vspace*{0.12cm}\\
	2- Using (\ref{le1e}) in Proposition \ref{prp215}, it is easy to see that the maps 
	$(\Psi_{1,.,\varphi},\Psi_{2,.,\varphi})$  is continuous from $ [\mathbb{D}(\mathbb{R}_{+},H^{-s,\sigma}) ]^{2}$ into  $\mathbb{C}(\mathbb{R}_{+},\mathbb{R}^{2}).$ Thus as $(U^{N},V^{N})$ converges in law in  $[\mathbb{D}(\mathbb{R}_{+},H^{-s,\sigma}) ]^{2}$ towards $(U,V)$, then $\Big(\Psi_{1,.,\varphi}(U^{N},V^{N}),\Psi_{2,.,\varphi}(U^{N},V^{N})\Big)$ converges in law towards \\ $\Big(\Psi_{1,.,\varphi}(U,V),\Psi_{2,.,\varphi}(U,V)\Big).$\\\\
	3- $\Big(\displaystyle (\Upsilon_{S}^{*}(.)U_{0}^{N},\varphi) +\int_{0}^{.}\big(\Upsilon_{S}^{*}(.-r)\varphi,d\widetilde{M}_{r}^{N}\big)$, $\displaystyle  (\Upsilon_{I}^{*}(.)V_{0}^{N},\varphi) +\int_{0}^{.}\big(\Upsilon_{I}^{*}(.-r)\varphi,d\widetilde{L}_{r}^{N}\big)\Big)$ converges in law towards  $\Big(\displaystyle (\Upsilon_{S}^{*}(.)U_{0},\varphi) +\int_{0}^{.}\big(\Upsilon_{S}^{*}(.-r)\varphi,d\mathcal{M}^{1}_{r}\big), \displaystyle  (\Upsilon_{I}^{*}(.)V_{0},\varphi) +\int_{0}^{.}\big(\Upsilon_{I}^{*}(.-r)\varphi,d\mathcal{M}^{2}_{r}\big)\Big)$ in $(\mathbb{D}(\mathbb{R}_{+},H^{-s,\sigma}))^{2}$ since \\ $\Big(\displaystyle (\Upsilon_{S}^{*}(.)U_{0}^{N},\varphi),\quad (\Upsilon_{I}^{*}(.)V_{0}^{N},\varphi),  \int_{0}^{.}\big(\Upsilon_{S}^{*}(.-r)\varphi,d\widetilde{M}_{r}^{N}\big),  \displaystyle   \int_{0}^{.}\big(\Upsilon_{I}^{*}(.-r)\varphi,d\widetilde{L}_{r}^{N}\big)\Big)$ converges in law towards  $\Big(\displaystyle (\Upsilon_{S}^{*}(.)U_{0},\varphi), (\Upsilon_{I}^{*}(.)V_{0},\varphi), \int_{0}^{.}\big(\Upsilon_{S}^{*}(.-r)\varphi,d\mathcal{M}^{1}_{r}\big), \displaystyle   \int_{0}^{.}\big(\Upsilon_{I}^{*}(.-r)\varphi,d\mathcal{M}^{2}_{r}\big)\Big)$ in\\ $(\mathbb{C}(\mathbb{R}_{+},H^{-s,\sigma}))^{2}\times (\mathbb{D}(\mathbb{R}_{+},H^{-s,\sigma}))^{2}$ , which in turn follows from the fact that  \\
	$\Big ((\Upsilon_{S}^{*}(.)U_{0}^{N},\varphi),(\Upsilon_{I}^{*}(.)V_{0}^{N},\varphi)\Big)$ converges in law towards  $\Big ((\Upsilon_{S}^{*}(.)U_{0},\varphi),(\Upsilon_{I}^{*}(.)V_{0},\varphi)\Big)$ in \\ $(\mathbb{C}(\mathbb{R}_{+},H^{-s,\sigma}))^{2}$(see Remark $\ref{rm59} )$ and  $\displaystyle\Big(\int_{0}^{.}\big(\Upsilon_{S}^{*}(.-r)\varphi,d\widetilde{M}_{r}^{N}\big),\int_{0}^{.}\big(\Upsilon_{I}^{*}(.-r)\varphi,d\widetilde{L}_{r}^{N}\big)\Big)$  converges in law towards $\displaystyle\Big(\int_{0}^{.}\big(\Upsilon_{S}^{*}(.-r)\varphi,dW^{1}_{r}\big),\int_{0}^{.}\big(\Upsilon_{I}^{*}(.-r)\varphi,dW^{2}_{r}\big)\Big)$ in $(\mathbb{D}(\mathbb{R}_{+},H^{-s,\sigma}))^{2}$(which follows from Proposition \ref{ffg})
	and $\Big ((\Upsilon_{S}^{*}(.)U_{0}^{N},\varphi),(\Upsilon_{I}^{*}(.)V_{0}^{N},\varphi)\Big)$ is globally independant of  $\displaystyle\Big(\int_{0}^{.}\big(\Upsilon_{S}^{*}(.-r)\varphi,d\widetilde{M}_{r}^{N}\big),\int_{0}^{.}\big(\Upsilon_{I}^{*}(.-r)\varphi,d\widetilde{L}_{r}^{N}\big)\Big)$.\vspace*{0.1cm}\\Thus from 1-, 2-, and 3-, we obtain the result of the statement.
\end{proof}
From Proposition \ref{c10} we deduce that  all limit points $(U,V,W)$ of $ (U^{N},V^{N},W_{N})_{N\geq1} $ are elements of  $( \mathbb{C}(\mathbb{R}_{+},H^{-s}))^{3}$, thus we end the proof of Theorem \ref{c5} by showing that   the system of equations (\ref{ll3}) and (\ref{ll4}) and (\ref{ll5}) admits a unique solution $(U,V,W)\in( \mathbb{C}(\mathbb{R}_{+},H^{-s}))^{3}$.
\begin{prp}
	Suppose that $(U^{1},V^{1},W^{1})$  and $ (U^{2},V^{2},W^{2})$   are solutions to equations (\ref{ll3}), (\ref{ll4}) and (\ref{ll5}) with $ (U_{0}^{1},V_{0}^{1})=(U_{0}^{2},V_{0}^{2})$  then  $(U^{1},V^{1},W^{1})$ = $ (U^{2},V^{2},W^{2}).$
\end{prp}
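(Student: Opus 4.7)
The plan is to subtract the two solutions and reduce uniqueness to a linear Gronwall-type estimate in the Hilbert norm $\lVert\cdot\rVert_{-s,\sigma}$. Set $\Delta U=U^{1}-U^{2}$, $\Delta V=V^{1}-V^{2}$, $\Delta W=W^{1}-W^{2}$. Since $(U_0^1,V_0^1)=(U_0^2,V_0^2)$, and the martingale terms $\mathcal{M}^1, \mathcal{M}^2, \mathcal{M}^3$ are common to both solutions, they cancel in the difference. Equations (\ref{ll3}), (\ref{ll4}), (\ref{ll5}) therefore reduce to
\begin{align*}
\Delta U_t &= -\int_0^t \Upsilon_S^*(t-r)(G_r^I)^*\Delta U_r\,dr - \int_0^t \Upsilon_S^*(t-r)(G_r^S)^*\Delta V_r\,dr,\\
\Delta V_t &= \int_0^t \Upsilon_I^*(t-r)(G_r^I)^*\Delta U_r\,dr + \int_0^t \Upsilon_I^*(t-r)\bigl((G_r^S)^*-\alpha I_d\bigr)\Delta V_r\,dr,\\
\Delta W_t &= \alpha \int_0^t \Upsilon_R^*(t-r)\Delta V_r\,dr.
\end{align*}

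Next, I take $H^{-s,\sigma}$-norms on both sides and estimate term by term. By duality Corollary \ref{cor20} implies $\lVert \Upsilon_A^*(t-r)\mu\rVert_{-s,\sigma}\le Ce^{C(t-r)}\lVert\mu\rVert_{-s,\sigma}$ for any $A\in\{S,I,R\}$, and Corollary \ref{uit} yields $\lVert (G_r^I)^*\mu\rVert_{-s,\sigma}+\lVert(G_r^S)^*\mu\rVert_{-s,\sigma}\le C \sup_{x,y}(\lVert K(x,\cdot)\rVert_{2+D,\sigma}\vee\lVert K(\cdot,y)\rVert_{2+D,\sigma})\lVert\mu\rVert_{-s,\sigma}$. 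Applying Cauchy--Schwarz to the integrals and combining these bounds on any fixed interval $[0,T]$ produces a constant $C(T)>0$ such that
\begin{align*}
\lVert \Delta U_t\rVert_{-s,\sigma}^2 &\le C(T)\int_0^t\bigl(\lVert \Delta U_r\rVert_{-s,\sigma}^2+\lVert \Delta V_r\rVert_{-s,\sigma}^2\bigr)\,dr,\\
\lVert \Delta V_t\rVert_{-s,\sigma}^2 &\le C(T)\int_0^t\bigl(\lVert \Delta U_r\rVert_{-s,\sigma}^2+\lVert \Delta V_r\rVert_{-s,\sigma}^2\bigr)\,dr,\\
\lVert \Delta W_t\rVert_{-s,\sigma}^2 &\le C(T)\int_0^t\lVert \Delta V_r\rVert_{-s,\sigma}^2\,dr.
\end{align*}
Note that the $\Delta W$ equation is decoupled from $\Delta U,\Delta V$, so uniqueness of $W$ will follow automatically from that of $V$.

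Summing the first two inequalities and applying Gronwall's lemma to $\Phi(t):=\lVert\Delta U_t\rVert_{-s,\sigma}^2+\lVert\Delta V_t\rVert_{-s,\sigma}^2$ yields $\Phi\equiv 0$ on $[0,T]$, hence $\Delta U=\Delta V=0$; plugging this into the $\Delta W$ inequality gives $\Delta W=0$ as well. Since $T$ is arbitrary, uniqueness in $(\mathbb{C}(\mathbb{R}_+,H^{-s,\sigma}))^3$ follows. The only delicate point is the estimate on $(G_r^I)^*$ and $(G_r^S)^*$, which is the main reason we chose $s<2+D$ together with assumption (H2) on $K$; once Corollary \ref{uit} is available the argument is a routine Gronwall loop.
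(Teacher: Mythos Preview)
Your proof is correct and follows essentially the same route as the paper: subtract the two solutions so that the common initial data and martingale terms cancel, bound the semigroup and the operators $(G_r^I)^*$, $(G_r^S)^*$ via Corollaries \ref{cor20} and \ref{uit}, and close by Gronwall. The only cosmetic difference is that the paper works directly with the norm $\lVert\cdot\rVert_{-s,\sigma}$ rather than its square (so no Cauchy--Schwarz step is needed), but the argument is otherwise identical.
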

\begin{proof}
	All we need  to show is that the system of equations (\ref{ll3}) and (\ref{ll4}) admits a unique solution. Thus we have \\\\
	$U_{t}^{1}-U_{t}^{2}=-\displaystyle \int_{0}^{t} \Upsilon_{S}^{*}(t-r)(G_{r}^{I})^{*}(U_{r}^{1}-U_{r}^{2})dr-\int_{0}^{t}\Upsilon_{S}^{*}(t-r)(G_{r}^{S})^{*}(V_{r}^{1}-V_{r}^{2})dr,$\\Hence \\$\lVert U_{t}^{1}-U_{t}^{2}\lVert_{H^{-s}}\leq\displaystyle \int_{0}^{t} \lVert\Upsilon_{S}^{*}(t-r)(G_{r}^{I})^{*}(U_{r}^{1}-U_{r}^{2})\lVert_{-s,\sigma}dr+\int_{0}^{t}\lVert\Upsilon_{S}^{*}(t-r)(G_{r}^{S})^{*}(V_{r}^{1}-V_{r}^{2})\lVert_{-s,\sigma}dr.$\vspace*{0.2cm}\\ So from Corollary \ref{uit},  we deduce that
	\begin{align}
\lVert U_{t}^{1}-U_{t}^{2}\lVert_{-s,\sigma}\leq C\underset{y}{\sup}\lVert K(.,y)\lVert_{2+D,\sigma}\displaystyle \int_{0}^{t}\{\lVert U_{r}^{1}-U_{r}^{2}\lVert_{-s,\sigma}  +  \lVert V_{r}^{1}-V_{r}^{2} \lVert_{-s,\sigma}\}dr.\label{tbh}
\end{align}
Similarly, we obtain 
	\begin{align}
	\lVert V_{t}^{1}-V_{t}^{2}\lVert_{-s,\sigma}\leq C( \underset{x}{\sup}\lVert K(x,.)\lVert_{2+D,\sigma}+\alpha)\displaystyle \int_{0}^{t}\{\lVert U_{r}^{1}-U_{r}^{2}\lVert_{-s,\sigma}  + \lVert V_{r}^{1}-V_{r}^{2} \lVert_{-s,\sigma}\}dr. \label{gtb}
\end{align}
	Summing $(\ref{tbh})$, $ (\ref{gtb}) $  and applying Gronwall's lemma we obtain that  $(U^{1},V^{1})=(U^{2},V^{2}),$ and the proof is complete.
\end{proof} 
	\section{Appendix}
	In this Appendix we prove the following Lemmas.\\
	\begin{lem}
		For any  nonnegative integer $m_{1}> m_{2}\geq0$, and any real $0<\sigma<\sigma'$, the following embedding is compact
		\begin{center}
		 $W_{0}^{m_{1},\sigma}(\mathbb{R}^{d})\hookrightarrow W_{0}^{m_{2},\sigma'}(\mathbb{R}^{d}).$
		\end{center} \label{la}
	\end{lem}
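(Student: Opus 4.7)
The plan is to combine local Rellich--Kondrachov compactness with a weighted tail estimate, which is the standard route for compactness of weighted Sobolev embeddings. Let $(\varphi_n)_{n\geq 1}$ be a bounded sequence in $W_0^{m_1,\sigma}(\mathbb{R}^d)$; I will extract a subsequence which is Cauchy, hence convergent, in $W_0^{m_2,\sigma'}(\mathbb{R}^d)$.

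First, for any fixed radius $R>0$, the weight $(1+|x|^{2\sigma})^{-1}$ is bounded from above and below by strictly positive constants on the open ball $B_R=\{x\in\mathbb{R}^d:\,|x|<R\}$. Hence the restriction $(\varphi_n|_{B_R})_n$ is bounded in the classical Sobolev space $H^{m_1}(B_R)$. Since $m_1>m_2$, the Rellich--Kondrachov theorem provides a subsequence converging in $H^{m_2}(B_R)$. Applying this successively for $R=1,2,\ldots$ and extracting a diagonal subsequence (still denoted $(\varphi_n)$), I obtain a sequence which converges in $H^{m_2}(B_k)$ for every integer $k\geq 1$; equivalently, by comparing the two equivalent weighted norms on $B_k$, it converges in the $W_0^{m_2,\sigma'}$-seminorm restricted to $B_k$.

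The heart of the argument, and the only place where the assumption $\sigma'>\sigma$ is used, is the following uniform tail control. For any multi-index $\gamma$ with $|\gamma|\leq m_2\leq m_1$ and any $R\geq 1$,
\begin{align*}
\int_{|x|>R}\frac{|D^\gamma \varphi_n(x)|^2}{1+|x|^{2\sigma'}}\,dx
&\leq \Big(\sup_{|x|>R}\frac{1+|x|^{2\sigma}}{1+|x|^{2\sigma'}}\Big)\int_{|x|>R}\frac{|D^\gamma\varphi_n(x)|^2}{1+|x|^{2\sigma}}\,dx \\
&\leq C\,R^{-2(\sigma'-\sigma)}\,\|\varphi_n\|_{m_1,\sigma}^2,
\end{align*}
and this prefactor vanishes as $R\to\infty$, uniformly in $n$, because $(\varphi_n)$ is bounded in $W_0^{m_1,\sigma}$.

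Finally, given $\varepsilon>0$, I fix $R$ large enough so that the tail outside $B_R$ of $\|\varphi_n-\varphi_p\|_{m_2,\sigma'}^2$ is at most $\varepsilon/2$ for every $n,p$ (applying the tail estimate to $\varphi_n-\varphi_p$, which is also bounded in $W_0^{m_1,\sigma}$). Then, by the local convergence on $B_R$ obtained from the diagonal extraction, the remaining integral over $B_R$ is smaller than $\varepsilon/2$ once $n,p$ are large enough. This yields a Cauchy subsequence in $W_0^{m_2,\sigma'}(\mathbb{R}^d)$, proving compactness. The only genuine delicacy is the bookkeeping that couples the diagonal extraction with the uniform tail bound; every other ingredient is either a routine Sobolev fact or the elementary weight comparison driven by $\sigma'>\sigma$.
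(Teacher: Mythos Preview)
Your proof is correct and follows essentially the same route as the paper's: both split the $W_0^{m_2,\sigma'}$-norm into a piece over a ball (handled by Rellich--Kondrachov, since the weights are bounded above and below on compacta) and a tail piece controlled uniformly via the elementary weight comparison $\frac{1+|x|^{2\sigma}}{1+|x|^{2\sigma'}}\to 0$ as $|x|\to\infty$. The only cosmetic difference is that the paper uses the ``weakly convergent to zero $\Rightarrow$ strongly convergent to zero'' characterization of compactness (so no diagonal extraction is written out), whereas you extract a Cauchy subsequence directly; the substance is identical.
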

\begin{proof}
	To prove this Lemma it is enough to show that for any sequence $(\varphi_{n})_{n}$ of $ W_{0}^{m_{1},\sigma}(\mathbb{R}^{d})$ which weakly converges towards $0$ in $W_{0}^{m_{1},\sigma}(\mathbb{R}^{d}),$ strongly converges in $W_{0}^{m_{2},\sigma'}(\mathbb{R}^{d}).$\\ Let $\overline{B}(R)=\{x\in \mathbb{R}^{d}/\lvert x\lvert \leq R\}$,
	one has:\\ $\lVert \varphi_{n}\lVert_{m_{2},\sigma'}^{2}=\ \sum\limits_{\lvert \gamma\lvert\leq m_{2}}\displaystyle\int_{\mathbb{R}^{d}}\frac{\lvert D^{\gamma}\varphi_{n}(x)\lvert^{2}}{1+\lvert x\lvert^{2\sigma'}}dx= \sum\limits_{\lvert \gamma\lvert\leq m_{2}}\displaystyle\int_{\overline{B}(R)}\frac{\lvert D^{\gamma}\varphi_{n}(x)\lvert^{2}}{1+\lvert x\lvert^{2\sigma'}}dx+\sum\limits_{\lvert \gamma\lvert\leq m_{2}}\displaystyle\int_{\overline{B}^{c}(R)}\frac{\lvert D^{\gamma}\varphi_{n}(x)\lvert^{2}}{1+\lvert x\lvert^{2\sigma'}}dx,$ \\furthermore, since the function $R\in]1,\infty[\mapsto\displaystyle\frac{1+R^{2\sigma}}{1+R^{2\sigma'}}$ is nonincreasing, 
	                 \begin{align*}
	\sum\limits_{\lvert \gamma\lvert\leq m_{2}}\displaystyle\int_{\overline{B}^{c}(R)}\frac{\lvert D^{\gamma}\varphi_{n}(x)\lvert^{2}}{1+\lvert x\lvert^{2\sigma'}}dx&=\sum\limits_{\lvert \gamma\lvert\leq m_{2}}\displaystyle\int_{\overline{B}^{c}(R)}\frac{1+\lvert x\lvert^{2\sigma}}{1+\lvert x\lvert^{2\sigma'}}\frac{\lvert D^{\gamma}\varphi_{n}(x)\lvert^{2}}{1+\lvert x\lvert^{2\sigma}}dx,\\&\leq \frac{1+R^{2\sigma}}{1+R^{2\sigma'}}\sum\limits_{\lvert \gamma\lvert\leq m_{2}}\displaystyle\int_{\overline{B}^{c}(R)}\frac{\lvert D^{\gamma}\varphi_{n}(x)\lvert^{2}}{1+\lvert x\lvert^{2\sigma}}dx,\\&\leq \frac{1+R^{2\sigma}}{1+R^{2\sigma'}} \underset{n\geq1}{\sup}\lVert  \varphi_{n}\lVert^{2}_{m_{1},\sigma}\underset{R\longrightarrow+\infty}{\longrightarrow}0.
		\end{align*}
	On the other hand since $W_{0}^{m_{1},\sigma}(\mathbb{R}^{d})\hookrightarrow W_{0}^{m_{1},\sigma'}(\overline{B}(R))\hookrightarrow W_{0}^{m_{2},\sigma'}(\overline{B}(R)),$ where the second embedding is compact, $W_{0}^{m_{1},\sigma}(\mathbb{R}^{d})\hookrightarrow W_{0}^{m_{2},\sigma'}(\overline{B}(R))$ is compact (see Remark 6.3 in \cite{ac}). Thus $\sum\limits_{\lvert \gamma\lvert\leq m_{2}}\displaystyle\int_{\overline{B}(R)}\frac{\lvert D^{\gamma}\varphi_{n}(x)\lvert^{2}}{1+\lvert x\lvert^{2\sigma'}}dx\underset{n\longrightarrow+\infty}{\longrightarrow}0$.
\end{proof}
\begin{cor}
	For any  non integer   $1+D<s<2+D$ and  $1+D<s'=\frac{s+1+D}{2}<s<2+D$ and any $0<\sigma<\sigma',$ the following embedding is compact. \label{co42}
		\begin{center}
		$H^{s,\sigma}(\mathbb{R}^{d})\hookrightarrow H^{s',\sigma'}(\mathbb{R}^{d}).$
	\end{center}
\label{ca}
\end{cor}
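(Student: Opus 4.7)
My plan is to combine the compact embedding from Lemma \ref{la} with a real interpolation inequality that characterizes $H^{s',\sigma'}$ as an intermediate space between $W_{0}^{D,\sigma'}$ and $H^{s,\sigma'}$. The approach proceeds in three steps, which I sketch in the order I would carry them out.

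First, I would establish the auxiliary continuous embedding $H^{s,\sigma}\hookrightarrow H^{s,\sigma'}$. The ratio $x\mapsto (1+\lvert x\lvert^{2\sigma})/(1+\lvert x\lvert^{2\sigma'})$ is continuous on $\mathbb{R}^{d}$, equal to $1$ at the origin and tending to $0$ at infinity, hence bounded. This gives $W_{0}^{m,\sigma}\hookrightarrow W_{0}^{m,\sigma'}$ continuously for every integer $m$, and by the functoriality of real interpolation (see L\"ofstr\"om \cite{lof}), the inequality extends to the non-integer index $s$.

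Next, let $(\varphi_{n})_{n}$ be a bounded sequence in $H^{s,\sigma}$. Since $s>1+D$, the interpolation identity $H^{s,\sigma}=(W_{0}^{1+D,\sigma},W_{0}^{2+D,\sigma})_{\theta,2}$ with $\theta=s-1-D\in(0,1)$ yields the continuous inclusion $H^{s,\sigma}\hookrightarrow W_{0}^{1+D,\sigma}$. Applying Lemma \ref{la} with $m_{1}=1+D>D=m_{2}$, the embedding $W_{0}^{1+D,\sigma}\hookrightarrow W_{0}^{D,\sigma'}$ is compact, so a subsequence of $(\varphi_{n})$ (still denoted the same way) converges in $W_{0}^{D,\sigma'}$ and in particular is Cauchy there.

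Finally, I would invoke the reiteration theorem of real interpolation inside the scale with fixed weight $\sigma'$ to identify
\[
H^{s',\sigma'}=\bigl(W_{0}^{D,\sigma'},H^{s,\sigma'}\bigr)_{\alpha,2},\qquad \alpha=\frac{s'-D}{s-D}\in(0,1),
\]
where the specific choice $s'=(s+1+D)/2$ guarantees $D<s'<s$, hence $\alpha\in(0,1)$. This gives the standard Gagliardo--Nirenberg-type interpolation inequality
\[
\lVert\psi\lVert_{s',\sigma'}\leq C\,\lVert\psi\lVert_{D,\sigma'}^{\,1-\alpha}\,\lVert\psi\lVert_{s,\sigma'}^{\,\alpha}.
\]
Applied to $\psi=\varphi_{n}-\varphi_{m}$, the $H^{s,\sigma'}$-boundedness provided by Step 1 combined with the $W_{0}^{D,\sigma'}$-Cauchy property from Step 2 forces $(\varphi_{n})$ to be Cauchy in $H^{s',\sigma'}$, proving compactness of $H^{s,\sigma}\hookrightarrow H^{s',\sigma'}$.

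The main technical obstacle is verifying the reiteration identity in Step 3 within the specific weighted scale of L\"ofstr\"om spaces rather than the classical Bessel potential scale. I expect it to follow from the general real-interpolation machinery of \cite{lof,loff} together with the identification $W_{0}^{m,\sigma}=H^{m,\sigma}$ at integer indices, but it is the only step not reducible to a direct application of results already cited in the paper.
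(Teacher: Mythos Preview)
Your argument is correct and proceeds by a genuinely different route than the paper. The paper exploits the specific value $s'=(s+1+D)/2$: with this choice there is a single parameter $\rho$ for which simultaneously $H^{s,\sigma}=(W_{0}^{2+D,\sigma},W_{0}^{D,\sigma})_{\rho,2}$ and $H^{s',\sigma'}=(W_{0}^{1+D,\sigma'},W_{0}^{D,\sigma'})_{\rho,2}$ (illustrated for $d=2$). Lemma~\ref{la} gives the compact embedding $W_{0}^{2+D,\sigma}\hookrightarrow W_{0}^{1+D,\sigma'}$ at one endpoint and the continuous embedding $W_{0}^{D,\sigma}\hookrightarrow W_{0}^{D,\sigma'}$ at the other, and the Cobos--Peetre interpolation-of-compactness theorem~\cite{fer} then yields the result in one stroke. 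Your approach instead combines Lemma~\ref{la} with an interpolation inequality and a direct Cauchy-sequence argument; it avoids the black-box~\cite{fer} and in fact works for any $s'\in(D,s)$, not just the midpoint, but the price is the reiteration identity $H^{s',\sigma'}=(W_{0}^{D,\sigma'},H^{s,\sigma'})_{\alpha,2}$, which you rightly flag as the one step needing justification from~\cite{lof,loff}. The paper's route makes transparent why the particular $s'$ appears in the statement, while yours shows the result is not sensitive to that choice.
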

\begin{proof}
	We use the definition by interpolation of the space $H^{s,\sigma}(\mathbb{R}^{d})$ to prove this result.We Prove this result for $d=2$, simlar arguments allow us to obtain the result for other values of $d$. Since $2<s'=\frac{s+2}{2}<s<3$. There exists $\rho\in (1/2,1),$  sucht that $s'=3(1-\rho)+2\rho$ and  $s=4(1-\rho)+2\rho.$ Furthemore,  \vspace*{0.1cm}\\\hspace*{1.5cm} $\big(W_{0}^{3,\sigma'}(\mathbb{R}^{2}),W_{0}^{2,\sigma'}(\mathbb{R}^{2}) \big)_{\rho,2}=H^{s',\sigma'}(\mathbb{R}^{2})$ and $\big(W_{0}^{4,\sigma}(\mathbb{R}^{2}),W_{0}^{2,\sigma}(\mathbb{R}^{2}) \big)_{\rho,2}=H^{s,\sigma}(\mathbb{R}^{2}),$ \vspace*{0.2cm}\\ see \cite{loff} or \cite{tri} for the explicit definition of the real interpolation space $(.,.)\rho,q.$\\Thus as from Lemma \ref{la} the embeddings   $W_{0}^{4,\sigma}(\mathbb{R}^{2})\hookrightarrow W_{0}^{3,\sigma'}(\mathbb{R}^{2})$ is compact, we deduce from  Corollary 3.5 in \cite{fer} that the following embedding is compact.
	 \begin{center}
		 $H^{s,\sigma}(\mathbb{R}^{d})=\big(W_{0}^{4,\sigma}(\mathbb{R}^{2}),W_{0}^{2,\sigma}(\mathbb{R}^{2}) \big)_{\rho,2}\hookrightarrow\big(W_{0}^{3,\sigma'}(\mathbb{R}^{2}),W_{0}^{2,\sigma'}(\mathbb{R}^{2}) \big)_{\rho,2}=H^{s,\sigma'}(\mathbb{R}^{2}).$
	\end{center}
\end{proof}

\begin{lem}
	Under the Assumption (H2), for any $A\in \{S,I,R\},$ the Markovian semi-group generated by the operator $\mathcal{Q}_{A},$ is such that there exists a constant $C>0$, such that 
	\begin{center}
	 $\displaystyle\underset{y}{\sup}\Big(\int_{\mathbb{R}^{d}}\Upsilon_{A}(t)(x,y)dx\Big)\leq e^{Ct}.$ \label{la2}
	\end{center}
\end{lem}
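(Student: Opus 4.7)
The strategy is to view $p_t(y):=\int_{\mathbb{R}^d}\Upsilon_A(t)(x,y)\,dx$ as the solution at time $t$ of the forward Kolmogorov (Fokker--Planck) equation associated with the diffusion $X^{A}$, started from the (non-integrable) initial datum $\equiv 1$, and then to read off the exponential bound from a Feynman--Kac representation of the dual semi-group.

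First I will rewrite the formal adjoint of $\mathcal{Q}_A$ in non-divergence form. A direct expansion of
\[
\mathcal{Q}_A^{*}f=\tfrac12\sum_{i,j}\partial_i\partial_j\bigl((\theta\theta^{t})_{ij}(A,\cdot)\,f\bigr)-\sum_i\partial_i\bigl(m_i(A,\cdot)\,f\bigr)
\]
yields a decomposition $\mathcal{Q}_A^{*}=\mathcal{L}_A+V_A$, in which $\mathcal{L}_A$ is the infinitesimal generator of a diffusion $(Z_t^{A,y})_{t\ge0}$ with the same diffusion matrix $\theta(A,\cdot)$ but modified drift $\tilde m_j(A,\cdot):=\sum_i\partial_i(\theta\theta^{t})_{ij}(A,\cdot)-m_j(A,\cdot)$, and
\[
V_A:=\tfrac12\sum_{i,j}\partial_i\partial_j(\theta\theta^{t})_{ij}(A,\cdot)-\sum_i\partial_i m_i(A,\cdot).
\]
Under (H2), both $m(A,\cdot)$ and $\theta(A,\cdot)$ belong to $C^{3+D}_b(\mathbb{R}^d)$; hence $\tilde m(A,\cdot)$ is bounded and Lipschitz (so that $Z^{A,y}$ is well defined) and $V_A$ is bounded. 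I set $C:=\|V_A\|_\infty<\infty$.

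Next, I will use that, for each fixed $x$, the kernel $y\mapsto\Upsilon_A(t)(x,y)$ satisfies the dual Kolmogorov equation $\partial_t\Upsilon_A(t)(x,\cdot)=\mathcal{Q}_A^{*}\Upsilon_A(t)(x,\cdot)$; integrating this identity in $x$ against Lebesgue measure produces $\partial_t p_t=\mathcal{Q}_A^{*}p_t$ with $p_0\equiv 1$. The Feynman--Kac formula then delivers
\[
p_t(y)=\mathbb{E}\!\left[\exp\!\left(\int_0^t V_A\bigl(Z_s^{A,y}\bigr)\,ds\right)\right]\le e^{Ct}
\]
uniformly in $y$, which is exactly the bound claimed.

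The main obstacle I anticipate is the rigorous justification of the PDE and of the Feynman--Kac identity just used, since $p_0\equiv 1$ is not integrable and most textbook well-posedness statements assume decay at infinity. I plan to sidestep this in one of two equivalent ways: either (i) test against $\varphi\in C_c^{\infty}(\mathbb{R}^d)$ and work with the semi-group duality $\int\varphi\,p_t\,dy=\int\Upsilon_A(t)\varphi\,dx$, reducing the pointwise estimate to the Gaussian-type upper bounds for $\Upsilon_A(t)(x,y)$ that are available under (H2); or (ii) invoke the parabolic maximum principle applied to $v_t(y):=e^{-Ct}p_t(y)$, which solves $\partial_t v_t=\mathcal{L}_A v_t+(V_A-C)v_t$ with $V_A-C\le 0$ and $v_0\equiv 1$, and therefore obeys $v_t\le 1$, i.e. $p_t\le e^{Ct}$.
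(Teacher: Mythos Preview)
Your proposal is correct and follows essentially the same route as the paper: set $p_t(y)=\int\Upsilon_A(t)(x,y)\,dx$, observe that $\partial_t p_t=\mathcal{Q}_A^{*}p_t$ with $p_0\equiv 1$, decompose $\mathcal{Q}_A^{*}$ as a diffusion generator plus a bounded zero-order term (the paper calls these $\mathcal{G}_A$ and $H(A,\cdot)$, you call them $\mathcal{L}_A$ and $V_A$), and conclude via Feynman--Kac together with the boundedness of the potential under (H2). Your explicit acknowledgment of the rigor issue with the non-integrable initial datum, and the maximum-principle alternative you sketch, go slightly beyond what the paper records, but the core argument is identical.
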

\begin{proof}
	We recall that   $\{X_{t}^{A}, t\geq\}$ is the Markov process having the operator $\mathcal{Q}_{A}$ as its infinitesimal generator.
Let  $P_{A}(t)(y)=\displaystyle \int_{\mathbb{R}^{d}}\Upsilon_{A}(t)(x,y)dx,$ using Dynkin's formula it is easy to see that
 \begin{align}
 \displaystyle\Upsilon_{A}(t)(x,y)=\delta_{0}(x-y)+\int_{0}^{t}(\mathcal{Q}_{A})^{*}_{y}\Upsilon_{A}(r)(x,y)dr,\label{sde}
\end{align}

 thus integrating (\ref{sde}) over $x$, we obtain  \vspace*{0.1cm}\\\hspace*{5cm}
	$\displaystyle P_{A}(t)(y)=1+\int_{0}^{t}(\mathcal{Q}_{A})^{*}_{y}P_{A}(r)(y)dr.$\\ Furthermore,\\ $\displaystyle\frac{\partial}{\partial t} P_{A}(t)(y)= - \sum\limits_{1\leq \ell\leq d}m_{\ell}(A,y)\frac{\partial}{\partial y_{\ell}}P_{A}(t)(y) + \frac{1}{2}\sum\limits_{1\leq \ell,u\leq d}\frac{\partial}{\partial y_{\ell}}(\theta \theta^{t})_{\ell,u}(A,y)\frac{\partial}{\partial y_{u}}P_{A}(t)(y) \\\hspace*{9cm}+\frac{1}{2}\sum\limits_{1\leq \ell,u\leq d}\frac{\partial}{\partial y_{u}}(\theta \theta^{t})_{\ell,u}(A,y)\frac{\partial}{\partial y_{\ell}}P_{A}(t)(y)\\\hspace*{2.5cm} +\frac{1}{2}\sum\limits_{1\leq \ell,u\leq 2}(\theta\theta^{t})_{\ell,u}(A,y)\frac{\partial^{2}}{\partial y_{u}y_{\ell}}P_{A}(t)(y)\vspace*{0.18cm}\\\hspace*{2.6cm}+\Big[-\sum\limits_{1\leq \ell\leq d}\frac{\partial}{\partial y_{\ell}}m_{l}(A,y) +\frac{1}{2}\sum\limits_{1\leq \ell,u\leq d}\frac{\partial^{2}}{\partial y_{u}\partial y_{\ell}}(\theta\theta^{t})_{\ell,u}(A,y)\Big]P_{A}(t)(y).\vspace*{0.18cm}$\\Consequently, $P_{A}(t)$ is the solution of the following system.\vspace*{0.18cm}\\ $\displaystyle\frac{\partial}{\partial t} P_{A}(t)(y)=\sum\limits_{\ell}F_{\ell}^{m,\theta}(A,y)\frac{\partial}{\partial y_{\ell}}P_{A}(t)(y)+\frac{1}{2}\sum\limits_{1\leq \ell,u\leq d}(\theta\theta^{t})_{\ell,u}(A,y)\frac{\partial^{2}}{\partial y_{u}y_{\ell}}P_{A}(t)(y) \vspace*{0.18cm}\\\hspace*{2.5cm}+H(A,y)P_{A}(t)(y)\vspace*{0.18cm}\\\hspace*{1.9cm}=\mathcal{G}_{A}P_{A}(t)(y)+H(A,y)P_{A}(t)(y)\vspace*{0.18cm}\\P_{A}(0)(y)= 1.$\\\\ Thus fix $T>0$,  according to the Feyman-Kac formula, for any $t\in[0,T]$, we have \vspace*{0.18cm}\\
	$\hspace*{1cm}P_{A}(t)(y)=P_{A}(T-t_{1})(y)=\mathbb{E}\Big(exp\Big(-\displaystyle\int_{t_{1}}^{T}H(A,Y_{r})dr\Big)/Y_{t_{1}}=y\Big),$ (with $t+t_{1}=T$). $\vspace*{0.18cm}\\$
	where $\{Y_{t},t\geq 0\}$ is the markovian processes having $\mathcal{G}_{A}$ as the infinitesimal generator.\\So as from assumption (H2) the function $H$ is bounded, for any $y\in \mathbb{R}^{d},$ we have\[ P_{A}(t)(y)=\displaystyle\int_{\mathbb{R}^{d}}\Upsilon_{A}(t)(x,y)dx\leq e^{Ct}.\label{tvc}\] .
\end{proof}

\begin{rmq}
	Since it is easy to adap without difficulty the proof of Theorem 5.4 of \cite{ac} to the space $W_{0}^{m,\sigma}$ $(m\in \mathbb{N}),$  by following the proof of Theorem 5.23 of \cite{ac}, we prove easily that for any integer $m>d/2$, the space $W_{0}^{m,\sigma}$ is a Banach algebra. Furthemore using the result on the complex interpolation of \cite{lof} (Theorem 4) and \cite{yi}(subsection 10.2), we conclude that for any real number $s>d/2$, the space $H^{s,\sigma}$ is a Banach algebra.
	\label{r65}
\end{rmq}

\end{document}